\tikzstyle{block}=[draw opacity=0.7,line width=1.4cm]
\theoremstyle{definition}
\newtheorem{definition}{Definition}[section]
\newtheorem{example}[definition]{Example}
\newtheorem{invariant}{Invariant}
\newtheorem*{remark*}{Remark}
\newtheorem*{claim*}{Claim}
\theoremstyle{remark}
\newtheorem{remark}[definition]{Remark}
\theoremstyle{plain}
\newtheorem{theorem}[definition]{Theorem}
\newtheorem{corollary}[definition]{Corollary}
\newtheorem{prop}[definition]{Proposition}
\newtheorem{claim}[definition]{Claim}
\newtheorem{observation}[definition]{Observation}
\newtheorem{lemma}[definition]{Lemma} 
\newtheorem{conjecture}[definition]{Conjecture}
\newtheorem{question}[definition]{Question}
\newtheorem{problem}[definition]{Problem}
\newtheorem{fact}[definition]{Fact}
\def\todo#1{}
\def\marginpar#1{}
\def\nbrel#1#2{R\ifstrempty{#1}{}{_{#1}}\ifstrempty{#2}{}{^{#2}}}
\def\nbpred#1#2{p\ifstrempty{#1}{}{_{#1}}\ifstrempty{#2}{}{^{#2}}}
\def\rel#1#2{\nbrel{\ifstrempty{#1}{}{\str{#1}}}{#2}}
\def\pred#1#2{\nbpred{\ifstrempty{#1}{}{\str{#1}}}{#2}}
\def\str#1{\mathbf {#1}}
\def\Alt{\mathop{\mathrm{Alt}}\nolimits}
\def\Sym{\mathop{\mathrm{Sym}}\nolimits}
\def\K{\mathcal{K}}
\def\Fraisse{Fra\"{\i}ss\' e}
\def\age{\mathop{\mathrm{Age}}\nolimits}
\def\Aut{\mathop{\mathrm{Aut}}\nolimits}
\def\Emb{\mathop{\mathrm{Emb}}\nolimits}
\def\Linorder{\mathcal {LO}}
\def\func#1#2{\nbfunc{\ifstrempty{#1}{}{\str{#1}}}{#2}}
\def\nbfunc#1#2{F\ifstrempty{#1}{}{_{#1}}\ifstrempty{#2}{}{^{#2}}}
\def\Age{\age}
\def\dom{\mathrm{Dom}}
\def\range{\mathrm{Range}}
\newcommand{\ie}{i.\,e.}
\newcommand{\eg}{e.\,g.}
\def\powerset#1{\mathscr{P}(#1)}
\DeclareMathOperator{\aritya}{\text{a}}
\def\arity#1{\aritya(#1)}
\def\BA{\mathcal{BA}}
\begin{document}
\let\WriteBookmarks\relax
\def\floatpagepagefraction{1}
\def\textpagefraction{.001}

\title{Twenty years of Ne\v set\v ril's classification programme of Ramsey classes}
\author[1]{Jan Hubička}
\ead{hubicka@kam.mff.cuni.cz}
\ead[url]{http:/www.ucw.cz/~hubicka}
\affiliation[1]{organization={Department of Applied Mathematics, Charles University},
	addressline={Malostranské náměstí~25},
	city={Praha~1},
	postcode={118~00},
	country={Czech Republic}}
\author[2]{Matěj Konečný}
\affiliation[2]{organization={Institute of Algebra, TU Dresden}, city={Dresden}, country={Germany}}
\ead{matej.konecny@tu-dresden.de}

\begin{abstract}
	In the 1970s, structural Ramsey theory emerged as a new branch of combinatorics. This development came with the isolation of the concepts of the $\str{A}$-Ramsey property and Ramsey class. Following the influential Nešetřil--Rödl theorem, several Ramsey classes have been identified. In the 1980s, Nešetřil, inspired by a seminar of Lachlan, discovered a crucial connection between Ramsey classes and \Fraisse{} classes, and, in his 1989 paper, connected the classification programme of homogeneous structures to structural Ramsey theory. In 2005, Kechris, Pestov, and Todor\v{c}evi\'{c} revitalized the field by connecting Ramsey classes to topological dynamics. This breakthrough motivated Nešetřil to propose a program for classifying Ramsey classes. We review the progress made on this program in the past two decades, list open problems, and discuss recent extensions to new areas, namely the extension property for partial automorphisms (EPPA), and big Ramsey structures.
\end{abstract}

\maketitle

\tableofcontents

\section{Introduction}
Ne\v set\v ril and R\"odl can be considered to be the founders of the area of
structural Ramsey theory.  This survey focuses on Nešetřil's classification
programme of Ramsey classes which was proposed by Ne\v set\v ril 20 years
ago~\cite{Nevsetril2005} and is an essential part of the immensely fruitful area of structural Ramsey theory which has
a number of beautiful results, as well as applications and connections to areas such as model theory, topological dynamics, or computer science.  We start
with a historical perspective introducing the key notions and then discuss the
present state, open problems, and recent extensions of the programme.  Appendix~\ref{appendix:partite} then gives a self-contained
presentation of the partite construction in the full strength used today for identifying Ramsey classes.

The historical part is organized roughly chronologically, since we believe that all the notions and results
feel natural when viewed in this context. However, we do make occasional exceptions to this rule to make
the presentation smoother.
The survey is intended to be self-contained, though some prior knowledge of
Ramsey theory might help.

\paragraph{Note} This document is the postprint version of the survey published in Computer Science Review~\cite{hubicka2025twenty}, updated on October 31, 2025, with several added references. The reference list, however, is still far from being exhaustive.

\section{1970s: From Ramsey theorem to its structural forms}
\label{sec:1970s}
Following the standard set-theoretic convention, we identify every integer $n$ with the set $\{0,\allowbreak 1,\allowbreak \ldots,\allowbreak  n-1\}$. Given a set $S$ and an integer $k$
we denote by $\binom{S}{k}$ the set of all subsets of $S$ with precisely $k$ elements.
Given a set $S$ and an integer $r$, an \emph{$r$-colouring of $S$} is a function $\chi\colon S\to r$.
Given integers $N$, $n$, $k$, and $r$ the \emph{Erd\H os--Rado partition arrow},
written as $N\longrightarrow (n)^k_r$, is a shortcut for the following statement:
\begin{quote}
	For every $r$-colouring $\chi\colon\binom{N}{k}\to r$ there exists a monochromatic set $S\in \binom{N}{n}$.
\end{quote}
Here, \emph{monochromatic} means that $\chi$ restricted to ${\binom{S}{k}}$ is a constant function.
With this notation, the finite Ramsey theorem takes the following compact form:
\begin{theorem}[Finite Ramsey Theorem, 1930~\cite{Ramsey1930}]
	$$(\forall n,k,r\in \mathbb N)(\exists N\in \mathbb N)N\longrightarrow (n)^k_r.$$
\end{theorem}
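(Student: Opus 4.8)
The plan is to give a self-contained proof by induction on the exponent $k$, reducing the statement for $k$-element subsets to the statement for $(k-1)$-element subsets. The base case $k=1$ is the pigeonhole principle: if $N>r(n-1)$ then every $\chi\colon\binom{N}{1}\to r$ is constant on some $n$-element set, so $r(n-1)+1\longrightarrow (n)^1_r$. For the inductive step, fix $k\ge 2$ and $n,r$; the cases $n\le k$ are trivial (any $n$-element set then has at most one $k$-element subset, hence is automatically monochromatic), so we may assume $n>k$ and take as induction hypothesis that for all $m,s\in\mathbb N$ there is some $M$ with $M\longrightarrow (m)^{k-1}_s$.

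The heart of the argument is to build, inside a sufficiently large ground set $S_0$, a sequence of distinct elements $a_1,a_2,\dots,a_t$ together with a nested family of ``reservoirs'' $S_0\supseteq S_1\supseteq\cdots\supseteq S_{t-1}$ such that $a_i\in S_{i-1}$, $S_i\subseteq S_{i-1}\setminus\{a_i\}$, and every $k$-element set of the form $\{a_i\}\cup T$ with $T\in\binom{S_i}{k-1}$ receives one and the same colour. Each step is carried out by picking $a_i\in S_{i-1}$ arbitrarily and applying the induction hypothesis to the $r$-colouring $T\mapsto\chi(\{a_i\}\cup T)$ of $\binom{S_{i-1}\setminus\{a_i\}}{k-1}$, which yields a large monochromatic $S_i$. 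The point of the construction is that any $k$-element set $\{a_{i_1},\dots,a_{i_k}\}$ with $i_1<\cdots<i_k$ can be written as $\{a_{i_1}\}\cup\{a_{i_2},\dots,a_{i_k}\}$ with $\{a_{i_2},\dots,a_{i_k}\}\subseteq S_{i_1}$ (since $a_j\in S_{j-1}\subseteq S_{i_1}$ whenever $j>i_1$), so that its $\chi$-colour depends only on the index $i_1$. This induces an $r$-colouring $\psi$ of $\{1,\dots,t-k+1\}$, and by the $k=1$ case, if $t$ is large enough there are $n$ indices with the same $\psi$-value; the corresponding elements $a_i$ then form a $\chi$-monochromatic $n$-set, because the least index occurring in any of their $k$-element subsets is again one of these $n$ indices.

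It remains to check that a finite $N$ suffices, which is pure bookkeeping carried out from the top down. Take $t:=r(n-1)+k$, so that the final pigeonhole step on $\psi$ forces $n$ repetitions; it then suffices that $|S_{t-1}|\ge 1$. Working backwards: if we require $|S_i|\ge\sigma_i$, it is enough that $|S_{i-1}|\ge 1+M_i$ where $M_i$ witnesses $M_i\longrightarrow (\sigma_i)^{k-1}_r$. Iterating this recursion from $\sigma_{t-1}:=1$ down to $\sigma_0$ produces an explicit (if rapidly growing) value $N:=\sigma_0$ with $N\longrightarrow (n)^k_r$, completing the induction.

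The routine parts are the two pigeonhole applications and the verification that the colour of a $k$-set is genuinely a function of its least index; the one point that needs care is the direction of this last step, since the reservoir sizes must be determined from the top ($S_{t-1}$) down to the bottom ($S_0$), rather than in the ``forward'' direction where no finite bound on $N$ would be visible. I would also note, as an aside, that once the \emph{infinite} Ramsey theorem is available the finite version follows more quickly by a compactness argument --- applying König's lemma, or a nonprincipal ultrafilter, to the tree of ``bad'' $r$-colourings of the finite initial segments of $\mathbb N$ --- but that route is not self-contained from the material set up so far.
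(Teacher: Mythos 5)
Your proof is correct. The paper does not prove this statement at all --- it is quoted as a classical result with a citation to Ramsey's 1930 paper --- so there is no argument of the paper's to compare against; the only remark the survey makes is that the general-$r$ case reduces to $r=2$ by grouping colours, whereas you handle general $r$ directly, which is equally fine. Your argument is the standard finitary one (induction on $k$, a nested chain of reservoirs $S_0\supseteq S_1\supseteq\cdots$ making the colour of a $k$-set depend only on its least chosen element, then pigeonhole on the induced colouring of indices), and the details check out: the condition $a_j\in S_{j-1}\subseteq S_{i_1}$ for $j>i_1$ is exactly what makes $\psi$ well-defined, $t=r(n-1)+k$ gives $t-k+1\ge r(n-1)+1$ indices for the final pigeonhole, and the top-down recursion $\lvert S_{i-1}\rvert\ge 1+M_i$ with $M_i\longrightarrow(\sigma_i)^{k-1}_r$ correctly accounts for the removal of $a_i$ before the induction hypothesis is applied.
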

Note that the Ramsey theorem is equivalent to its special case with $r=2$: By grouping colours and iteratively using the $r=2$ version, the general version follows easily. To simplify notation, we will often use the $r=2$ variant instead of $\forall r\in \mathbb N$.

\subsection{Structural partition arrow}

Structural Ramsey theory is concerned with structural generalizations of the
partition arrow.  While its key notions can be presented in the very general
context of category theory (see \eg{}~\cite{Graham1972,Leeb}), the hard core of structural Ramsey theory lies in studying specific examples of
structures, such as graphs, ordered graphs, hypergraphs or metric spaces.
Rather than introducing the key definitions in full generality, we find it
convenient to restrict our attention to structures and embeddings:

\medskip

A \emph{language} is a collection $L$ of relation and function symbols, each having an associated {\em arity} denoted by $\arity{S}$ for a symbol $S\in L$.
In this survey, the language will often be fixed and/or understood from the context.
We adopt the standard model-theoretic notion of structures (see \eg~\cite{Hodges1993}) with one slight generalization, our functions are set-valued (see Remark~\ref{rem:set_valued} for some discussion).
Formally, denoting by $\powerset{A}$ the set of all subsets of $A$, an \emph{$L$-structure} $\str{A}$ is a structure with {\em vertex set} $A$, functions $\func{A}{}\colon A^{\arity{\func{}{}}}\to \powerset{A}$ for every function $\func{}{}\in L$ and relations $\rel{A}{}\subseteq A^{\arity{\rel{}{}}}$ for every relation $\rel{}{}\in L$.
Notice that the domain of a function consists of tuples while the range consists of sets. We will use bold letters such as $\str{A},\str{B},\str{C}$ to denote structures
and the corresponding normal letters $A$, $B$, $C$ to denote their underlying sets.

If the set $A$ is finite, we call $\str A$ a \emph{finite structure}. We consider only structures with finitely or countably infinitely many vertices.
If $L$ consists of relation symbols only, we call $L$ a {\em relational language} and say that an $L$-structure is  a {\em relational structure}.
A function $\func{}{}$ such that $\arity{\func{}{}}=1$ is a {\em unary function}.

A \emph{homomorphism} $f\colon \str{A}\to \str{B}$ is a map $f\colon A\to B$
such that  for every relation $\rel{}{}\in L$ we have $$(x_1,x_2,\ldots, x_{\arity{\rel{}{}}})\in \rel{A}{}\implies (f(x_1),f(x_2),\ldots,f(x_{\arity{\rel{}{}}}))\in \rel{B}{},$$ and for every function $\func{}{}\in L$ we have $$f[\func{A}{}(x_1,x_2,\allowbreak \ldots, x_{\arity{\func{}{}}})] = \func{B}{}(f(x_1),f(x_2),\ldots,f(x_{\arity{\func{}{}}})),$$ where for a subset $A'\subseteq A$, we denote by $f[A']$ the set $\{f(x): x\in A'\}$.

If $f$ is injective then $f$ is called a \emph{monomorphism}. A monomorphism $f$ is an \emph{embedding} if its inverse is also a monomorphism. A bijective embedding is an \emph{isomorphism}. An isomorphism from a structure to itself is called an \emph{automorphism}. If $f$ is an inclusion then $\str{A}$ is a \emph{substructure} of $\str{B}$, and we write $\str A\subseteq \str B$.

\begin{remark}\label{rem:set_valued}
	The generalization of functions to set-valued ones comes from~\cite{Evans3} and is motivated by applications (see Section~\ref{sec:orientations}): We use set-valued functions to restrict the notion of a substructure to algebraically closed substructures only (see \eg{}~\cite{Hodges1993}) by explicitly representing the algebraic closure. This would not always be possible with standard functions without removing some automorphisms.

	Notice that this generalization is ``conservative'': If we restrict to structures where the range of each function consists of singleton sets, we obtain the standard model-theoretic structures with the standard notions of maps between them (formally after identifying each singleton set with its element). It is easy to verify that the general results presented in this paper imply the same statements for the standard definition of structures.

	Moreover, in almost all examples in this paper the ranges of functions consist of singletons and the empty set. This corresponds to working with partial functions in the standard setting.
\end{remark}

\medskip

Given structures $\str{A}$ and $\str{B}$, we denote by $\Emb(\str A,\str B)$ the set of
all embeddings from $\str{A}$ to $\str{B}$.
Given structures $\str{A}$, $\str{B}$,
$\str{C}$, and an integer $r$, the \emph{structural Erd\H os--Rado partition arrow},
written as $\str{C}\longrightarrow (\str{B})^\str{A}_r$, is a shortcut for the following statement:
\begin{quote}
	For every colouring $\chi\colon\Emb(\str{A},\str{C})\to r$, there exists a
	\emph{monochromatic} embedding $f\in \Emb(\str{B},\str{C})$. That is, $\chi$ restricted
	to $\{f\circ e:e\in \Emb(\str{A},\str{B})\}$ is a constant function.
\end{quote}

This is a natural generalization of the original partition arrow:
Denoting by $\Linorder$ the class of all finite linear orders, the finite Ramsey theorem can
be equivalently stated using the structural form of the partition arrow:
$$(\forall \str{A},\str{B}\in \Linorder)(\exists \str{C}\in \Linorder)\str{C}\longrightarrow (\str{B})^\str{A}_2.$$

\begin{remark}
	The partition arrow was originally defined for copies instead of embeddings. However, it turns out that in order to obtain Ramsey theorems, one typically needs structures to be linearly ordered for both versions of the partition arrow, and since both these notions coincide for rigid structures, there is a posteriori little difference in practice. (See Observation~\ref{obs:ramsey_rigid}, Proposition~\ref{prop:ordernecessary}, and the adjacent discussions for more details.)
\end{remark}

\subsection{Induced Ramsey theorems: colouring vertices or edges}
One of the earliest structural results was obtained by Folkman~\cite{folkman1970} who showed that for every finite graph $\str{G}$ there exists a graph $\str{H}$ such
that $\str{H}\longrightarrow(\str{G})^\bullet_2$ where $\bullet$ denotes the one-vertex graph. He also showed that $\str{H}$ can be constructed such that the sizes of the largest cliques of $\str{H}$ and $\str{G}$ are the same.
This result is not hard. Komj\'ath and R\"odl~\cite{komjath1986} proved that it follows from the lexicographic product of graphs, and Nešetřil and R\"odl~\cite{Nevsetvril1976b} proved a stronger form of it as a consequence of the well-known theorem of Erd\H os and Hajnal on the
existence of hypergraphs of arbitrarily large chromatic number and girth~\cite{erdos1959graph,erdos1966chromatic}, see \eg{} Ne\v set\v ril's survey~\cite{nevsetril2013combinatorial} and his first paper~\cite{nesetril1966}.

More complicated results were concerned with colouring edges. The first edge-colouring results were independently obtained
in mid 1970s by  Deuber~\cite{Deuber1975}, Erd\H os, Hajnal, and P\'osa~\cite{erdos1975strong}, and R\"odl~\cite{rodl1973} (published in English in 1976~\cite{rodl1976generalization}). A simple proof was also given by Ne\v set\v ril and R\"odl in 1978~\cite{nevsetvril1978simple}.
This theorem became known as the \emph{Induced Ramsey theorem}, since in the case of graphs the monochromatic copy is an induced subgraph.
The corresponding \emph{induced Ramsey numbers} (and their variants) are actively studied~\cite{fox2008induced,Conlon2015}.

Answering a question of Galvin~\cite{Erdos1970}, in 1975, Nešetřil and R\"odl (using a smart representation of triangle-free graphs by $n$-tuples) showed the
following:
\begin{theorem}[Nešetřil--R\"odl, 1975~\cite{nevsetril1975ramsey}]
	\label{thm:NR3f}
	For every finite triangle-free graph $\str{G}$ there exists a finite
	triangle-free graph $\str{H}$ such that for every
	$2$-colouring of the edges of $\str{H}$ there exists a monochromatic copy of $\str{G}$.
\end{theorem}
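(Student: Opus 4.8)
The plan is to prove a self-strengthening statement by induction that simultaneously controls the triangle-free requirement. Specifically, I would try to show: for every finite triangle-free graph $\str{G}$ there is a finite triangle-free graph $\str{H}$ with $\str{H}\longrightarrow(\str{G})^{\str{K}_2}_2$ (edge arrow), proving it by induction on the number of edges of $\str{G}$. The base case (a graph with no edges, or one edge) is trivial. For the inductive step, pick an edge $e=\{x,y\}$ of $\str{G}$ and let $\str{G}'$ be $\str{G}$ with $e$ deleted; by induction we have a triangle-free $\str{H}'$ arrowing $\str{G}'$. The difficulty is that simply reinstating edges in $\str{H}'$ will create triangles, so one needs a clever host construction.

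The key tool is the Nešetřil--Rödl ``smart representation of triangle-free graphs by $n$-tuples'' alluded to in the excerpt. Concretely, I would encode each vertex of the host graph as a point in a carefully chosen set system (e.g. a high-dimensional combinatorial space, or a set of specially structured subsets of a large ground set), and declare two encoded vertices adjacent precisely when their representations satisfy a fixed ``intersection pattern'' condition. The point of the encoding is two-fold: first, the pattern condition must be arranged so that no three encoded vertices can be pairwise adjacent, guaranteeing triangle-freeness of the host; second, the combinatorial space must be rich enough that a Hales--Jewett-type or product-Ramsey argument applies to it, so that any $2$-colouring of the edges (pairs in the given pattern) stabilizes on a large substructure whose induced graph contains a copy of $\str{G}$. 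In effect, one is running the amalgamation/partite-construction philosophy but inside a category of ``tuple-representations'' where the forbidden triangle is automatically respected.

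I would structure the argument in three steps. Step 1: set up the representation functor — given a triangle-free graph $\str{F}$ on $m$ vertices, define $\rho(\str{F})$ to be a triangle-free graph whose vertices are suitable $N$-tuples (for $N$ chosen later as a Ramsey/Hales--Jewett bound), with adjacency given by the fixed pattern, and verify that $\str{F}$ embeds into $\rho(\str{F})$ as an induced subgraph and that $\rho(\str{F})$ is triangle-free. Step 2: prove a pigeonhole/partition property for $\rho$ — for any $2$-colouring of the edges of $\rho(\str{F})$, there is a ``combinatorial line'' (or a monochromatic product-substructure) on which the colour of an edge depends only on its abstract type, reducing an edge-colouring of $\rho(\str{F})$ to an edge-colouring of $\str{F}$ itself. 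Step 3: combine with the induction — colour the edges of $\str{H}:=\rho(\str{H}'')$ for an appropriate $\str{H}''$, push the colouring down to $\str{H}''$, apply the inductive hypothesis to locate a monochromatic copy of $\str{G}$, and lift it back through $\rho$ using its induced-embedding property.

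The main obstacle, and the genuinely delicate part, is Step 1: engineering the intersection/pattern condition on the $N$-tuples so that triangle-freeness is guaranteed \emph{structurally} (by the combinatorics of the pattern, not by accident) while still leaving enough adjacencies to embed an arbitrary triangle-free $\str{G}$ and to support the Ramsey/Hales--Jewett argument in Step 2. Balancing ``few enough edges to forbid triangles'' against ``many enough edges to realize $\str{G}$ and to be Ramsey'' is exactly the tension that Galvin's question exposes, and it is why a naive product or blow-up construction fails; the Nešetřil--Rödl insight is precisely a representation that threads this needle. Once that representation is in hand, Steps 2 and 3 are comparatively routine applications of Hales--Jewett together with the inductive scheme.
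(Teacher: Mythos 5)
Your proposal is an outline rather than a proof, and the two places where it defers to future work are precisely where all the mathematical content of this theorem lives. First, the entire construction is hidden inside Step~1: you never specify the ``intersection pattern'' on $N$-tuples, never verify that it structurally forbids triangles, never show that an arbitrary triangle-free $\str{G}$ embeds into $\rho(\str{F})$ as an induced subgraph, and never check that the pattern is compatible with a Hales--Jewett argument. Saying that one must ``thread the needle'' between too few and too many edges correctly identifies the difficulty, but it does not resolve it; as stated, nothing rules out that no such pattern exists. (It does exist --- this is the content of the 1975 paper and, in the modern formulation, of the partite construction in Appendix~A, where triangle-freeness is maintained by the invariant that every irreducible substructure of each picture projects into a copy of $\str{B}$, see point~(\ref{thm:inducedpartite:3}) of Theorem~\ref{thm:inducedpartite} --- but the proposal contains no substitute for that mechanism.)

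Second, the inductive scaffolding does not cohere with Steps~1--3. You set up an induction on the number of edges of $\str{G}$, deleting an edge $e$ to get $\str{G}'$ and a host $\str{H}'$ for $\str{G}'$. But in Step~3 the inductive hypothesis only produces a monochromatic copy of $\str{G}'$, and you never explain how to upgrade it to a monochromatic copy of $\str{G}$: the deleted edge must be present in the host, must connect the right two vertices of the located copy, and must carry the same colour --- none of which follows from anything you have set up. The paper's route avoids this entirely: one first proves the unrestricted ordered theorem (Theorem~\ref{thm:unNR}) by a backward induction over order-types of copies of $\str{A}$, using the partite lemma (a Hales--Jewett argument) at each step, and then runs the induced partite construction over a pre-Ramsey witness $\str{D}$ to impose triangle-freeness (Theorem~\ref{thm:NR}), from which Theorem~\ref{thm:NR3f} follows by forgetting the order. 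Your instinct about the tools (Hales--Jewett, a representation making the forbidden configuration structural) is correct, but without the actual construction and with the induction as described, the argument does not go through.
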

Notice that we cannot state the Ramsey property of $\str{H}$ in Theorem~\ref{thm:NR3f}
as $\str{H}\longrightarrow(\str{G})^{\str{E}}_2$ with $\str{E}$ being the graph consisting of two vertices connected by an edge.  In fact, with our definition of the partition arrow
all coloured substructures have to be \emph{rigid} (that is, have no non-identity automorphisms):
\begin{observation}
	\label{obs:ramsey_rigid}
	Let $\str A$ and $\str B$ be structures with $\Emb(\str A,\str B)$ non-empty. If there is $\str C$ such that $\str C\longrightarrow (\str B)^\str A_2$ then $\str A$ is rigid.
\end{observation}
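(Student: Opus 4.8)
\medskip

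The plan is to prove the contrapositive: assuming $\str A$ is \emph{not} rigid (and that $\Emb(\str A,\str B)\neq\emptyset$), I will show that for every structure $\str C$ there is a colouring $\chi\colon\Emb(\str A,\str C)\to 2$ admitting no monochromatic $f\in\Emb(\str B,\str C)$, so that $\str C\longrightarrow(\str B)^{\str A}_2$ fails. We may assume $\Emb(\str B,\str C)\neq\emptyset$ (otherwise the arrow already fails trivially, since a constant colouring then has no monochromatic $f$ at all), and then, fixing some $e_0\in\Emb(\str A,\str B)$, we also have $f\circ e_0\in\Emb(\str A,\str C)$ for any $f\in\Emb(\str B,\str C)$.

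The conceptual core is that $\Aut(\str A)$ acts on $\Emb(\str A,\str C)$ on the right by $g\mapsto g\circ\beta$, and this action is \emph{free}: if $g\circ\beta=g$ then $\beta=\mathrm{id}$ because $g$ is injective. Hence every orbit has exactly $\card{\Aut(\str A)}\geq 2$ elements. I claim that \emph{any} colouring $\chi\colon\Emb(\str A,\str C)\to 2$ that is non-constant on each such orbit already suffices. Indeed, given $f\in\Emb(\str B,\str C)$, for every $\beta\in\Aut(\str A)$ we have $e_0\circ\beta\in\Emb(\str A,\str B)$, so the orbit $\{(f\circ e_0)\circ\beta : \beta\in\Aut(\str A)\}$ is contained in $\{f\circ e : e\in\Emb(\str A,\str B)\}$; as $\chi$ is non-constant on the orbit it is non-constant on this larger set, that is, $f$ is not monochromatic.

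It remains to construct such a $\chi$ concretely. Fix a non-identity $\alpha\in\Aut(\str A)$ (which exists precisely because $\str A$ is not rigid), an element $a_0\in A$ with $\alpha(a_0)\neq a_0$, and an arbitrary linear order $\prec$ of the (countable) set $C$; put $\chi(g)=1$ if $g(a_0)\prec g(\alpha(a_0))$, and $\chi(g)=0$ otherwise. To check non-constancy on an orbit it is enough to look at the cyclic subgroup $\langle\alpha\rangle$: let $m\geq 2$ be least with $\alpha^m(a_0)=a_0$, so the elements $c_j:=g(\alpha^j(a_0))$ for $j=0,\dots,m-1$ are pairwise distinct (as $g$ and each $\alpha^j$ are injective), and $\chi(g\circ\alpha^j)=1$ if and only if $c_j\prec c_{j+1}$, indices taken mod $m$. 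If these $m$ comparisons were all equal we would get a cyclic chain $c_0\prec c_1\prec\cdots\prec c_{m-1}\prec c_0$ (or its reverse), contradicting that $\prec$ is a strict linear order; hence $\chi$ attains both values on the orbit of $g$, as needed.

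The only genuine content is this final one-line cyclic argument; everything else is bookkeeping. The point worth flagging is what one should \emph{not} attempt: it is tempting to demand $\chi(g)\neq\chi(g\circ\alpha)$ for every $g$, but this is impossible when $\alpha$ has odd order (it amounts to properly $2$-colouring an odd cycle). The right move is to use that the Ramsey property only needs the \emph{whole} fibre $\{f\circ e : e\in\Emb(\str A,\str B)\}$ — which always contains an entire, hence size-$\geq 2$, $\Aut(\str A)$-orbit — to be non-monochromatic, not any individual pair inside it.
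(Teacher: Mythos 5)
Your proof is correct and is essentially the paper's argument made explicit: both rest on the observation that the fibre $\{f\circ e : e\in\Emb(\str A,\str B)\}$ of any copy of $\str B$ contains \emph{all} embeddings of $\str A$ onto a fixed image --- a free $\Aut(\str A)$-orbit of size $\card{\Aut(\str A)}\geq 2$ --- so any colouring that is non-constant on each such orbit defeats the arrow. The one point to watch is your concrete construction of $\chi$: the least $m\geq 2$ with $\alpha^m(a_0)=a_0$ need not exist when $\str A$ is infinite (the $\langle\alpha\rangle$-orbit of $a_0$ may be infinite, and an infinite ascending $\prec$-chain $c_0\prec c_1\prec\cdots$ is then no contradiction), but since you have already noted that \emph{any} orbit-non-constant colouring suffices and such a colouring trivially exists, this explicit construction is a dispensable bonus, and it is valid in the finite case that all of the paper's applications use.
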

\begin{proof}
	Assume for a contradiction that we have such $\str C$, but $\str A$ is not rigid. This means that for every $\str A'\subseteq \str C$ isomorphic to $\str A$ there are at least two embeddings $f\neq f'\colon \str A\to \str C$ with $f[A] = f'[A] = A'$. Consequently, we can colour each of them by a different colour, contradicting that $\str C\longrightarrow (\str B)^\str A_2$.
\end{proof}
We will solve this problem by adding an order of vertices to the structures. At this point, this may seem like an overkill
since the structural partition arrow can also be defined with respect to copies rather then embeddings -- and it is often
done so in the literature. However, for more fundamental reasons, one must actually add a linear order in order to colour more complicated structures (see Proposition~\ref{prop:ordernecessary}) so this step would become necessary later anyway.

Assume that $L$ contains a binary relation $<$ and $\str A$ is an $L$-structure. We say that $\str{A}$ is \emph{ordered} if $(A,<_\str{A})$ is a linear order. For example, \emph{ordered graphs} can now be seen as ordered $L$-structures in the language $L=\{<,E\}$ where $E$ is a binary relation symbol representing edges. Using a more involved proof, Theorem~\ref{thm:NR3f} was one year later generalized to the following:
\begin{theorem}[Nešetřil--R\"odl, 1976~\cite{Nevsetvril1976}]
	\label{thm:NRedge}
	Let $\str{E}$ be the ordered graph consisting of two vertices connected
	by an edge.  Given an integer $k>2$, denote by $\mathcal K_k$ the class of
	all finite ordered graphs with no clique of size $k$. Then:
	$$(\forall k>2)(\forall \str{G}\in \mathcal K_k)(\exists \str{H}\in \mathcal K_k):\str{H}\longrightarrow (\str{G})^\str{E}_2.$$
\end{theorem}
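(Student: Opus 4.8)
The plan is to run the \emph{partite construction} (the tool developed in detail in Appendix~\ref{appendix:partite}); it can also be deduced more directly from the Hales--Jewett or Graham--Rothschild theorem, but the partite construction isolates the combinatorial core most cleanly. The linear order is essentially forced: by Observation~\ref{obs:ramsey_rigid} the coloured structure must be rigid, which the ordered edge $\str E$ is, and, as the colour bookkeeping below shows, ordering is genuinely needed even for the edge, since a carelessly ordered host can fail to be Ramsey.

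The first step is a partite reformulation. Fix $\str G\in\mathcal K_k$ on the ordered vertex set $\{1<\cdots<m\}$ and call an ordered graph \emph{$\str G$-partite} if its vertices split into independent parts $P_1<\cdots<P_m$, occurring as consecutive intervals of the order and indexed by the vertices of $\str G$; a \emph{transversal copy} of $\str G$ is a choice of one vertex in each part inducing an ordered copy of $\str G$ (edges between non-adjacent parts are allowed in general and play no role in such copies). Since a monochromatic transversal copy is in particular a monochromatic copy of $\str G$, it suffices to build a $\str G$-partite $\str H\in\mathcal K_k$ in which every $2$-colouring of the edges has a monochromatic transversal copy of $\str G$, and then forget the partition. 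Membership in $\mathcal K_k$ will be easy to keep: all amalgamations performed are free over a bipartite ``picture'', and since $K_k$ is $2$-connected (as $k>2$) a copy of $K_k$ in such an amalgam must lie inside a single copy of the previous stage or inside the bipartite picture, so a largest clique never exceeds $\max(\omega(\str G),2)<k$, where $\omega$ denotes the clique number. This is the only use of $\str G\in\mathcal K_k$, and it is cheap precisely because the forbidden object is a clique; for forbidden structures that are not $2$-connected this step becomes the main difficulty, which is what the refined partite construction of the appendix handles.

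The combinatorial heart is the \emph{partite lemma}: for every bipartite ordered graph $\str B$, with a distinguished set of cross edges, there is a bipartite ordered graph $\str P$ such that every $2$-colouring of the distinguished edges of $\str P$ admits a part-preserving embedding $\str B\hookrightarrow\str P$ all of whose distinguished edges receive a single colour. One proves this by encoding the choice of the embedding as a combinatorial line (or subspace) and invoking the Hales--Jewett theorem; bipartiteness makes the auxiliary graphs triangle-free, so no clique condition intervenes. From the partite lemma one builds $\str H$ as the last term of a finite chain $\str P_0,\str P_1,\ldots$ of $\str G$-partite graphs, starting from $\str G$ (or a disjoint union of copies of it), where each $\str P_i$ arises from $\str P_{i-1}$ by choosing a partite-lemma Ramsey object for the bipartite picture processed at that stage and amalgamating copies of $\str P_{i-1}$ over it.

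The main obstacle, where essentially all the work lies, is to choose the invariant carried along this chain, and the order of processing, so that the induction closes to a \emph{fully} monochromatic transversal copy. The naive invariant ``for every colouring some transversal copy is monochromatic on the edges processed so far'' does not survive an amalgamation step: the colour supplied by the inductive hypothesis need not match the colour into which the freshly processed edges have been pushed, and this is a real phenomenon rather than a bookkeeping nuisance. It is already visible for $\str G$ a three-vertex path: a carelessly ordered host admits a $2$-colouring in which every copy carries exactly one edge of each colour — the adversary colours, at each vertex, all ``downward'' edges one colour and all ``upward'' edges the other, which is consistent exactly when the graph of colour constraints imposed by the copies is bipartite. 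The construction must therefore force those accumulated constraints to contain an odd cycle, so that no colouring can satisfy them all; arranging this uniformly over all $\str G$, by a sufficiently long iteration of amalgamations over partite-lemma Ramsey objects, is what dictates the (somewhat technical) precise invariant. Everything else — that the amalgams are $\str G$-partite, that transversal copies are genuine ordered copies of $\str G$, and the base case — is routine. Compared with Theorem~\ref{thm:NR3f}, upgrading ``triangle-free'' to ``$K_k$-free'' adds essentially nothing once the partite framework is in place; the content remains the partite lemma together with the colour-forcing iteration.
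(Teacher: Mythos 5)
Your general toolkit is the right one --- the partite lemma via Hales--Jewett, pictures glued by free amalgamation over the partite-lemma output, and the observation that free amalgamation over bipartite pieces cannot create a $K_k$ --- but the reduction in your first step is broken, and the vagueness of your ``invariant'' paragraph hides the fact that the construction, as you set it up, cannot close. You index the parts by the vertices of $\str{G}$ itself and claim it suffices to build a $\str{G}$-partite $\str{H}\in\mathcal K_k$ in which every $2$-colouring of the edges yields a monochromatic transversal copy of $\str{G}$. No such $\str{H}$ exists once $\str{G}$ has at least two edges: colour every edge between parts $P_1$ and $P_2$ (where $12$ is an edge of $\str{G}$) red and every other edge blue; a transversal copy of $\str{G}$ uses one edge from each pair of parts corresponding to an edge of $\str{G}$, so it carries both colours. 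This is, moreover, exactly the colouring your backward induction would hand you: after all amalgamation stages the colour of a transversal edge depends only on the pair of parts it joins, i.e.\ you are left with an arbitrary $2$-colouring of $E(\str{G})$, and nothing forces it to be constant. No amount of iterating the amalgamation or ``forcing odd cycles in the constraint graph'' can repair this, because the adversary colours last.

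The missing idea is the two-level structure of the partite construction: the partition must be indexed by the vertices of a \emph{pre-Ramsey host} $\str{D}$ with $\str{D}\longrightarrow(\str{G})^{\str{E}}_2$, supplied by the unrestricted Theorem~\ref{thm:unNR} (which need not, and does not, preserve $K_k$-freeness); the initial picture $\str{P}_0$ must contain a transversal copy of $\str{G}$ above \emph{every} copy of $\str{G}$ in $\str{D}$; and one picture step is performed for each edge of $\str{D}$ lying in a copy of $\str{G}$. The backward induction then terminates in a colouring of the edges of $\str{D}$, and it is the Ramsey property of $\str{D}$ --- not any invariant internal to the chain of pictures --- that produces a copy of $\str{G}$ in $\str{D}$ all of whose edges received the same colour; its transversal preimage in $\str{P}_0$ is the desired monochromatic copy. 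With this correction your clique bookkeeping goes through essentially verbatim (every clique of an amalgamation stage lies in a single copy of the previous picture or in a bipartite power, so $K_k$-freeness is inherited from $\str{P}_0$, a disjoint union of copies of $\str{G}$); this is the appendix's Theorem~\ref{thm:inducedpartite}, whose item~(3) is the general form of that argument, specialised to $\str{A}=\str{E}$.
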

\subsection{Ne\v set\v ril--R\"odl and Abramson--Harrington theorems}
\label{sec:ah}
The first non-trivial structural theorem for colouring arbitrary ordered substructures was given in 1977 by Ne\v set\v ril and R\"odl \cite{Nevsetvril1977} and, independently, by Abramson and Harrington~\cite{Abramson1978}.
\begin{theorem}[Unrestricted Nešetřil--R\"odl Theorem~\cite{Nevsetvril1977} or Abramson--Harrington Theorem~\cite{Abramson1978}]
	\label{thm:unNR}
	Let $L$ be a relational language containing a binary relation $<$, and let $\K$ be the class of all finite ordered $L$-structures. Then
	$$(\forall \str{A},\str{B}\in \K)(\exists \str{C}\in \K)\str{C}\longrightarrow (\str{B})^\str{A}_2.$$
\end{theorem}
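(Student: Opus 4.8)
The plan is to establish this by the \emph{partite construction} of Nešetřil and Rödl, developed in full in Appendix~\ref{appendix:partite}; here we only outline the strategy. A few routine reductions come first. As noted in Section~\ref{sec:1970s} it suffices to take $r=2$. Since $\str A$ and $\str B$ are finite, only finitely many symbols of $L$ occur in them, and there is no harm in letting $\str C$ use only those together with $<$, so we may assume $L$ is finite. If $\Emb(\str A,\str B)=\emptyset$ the statement is vacuous, so assume otherwise; identifying $\str A$ with its image under a fixed embedding into $\str B$ lets us assume $\str A\subseteq\str B$. Finally --- since it suffices to prove the theorem for \emph{every} finite relational language containing $<$ --- we may add to $L$ a fresh $|A|$-ary relation symbol $R^{*}$, interpreted in $\str A$ by the single tuple listing the vertices of $\str A$ in increasing order, and in $\str B$ (and in every structure built below) by the images of that tuple under embeddings of $\str A$. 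Because $\str A$, being ordered, is rigid, one checks that this changes none of the embedding sets $\Emb(\str A,\cdot)$ or $\Emb(\str B,\cdot)$, so the enlarged problem implies the original; and now $\str A$ is \emph{irreducible}: any two of its vertices occur together in the tuple of $R^{*}$.

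Write $a=|A|$, $b=|B|$, and the order of $\str A$ as $1<\dots<a$. We shall obtain $\str C$ as an \emph{$n$-partite transversal} structure for a suitable $n$: its universe is partitioned into consecutively ordered blocks $P_1<\dots<P_n$, and no relation other than $<$ has a tuple meeting one block twice. Irreducibility of $\str A$ then forces \emph{every} embedding $\str A\to\str C$ to be \emph{transversal}, i.e.\ to meet each block at most once: embeddings are injective, so two vertices of $\str A$ in the same block would, lying together in the $R^{*}$-tuple of $\str A$, make its image meet that block twice. Hence it suffices to control transversal copies of $\str A$. The construction rests on two ingredients. The first is the \emph{Partite Lemma}: for every $a$-partite transversal structure $\str B_0$ there is an $a$-partite transversal $\str C_0$, with the same $a$ blocks, such that for every $2$-colouring of the transversal copies of $\str A$ in $\str C_0$ there is a block-preserving embedding $g\colon\str B_0\to\str C_0$ on whose transversal copies of $\str A$ the colouring is constant. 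This is the step where the Hales--Jewett theorem enters: a transversal copy of $\str A$ is a choice of one vertex per block, so presenting each block of $\str C_0$ as a high power of the corresponding block of $\str B_0$ --- with the relations read coordinate-wise --- and extracting a monochromatic combinatorial line produces the desired $g$.

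The second ingredient is the amalgamation iteration. By the Finite Ramsey Theorem fix a finite set $Y$ with $Y\longrightarrow(b)^a_2$ and put $n=|Y|$. Build a base picture $\str P_0$: an $n$-partite transversal structure with blocks indexed by $Y$ that contains, transversally over every $b$-element subset of $Y$, a copy of $\str B$ --- so every transversal copy of $\str A$ in $\str P_0$ lies over some $a$-element subset of $Y$. Enumerate the $a$-element subsets of $Y$ as $S_1,\dots,S_M$ and define pictures $\str P_1,\dots,\str P_M$ recursively. To pass from $\str P_{k-1}$ to $\str P_k$, apply the Partite Lemma to $\str B_0$, the sub-picture of $\str P_{k-1}$ induced on the blocks indexed by $S_k$, obtaining an $a$-partite Ramsey picture $\str R_k$; then let $\str P_k$ be formed by gluing one isomorphic copy of $\str P_{k-1}$ onto $\str R_k$ for every block-preserving copy of $\str B_0$ inside $\str R_k$, identifying the $S_k$-blocks of that copy of $\str P_{k-1}$ with the corresponding copy of $\str B_0$ and keeping all remaining vertices disjoint. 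Each $\str P_k$ is again a finite $n$-partite transversal structure using only the relations of $\str B$, so $\str C:=\str P_M\in\K$. To check $\str C\longrightarrow(\str B)^\str{A}_2$, take any $2$-colouring $\chi$ of $\Emb(\str A,\str C)$ --- every embedding in which is transversal --- and process the stages in the order $k=M,M-1,\dots,1$: at stage $k$ the structure currently in hand is an isomorphic copy of $\str P_k$, whose sub-picture on the $S_k$-blocks is a copy of $\str R_k$, and the Ramsey property of $\str R_k$ supplies inside it a copy of $\str P_{k-1}$ on which all copies of $\str A$ lying over $S_k$ get a single colour, while those lying over $S_j$ with $j>k$ remain monochromatic. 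After all $M$ stages we are left with a copy of $\str P_0$ in which the $\chi$-colour of a transversal copy of $\str A$ depends only on the $a$-subset of $Y$ it lies over; feeding the resulting colouring of $\binom{Y}{a}$ into $Y\longrightarrow(b)^a_2$ yields a $b$-subset $T$ all of whose $a$-subsets receive one colour, and the copy of $\str B$ built over $T$ in $\str P_0$ is then monochromatic for $\chi$.

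The difficulty is concentrated in the two ingredients. Deriving the Partite Lemma from the Hales--Jewett theorem requires the right encoding of a transversal picture as a parameter set, and a verification that the combinatorial subspaces it produces really span copies of $\str B_0$ with \emph{all} of their copies of $\str A$ under control. The amalgamation iteration, in turn, demands careful bookkeeping: one must check that each $\str P_k$ stays finite and transversal, that the gluing at stage $k$ creates no copy of $\str A$ beyond those living in a single copy of $\str P_{k-1}$ or in $\str R_k$, and that the backward peeling genuinely preserves monochromaticity over the subsets $S_j$ already handled. It is precisely the reduction to irreducible $\str A$, together with the consecutive ordering of the blocks, that keeps the embeddings of $\str A$ tame enough for this to succeed --- without the linear order, which by Observation~\ref{obs:ramsey_rigid} is indispensable in any case, this control is lost. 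A quite different proof, due to Abramson and Harrington, instead encodes ordered $L$-structures rigidly so that the ordinary Ramsey theorem for uniform hypergraphs applies after a canonical-partition argument; the partite construction is the route that has proved the most adaptable to later developments.
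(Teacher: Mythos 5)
Your proposal is correct and follows essentially the same route as the paper's proof in Appendix~\ref{anoninduced}: the non-induced partite construction, with the Partite Lemma extracted from the Hales--Jewett theorem, a base picture containing transversal copies of $\str B$ over every $b$-subset of an index set chosen by the finite Ramsey theorem, amalgamated pictures refined one $a$-subset at a time, and a backward induction reducing the colouring to one of the $a$-subsets. The only cosmetic difference is your auxiliary relation $R^{*}$ forcing irreducibility of $\str A$ --- this is redundant, since the binary relation $<$ already puts any two vertices of an ordered structure into a common tuple (so its Gaifman graph is complete), which is precisely why the paper instead keeps $<$ transversal inside the partite systems (rather than total within consecutively ordered blocks, as you propose) and only extends the resulting acyclic relation to a linear order at the very last step.
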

The Ne\v set\v ril--R\"odl theorem, in its complete form, expands upon Theorems~\ref{thm:NR3f} and~\ref{thm:NRedge} by incorporating the ability to forbid cliques.
Before stating this theorem, we first introduce the key notion of a Ramsey class.
\begin{definition}[Ramsey Class]
	Let $\mathcal K$ be a class of finite structures and let $\str{A}$ be a structure.  We say that $\mathcal K$ is \emph{$\str{A}$-Ramsey}
	if for every $\str{B}\in \mathcal K$ there exists $\str{C}\in \mathcal K$ such that $\str{C}\longrightarrow(\str{B})^\str{A}_2$.
	We say that $\K$ is \emph{Ramsey} (or that it has the \emph{Ramsey property}) if it is $\str{A}$-Ramsey for every $\str{A}\in \mathcal K$.
\end{definition}
It the other words, $\K$ is Ramsey if and only if it satisfies the structural form of the Ramsey theorem:
$$(\forall \str{A},\str{B}\in \K)(\exists \str{C}\in \K)\str{C}\longrightarrow (\str{B})^\str{A}_2.$$

Given languages $L\subseteq L^+$ and an $L^+$-structure $\str{A}$, the \emph{$L$-reduct} of
$\str{A}$ is the $L$-structure $\str{A}\vert _L$ created from $\str{A}$ by forgetting all relations
and functions from $L^+\setminus L$. Note that model theory often works with the more general concept of first-order reducts, where an $L'$-structure $\str B$ is a \emph{first-order reduct} of an $L$-structure $\str A$
if it is the $L'$-reduct of the expansion of $\str A$ by all first-order definable relations (in a suitable language), see~\eg{}~\cite{Hodges1993}.

Generalizing the notion of a graph clique, we say that a relational $L$-structure $\str{F}$ is \emph{irreducible} if for every $u,v\in F$ there
exists a relation $R\in L$ and a tuple $\vec{x}\in \rel{F}{}$ containing both $u$ and
$v$ (or, in other words, if the Gaifman graph of $\str F$ is a clique, where the \emph{Gaifman graph} of $\str F$
is the graph with vertex set $F$ where $uv$ is an edge if and only if exists a relation $R\in L$ and a tuple
$\vec{x}\in \rel{F}{}$ containing both $u$ and $v$).
In Definition~\ref{def:irreducible} we will generalize the notion of irreducibility for languages containing functions.

Given a set $\mathcal F$ of structures, we call a structure $\str{A}$
\emph{$\mathcal F$-free} if there is no $\str{F}\in \mathcal F$ with an embedding
$\str{F}\to\str{A}$. The key strength of the Nešetřil--R\"odl theorem is its ability to construct Ramsey structures with given restrictions.
\begin{theorem}[Nešetřil--R\"odl Theorem~\cite{Nevsetvril1977,nevsetvril1977partitions,Nevsetvril1983}]
	\label{thm:NR}
	Let $L$ be a relational language containing a binary relation $<$, and let $\mathcal F$ be a set of finite $L$-structures.
	Assume that every $\str{F}\in \mathcal F$ has an irreducible $(L\setminus \{<\})$-reduct.
	Then the class of all finite ordered $\mathcal F$-free structures is a Ramsey class.
\end{theorem}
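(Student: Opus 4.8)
First, I would reduce the problem to the case in which the $(L\setminus\{<\})$-reduct of $\str A$ is irreducible, and then deduce the statement from the unrestricted Ne\v{s}et\v{r}il--R\"odl theorem (Theorem~\ref{thm:unNR}) by the well-known \emph{partite construction}. Throughout, write $\mathcal K$ for the class of all finite ordered $\mathcal F$-free $L$-structures, fix $\str A,\str B\in\mathcal K$, and put $a=\card A$; the goal is to produce $\str C\in\mathcal K$ with $\str C\longrightarrow(\str B)^{\str A}_2$. For the reduction, adjoin to $L$ a fresh $a$-ary relation symbol $\hat R$, let $\str A^+$ be $\str A$ with $\hat R$ holding of the single tuple listing the vertices of $A$ in increasing order, let $\str B^+$ be $\str B$ with $\hat R$ holding of every tuple listing the vertices of a copy of $\str A$ in increasing order, and forbid, alongside $\mathcal F$, the finitely many structures on at most $a$ vertices in which $\hat R$ holds of a tuple that does not list the vertices of a copy of $\str A$ in increasing order; all of these, like $\str A^+$, become irreducible once $<$ is deleted from the language, since the $\hat R$-tuple in question covers every pair of its vertices. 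A witness $\str C^+$ for $\str C^+\longrightarrow(\str B^+)^{\str A^+}_2$ in this enlarged class restricts, by forgetting $\hat R$, to a witness for $\str C\longrightarrow(\str B)^{\str A}_2$: the monochromatic copy of $\str B^+$ that it produces has all of its copies of $\str A$ marked by $\hat R$, so all of them receive one colour. I may therefore assume henceforth that the $(L\setminus\{<\})$-reduct of $\str A$ is irreducible; consequently, in any \emph{$a$-partite} $L$-structure --- one whose non-$<$ relational tuples never repeat a part, and whose parts are the intervals of the order --- every copy of $\str A$ is \emph{transversal}, meeting each part exactly once. Finally, I would invoke Theorem~\ref{thm:unNR} to obtain a finite ordered structure $\str B^*$, in general not $\mathcal F$-free, with $\str B^*\longrightarrow(\str B)^{\str A}_2$; this serves as the base of the construction.

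The first ingredient is the \emph{partite lemma}: for every finite $a$-partite $\mathcal F$-free structure $\str B_0$ there should be a finite $a$-partite $\mathcal F$-free structure $\str C_0$ with $\str C_0\longrightarrow(\str B_0)^{\str A}_2$, the colouring ranging over transversal copies of $\str A$. I would build $\str C_0$ via the Hales--Jewett theorem, as a union of copies of $\str B_0$ in which transversal copies of $\str A$ play the role of the points of a combinatorial cube and suitable copies of $\str B_0$ that of its subspaces, so that a monochromatic subspace yields the wanted monochromatic $\str B_0$. The crucial feature to arrange is that each relational tuple of $\str C_0$ lies within a single factor copy of $\str B_0$: since the $(L\setminus\{<\})$-reduct of every $\str F\in\mathcal F$ is irreducible, so that its Gaifman graph is complete, a copy of $\str F$ in $\str C_0$ then cannot spread across two factor copies and must live inside one of them, whence $\str C_0$ inherits $\mathcal F$-freeness from $\str B_0$.

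The second ingredient is the iteration. Enumerate the copies $f_1,\dots,f_N$ of $\str A$ in $\str B^*$ (equivalently the members of $\Emb(\str A,\str B^*)$, ordered structures being rigid; cf.\ Observation~\ref{obs:ramsey_rigid}) and build a chain $\str P_0\subseteq\str P_1\subseteq\dots\subseteq\str P_N$ of $\mathcal F$-free $a$-partite structures, each with a projection $\pi_i\colon\str P_i\to\str B^*$ whose fibres are its parts. Let $\str P_0$ be the disjoint union, over all copies of $\str B$ in $\str B^*$, of one copy of $\str B$ projecting onto it: since the $(L\setminus\{<\})$-reduct of each member of $\mathcal F$ is irreducible, hence connected, such a member can embed into at most one summand, a copy of $\str B\in\mathcal K$, and so $\str P_0$ is $\mathcal F$-free. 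To pass from $\str P_{i-1}$ to $\str P_i$, perform a \emph{partite amalgamation} over $f_i$: let $\str B_0$ be the substructure of $\str P_{i-1}$ induced on the union of the $a$ fibres above $\range(f_i)$, apply the partite lemma to obtain $\str C_0\longrightarrow(\str B_0)^{\str A}_2$, and glue onto $\str C_0$ --- for every transversal copy of $\str B_0$ inside $\str C_0$ --- a copy of $\str P_{i-1}$, identifying the fibre of that copy above $f_i$ with the given copy of $\str B_0$. No new relational tuple runs between two of the glued copies, so by the irreducibility argument already used for the partite lemma, every copy of $\str F\in\mathcal F$ inside $\str P_i$ lies within a single glued copy of $\str P_{i-1}$ or within $\str C_0$, and the invariant that $\str P_i$ is $\mathcal F$-free is preserved. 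Set $\str C:=\str P_N$, carrying the linear order of its partite structure; then $\str C\in\mathcal K$. Given a $2$-colouring $\chi$ of the copies of $\str A$ in $\str C$, I would peel the chain back from the top, handling $f_N$, then $f_{N-1}$, and so on down to $f_1$: when handling $f_j$, the Ramsey property of the $\str C_0$ used at that step furnishes, inside the current structure, a copy of $\str P_{j-1}$ on which all copies of $\str A$ projecting to $f_j$ receive a common colour, and restricting to this copy preserves the common colours already arranged for $f_{j+1},\dots,f_N$. After $N$ stages one is left with a copy of $\str P_0$ inside $\str C$ on which $\chi$ depends only on the projection $\pi_N$; by $\str B^*\longrightarrow(\str B)^{\str A}_2$, the induced colouring of the copies of $\str A$ in $\str B^*$ has a monochromatic copy of $\str B$, and the summand of $\str P_0$ lying over it is a $\chi$-monochromatic copy of $\str B$ in $\str C$.

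I expect the main obstacle to be making the $\mathcal F$-freeness invariant genuinely go through, and this is exactly the point at which the hypothesis on $\mathcal F$ is indispensable. One has to organise the partite lemma and the partite amalgamation so that every relational tuple of the newly built structure lies inside a single copy of a previously built one --- a copy of $\str B$, of $\str B_0$, or of $\str P_{i-1}$ --- and then exploit the irreducibility of the $(L\setminus\{<\})$-reduct of each $\str F\in\mathcal F$, whose Gaifman graph is therefore complete, to conclude that such a configuration cannot straddle two of these copies, so that no new copy of a member of $\mathcal F$ is ever created. By contrast, the partite lemma itself should be a fairly direct application of the Hales--Jewett theorem, and the bookkeeping by which a colouring of $\str C=\str P_N$ descends along $\str P_N\supseteq\cdots\supseteq\str P_0$ to a colouring of $\str B^*$ is routine once the amalgamations are set up correctly; carrying out the preliminary reduction to irreducible $\str A$ while keeping the enlarged class defined by forbidden structures with irreducible reducts is a further, minor point to watch.
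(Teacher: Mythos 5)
Your overall architecture --- a Hales--Jewett partite lemma, pictures indexed by the embeddings of $\str A$ into a witness $\str B^*$ supplied by Theorem~\ref{thm:unNR}, and a backward induction --- is the same as the paper's, and your preliminary reduction to irreducible $\str A$ is a legitimate (if unnecessary) extra step; the paper avoids it by only ever colouring copies of $\str A$ with a prescribed projection to the base structure, which are transversal automatically. Your free-amalgamation gluing step is also sound: an irreducible structure genuinely cannot straddle two pieces of a free amalgam.

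The gap is in the partite lemma, and it sits exactly at the crux of the theorem. You argue that since every relational tuple of the Hales--Jewett power $\str C_0$ lies inside a single factor copy $f_W(\str B_0)$, and since the Gaifman graph of each $\str F\in\mathcal F$ is complete, a copy of $\str F$ ``cannot spread across two factor copies.'' This inference is invalid: the factor copies for distinct parameter words overlap in a complicated, non-free way, and a configuration each of whose \emph{pairs} is covered by a tuple lying in \emph{some} factor copy need not lie in any single one. Concretely, the power construction does create irreducible substructures contained in no copy of $\str B_0$ whatsoever --- the clique on the vertices $00$, $32$, $55$ in Figure~\ref{fig:partite1} is such an example --- so your argument gives no control over precisely the configurations that could realize a forbidden $\str F$. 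The conclusion of your partite lemma can be rescued, but by a different mechanism, which is what the induced partite construction (Theorem~\ref{thm:inducedpartite}) is for: one maintains the projection of each partite system onto the base structure as a homomorphism-embedding, so that every irreducible substructure of the power projects injectively into $\str A$ (and every irreducible substructure of a picture projects into a copy of $\str B$ inside $\str B^*$), hence embeds into $\str B$ and cannot be isomorphic to any $\str F\in\mathcal F$. You have the projections $\pi_i$ available in your setup but never invoke them for this purpose; as written, the $\mathcal F$-freeness invariant does not go through at the Hales--Jewett step.
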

This celebrated result (which we formulate in a mild strengthening for forbidding ordered cliques) generalizes all results discussed so far. For example, all classes $\mathcal K_k$ from Theorem~\ref{thm:NRedge} are Ramsey classes.

The proof technique called the (Nešetřil--R\"odl) \emph{partite construction} (or \emph{partite method}), which was developed in a series of papers~\cite{Nevsetvril1976,Nevsetvril1977,Nevsetvril1979,Nevsetvril1981,Nevsetvril1982,Nevsetvril1983,Nevsetvril1984,Nevsetvril1984a,Nevsetvril1987,Nevsetvril1989,Nevsetvril1990,Nevsetvril2007}, is still the main tool of the structural Ramsey theory today, see for example~\cite{bhat2016ramsey,Hubicka2016,reiher2024graphs,reiher2023girth,reiher2024colouring}.
Thanks to many revisions and simplifications, the partite construction can now be presented in a relatively compact form, which we give in Appendix~\ref{appendix:partite}.

Theorem~\ref{thm:unNR} has multiple known proofs. In addition to the partite construction, these include the original proof of Abramson and Harrington~\cite{Abramson1978}, as well as various applications of dual Ramsey results (see~\cite[Theorem 12.13]{PromelBook} for the case of graphs, and see~\cite[Theorem 6.19]{Balko2023Sucessor} or~\cite[Section 7.2]{braunfeld2023big} for the general case). On the other hand, all published proofs of Theorem~\ref{thm:NR} use the partite construction.
Recently, new alternative proofs of the Ramsey property for triangle-free graphs~\cite{dobrinen2017universal,Hubicka2020CS} have been found (and generalized to finite binary languages~\cite{zucker2020,Balko2023Sucessor}).
These new proofs extend to infinite structures, but this comes at a cost of
additional complexity, and the partite construction thus remains the most effective tool for proving structural Ramsey results about finite structures to date.

\subsection{Ordering property}

Nešetřil and R\"odl also proved the following property for unordered variants of all classes $\mathcal K$ that Theorem~\ref{thm:NR} is concerned with:
\begin{definition}[Ordering Property~\cite{nevsetril1975type,Nesetvril1978}]
	\label{def:ordering}
	Let $\mathcal K$ be a class of hypergraphs. We say that $\mathcal K$ has the
	\emph{ordering property} if for every $\str{A}\in \mathcal K$  there exists
	$\str{B}\in \mathcal K$ such that for every linear ordering $(B,\leq_B)$ of vertices
	of $\str{B}$ and every linear ordering $(A,\leq_A)$ of vertices of $\str{A}$
	there exists a \emph{monotone} embedding $f\colon \str{A}\to\str{B}$:
	for every pair of vertices $u,v\in A$ satisfying $u\leq_A v$ we also have $f(u)\leq_B f(v)$.
\end{definition}
This immediately implies that structures in the Nešetřil--R\"odl theorem need to be ordered, as the order can be used
to colour embeddings. We will discuss this in greater generality in Section~\ref{sec:precompact}.
\section{1980s: Seeking additional Ramsey classes}
Ramsey classes are very special and hard to identify. Only a handful additional examples
have been known in the 1980s. An interesting example is the following:
\begin{theorem}
	\label{thm:posets}
	Let $L$ be the language consisting of two binary symbols $<$ and $\ll$, and let $\mathcal K$ be the
	class of all finite $L$-structures $\str{A}$ where $(A,\ll)$ is a partial order,
	and $(A,{<})$ is a linear extension of $(A,\ll)$.  Then $\mathcal K$ is a Ramsey class.
\end{theorem}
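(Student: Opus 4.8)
The plan is to run the Ne\v set\v ril--R\"odl partite construction (Appendix~\ref{appendix:partite}) inside $\mathcal K$, the point being that Theorem~\ref{thm:NR} does \emph{not} cover $\mathcal K$ directly. Indeed, $\mathcal K$ is cut out from the class of all finite ordered $\{<,\ll\}$-structures by two conditions: $\ll\subseteq{<}$ and transitivity of $\ll$. The first merely forbids the two-vertex structure $a\ll b$, $b<a$, whose $\{\ll\}$-reduct is a single (hence irreducible) edge; the second, however, is a universal condition whose violation is a triple $a\ll b\ll c$ with $a\not\ll c$, and the $\{\ll\}$-reduct of this triple is the path $a$--$b$--$c$, which is not irreducible. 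So Theorem~\ref{thm:NR} applies only to the \emph{larger} class $\mathcal K^{+}$ of all finite ordered $\{<,\ll\}$-structures satisfying $\ll\subseteq{<}$, and I would use $\mathcal K^{+}$ merely as a reservoir of Ramsey objects for the partite lemma.

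The next step is the amalgamation fact that keeps the construction inside $\mathcal K$. If $\mathbf P,\mathbf Q\in\mathcal K$ share a substructure $\mathbf R$, form their free amalgam on $P\cup_{R}Q$ by taking $\ll$ to be the transitive closure of $\ll_{\mathbf P}\cup\ll_{\mathbf Q}$ and $<$ to be any linearisation of $<_{\mathbf P}\cup<_{\mathbf Q}$. I claim this again lies in $\mathcal K$ and has $\mathbf P,\mathbf Q$ as induced substructures. There are three points: first, $<_{\mathbf P}\cup<_{\mathbf Q}$ is acyclic, since a minimal directed cycle must strictly alternate $\mathbf P$- and $\mathbf Q$-edges (two consecutive edges from one linear order could be replaced by a single edge), hence all its vertices lie in $R$, where the two orders agree; second, the linearisation $<$ therefore exists and, as $\ll\subseteq{<}$, makes $\ll$ a strict partial order with linear extension $<$; third, the transitive closure adds no $\ll$-edge inside $P$ or inside $Q$, because any detour through $Q\setminus P$ runs between two vertices of $R$ and, by transitivity of $\ll_{\mathbf Q}$, can be replaced by a single $\ll_{\mathbf R}$-edge already present in $\mathbf P$ (and symmetrically). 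The same description handles the simultaneous amalgamation of arbitrarily many factors.

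Granting this, the partite construction proceeds as usual. Fix enumerations of $\mathbf A$ and $\mathbf B$ (available since they are ordered), list the copies $\mathbf A_{1},\dots,\mathbf A_{t}$ of $\mathbf A$ in $\mathbf B$, take the root $\mathbf P_{0}$ to be $\mathbf B$ regarded as a $|B|$-partite structure with singleton parts, and build $\mathbf P_{1},\dots,\mathbf P_{t}$ by successive free partite amalgamations of copies of $\mathbf P_{j}$ along transversal copies of $\mathbf A_{j}$, the number of copies and the monochromaticity at each step being governed by the partite lemma --- which follows from the Hales--Jewett theorem, or from Theorem~\ref{thm:NR} applied to $\mathcal K^{+}$, once one observes that the partite-power objects involved are product orders with a lexicographic tie-break for $<$ and hence again lie in $\mathcal K$. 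By the amalgamation fact each $\mathbf P_{j}$ is in $\mathcal K$, and the standard last-step argument shows that $\mathbf C:=\mathbf P_{t}$ works: given a $2$-colouring $\chi$ of $\Emb(\mathbf A,\mathbf C)$, it produces a transversal copy of $\mathbf B$ in $\mathbf C$ that is $\chi$-monochromatic, using that a sub-$\mathbf A$ of a transversal $\mathbf B$-copy occupies pairwise distinct parts and is therefore itself transversal, hence among the copies controlled along the way. Thus $\mathbf C\longrightarrow(\mathbf B)^{\mathbf A}_{2}$ with $\mathbf C\in\mathcal K$.

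The main obstacle is exactly this: staying inside $\mathcal K$. Every partite amalgamation and every partite power must be checked to preserve transitivity of $\ll$ and to admit a compatible linear extension, and one must rule out ``unexpected'' copies of $\mathbf A$ --- copies placing two vertices in a single part --- that could interfere with the count in the partite lemma. Both issues are precisely where the linear order $<$ earns its place: it rigidifies every copy (Observation~\ref{obs:ramsey_rigid}) and supplies the canonical tie-breaking used in every amalgamation step. Everything else --- the enumeration bookkeeping, the Hales--Jewett input, the induction over $\mathbf A_{1},\dots,\mathbf A_{t}$, and the final extraction --- is the routine machinery of the partite method.
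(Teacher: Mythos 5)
Your global strategy --- run the partite construction over a Ramsey class of less constrained ordered structures and use poset amalgamation, transitive closure and linearisation to land back in $\mathcal K$ --- is the right one, and your diagnosis of why Theorem~\ref{thm:NR} does not apply directly (the obstruction to transitivity of $\ll$ is not irreducible) matches the paper. The gap is in your insistence that every picture $\str{P}_j$ itself lie in $\mathcal K$. A partite system in which $<$ is a genuine linear order and whose projection to the linearly ordered base is a homomorphism must be transversal: if $u\neq v$ share a part and, say, $u<v$, then $\pi(u)<_{\str D}\pi(v)=\pi(u)$, contradicting irreflexivity (the paper makes exactly this remark after the definition of a partite system). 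So for non-trivial pictures you must give up either the linear order or the projection homomorphism. Your ``lexicographic tie-break'' gives up the latter, and with it the bookkeeping of the backward induction (colours of copies of $\str A$ are read off via their projections) and the verification in Lemma~\ref{lem:partite} that the maps $f_W$ \emph{reflect} relations, which no longer applies once tie-break pairs are added to $<$. Relatedly, transitively closing $\ll$ at every step can create $\ll$-pairs between same-part vertices, hence non-transversal copies of $\str A$; these are exactly the ``unexpected'' copies you mention, and neither rigidity nor the tie-break removes them --- they are simply never controlled by the induction.

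The paper's proof sidesteps both problems by \emph{not} keeping the pictures in $\mathcal K$: they remain honest partite systems (relations only on transversal tuples, $<$ not linear, $\ll$ not transitive), produced by Theorem~\ref{thm:unNR} followed by the induced partite construction (Theorem~\ref{thm:inducedpartite}), and one instead maintains Invariant~\ref{inv2}: $\ll\subseteq{<}$, and the transitive closure $\ll'$ of $\ll$ adds only pairs \emph{outside} $<$. The closure of $\ll$ and the linearisation of $<$ are performed exactly once, at the very end; the invariant, together with acyclicity inherited from the projection to the linearly ordered $\str D$, guarantees that no new $\ll$-pair lands inside any controlled copy of $\str B$ (inside such a copy every pair is already $<$-decided), so those copies survive as embeddings into the completed structure. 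To repair your write-up, replace ``each $\str P_j$ is in $\mathcal K$'' by this weaker invariant and defer all completion to the final step.
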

\begin{remark}
	This theorem has an interesting history. It was announced by Ne\v set\v ril and
	R\"odl in 1984~\cite{Nevsetvril1984} as a result following from the partite construction. One year later, using a different method,  Paoli, Trotter, and Walker proved a weaker
	form of this result~\cite[Lemma~15 and Theorem~16]{Trotter1985}\footnote{This proof is sometimes considered faulty, since the paper states as Theorem 2 an infinite form of the product Ramsey theorem, which is known to hold only in its finite form. However, when Theorem 2 is applied to prove Lemma~15 and Theorem~16, the following remark saves the day: ``To simplify the presentation of an argument we take $\underset{\sim}{Z}$ to be an infinite poset. Of course, we can actually choose $\underset{\sim}{Z}$ as $\underline{p}^k$ where $p$ is a sufficiently large integer.''}. The basic idea of
	the proof, based on a combination of the product Ramsey theorem and the dual
	Ramsey theorem, was adapted by Fouch{\'e}~\cite{Fouche1997} to determine small
	Ramsey degrees of partial orders.  This result directly implies
	Theorem~\ref{thm:posets} as shown by Soki\'c~\cite[Theorem
		7(6)]{sokic2012ramsey}.  A self-contained presentation of this strategy was given
	by Solecki and Zhao~\cite{solecki2017ramsey}, generalizing
	Theorem~\ref{thm:posets} to multiple linear extensions.
	A different approach, based on the Graham--Rothschild
	theorem alone, was found by Ma{\v{s}}ulovi{\'c} in 2018~\cite{masulovic2016pre} and later generalized to multiple partial orders and linear extensions jointly with Dragani{\'c}~\cite{draganic2019ramsey}.
	In the same year, the original proof
	using partite construction (which is also presented here in
	Section~\ref{aaplications}) was published by Ne\v set\v ril and R\"odl~\cite{nevsetvril2018ramsey}.  Generalizations to multiple partial orders and multiple linear extensions (possibly satisfying simple axioms) can be also obtained from
	Theorem~\ref{thm:posets} and a product argument (see \eg{} Bodirsky's paper~\cite[Theorem 1.5 and Proposition 1.6]{bodirsky2014new} for a structural view of the product argument, and Arman and R\"odl's paper~\cite{arman2016note} for an independent presentation).
\end{remark}

The class of all finite partial orders with linear extensions feels very close
to the class of all linear orders, so it seems that Theorem~\ref{thm:posets}
should be easier than Theorem~\ref{thm:NR}. In a certain sense this is true,
and there is a particularly direct proof using the Graham--Rothschild theorem.
On the other hand, this Ramsey class has been left somewhat under-appreciated. Only
recently it was noticed that it implies other theorems, such as the Ramsey property
of the class of all ordered metric spaces with rational distances~\cite{masulovic2016pre}, or the class
of all ordered triangle-free graphs~\cite{Hubicka2020CS}. It also played
an important role in the development of infinitary extensions of structural Ramsey theory~\cite{Hubicka2020CS,Balko2023Sucessor}.

Since the partite construction has a dual form, additional progress was made on
dual Ramsey classes, originating from the Graham--Rothschild theorem (or the
dual Ramsey theorem), see for example~\cite{promel1985induced,frankl1987}.

\subsection{Observation from Lachlan's seminar}\label{sec:lachlan}
While participating in Lachlan's seminar at the Simon Fraser University, Nešetřil discovered a crucial connection between Ramsey classes and \Fraisse{} theory. Let us review
the key definitions.

\begin{definition}[Amalgamation~\cite{Fraisse1953}]
	\label{def:amalgamation}
	Let $\str{A}$, $\str{B}_1$, and $\str{B}_2$ be structures, and let $\alpha_1\colon\str{A} \to \str B_1$ and $\alpha_2\colon \str{A}\to \str B_2$ be embeddings. A structure $\str{C}$
	with embeddings $\beta_1\colon\str{B}_1 \to \str{C}$ and
	$\beta_2\colon\str{B}_2\to\str{C}$ such that $\beta_1\circ\alpha_1 =
		\beta_2\circ\alpha_2$ is called an \emph{amalgam} of $\str{B}_1$ and $\str{B}_2$ over $\str{A}$ with respect to $\alpha_1$ and $\alpha_2$. See Figure~\ref{amalgamfig}.

	We call the amalgam $\str C$ \emph{strong} if $\beta_1[B_1] \cap \beta_2[B_2] = \beta_1\circ\alpha_1[A]$. It is \emph{free} if it is strong, $C = \beta_1[B_1] \cup \beta_2[B_2]$ and moreover whenever a tuple $\vec{x} \in C^n$ is in some relation or some function of $\str C$ then either $\vec{x} \in \beta_1[B_1]^n$ or $\vec{x}\in \beta_2[B_2]^n$ (here, abusing notation, we say that $\vec{x}$ is in some function of $\str C$ if $\vec{x} = (x_0,\ldots,x_{n-2},x_{n-1})$ and there is an $(n-1)$-ary function $\func{}{}\in L$ such that $x_{n-1}\in\func{C}{}(x_0,\ldots,x_{n-2})$).
\end{definition}
\begin{figure}
	\centering
	\includegraphics{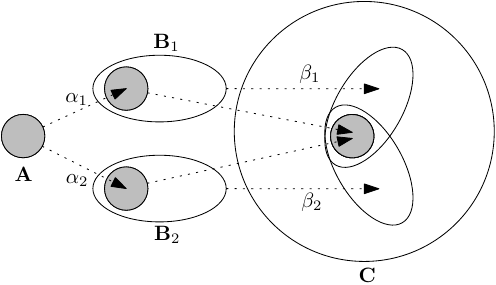}
	\caption{An amalgam of $\str{B}_1$ and $\str{B}_2$ over $\str{A}$.}
	\label{amalgamfig}
\end{figure}
We will often call $\str{C}$ simply an \emph{amalgam} of $\str{B}_1$ and $\str{B}_2$ over $\str{A}$
(as in most cases $\alpha_1$ and $\alpha_2$ can be chosen to be the inclusions).

\begin{definition}[Amalgamation Class~\cite{Fraisse1953}]
	\label{defn:amalg}
	A class $\K$ of finite structures is called an \emph{amalgamation class} if it is closed for isomorphisms and satisfies the following three conditions:
	\begin{enumerate}
		\item \emph{Hereditary property:} For every $\str{A}\in \K$ and every structure $\str{B}$ with an embedding $f\colon \str B \to \str{A}$ we have $\str{B}\in \K$;
		\item \emph{Joint embedding property:} For every $\str{A}, \str{B}\in \K$ there exists $\str{C}\in \K$ with embeddings $f\colon \str A\to \str{C}$ and $g\colon \str B\to \str C$;
		\item \emph{Amalgamation property:}
		      For $\str{A},\str{B}_1,\str{B}_2\in \K$ and embeddings $\alpha_1\colon\str{A}\to\str{B}_1$ and $\alpha_2\colon\str{A}\to\str{B}_2$, there is $\str{C}\in \K$ which is an amalgam of $\str{B}_1$ and $\str{B}_2$ over $\str{A}$ with respect to $\alpha_1$ and $\alpha_2$.
	\end{enumerate}
	If the $\str{C}$ in the amalgamation property can always be chosen as the free amalgam then $\K$ is a \emph{free amalgamation class}.
	Analogously, if $\str{C}$ can be always chosen as a strong amalgam then $\K$ is a \emph{strong amalgamation class}.
\end{definition}
Note that if the language of $\K$ contains no constants (nullary functions) then, by the hereditary property, the empty structure is a member of $\K$ and in this case the amalgamation property for $\str A$ being the empty structure is exactly the joint embedding property.

A structure $\str M$ is \emph{(ultra)homogeneous} if every isomorphism between finite substructures of $\str M$ extends to an automorphism of $\str M$. Given a structure $\str M$, its \emph{age} (denoted by $\Age(\str M)$) is the class of all finite structures which embed to $\str M$. A structure $\str M$ is \emph{locally finite} if for every finite $A\subseteq M$ there is a finite substructure $\str A'\subseteq \str M$ with $A\subseteq A'$. (Note that every relational structure is locally finite, this only becomes non-trivial for languages with functions.) The following is one of the cornerstones of model theory. (Technically, the references for the following theorem are a bit misleading as they use the standard definition of functions instead of ours. However, the results of \Fraisse{} theory remain true even in our setting, see \eg{}~\cite{Baudisch2014}.)

\begin{theorem}[\Fraisse{}~\cite{Fraisse1986}, See also~\cite{Baudisch2014}]
	\label{fraissethm}
	Let $\K$ be a class of finite structures with only countably many non-isomorphic structures.

	\begin{enumerate}[label=$(\alph*)$]
		\item\label{fraisse:a} $\K$ is the age of a countable
		      locally finite homogeneous structure $\str{H}$ if and only if $\K$ is an amalgamation
		      class.
		\item If the conditions of~\ref{fraisse:a} are satisfied then the structure $\str{H}$ is
		      unique up to isomorphism and is called the \emph{\Fraisse{} limit of $\K$}.
	\end{enumerate}
\end{theorem}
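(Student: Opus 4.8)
The plan is to prove the two directions of~\ref{fraisse:a} separately and then deduce~(b) by a back-and-forth argument; all of this is classical, so I will only indicate the steps and flag the places where the set-valued, merely locally finite setting needs extra care. Assume first that $\K=\Age(\str H)$ for a countable locally finite homogeneous structure $\str H$. Closure under isomorphism and the hereditary property are immediate from the definition of the age. For the joint embedding property, given $\str A,\str B\in\K$ I would fix embeddings $a\colon\str A\to\str H$ and $b\colon\str B\to\str H$; the finite set $a[A]\cup b[B]$ is contained in some finite substructure $\str C\subseteq\str H$ by local finiteness, and $\str C\in\K$ works. For the amalgamation property, given $\str A,\str B_1,\str B_2\in\K$ with embeddings $\alpha_i\colon\str A\to\str B_i$, I would fix embeddings $e_i\colon\str B_i\to\str H$; then $e_1\circ\alpha_1$ and $e_2\circ\alpha_2$ are two embeddings of $\str A$ into $\str H$, so $(e_2\circ\alpha_2)\circ(e_1\circ\alpha_1)^{-1}$ is an isomorphism between two finite substructures of $\str H$, which by homogeneity extends to some $g\in\Aut(\str H)$; after replacing $e_1$ by $g\circ e_1$ we have $e_1\circ\alpha_1=e_2\circ\alpha_2$, and now $e_1[B_1]\cup e_2[B_2]$ lies in a finite substructure $\str C\subseteq\str H$, so $\str C$ together with $\beta_i:=e_i$ (co-restricted to $\str C$) is an amalgam in $\K$.

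For the converse, let $\K$ be an amalgamation class with only countably many isomorphism types. I would construct $\str H$ as the union of an increasing chain $\str C_0\subseteq\str C_1\subseteq\cdots$ of members of $\K$, maintained by a bookkeeping over ``tasks'', where a task consists of a pair $\str A\subseteq\str B$ of members of $\K$ together with an embedding $f\colon\str A\to\str C_n$ into the current stage of the chain. Since there are only countably many isomorphism types and each $\str C_n$ is finite, a priority queue can serve every task eventually: to serve $(\str A\subseteq\str B,\,f)$ I would apply the amalgamation property to $\str C_n$ and $\str B$ over $\str A$ (along $f$ and the inclusion $\str A\hookrightarrow\str B$), obtaining $\str C_{n+1}\in\K$ into which $\str C_n$ embeds and into which $\str B$ embeds compatibly with $f$, and then rename vertices so that $\str C_n\subseteq\str C_{n+1}$. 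I would seed the chain with an arbitrary $\str C_0\in\K$ and, using the joint embedding property, also queue for each $\str D\in\K$ a task forcing a copy of $\str D$ into the chain. Put $\str H=\bigcup_n\str C_n$; since the inclusions $\str C_n\hookrightarrow\str C_{n+1}$ are embeddings, the interpretations of the (set-valued) functions are compatible along the chain, so $\str H$ is a well-defined $L$-structure having each $\str C_n$ as a substructure.

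Every finite subset of $H$ lies in some $\str C_n$, so $\str H$ is countable and locally finite; the hereditary property together with the tasks forcing in every member of $\K$ gives $\Age(\str H)=\K$; and by construction $\str H$ has the \emph{extension property}: whenever $\str A\subseteq\str B$ lie in $\K$ and $f\colon\str A\to\str H$ is an embedding (its image being finite, $f$ factors through some $\str C_n$), there is an embedding $g\colon\str B\to\str H$ extending $f$. Homogeneity then follows by a back-and-forth along an enumeration of $H$: to enlarge a finite partial isomorphism so that a prescribed vertex enters its domain (and, alternately, its range) I would first take, using local finiteness, a finite substructure of $\str H$ containing the relevant vertices, and then apply the extension property; the union of the resulting chain of partial isomorphisms is an automorphism of $\str H$ extending the given one. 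For~(b), the key point is that \emph{any} countable locally finite homogeneous $\str H'$ with $\Age(\str H')=\K$ also has the extension property (embed $\str B$ into $\str H'$, then correct the resulting copy of $\str A$ by an automorphism obtained from homogeneity); a back-and-forth between two such structures that alternately invokes their extension properties then yields an isomorphism, so $\str H$ is unique up to isomorphism.

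The mathematical content here is classical; the genuine care lies in (i) arranging the bookkeeping so that every task is eventually served, and (ii) the passage to set-valued functions and the merely locally finite setting --- verifying that the increasing union is really a structure in which each $\str C_n$ is a substructure, and that at each step of each back-and-forth one may legitimately replace a finite set of vertices by a finite substructure containing it. Neither point is difficult, but these are precisely the places where the textbook relational argument must be revisited; I would expect essentially no other obstacles.
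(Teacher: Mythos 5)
Your proposal is correct, but note that the paper does not actually prove this theorem: it is quoted as a classical result with citations to Fra\"{\i}ss\'e and to Baudisch et al.\ (the latter precisely because the standard references use ordinary rather than set-valued functions), so there is no in-paper proof to compare against. What you give is the standard argument --- age of a homogeneous structure is an amalgamation class via homogeneity plus local finiteness; the converse via a chain of members of $\K$ with bookkeeping over extension tasks; homogeneity and uniqueness via back-and-forth using the extension property --- and it goes through in this setting. You correctly isolate the two places where the generalized setting matters: local finiteness is exactly what lets you close a finite subset of $\str H$ into a finite substructure (needed for JEP, AP, and each back-and-forth step), and the fact that the inclusions $\str C_n\hookrightarrow\str C_{n+1}$ are embeddings (so the set-valued function interpretations agree on the nose, not just up to inclusion) is what makes the union a well-defined structure containing each $\str C_n$ as a substructure. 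I see no gap.
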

If the conditions of~\ref{fraisse:a} are satisfied, we call $\K$ a \emph{\Fraisse{} class}.
Note that \Fraisse{} theory (amalgamation classes, ages, \dots) is sometimes introduced for finitely generated structures instead of finite ones. However, we chose to only present the finite version as the Ramsey constructions which are central to this survey work with finite structures anyway.

We are now ready to show the crucial connection between Ramsey classes and homogeneous structures:
\begin{observation}[Nešetřil~\cite{Nevsetvril1989a,Nevsetril2005}]\label{obs:ramseyamalg}
	Let $\mathcal C$ be a Ramsey class of finite structures with the joint embedding property. Then $\mathcal C$ has the amalgamation property.
\end{observation}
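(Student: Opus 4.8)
The plan is to invoke the Ramsey property exactly once, with a colouring of the embeddings of $\str{A}$ that records which of them ``come from'' $\str{B}_1$. Fix $\str{A},\str{B}_1,\str{B}_2\in\mathcal C$ and embeddings $\alpha_1\colon\str{A}\to\str{B}_1$, $\alpha_2\colon\str{A}\to\str{B}_2$; the goal is to produce $\str{C}\in\mathcal C$ together with embeddings $\beta_1\colon\str{B}_1\to\str{C}$, $\beta_2\colon\str{B}_2\to\str{C}$ satisfying $\beta_1\circ\alpha_1=\beta_2\circ\alpha_2$. First I would use the joint embedding property on $\str{B}_1$ and $\str{B}_2$ to get $\str{D}\in\mathcal C$ with embeddings $\gamma_1\colon\str{B}_1\to\str{D}$ and $\gamma_2\colon\str{B}_2\to\str{D}$; there is no reason that these agree on the copies of $\str{A}$, and this mismatch is precisely what the Ramsey step will repair. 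Since $\str{A}\in\mathcal C$ and $\mathcal C$ is Ramsey, pick $\str{C}\in\mathcal C$ with $\str{C}\longrightarrow(\str{D})^{\str{A}}_2$.

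Next I would define $\chi\colon\Emb(\str{A},\str{C})\to 2$ by $\chi(f)=1$ if $f=h\circ\alpha_1$ for some $h\in\Emb(\str{B}_1,\str{C})$, and $\chi(f)=0$ otherwise, and apply the partition arrow to obtain $g\in\Emb(\str{D},\str{C})$ with $\chi$ constant on $\{g\circ e:e\in\Emb(\str{A},\str{D})\}$. Applying this to the element $e=\gamma_1\circ\alpha_1$ shows the constant value is $1$: indeed $g\circ\gamma_1\in\Emb(\str{B}_1,\str{C})$ and $(g\circ\gamma_1)\circ\alpha_1=g\circ(\gamma_1\circ\alpha_1)$, so $\chi(g\circ\gamma_1\circ\alpha_1)=1$. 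Applying it instead to $e=\gamma_2\circ\alpha_2$ then yields some $\beta_1\in\Emb(\str{B}_1,\str{C})$ with $\beta_1\circ\alpha_1=g\circ\gamma_2\circ\alpha_2$; setting $\beta_2:=g\circ\gamma_2\in\Emb(\str{B}_2,\str{C})$ gives $\beta_1\circ\alpha_1=g\circ\gamma_2\circ\alpha_2=\beta_2\circ\alpha_2$, so $\str{C}$ with $\beta_1,\beta_2$ is the desired amalgam of $\str{B}_1$ and $\str{B}_2$ over $\str{A}$ with respect to $\alpha_1,\alpha_2$.

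The only genuine idea, and hence the main ``obstacle'', is hitting upon the right colouring: colour a copy of $\str{A}$ in $\str{C}$ by whether it is the $\alpha_1$-image inside some copy of $\str{B}_1$. Everything else is bookkeeping: a full copy of $\str{B}_1$ sits inside $\str{D}$, so after passing to a monochromatic copy of $\str{D}$ the colour must be $1$; but then the copy of $\str{A}$ lying inside the $\str{B}_2$-part of that copy of $\str{D}$ must also extend to a copy of $\str{B}_1$, and this shared copy of $\str{A}$ is exactly what identifies $\str{B}_1$ and $\str{B}_2$ over $\str{A}$. Note that rigidity of $\str{A}$ (Observation~\ref{obs:ramsey_rigid}) is not needed in the argument, although it is of course implied by the hypotheses.
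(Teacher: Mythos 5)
Your proof is correct and is essentially the paper's own argument: joint-embed $\str B_1,\str B_2$ into one structure, take a Ramsey witness for it over $\str A$, and colour copies of $\str A$ by whether they extend via $\alpha_1$ to a copy of $\str B_1$. The only difference is presentational --- you extract the amalgam directly from the monochromatic copy, whereas the paper phrases the same step as a proof by contradiction.
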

\begin{proof}[Proof (Nešetřil--R\"odl~\cite{nevsetvril1977partitions})]
	We need to show that for every $\str A,\str B_1,\str B_2\in \mathcal C$ and embeddings $\alpha_1\colon  \str A\rightarrow \str B_1$ and $\alpha_2\colon  \str A\rightarrow \str B_2$ there is $\str C\in \mathcal C$ and embeddings $\beta_1\colon  \str B_1\rightarrow \str C$ and $\beta_2\colon  \str B_2\rightarrow \str C$ such that $\beta_1\circ \alpha_1 = \beta_2\circ \alpha_2$.

	Pick $\str B\in \mathcal C$ which embeds both $\str B_1$ and $\str B_2$, and take $\str C\in \mathcal C$ such that $\str C \longrightarrow (\str B)^\str A_2$. We will prove that $\str C$ is the amalgam we are looking for. Assume the contrary which means that there is no embedding $\alpha\colon  \str A\rightarrow \str C$ with the property that there are embeddings $\beta_1\colon  \str B_1\rightarrow \str C$ and $\beta_2\colon  \str B_2\rightarrow \str C$ such that $\beta_i\circ\alpha_i = \alpha$ for $i\in\{1,2\}$. Define a colouring $c\colon  \Emb(\str A,\str C) \rightarrow \{0, 1\}$ by letting
	$$c(\alpha) =
		\begin{cases}
			0 & \text{if there is } \beta \colon  \str B_1\rightarrow \str C\text{ such that }\alpha = \beta\circ\alpha_1 \\
			1 & \text{otherwise}.
		\end{cases}$$

	For an illustration, see Figure~\ref{fig:ramseyamalg}.
	\begin{figure}
		\centering
		\includegraphics{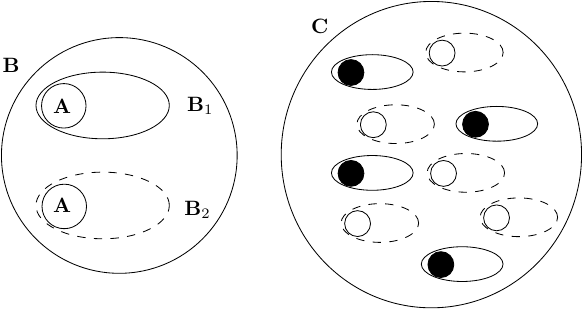}
		\caption{An illustration of the proof of Observation~\ref{obs:ramseyamalg}. Copies of $\str A$ from $\str B_1$ are coloured black, copies of $\str A$ from $\str B_2$ are coloured white.}
		\label{fig:ramseyamalg}
	\end{figure}

	As $\str C \longrightarrow (\str B)^\str A_2$, there is an embedding $\beta \colon  \str B \rightarrow \str C$ such that $c$ is constant on $\Emb(\str A,\allowbreak \beta(\str B))$. But there are at least two embeddings of $\str A$ into $\str B$ --- one is given by $\alpha_1$ and the other is given by $\alpha_2$. And $\alpha_1$ can be extended to an embedding of $\str B_1$, while $\alpha_2$ can be extended to an embedding of $\str B_2$, hence they have different colours, which is a contradiction.
\end{proof}

\begin{remark}
	The first appearance of a version of the proof of Observation~\ref{obs:ramseyamalg} which we present here is in~\cite{nevsetvril1977partitions} where Nešetřil and Rödl prove that, in modern terms, if $\mathcal C$ is a monotone class of finite relational structures closed under inverse homomorphisms and disjoint unions and the class of all linearly ordered members of $\mathcal C$ is Ramsey then $\mathcal C$ is a free amalgamation class. The intimate connection between amalgamation and \Fraisse{} theory on one side, and structural Ramsey theory on the other side, was, however, only noticed by Nešetřil at Lachlan's seminar in 1985. This then led to~\cite{Nevsetvril1989a}, where a more refined result is proved.
\end{remark}

The joint embedding property is a natural assumption. Observation~\ref{obs:ramseyamalg} thus shows us that when looking for Ramsey classes, we can restrict ourselves to ages of homogeneous structures.

\subsection{Ramsey classes from homogeneous graphs}
Ne\v set\v ril's starting point was the classification of countable homogeneous graphs:
\begin{theorem}[Lachlan--Woodrow~\cite{Lachlan1980}]
	\label{thm:LW}
	Let $\str{G}$ be a countably infinite homogeneous graph. Then $\str{G}$ or its complement is isomorphic to one of the following:
	\begin{enumerate}
		\item The Rado graph $\str{R}$: the \Fraisse{} limit of the class of all finite graphs. (Also called the countable random graph.)
		\item The \Fraisse{} limit $\str{R}_k$, for some $k>2$, of the class of all finite graph omitting the clique of size $k$.
		\item The disjoint union of $n$ cliques of size $k$ for $n,k\in \mathbb N\cup \{\omega\}$ where at least one of $n$ and $k$ is equal to $\omega$.
	\end{enumerate}
\end{theorem}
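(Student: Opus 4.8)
The plan is to apply \Fraisse{}'s theorem (Theorem~\ref{fraissethm}) to reduce the classification of countably infinite homogeneous graphs to that of their ages --- the hereditary amalgamation classes of finite graphs --- using that a homogeneous graph is determined up to isomorphism by its age. Since the list in the statement is closed under graph complementation, and complementation preserves homogeneity and sends $\Age(\str{G})$ to $\Age(\overline{\str{G}})$, it suffices to place $\str{G}$ \emph{or} $\overline{\str{G}}$ on the list, so I may freely replace $\str{G}$ by $\overline{\str{G}}$. Now I would first prove that a disconnected homogeneous graph is a disjoint union of complete graphs of equal size. The crucial observation is that such a graph contains no induced path $P_3$ on three vertices: if $a-b-c$ were one (with $a\not\sim c$), then choosing $x,y$ in distinct connected components, the two-element non-edges $\{a,c\}$ and $\{x,y\}$ are isomorphic induced subgraphs, so by homogeneity some automorphism $\varphi$ carries $a\mapsto x$ and $c\mapsto y$, whence $\varphi(b)$ is a common neighbour of $x$ and $y$, contradicting that they lie in different components. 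A $P_3$-free graph is a disjoint union of complete graphs, and since homogeneity makes $\str{G}$ vertex-transitive, all of these cliques share a common size $k$; counting cardinalities, $\str{G}$ is a disjoint union of $n$ cliques of size $k$ with $nk=\aleph_0$, so one of $n,k$ equals $\omega$ --- this is case~(3). Applying the same argument to $\overline{\str{G}}$ handles the case where the complement is disconnected. Hence from now on both $\str{G}$ and $\overline{\str{G}}$ are connected; in particular $\str{G}$ is neither complete nor edgeless, so both $P_3$ and $\overline{P_3}$ embed into $\str{G}$.

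\textbf{Step 2: localization and the four regimes.} A key technical tool is that for every vertex $v$ the induced subgraphs $\str{G}[N(v)]$ and $\str{G}[G\setminus(N(v)\cup\{v\})]$ are again homogeneous graphs: an isomorphism between finite induced subgraphs of one of these extends, after adjoining $v$ as a fixed point, to an automorphism of $\str{G}$, which then restricts appropriately. Next I would split according to the sizes of cliques and independent sets in $\str{G}$. If both are bounded, the finite Ramsey theorem forces $\str{G}$ to be finite, contrary to hypothesis. If cliques are bounded --- say $k$ is least with $K_k\notin\Age(\str{G})$, so that $k>2$ since $k=2$ would make $\str{G}$ edgeless and hence disconnected --- then $\str{G}$ being infinite forces independent sets to be unbounded, and the goal becomes to show that $\Age(\str{G})$ is \emph{exactly} the class of all finite $K_k$-free graphs, whence $\str{G}\cong\str{R}_k$ by uniqueness of \Fraisse{} limits. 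The case of unbounded cliques and bounded independence reduces to this by complementation, yielding $\overline{\str{R}_k}$, and the case where both are unbounded should likewise give $\Age(\str{G})=$ all finite graphs, i.e.\ $\str{G}\cong\str{R}$.

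\textbf{Step 3: richness of the age --- the main obstacle.} What remains, and is the technical heart of the Lachlan--Woodrow argument~\cite{Lachlan1980}, is to establish these genericity statements: for instance, that a connected, co-connected homogeneous graph with clique number $k-1$ and unbounded independence number embeds every finite $K_k$-free graph. I would attempt this by induction on the number of vertices of the target graph $\str{H}$: writing $\str{H}=\str{H}'+v$ with $v$ a single extra vertex whose neighbourhood is some $N\subseteq V(\str{H}')$, the inductive hypothesis supplies a copy of $\str{H}'$ in $\str{G}$, and one then needs a vertex of $\str{G}$ realizing precisely the neighbourhood $N$ over that copy --- and since $\str{H}'[N]$ is $K_{k-1}$-free, this is an amalgamation-type demand inside $\str{G}$. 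Turning this into an argument requires combining the amalgamation property of $\Age(\str{G})$ with the availability of arbitrarily large independent sets and of the configurations $P_3$, $\overline{P_3}$ to manufacture the required extensions, while guaranteeing that no forbidden $K_k$ is ever created and that every prescribed neighbourhood can be attained; this is most naturally organized through the localization lemma and an analysis of the homogeneous graphs $\str{G}[N(v)]$ and $\str{G}[G\setminus(N(v)\cup\{v\})]$, which have strictly smaller clique or independence number. I expect essentially all of the difficulty of the theorem to concentrate in this richness step rather than in the elementary connectivity dichotomy.
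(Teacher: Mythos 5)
First, note that the paper does not actually prove Theorem~\ref{thm:LW}: it is quoted as a known classification result from~\cite{Lachlan1980}, so there is no in-paper proof to compare against. Judged on its own, your proposal has a genuine gap. Steps 1 and 2 are correct and standard: the $P_3$-freeness argument for disconnected homogeneous graphs, the reduction by complementation, the homogeneity of $\str{G}[N(v)]$ and of the non-neighbourhood, and the Ramsey split into four regimes are all fine and are indeed how one sets up the problem. But the entire content of the theorem lives in your Step 3, and there you only state the goal rather than prove it. The inductive step you sketch --- ``the inductive hypothesis supplies a copy of $\str{H}'$ in $\str{G}$, and one then needs a vertex of $\str{G}$ realizing precisely the neighbourhood $N$ over that copy'' --- is circular: homogeneity together with amalgamation of $\Age(\str{G})$ only guarantees extensions by configurations \emph{already known} to lie in $\Age(\str{G})$, and the assertion that every $K_k$-free one-point extension is realizable is exactly the statement $\Age(\str{G})=\Forb(K_k)$ that you are trying to establish. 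Nothing in your outline explains why unbounded independence number plus connectivity of $\str{G}$ and $\overline{\str{G}}$ forces this; the same remark applies to the ``both unbounded $\Rightarrow$ Rado graph'' case, which you dispatch with ``should likewise give.''

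Concretely, what is missing is the mechanism by which one bootstraps from the minimal configurations ($P_3$, $\overline{P_3}$, large independent sets) to arbitrary $K_k$-free graphs. In Lachlan and Woodrow's actual argument this is a long double induction on parameters measuring which graphs of the form ``clique joined/attached to independent set'' embed into $\str{G}$, with a substantial case analysis showing that any homogeneous graph whose age omits some finite $K_k$-free graph must degenerate into one of the union-of-cliques cases; the localization to $\str{G}[N(v)]$ alone does not close the induction, because the smaller homogeneous graph need not inherit enough of the hypotheses (connectivity of it and its complement can fail, and one must then argue about how a union-of-cliques neighbourhood sits inside $\str{G}$). You have correctly identified where the difficulty is concentrated, but identifying it is not the same as resolving it, so as it stands the proposal proves only the easy dichotomy of Step 1 and reduces the theorem to an unproved richness claim.
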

This theorem gives a complete list of the potential  graph candidates for Ramseyness.
By Observation~\ref{obs:ramsey_rigid}, none of them is a Ramsey class
due to the lack of rigidity.
However, this problem can be solved by adding
an order, just as in the statement of the Ne\v set\v ril--R\"odl theorem:
\begin{definition}
	\label{defn:freeorder}
	Let $L$ be a language not containing symbol $<$ and let $\mathcal K$ be a class
	of $L$-structures.  Let the language $L^{<}$ extend $L$ by a single binary
	relation $<$.
	Then the class of \emph{free orderings} of $\mathcal K$ is the class $\mathcal K^<$
	consisting of all ordered $L^<$-structures $\str{A}$ satisfying $\str{A}\vert _L\in \K$.
\end{definition}

For a long time, the true role of orderings in the Ramsey property was not well understood. For example, Ne\v set\v ril in 1989~\cite{Nevsetvril1989a} only considered
free orderings of the amalgamation classes given by Theorem~\ref{thm:LW}.
In this
situation, the first case (the class of all ordered graphs) is a Ramsey class
by the unrestricted Nešetřil--R\"odl theorem (Theorem~\ref{thm:unNR}), and the second case
(the class of all ordered graphs with no clique of size $k$) is a Ramsey class
by the Nešetřil--R\"odl theorem (Theorem~\ref{thm:NR}). However, in the last case one only obtains a Ramsey class when putting $k=1$ or $n=1$ (see Observation~\ref{obs:equiv_non_ramsey}), even though the Ramsey property for copies follows easily from a standard product argument (which was known to Nešetřil). Only in 2005 did the seminal paper of Kechris, Pestov, and Todor\v{c}evi\'{c}~\cite{Kechris2005} introduce the correct set-up for Ramsey expansions (see Section~\ref{sec:precompact}) which, in the case of disjoint unions of cliques, allows one to  that restrict themselves to \emph{convex} orderings (that is, orderings where every clique forms an interval). Then (possibly by further expanding by unary relations, see Example~\ref{ex:2komega}) one does indeed obtain Ramsey classes for the last case of Theorem~\ref{thm:LW} by a simple application of the product argument.

\section{1990s: Siesta time}
Ne\v set\v ril's 1989 paper~\cite{Nevsetvril1989a} showed that all hereditary
Ramsey classes of freely ordered graphs had already been known earlier and concluded
with a conjecture that this is also the case for hereditary classes of freely ordered
hypergraphs.  As a consequence of this, it seemed that the sources of potential candidates
for interesting Ramsey classes had run out, and in the 1990s the search for them has slowed
down.  However, new candidates started emerging in the context of model theory, for which the 1990s
meant a significant progress.

\subsection{Classification Programme of Homogeneous Structures}
The celebrated (Cherlin and Lachlan's) \emph{classification programme of homogeneous structures}
aims to provide full catalogues of countable
homogeneous structures of a given type. The most important cases for structural Ramsey theory where the classification
is complete are:
\begin{enumerate}
	\item\label{cat1} Schmerl's catalogue of homogeneous partial orders~\cite{Schmerl1979},
	\item\label{cat2} Lachlan and Woodrow's catalogue of homogeneous (simple) graphs~\cite{Lachlan1980},
	\item\label{cat3} Lachlan's catalogue of homogeneous tournaments~\cite{lachlan1984countable},
	\item Cherlin's catalogue of homogeneous digraphs~\cite{Cherlin1998} (this generalizes catalogues \ref{cat1}, \ref{cat2} and \ref{cat3}),
	\item Cherlin's catalogue of homogeneous ordered graphs~\cite{Cherlin2013} (the first classification result motivated by structural Ramsey theory),
	\item Braunfeld's catalogue of homogeneous finite-dimensional (generalized) permutation structures~\cite{SamPhD}, which has recently been shown to be complete by Braunfeld and Simon~\cite{sam2,braunfeld2018classification}, and
	\item Cherlin's catalogue of metrically homogeneous graphs~\cite{Cherlin2013}. A proof of its completeness has been announced by Cherlin.
\end{enumerate}
Several additional catalogues are known. An extensive list is given in  Cherlin's recent monograph~\cite{Cherlin2013} and the associated extended bibliography~\cite{cherlin2021homogeneity}. All these results give us a rich and systematic source of homogeneous structures and each such homogeneous structure potentially leads to a Ramsey class.
This, however, was fully exploited and understood only in the following decades.

\section{2000s: The revitalization}
The most important step towards revitalizing the structural Ramsey theory was undoubtedly the connection to topological dynamics
given by Kechris, Pestov, and Todor\v{c}evi\'{c} (announced in 2003). We will explain it shortly, but first we turn our attention to the main topic of our survey.

\subsection{Ne\v set\v ril's classification programme of Ramsey classes}
In 2005, Ne\v set\v ril proposed a project to
analyse known catalogues of homogeneous structures and initiated the classification programme of Ramsey
classes (which we refer to as \emph{Ne\v set\v ril's classification programme})~\cite{Nevsetril2005,Hubicka2005a}.
The overall idea is symbolized in~\cite{Nevsetril2005} as follows:
\begin{center}
	\begin{tikzpicture}[auto,
			box/.style ={rectangle, draw=black, thick, fill=white,
					text width=9em, text centered,
					minimum height=2em}]
		\tikzstyle{line} = [draw, thick, -latex',shorten >=2pt];
		\matrix [column sep=5mm,row sep=3mm] {
			\node [box] (Ramsey) {Ramsey\\ classes};
			 &  & \node [box] (amalg) {amalgamation classes};
			\\
			\\
			\node [box] (lift) {Ramsey structures};
			 &  & \node [box] (lim) {homogeneous structures};
			\\
		};
		\begin{scope}[every path/.style=line]
			\path (Ramsey)   -- (amalg);
			\path (amalg)   -- (lim);
			\path (lim)   -- (lift);
			\path (lift)   -- (Ramsey);
		\end{scope}
	\end{tikzpicture}
\end{center}
The individual arrows in the diagram can be understood as follows.
\begin{enumerate}
	\item \emph{Ramsey classes $\longrightarrow$ amalgamation classes}.  By Observation~\ref{obs:ramseyamalg}, every hereditary iso\-morphism-closed Ramsey class of finite structures with the joint embedding property is an amalgamation class.
	\item \emph{Amalgamation classes $\longrightarrow$ homogeneous structures}. By the \Fraisse{} theorem (Theorem~\ref{fraissethm}), every amalgamation class of structures with countably many mutually non-isomorphic structures has a \Fraisse{}~limit which is a homogeneous structure.

	      The additional assumption about the amalgamation class having only countably many mutually non-isomorphic structures is a relatively mild one. However, there are interesting and natural Ramsey classes not satisfying it, such as the class of all finite ordered metric spaces.
	\item \emph{Ramsey structures $\longrightarrow$ Ramsey classes}. We call a structure \emph{Ramsey} if its age is a Ramsey class and thus this connection follows by the definition.
\end{enumerate}
The difficult part of the diagram is thus the last arrow \emph{homogeneous structures $\longrightarrow$ Ramsey structures}.
We already know that only very special homogeneous structures are Ramsey, and for this reason
the precise formulation of this classification programme requires close attention.
It is a known fact that the automorphism group of every Ramsey structure fixes a linear order on
vertices. This follows at once from the connection with topological dynamics, see Proposition~\ref{prop:ordernecessary}.
This is a very strong hypothesis---such a linear order is unlikely to appear in practice unless one begins with a class of
ordered structures. So it usually happens that the
age $\K$ of a homogeneous structure $\str{H}$ is not Ramsey.
In many such cases, it can be ``expanded'' to a Ramsey class $\K^<$ as given by Definition~\ref{defn:freeorder}.
It is easy to check that $\K^<$ is an amalgamation class and
the \Fraisse{} limit of $\K^<$ thus exists and may lead to a Ramsey structure $\str{H}^<$.
This Ramsey structure can be thought of as an expansion of $\str{H}$ by an additional ``free'' or ``generic'' linear
order of vertices.

\begin{example}[Countable Random Graph]
	Consider the class $\mathcal G$ of all finite graphs and
	its~\Fraisse{} limit $\str{R}$ (cf. Theorem~\ref{thm:LW}).  Because an order is not fixed by $\Aut(\str{R})$, we can
	consider the class $\mathcal G^<$ of all linearly ordered finite graphs.
	By the Ne\v set\v ril--R\"odl theorem (Theorem \ref{thm:unNR}), $\mathcal G^<$ is Ramsey
	and thus its~\Fraisse{} limit $\str{R}^<$ (where the graph part is isomorphic to $\str R$, the order part is isomorphic to $(\mathbb Q, {<})$, and they are in ``generic'' position, \ie~they do not interact with each other non-trivially) is a Ramsey structure.
	In this sense, we completed the last arrow of Ne\v set\v ril's diagram.
\end{example}

\subsection{Precompact expansions and the expansion property}
\label{sec:precompact}
Initially, the classification problem was understood in terms
of adding linear orders when they were absent, and
then confining one’s attention to classes with the linear order
present. This point of view is still implicit in~\cite{Kechris2005}.

However, it turns out that it is necessary to consider more general expansions
of languages than can be afforded using a single linear order, such as the one in the following example:
\begin{example}[Equivalences with Two Classes]\label{ex:2komega}
	Let $\str H$ be the disjoint union of two copies of $K_\omega$ from the Lachlan--Woodrow list (Theorem~\ref{thm:LW}) and let $\mathcal C$ be its age. In other words, $\mathcal C$ is the class of all finite graphs which are the disjoint union of two cliques. We will see that $\mathcal C^<$ does not have the Ramsey property.

	Let $\str A\in \mathcal C^<$ be the (up to isomorphism unique) one-vertex structure and let $\str B\in \mathcal C^<$ be the two-vertex structure with no edge. Given an arbitrary $\str C\in \mathcal C^<$, let $C_0$ be the set of vertices of one clique of $\str C$ and let $C_1$ be the set of vertices of the other clique of $\str C$. Define a colouring $c\colon \Emb(\str A,\str C)\to 2$ by $c(f) = i$ if and only if $f[A] \subseteq C_i$ for $i\in \{0,1\}$. Clearly, every copy of $\str B$ in $\str C$ attains both colours, hence $\str C\not\longrightarrow (\str B)^\str A_2$.

	This suggests that in order to obtain a Ramsey class, one needs to distinguish vertices from the two cliques. And indeed, consider the language $L^+ = \{E,<,R\}$ where $E$ and $<$ are binary relations and $R$ is a unary relation. Let $\str H^+$ be the $L^+$-expansion of $\str H$ such that $(H,{<})$ is isomorphic to $(\mathbb Q,{<})$, and $R$ consists precisely of the vertices of one of the cliques.

	Observe that we can instead interchangeably work with the structure $\str H^\star$ in the language $L^\star = \{<,R\} \subseteq L^+$ which we obtain from $\str H^+$ by forgetting the relation $E$ -- it can be recovered using the unary relation. The Nešetřil--R\"odl theorem (Theorem~\ref{thm:NR}) tells us that $\str H^\star$, and hence also $\str H^+$, is a Ramsey structure.

	Below we will discuss the importance of finding the \textit{optimal} Ramsey expansion. Let us remark that $\str H^+$ is not the optimal expansion of $\str H$, in the optimal expansion one adds \textit{convex} orders only (that is, we demand that every vertex in $R$ is smaller than every vertex not in $R$), see Section~\ref{sec:equivalences}.
\end{example}

On the other hand, if one allows more general expansions of the structure---for
example, naming every point, that is, adding for every $x\in \str H$ the unary relation $U^x = \{x\}$---then the Ramsey property follows vacuously.
This means that a conceptual issue of identifying the
``correct'' Ramsey expansion needs to be resolved first before returning to
the technical issues of proving the existence of such an expansion, and
constructing it explicitly. The topological dynamical view of~\cite{Kechris2005} clarifies what kind of Ramsey expansion we should look for, and this can be expressed very
concretely in combinatorial terms, and hence nowadays, the conceptual issue may be considered to be satisfactorily resolved (at least provisionally).

Next, we review the resolution of this issue and justify the following:
the last arrow in Ne\v set\v ril's
diagram represents the search for an \emph{optimal expansion} of the
homogeneous structure to a larger language, in a precise sense,
and raises the question of the existence of such expansion.

Before entering into the technicalities associated with the conceptual issues,
we present another critical example in which the Ramsey property requires something
more than the addition of a linear order, and we describe the optimal solution
according to the modern point of view.

\begin{example}[Generic Local Order]
	\label{example:S2}
	An early example (given by Laflamme, Nguyen Van Th\'e, and Sauer~\cite{laflamme2010partition}) of a structure
	with a non-trivial optimal Ramsey expansion is the \emph{generic local order}.
	This is the homogeneous tournament $\str{S}(2)$ (which we view as an $L$-structure where $L$ consists of a binary relation symbol $E$) defined as follows.   Let $\mathbb
		T$ denote the unit circle in the complex plane.  Define an oriented graph
	structure on $\mathbb T$ by declaring that there is an arc from $x$ to $y$ if and only if
	$0<\mathop{\mathrm{arg}}\nolimits(y/x)< \pi$.  Call $\vv{\mathbb T}$ the resulting oriented graph.  The
	dense local order is then the substructure $\str{S}(2)$ of $\vv{\mathbb T}$ whose
	vertices are those points of $\mathbb T$ with rational argument.

	Another construction of $\str{S}(2)$ is to start from the order of the rationals
	(seen as a countable transitive tournament), randomly colour vertices with two
	colours so that both colour classes are dense and then reverse the direction of all arrows between vertices of different
	colours. In fact, this colouring is precisely the necessary Ramsey expansion.
	We thus consider the class of finite $L^+$-structures where
	$L^+$ consists of a binary relation $E$ (representing the directed edges) and a
	unary relation $R$ (representing one of the colour classes).
	The linear ordering of vertices is implicit, but can be defined based
	on the relations $E$ and $R$, putting $a< b$ if and only if either there
	is an edge from $a$ to $b$ and they belong to the same colour class, or there
	is an edge from $b$ to $a$ and they belong to different colour classes.
	The Ramsey property then follows exactly in the same way as in Example~\ref{ex:2komega}, and unlike
	in Example~\ref{ex:2komega}, this can be proved to be the optimal Ramsey expansion for $\str S(2)$.
\end{example}
The general notion of expansion (or, more particularly, \emph{relational expansion}) that we work with comes from model theory.

\begin{definition}[Expansion and Reduct]\label{defn:expansion}
	Let $L$ be a language and let $L^+\supseteq L$ extend it by relation symbols. By this we mean that $L^+\setminus L$ is a relational language, and every symbol $S\in L$ has the same arity in $L$ and $L^+$. We call $L^+$ a \emph{(relational) expansion} of $L$.

	For every $L^+$-structure $\str{A}$, there is a unique $L$-reduct $\str{A}\vert _L$ for $\str L$, that is, $A\vert _L=A$, $\nbrel{\str A\vert _L}{}=\rel{A}{}$ for every relation $\rel{}{}\in L$ and $\nbfunc{\str A\vert _L}{}=\func{A}{}$ for every function $\func{}{}\in L$. We call $\str{A}$ a \emph{(relational) expansion} of $\str{A}\vert _L$ and $\str{A}$ is called the \emph{$L$-reduct} of $\str{A}$.

	Given a language $L$, an expansion $L^+$ of $L$, and a class $\K$ of $L$-structures, a class $\K^+$ of $L^+$-structures is an \emph{expansion} of $\K$ if $\K$ is precisely the class of all $L$-reducts of structures in $\K^+$.
\end{definition}
Originally, expansions were called \emph{lifts} and reducts were called \emph{shadows} by Nešetřil and the first author, influenced by the computer science language of Gábor Kun and Nešetřil~\cite{Kun2007}. Over time, the standard model-theoretic terminology prevailed (though there are exceptions~\cite{Kun2025}).

As mentioned earlier, every $L$-structure $\str{H}$ can be turned into a Ramsey
structure $\str H^+$ by naming every point. This can be done by expanding the language $L$
by infinitely many unary relation symbols and putting every vertex of
$\str{H}^+$ into a unique relation. The Ramsey property then holds,
since there are no non-trivial embeddings between structures from the age of $\str H^+$. Clearly, additional
restrictions on the allowed expansions need to be made.

We now formulate a notion of ``optimal''
Ramsey expansion. We will first give this in purely combinatorial terms. In those terms, we seek a \emph{reasonable precompact
	Ramsey expansion with the expansion property} as defined
below (we will call it \emph{canonical} for short). In order to understand why the name canonical is suitable we will then need to invoke notions and
non-trivial results of topological dynamics.

\begin{definition}[Reasonable Expansion~\cite{Kechris2005}]
	\label{defn:reasonable}
	Let $\mathcal K^+$ be an expansion of a class of structures $\K$.
	We say that $\mathcal K^+$ is \emph{reasonable} if for
	every pair of structures $\str A, \str{B} \in \mathcal K$ with an embedding $f\colon \str A\to \str B$ and every expansion $\str A^+\in \mathcal K^+$ of $\str A$ there is an expansion $\str B^+\in \mathcal K^+$ of $\str B$ such that $f$ is an embedding $\str A^+\to \str B^+$.
\end{definition}
\begin{definition}[Precompact Expansion~\cite{The2013}]
	\label{defn:precompact}
	Let $\mathcal K^+$ be an expansion of a class of structures $\K$.
	We say that $\mathcal K^+$ is a \emph{precompact expansion} of $\mathcal K$ if for
	every structure $\str{A} \in \mathcal K$ there are only finitely many
	structures $\str{A}^+ \in \mathcal K^+$ such that $\str{A}^+$ is an expansion of
	$\str{A}$.
\end{definition}
Slightly abusing terminology, we say that an expansion $\str M^+$ of a homogeneous structure $\str M$ if \emph{precompact} if $\age(\str M^+)$ is a precompact expansion of $\age(\str M)$. (Note that $\age(\str M^+)$ is always a reasonable expansion of $\age(\str M)$.)
\begin{definition}[Expansion Property~\cite{The2013}]
	\label{defn:ordering}
	Let $\mathcal K^+$ be an expansion of $\K$. For $\str{A},\str{B}\in \K$ we say
	that $\str{B}$ has the \emph{expansion property} for $\str{A}$ if for every expansion $\str{B}^+\in \mathcal K^+$ of $\str{B}$ and for every expansion $\str{A}^+\in \mathcal K^+$ of $\str{A}$ there is an embedding $\str A^+\to\str{B}^+$.

	$\mathcal K^+$ has the \emph{expansion property} relative to $\K$ if for every $\str{A}\in \K$
	there is $\str{B}\in \K$ with the expansion property for $\str{A}$.
\end{definition}
Notice that the expansion property is a natural generalization of the ordering property (Definition~\ref{def:ordering}).
\subsection{Kechris--Pestov--Todor\v{c}evi\'{c} correspondence}

Intuitively, precompactness prevents us from considering wild expansions such as naming every point,
and the expansion property captures minimality of the expansion.
To further motivate these concepts, we review the key connections to topological
dynamics.

Given a structure $\str H$, we consider the automorphism group $\Aut(\str{H})$ as a Polish topological
group by giving it the topology of pointwise convergence.  Recall  that a
topological group $\Gamma$ is \emph{extremely amenable} if whenever $X$ is a
\emph{$\Gamma$-flow} (that is, a non-empty compact Hausdorff space on
which $\Gamma$ acts continuously), then there is a $\Gamma$-fixed point in $X$. A $\Gamma$-flow is \emph{minimal} if it admits no nontrivial closed $\Gamma$-invariant subset or, equivalently, if every orbit is dense.
Among all minimal $\Gamma$-flows, there exists a canonical one, known as the \emph{universal minimal $\Gamma$-flow}. See \cite{NVT14,zucker2016topological} for details.

In 1998, Pestov~\cite{Pestov1998free} used the Ramsey theorem to show
that the automorphism group of the order of the rationals is extremely amenable. Two years later, Glasner and
Weiss~\cite{glasner2002minimal} proved (again applying the Ramsey theorem) that the
space of all linear orderings on a countable set is the universal minimal flow
of the infinite permutation group.  In 2005, Kechris, Pestov, and Todor\v cevi\' c
introduced the general framework (which we refer to as the \emph{KPT-cor\-re\-spon\-dence}) connecting \Fraisse{} theory, Ramsey classes,
extremely amenable groups and metrizable minimal flows~\cite{Kechris2005}.
Subsequently, this framework was generalized to the notion of Ramsey expansions~\cite{The2013,NVT2009,Melleray2015,zucker2016topological} with main results as follows:

\begin{theorem}[{Kechris, Pestov, Todor\v cevi\' c \cite[Theorem 4.8]{Kechris2005}}]
	\label{thm:KPT}
	Let $\str{H}$ be locally finite\footnote{While in this survey, the age of a structure consists of finite structures, \cite{Kechris2005} consider the more general setting with finitely generated structures. For this reason, we need to add the extra assumption of local finiteness to the statement of Theorem~\ref{thm:KPT}.} homogeneous structure. Then $\Aut(\str{H})$ is extremely amenable if and only if $\Age(\str{H})$ is a Ramsey class.
\end{theorem}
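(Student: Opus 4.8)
The plan is to decouple the combinatorics from the topological dynamics. Writing $G:=\Aut(\str H)$, I would first show that ``$\Age(\str H)$ is a Ramsey class'' is equivalent to the following \emph{homogeneous Ramsey property} of $\str H$ itself: for all finite substructures $\str A\subseteq\str B$ of $\str H$ and every $r$, every colouring $\chi\colon\Emb(\str A,\str H)\to r$ admits an $f\in\Emb(\str B,\str H)$ such that $\chi$ is constant on $f\circ\Emb(\str A,\str B):=\{f\circ e:e\in\Emb(\str A,\str B)\}$. One direction is immediate: pick a finite $\str C\in\Age(\str H)$ with $\str C\longrightarrow(\str B)^{\str A}_r$, embed it into $\str H$, and restrict $\chi$. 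For the other, if no finite witness $\str C$ existed, then each finite substructure $\str C'\subseteq\str H$ would carry a colouring of $\Emb(\str A,\str C')$ with no monochromatic copy of $\str B$; since any finite family of ``no monochromatic copy'' constraints only involves embeddings sitting inside a single finite substructure of $\str H$---which exists because $\str H$ is locally finite---compactness of $r^{\Emb(\str A,\str H)}$ yields one colouring of $\Emb(\str A,\str H)$ with no monochromatic copy of $\str B$ anywhere, a contradiction. (Homogeneity is used throughout to transport finite substructures inside $\str H$.)

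For the implication ``extreme amenability $\Rightarrow$ Ramsey'', I would fix finite $\str A\subseteq\str B\subseteq\str H$ and a colouring $\chi_0\colon\Emb(\str A,\str H)\to r$, and let $G$ act on the compact space $Z:=r^{\Emb(\str A,\str H)}$ by $(g\cdot\chi)(e)=\chi(g^{-1}\circ e)$; this action is continuous since inversion is continuous on $G$ and every basic neighbourhood in $Z$ constrains finitely many coordinates. The orbit closure $X:=\overline{G\cdot\chi_0}$ is a nonempty $G$-flow, so by extreme amenability it has a fixed point $\chi^{\ast}$. As $\str H$ is homogeneous, $G$ acts transitively on $\Emb(\str A,\str H)$, so the $G$-invariant colouring $\chi^{\ast}$ is constant, with some value $j$. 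Since $\chi^{\ast}$ lies in the closure of $\{g\cdot\chi_0\}$, on the finite set $\beta_0\circ\Emb(\str A,\str B)$ (for a fixed $\beta_0\in\Emb(\str B,\str H)$) it agrees with some $g\cdot\chi_0$; unwinding, $\chi_0$ takes the constant value $j$ on $(g^{-1}\circ\beta_0)\circ\Emb(\str A,\str B)$, so $f:=g^{-1}\circ\beta_0$ is the sought monochromatic copy of $\str B$. Together with the reduction above, this gives that $\Age(\str H)$ is Ramsey.

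For the converse, ``Ramsey $\Rightarrow$ extreme amenability'', I would translate the homogeneous Ramsey property into a combinatorial statement about $G$ and then feed it into any flow. For finite $\str A\subseteq\str H$ put $V_{\str A}:=\{g\in G:g\vert_A=\mathrm{id}\}$; by local finiteness these open subgroups form a neighbourhood basis at the identity, and homogeneity gives a $G$-equivariant bijection between $\Emb(\str A,\str H)$ and the right-coset space $V_{\str A}\backslash G$. Under it, a copy $\beta\circ\Emb(\str A,\str B)$ of a finite $\str B\supseteq\str A$ becomes a right translate $\hat E_{\str B}\,g$ of a fixed finite set $\hat E_{\str B}\subseteq V_{\str A}\backslash G$, so (choosing representatives $S(\str B)\ni e$ for $\hat E_{\str B}$) the homogeneous Ramsey property says exactly: for every finite-valued right-uniformly continuous colouring $c\colon G\to r$ (necessarily constant on the right cosets of some $V_{\str A}$) and every finite $S$ of the form $S(\str B)$ with $\str A\subseteq\str B$ there is $g\in G$ with $c$ constant on $Sg$; and the sets $S(\str B)$ are cofinal among all finite subsets of $G$, since enlarging $\str B$ absorbs any given finite set (local finiteness again). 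To obtain extreme amenability, let $X$ be any $G$-flow, $x_0\in X$; the orbit map $\phi\colon g\mapsto g\cdot x_0$ is right-uniformly continuous, so after a standard shrinking a finite open cover $\{U_\ell\}$ of $X$ produces a finite-valued right-uniformly continuous $c\colon G\to r$ with the property that $c(h)=\ell$ forces $g\cdot x_0\in U_\ell$ for every $g\in V_{\str A}h$. Applying the translated Ramsey statement with $c$ and larger and larger finite $S$ gives points $g_S\cdot x_0$ with $S\cdot(g_Sx_0)$ inside a single $U_{\ell(S)}$; a subnet limit gives a point whose whole $G$-orbit sits in the closure of one member of $\{U_\ell\}$, and letting the cover refine and taking a further limit yields an honest $G$-fixed point. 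Hence $G$ is extremely amenable.

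The hard part will be this last direction. The reduction and the ``extreme amenability $\Rightarrow$ Ramsey'' step are routine compactness and orbit-closure manipulations, but in ``Ramsey $\Rightarrow$ extreme amenability'' one has to get the left/right coset bookkeeping exactly right---so that the ``pre-composition'' structure of Ramsey copies of $\str B$ really does become right translates of a fixed finite set while the group acts on the appropriate side---verify that the orbit map is right-uniformly continuous (not left), and, most delicately, upgrade the family of ``$S$-almost fixed'' points obtained cover-by-cover into a single genuine fixed point, using that closures of the members of finite open covers form a basis for the uniformity of $X$.
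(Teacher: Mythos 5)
The survey does not actually prove Theorem~\ref{thm:KPT}; it is imported as a black box from \cite{Kechris2005}, and the only argument of this type carried out in the text is the proof of Proposition~\ref{prop:ordernecessary}. So there is no in-paper proof to compare against; what you have done is reconstruct, correctly in outline, the standard Kechris--Pestov--Todor\v{c}evi\'c argument. Your compactness reduction to colourings of $\Emb(\str{A},\str{H})$ is sound (local finiteness is exactly what lets finitely many ``bad colouring'' constraints be realized inside a single finite substructure), and your ``extreme amenability $\Rightarrow$ Ramsey'' direction is precisely the technique the paper uses for Proposition~\ref{prop:ordernecessary}, run on the compact space $r^{\Emb(\str{A},\str{H})}$ instead of the space of linear orders; homogeneity enters via transitivity of $G=\Aut(\str{H})$ on $\Emb(\str{A},\str{H})$, which forces the fixed colouring to be constant, and as a by-product the argument also yields rigidity of all members of $\Age(\str{H})$, consistently with Observation~\ref{obs:ramsey_rigid}. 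The converse is, as you say, Pestov's finite-oscillation-stability criterion, and your outline of it is correct; the one step I would insist you write out in full is the side-switching you flag: the stabilizers $V_{\str{A}}$ identify $\Emb(\str{A},\str{H})$ with the \emph{left} coset space $G/V_{\str{A}}$ and make the monochromatic sets \emph{left} translates $h\cdot S$, whereas orbit maps $g\mapsto g\cdot x_0$ into a flow are uniformly continuous for the uniformity with entourages $\{(g,h):gh^{-1}\in V\}$, so the pulled-back colourings are constant on \emph{right} cosets $V_{\str{A}}g$ and one needs monochromatic \emph{right} translates $Sg$; the two formulations are exchanged by $g\mapsto g^{-1}$ (replacing $S$ by $S^{-1}$), after which your cover-refinement and compactness upgrade to a genuine fixed point goes through. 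I see no gap, but that inversion is the step most easily botched, so it deserves an explicit computation rather than a remark.
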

Theorem~\ref{thm:KPT} is often formulated with the additional assumption that $\Age(\str{H})$ is rigid (\ie{} no structure in $\Age(\str{H})$ has non-trivial automorphisms).  This is however implied by our
definition of a Ramsey class which colours embeddings, see Observation~\ref{obs:ramsey_rigid}.

As we have mentioned earlier, it is an easy consequence of Theorem~\ref{thm:KPT} that the automorphism group of a Ramsey structure needs to fix a linear order. For finite relational languages this can actually be proved combinatorially (see \eg{}~\cite[Proposition 2.22]{Bodirsky2015}) and in a stronger form where the order will be definable.
\begin{prop}[Kechris--Pestov--Todor\v cevi\' c~\cite{Kechris2005}]\label{prop:ordernecessary}
	Let $\str M$ be a locally finite homogeneous structure whose age has the Ramsey property. Then $\Aut(\str M)$ fixes a linear order, that is, there exists a linear order $\ll$ on the vertices of $\str M$ such that for every $g\in \Aut(\str M)$ and every $x,y\in M$ it holds that $x\ll y$ if and only if $g(x)\ll g(y)$.
\end{prop}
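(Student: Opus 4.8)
The plan is to derive the statement directly from the Kechris--Pestov--Todor\v cevi\' c correspondence (Theorem~\ref{thm:KPT}) together with the classical observation, going back to Glasner and Weiss~\cite{glasner2002minimal}, that the space of all linear orders on a countable set forms a flow for any closed subgroup of the full symmetric group. Since $\str M$ is locally finite homogeneous and $\Age(\str M)$ has the Ramsey property, Theorem~\ref{thm:KPT} tells us that $\Gamma := \Aut(\str M)$ is extremely amenable. It therefore suffices to produce a nonempty compact Hausdorff $\Gamma$-flow in which a $\Gamma$-fixed point is the same thing as a $\Gamma$-invariant linear order on $M$; extreme amenability then hands us such a fixed point.

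First I would set up this flow. Let $X$ be the set of all linear orders on the underlying set $M$, regarded as a subset of $2^{M\times M}$ with the product topology. Since $M$ is countable, $2^{M\times M}$ is a compact metrizable space, and $X$ is closed in it: being a linear order is the conjunction of irreflexivity, antisymmetry, transitivity, and totality, each of which is an intersection of clopen conditions indexed by pairs or triples of elements of $M$. Hence $X$ is compact and Hausdorff, and it is nonempty because the countable set $M$ admits at least one linear order. The group $\Gamma$ acts on $X$ by letting $g\cdot{<}$ be the order with $x\,(g\cdot{<})\,y$ if and only if $g^{-1}(x)<g^{-1}(y)$; this is a left action, and it is continuous, since a subbasic open set of $X$ constrains the order on a single pair $(x,y)$ and its preimage under the action map is controlled by the values of $g$ on the finite set $\{x,y\}$, which is an open condition in the topology of pointwise convergence on $\Gamma$. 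Thus $X$ is a $\Gamma$-flow.

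By extreme amenability of $\Gamma$ there is a $\Gamma$-fixed point ${\ll}\in X$. Unwinding the definition, $g\cdot{\ll}={\ll}$ for every $g\in\Gamma$ means that for all $x,y\in M$ we have $x\ll y$ if and only if $g^{-1}(x)\ll g^{-1}(y)$; substituting $g(x)$ for $x$ and $g(y)$ for $y$ gives $g(x)\ll g(y)$ if and only if $x\ll y$, which is precisely the asserted invariance. Hence $\Aut(\str M)$ fixes the linear order $\ll$.

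I expect the only points requiring care to be the verification that $X$ is compact and that the action $\Gamma\times X\to X$ is continuous; both are standard but must be checked so that $X$ genuinely qualifies as a $\Gamma$-flow. I would also remark on the alternative route available for finite relational languages: there one can bypass topological dynamics and instead use the Ramsey property of $\Age(\str M)$ in a compactness (König's lemma) argument over finite substructures to build an invariant---indeed first-order definable---linear order directly, as in~\cite[Proposition~2.22]{Bodirsky2015}. I would nevertheless present the topological argument above, since it handles the general locally finite homogeneous case uniformly with no extra effort.
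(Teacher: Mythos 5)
Your proposal is correct and follows essentially the same route as the paper: realize the set of linear orders on $M$ as a compact Hausdorff subspace of $2^{M^2}$, let $\Aut(\str M)$ act on it by the standard (inverse-twisted) action, and invoke extreme amenability via Theorem~\ref{thm:KPT} to extract a fixed point. The extra verifications you flag (closedness of the space of orders, continuity of the action) are exactly the ones the paper leaves as routine.
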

\begin{proof}
	Let $\Linorder(M)$ be the space of all linear orders on $M$ (understood as a subspace of $2^{M^2}$). Note that $\Linorder(M)$ is Hausdorff and compact, and it is easy to see that $\Aut(\str M)$ acts continuously on it by its standard action: For $L\in \Linorder(M)$ and $g\in \Aut(\str M)$ we define $g\cdot L$ by $(x,y)\in g\cdot L$ if and only if $(g^{-1}(x), g^{-1}(y))\in L$. Therefore, as $\str M$ is Ramsey, $\Aut(\str M)$ is extremely amenable and thus this action has a fixed point, which is an order $L$ such that $g\cdot L = L$ for every $g\in\Aut(\str M)$.
\end{proof}

\medskip

Precompact Ramsey expansions with the expansion property relate to universal minimal flows as follows.
\begin{theorem}[{Melleray, Nguyen Van Th\'e, and Tsankov \cite[Corollary 1.3]{Melleray2015}}]
	\label{thm:metrizable}
	Let $\str{H}$ be a locally finite homogeneous structure. The following are equivalent:
	\begin{enumerate}
		\item The universal minimal flow of $\Aut(\str{H})$ is metrizable and has a comeagre orbit.
		\item The structure $\str{H}$ admits a precompact expansion $\str{H}^+$ whose age has the Ramsey property, and has the expansion property relative to $\Age(\str{H})$.
	\end{enumerate}
\end{theorem}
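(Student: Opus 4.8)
The plan is to prove the two implications separately, in each case using the Kechris--Pestov--Todor\v{c}evi\'{c} correspondence (Theorem~\ref{thm:KPT}) as the dictionary between extreme amenability and the Ramsey property, and exhibiting a candidate universal minimal flow explicitly as a space of expansions.

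For the implication $(2)\Rightarrow(1)$, suppose $\str H^+$ is a precompact expansion of $\str H$ to a language $L^+$ such that $\K^+:=\Age(\str H^+)$ is Ramsey and has the expansion property relative to $\K:=\Age(\str H)$. Note first that $\K^+$ is a genuine amalgamation class: as the age of a structure it has the joint embedding property, and joint embedding plus Ramsey gives amalgamation by Observation~\ref{obs:ramseyamalg}, so $\str H^+$ is the \Fraisse{} limit of $\K^+$. Let $X$ be the space of all $L^+$-structures on the vertex set $H$ that expand $\str H$ and whose age is contained in $\K^+$; precompactness of the expansion says that each finite tuple of vertices of $H$ admits only finitely many $L^+$-decorations compatible with $\K^+$, so $X$ is a closed subset of a countable product of finite discrete spaces, hence a nonempty compact metrizable space on which $\Aut(\str H)$ acts continuously by transporting decorations. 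I would then verify three things. (i) $X$ is minimal: given $\str H',\str H''\in X$ and finite $A\subseteq H$, use the expansion property to pick $\str B\in\K$ with the expansion property for the $L$-reduct of the structure induced on $A$ by $\str H''$; realize $\str B$ inside $\str H$, decorate this copy by the $L^+$-structure inherited from $\str H'$ to obtain some $\str B^+\in\K^+$, embed $\str H''{\restriction}A$ into $\str B^+$, and use homogeneity of $\str H$ to produce $g\in\Aut(\str H)$ for which $g^{-1}\cdot\str H'$ agrees with $\str H''$ on $A$; hence every orbit is dense. (ii) $X$ is universal: by Theorem~\ref{thm:KPT} the Ramsey property of $\K^+$ makes $\Aut(\str H^+)$ extremely amenable, and $\Aut(\str H^+)$ is exactly the stabilizer of the point $\str H^+\in X$; given a minimal $\Aut(\str H)$-flow $Y$, restricting to $\Aut(\str H^+)$ yields a fixed point $y_0\in Y$, the equivariant assignment $g\cdot\str H^+\mapsto g\cdot y_0$ is well-defined on the dense orbit of $\str H^+$, and the standard uniform-continuity argument of the KPT correspondence extends it to a continuous $\Aut(\str H)$-map $X\to Y$, which is onto by minimality of $Y$; combined with (i), $X$ is the (metrizable) universal minimal flow. (iii) $X$ has a comeagre orbit: the orbit $\Aut(\str H)\cdot\str H^+$ is precisely the set of $\str H'\in X$ satisfying all the one-point extension axioms of the \Fraisse{} limit of $\K^+$, and using amalgamation in $\K^+$ together with universality of $\str H$ for $\K$ one checks that each such axiom cuts out a dense open subset of $X$, so their countable intersection---the orbit of $\str H^+$---is comeagre.

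For the implication $(1)\Rightarrow(2)$, let $X:=M(\Aut(\str H))$ be the universal minimal flow, assumed metrizable (hence Polish) with a comeagre, in particular non-meagre, orbit $\mathcal O$. By Effros' theorem the action of $\Aut(\str H)$ on $\mathcal O$ is micro-transitive; fixing $x_0\in\mathcal O$ and letting $\Gamma_0$ be its stabilizer (a closed subgroup of $\Aut(\str H)$), the orbit map identifies $\Aut(\str H)/\Gamma_0$ with the dense subspace $\mathcal O$ of the compact space $X$, and one checks this identification respects the natural uniformities, so $\Aut(\str H)/\Gamma_0$ is precompact with completion $X$. Let $\str H^+$ be the expansion of $\str H$ to the language $L^+$ obtained by adding one relation symbol for each orbit of $\Gamma_0$ on tuples of $H$; then $\str H^+$ is homogeneous with $\Aut(\str H^+)=\Gamma_0$, precompactness of the coset space $\Aut(\str H)/\Gamma_0$ is precisely the statement that $\K^+:=\Age(\str H^+)$ is a precompact expansion of $\K:=\Age(\str H)$, and minimality of $X$ is precisely the statement that $\K^+$ has the expansion property relative to $\K$ (the dictionary of step (i), read backwards). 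Finally $\K^+$ is Ramsey: by Theorem~\ref{thm:KPT} it suffices that $\Gamma_0$ be extremely amenable, and this holds because $\Gamma_0$ is the stabilizer of a point of a universal minimal flow---given a minimal $\Gamma_0$-flow $Z$, the co-induced flow $W$ of $\Gamma_0$-equivariant maps $\Aut(\str H)\to Z$ with the topology of pointwise convergence is a closed $\Aut(\str H)$-invariant subset of $Z^{\Aut(\str H)}$, hence a compact $\Aut(\str H)$-flow, so by universality of $X$ some minimal subflow of $W$ is an equivariant continuous image of $X$, and the image of $x_0$ is a $\Gamma_0$-fixed point of $W$, which (evaluating at the identity) yields a $\Gamma_0$-fixed point of $Z$, forcing $Z$ to be a singleton.

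The main obstacle is the topological-to-combinatorial passage in $(1)\Rightarrow(2)$: one needs the Effros/micro-transitivity input to know that a comeagre orbit carries the quotient uniformity faithfully, since this is exactly what converts ``metrizable universal minimal flow with comeagre orbit'' into ``precompact coset space'', and hence into a bona fide precompact expansion; once the right expansion $\str H^+$ is identified, the equivalences minimality $\leftrightarrow$ expansion property and Ramsey $\leftrightarrow$ extreme amenability are the routine KPT dictionary. In the $(2)\Rightarrow(1)$ direction the only delicate point is the comeagreness of the generic orbit in step (iii), which is a Banach--Mazur-type argument that uses only that $\K^+$ amalgamates.
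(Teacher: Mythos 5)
First, note that the survey does not prove this statement: it is quoted from Melleray--Nguyen Van Th\'e--Tsankov \cite[Corollary 1.3]{Melleray2015}, so there is no in-paper proof to compare against. Your architecture (the space of expansions as a candidate universal minimal flow, the KPT dictionary translating Ramsey $\leftrightarrow$ extreme amenability and expansion property $\leftrightarrow$ minimality, and Effros' theorem to convert a comeagre orbit into a precompact coset space) is the correct one and matches the strategy of \cite{Melleray2015}. However, there is one genuine gap and one smaller one.

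The genuine gap is the co-induction step at the end of $(1)\Rightarrow(2)$. You take the set $W$ of $\Gamma_0$-equivariant maps $\Aut(\str H)\to Z$ with the topology of pointwise convergence and assert it is a compact $\Aut(\str H)$-flow. It is a compact space on which $\Aut(\str H)$ acts by homeomorphisms, but the action map $\Aut(\str H)\times W\to W$ is \emph{not} jointly continuous: an element $f\in W$ is an arbitrary (equivariant) function on the non-discrete group $\Aut(\str H)$, chosen freely on a transversal of the cosets of the closed (but not open) subgroup $\Gamma_0$, so $(g,f)\mapsto (g\cdot f)(g_1)=f(g^{-1}g_1)$ has no reason to be continuous in $g$. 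Since $W$ is then not a flow in the required sense, universality of $M(\Aut(\str H))$ cannot be invoked, and the conclusion that $\Gamma_0$ is extremely amenable does not follow as written. This is exactly the delicate point of \cite{Melleray2015}; repairing it requires a genuinely different argument (e.g.\ working with the greatest ambit and exploiting the precompactness of $\Aut(\str H)/\Gamma_0$ that you have already established, as is done there), not just a routine patch.

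The smaller issue is in $(2)\Rightarrow(1)$: the hypothesis gives an expansion $\str H^+$ with $\Age(\str H^+)=\K^+$ Ramsey and with the expansion property, but it does not give that $\str H^+$ is homogeneous. Theorem~\ref{thm:KPT}, as you use it in step (ii), applies to homogeneous structures, and your step (iii) identifies the comeagre orbit with the orbit of $\str H^+$ via the extension axioms of the \Fraisse{} limit of $\K^+$ --- which characterizes the orbit of the \emph{generic} expansion, not a priori the orbit of $\str H^+$. Both points are fixed by first replacing $\str H^+$ with the \Fraisse{} limit of $\K^+$ realized as an expansion of $\str H$ (possible because $\K^+$ is an amalgamation class by Observation~\ref{obs:ramseyamalg} and is automatically a reasonable precompact expansion of $\Age(\str H)$, so a back-and-forth builds the generic expansion on the vertex set $H$), but this substitution should be made explicit since without it the stabilizer you feed into KPT and the orbit you prove comeagre are not the right ones.
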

Let us note that by~\cite{zucker2016topological}, one can equivalently drop the comeagre orbit assumption in Theorem~\ref{thm:metrizable}, and by essentially the same proof as~\cite[Theorem~10.7]{Kechris2005}, one can equivalently drop the requirement that the expansion has the expansion property (see also~\cite{The2013universal}).

\begin{remark}\label{rem:unique_expansion}
	One can prove from Theorem~\ref{thm:metrizable} (or more precisely from a slightly more refined version where the universal minimal flow is constructed explicitly as the closure of the orbit of the expansion under the action of $\Aut(\str H)$) that for a given homogeneous structure $\str{H}$
	there is, up to quantifier-free $L_{\omega_1,\omega}$-interdefinability, at most one expansion $\str{H}^+$ such that
	$\Age(\str{H}^+)$ is a reasonable precompact Ramsey expansion of $\Age(\str{H})$ with the
	expansion property (relative to $\Age(\str{H})$), which justifies calling it the canonical Ramsey expansion. This was done in Section~9 of~\cite{Kechris2005} for expansions by a linear order only but the same proof also works in this more general case. The only written proof of this fact that we are aware of is by Hadek~\cite{Hadek2025} and is purely combinatorial (though it uses the language of category theory).

	(Here, an $L$-structure $\str M$ is \emph{first-order definable} in an $L'$-structure $\str M'$ if $\str M$ and $\str M'$ have the same vertex set, and for every symbol $S\in L$ there is a first-order $L'$-formula $\varphi_S$ such that $\bar{x} \in S^\str M$ if and only if $\str M'$ satisfies $\varphi_S(\bar{x})$ (one can consider $a$-ary functions as $(a+1)$-ary relations). The structures $\str M$ and $\str M'$ are \emph{first-order interdefinable} if $\str M$ is first-order definable in $\str M'$ and vice versa. $L_{\omega_1,\omega}$-(inter)definability is a generalization of first-order (inter)definability where one allows infinite disjunctions in formulas.)

	In fact, if $\str H$ is $\omega$-categorical, one actually gets uniqueness up to quantifier-free first-order interdefinability. On the other hand, infinitary formulas are necessary in the general case: Consider, for example, $\str H$ to be the \Fraisse{} limit of all finite graphs with edges labelled by countably many colours. Put $\str H^+ = (\str H, {<})$ to be the free linear ordering of $\str H$. By Theorem~\ref{thm:NR}, $\Age(\str H^+)$ is Ramsey and it is easy to verify that it has the expansion property relative to $\Age(\str H)$. However, given any set $S\subseteq \omega$ of labels, one can define $<_S$ by reversing $<$ on all edges with label from $S$. Clearly, the age of $(\str H, <_S)$ is also Ramsey and has the expansion property relative to $\Age(\str H)$, but one needs infinitary disjunctions to get $<$ from $<_S$ and vice versa.
\end{remark}

\medskip

The classification programme of Ramsey classes thus turns into two questions.
Given a locally finite homogeneous structure $\str{H}$, we ask the following:
\begin{enumerate}[label=Q\arabic*]
	\item \label{Q1} Is there a Ramsey structure $\str{H}^+$ which is a (relational) expansion of $\str{H}$
	      such that $\Age(\str{H}^+)$ is a precompact expansion of $\Age(\str{H})$? (If $\str H$ is already Ramsey then one can put $\str{H}^+=\str{H}$.)
\end{enumerate}
If the answer to \ref{Q1} is positive, we know that $\str{H}^+$ can be
chosen such that $\Age(\str{H})^+$ has the expansion property relative to
$\Age(\str{H})$.
We can moreover ask:
\begin{enumerate}[label=Q\arabic*,resume]
	\item \label{Q2}
	      If the answer to \ref{Q1} is positive, can we give an explicit description of $\str{H}^+$ which
	      additionally satisfies that the $\Age(\str{H}^+)$ has the expansion property with respect to $\Age(\str{H})$?
	      In other words, can we describe the canonical Ramsey expansion of $\str{H}$?
\end{enumerate}
\begin{remark}
	In addition to the universal minimal flow (Theorem~\ref{thm:metrizable}),
	by a counting argument given by Angel, Kechris, and Lyons~\cite{AKL14} (which goes back to Nešetřil and Rödl~\cite{Nesetvril1978} where the strong ordering property is established for classes of graphs without short cycles),
	knowledge of the answer to question~\ref{Q2} often gives amenability of
	$\Aut(\str{H})$, and under somewhat stronger assumptions also shows that
	$\Aut(\str{H})$ is uniquely ergodic. See
	\cite{sokic2015semilattices,PawliukSokic16,jahel2019unique} for an initial
	progress of the classification programme in this direction.
\end{remark}

\section{2010s: A plethora of Ramsey expansions}
The initial success in obtaining Ramsey expansions for classes from the classification programme of homogeneous structures (such as Ramsey expansions for all amalgamation classes from Cherlin's homogeneous digraph classification~\cite{Cherlin1998} given by Jasi{\'n}ski, Laflamme, Nguyen Van Th{\'e}, and Woodrow~\cite{Jasinski2013}) ignited renewed interest in the search for new Ramsey classes.
Towards this goal, multiple authors learned and applied the partite construction~\cite{The2010,Solecki2010,Solecki2012,Sokic2017,Sokic2016,boettcher2013ramsey,foniok2014ramsey,Jasinski2013,sokic2012ramsey2,junge2023categorical}, several transfer
principles became well understood (for example product Ramsey arguments, model-theoretic notions such as interpretations and free superpositions~\cite{bodirsky2014new}, categorical preadjunctions~\cite{masulovic2016pre}, semi-retractions~\cite{Scow2021,BartosovaSemiretractions}, or range-rigid functions~\cite{Mottet2021}), and
various Ramsey classes were identified using different methods~\cite{Bodirsky2010,sokic2013ramsey,sokic2015semilattices,sokic2015directed}.
The at the time new framework of Ramsey expansions in combination with a large supply of homogeneous structures proved to be useful,
making it possible to take smaller steps at a time and see Ramsey classes in a context and not only as isolated examples of hard
theorems.
Bodirsky's~\cite{Bodirsky2015} and Nguyen Van Th\'e's~\cite{NVT14} surveys describe the state of the art of that time.

Recall that a countable structure $\str M$ is \emph{$\omega$-categorical} if $\Aut(\str M)$ has only finitely many orbits on $n$-tuples for every $n\in \mathbb N$.
Motivated by rapid progress in finding new Ramsey classes
it seemed natural to ask for interesting examples of amalgamation classes with no Ramsey expansion.  This question
(in an informal variant also appearing in Ne\v set\v ril's 2011 talk at the Bertinoro meeting),
needs a careful formulation, and two variants were considered:

\begin{question}[Bodirsky--Pinsker--Tsankov~\cite{Bodirsky2013}, See also~\cite{Melleray2015}]\label{q:ramsey_precompact}
	Does the age of every $\omega$-categorical structure have a precompact Ramsey expansion?
\end{question}
\begin{question}[Bodirsky--Pinsker--Tsankov~\cite{Bodirsky2011a}]\label{q:ramsey_finite}
	Does every amalgamation class in a finite relational language have a Ramsey expansion in a finite relational language?
\end{question}
These questions were the starting points for the first
author's adventures in the area of structural Ramsey theory. During the 2013
trimester on homogeneous structures held in Bonn, he together with Ne\v
set\v ril attempted to answer these questions negatively.  Nešetřil's
intuition was that Ramsey classes are a lot more special than
homogeneous structures, and most likely a ``randomly chosen'' homogeneous structure will
be a counterexample to the question.  The lack of a counterexample was
simply attributed to the fact that the classification programme of
homogeneous structures considers only particularly simple examples of
structures.

For this reason, an ``exotic''
example was considered: the class of finite bowtie-free graphs.  This is the class
$\mathcal K$ of all finite graphs $\str{G}$ not containing a bowtie
(see Figure~\ref{fig:bowtie}) as a (not necessarily induced) subgraph.
\begin{figure}
	\centering
	\includegraphics{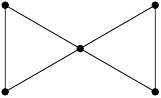}
	\caption{Bowtie graph.}
	\label{fig:bowtie}
\end{figure}
The existence of a countable countably-universal bowtie-free graph (constructed by
Komj\'ath~\cite{Komjath1999}) served as a key example in the more
general result of Cherlin, Shelah, and Shi~\cite{Cherlin1999}.  While
bowtie-free graphs seemed completely out of reach of the
Ramsey constructions at that time, primarily due to the lack of the strong
amalgamation property, three months later (when, as Ne\v set\v ril described it, ``we were both working hard as dogs'') a Ramsey expansion was found using an adjustment of the partite construction~\cite{Hubivcka2014}.
The main idea, still somewhat implicit in~\cite{Hubivcka2014}, was to add unary functions
to represent closures and turn the class into a free amalgamation class in a language with both
relations and unary functions.

Looking back, working hard as dogs was not truly necessary. An easy
on-the-top construction to add unary functions into Ramsey classes in
relational languages was later found~\cite[Theorem 4.29]{Hubicka2016}, \cite[Section
	7]{hubicka2024survey}, and using this trick, the canonical Ramsey expansion of bowtie-free
graphs can be obtained from the standard Ne\v set\v ril--R\"odl theorem.
Independently, Ramsey classes in languages containing unary functions were also
considered by Soki\'c~\cite{Sokic2016} who also approached the problem by adapting
the partite construction.

\subsection{Systematic approach: functions}
The unexpected  failures of the approach of considering ``exotic'' amalgamation classes in a
hope of finding a counterexample for the existence of precompact Ramsey
expansions led to a need to make the proof techniques used to show that a given class
is Ramsey more systematic and effective.  It was clear that writing partite
construction proofs for many amalgamation classes of interest is not only laborious, but also
dangerous, since the construction is very subtle and it is easy to
include mistakes.

The first natural question was thus whether the partite construction method can be
further generalized to structures in languages with function symbols of arity 2
or greater. Answering this was significantly harder and needed a bigger surgery
to the whole approach, discussed in Appendix~\ref{arecursive}.
However, in the end it led to the following natural generalization of irreducible structures and the Ne\v set\v ril--R\"odl theorem.

\begin{definition}[Irreducible Structure~\cite{Evans3}]
	\label{def:irreducible}
	A structure is \emph{irreducible} if it is not the free amalgam of any two of its proper
	substructures.
\end{definition}
Note that for languages without constants (nullary function), the empty structure is always a substructure; taking the free amalgam over the empty structure corresponds to finding joint embeddings.
\begin{observation}
	If $L$ is a relational language and $\str A$ an $L$-structure then $\str A$ is irreducible if and only if it is irreducible as defined in Section~\ref{sec:ah} (\ie{}, its Gaifman graph is complete).
\end{observation}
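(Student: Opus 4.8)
The plan is to prove both implications directly from the definitions, translating ``free amalgam'' into the three conditions of Definition~\ref{def:amalgamation} (strongness, covering $C=\beta_1[B_1]\cup\beta_2[B_2]$, and the tuple condition). For a relational language $L$ this simplifies: there is no function clause, and every substructure is induced, so a tuple of $\str A$ lying in some relation and having all its entries inside a subset $S\subseteq A$ already lies in the corresponding relation of the substructure induced on $S$.

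For the forward direction I would assume the Gaifman graph of $\str A$ is \emph{not} complete, witnessed by two distinct vertices $u,v\in A$ such that no tuple of any relation $\rel{A}{}$ contains both $u$ and $v$. Take $\str B_1$ to be the substructure of $\str A$ induced on $A\setminus\{v\}$ and $\str B_2$ the one induced on $A\setminus\{u\}$; both are proper substructures since $u\neq v$. Let $\str A_0=\str B_1\cap\str B_2$ be the substructure induced on $A\setminus\{u,v\}$ and take all four maps to be inclusions. One checks that $\str A$ is then the free amalgam of $\str B_1$ and $\str B_2$ over $\str A_0$: it is strong because $B_1\cap B_2=A_0$; it satisfies $A=B_1\cup B_2$ because $u\neq v$; and any tuple lying in a relation of $\str A$ must avoid $u$ or avoid $v$ (it cannot contain both), hence lies entirely inside $B_1$ or entirely inside $B_2$, and therefore already in the corresponding relation of $\str B_1$ or of $\str B_2$. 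Thus $\str A$ is a free amalgam of two proper substructures, i.e.\ not irreducible in the sense of Definition~\ref{def:irreducible}.

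For the converse I would assume $\str A$ is not irreducible; transporting along the relevant isomorphism, this means there are proper substructures $\str B_1,\str B_2\subseteq\str A$ such that $\str A$ is the free amalgam of $\str B_1$ and $\str B_2$ over their intersection, with the inclusions as the amalgamation maps. Covering gives $A=B_1\cup B_2$, and properness lets me pick $u\in B_1\setminus B_2$ and $v\in B_2\setminus B_1$. The tuple condition of freeness says every tuple in a relation of $\str A$ sits entirely in $B_1$ or entirely in $B_2$; in the first case it omits $v$, in the second it omits $u$. Hence no relation tuple contains both $u$ and $v$, so the Gaifman graph of $\str A$ is not complete.

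There is essentially no serious obstacle here, as this is a sanity check that Definition~\ref{def:irreducible} extends the notion of Section~\ref{sec:ah}. The only points requiring a little care are: the harmless reduction in the converse to the case where $\str B_1,\str B_2$ are genuine inclusion substructures rather than images of abstract embeddings; recording explicitly that for relational $L$ the free-amalgam condition has no function clause and substructures are automatically induced; and the degenerate cases $|A|\le 1$, where both notions hold vacuously and nothing needs to be checked.
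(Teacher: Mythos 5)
Your proof is correct and is essentially the paper's argument unwound: the paper disposes of this in one sentence by noting that the Gaifman graph of a free amalgam of relational structures is the free amalgam of their Gaifman graphs, and your two explicit implications (splitting off a non-adjacent pair $u,v$ into $A\setminus\{v\}$ and $A\setminus\{u\}$, and conversely extracting $u\in B_1\setminus B_2$, $v\in B_2\setminus B_1$ from a free-amalgam decomposition) are exactly the hands-on verification of that fact. Nothing is missing; your explicit treatment of the degenerate cases and of the reduction to inclusion substructures is careful but not strictly needed.
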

\begin{proof}
	This easily follows from the fact that the Gaifman graph of the free amalgam of relational structures is the free amalgam of their Gaifman graphs.
\end{proof}
\begin{theorem}[Hubi\v cka--Ne\v set\v ril, 2019~\cite{Hubicka2016}, See also~\cite{Evans3}]
	\label{thm:HN}
	Let $L$ be a language (possibly with both function and relation symbols) containing a binary relation $<$, and let $\mathcal F$ be a set of finite $L$-structures.
	Assume that every $\str{F}\in \mathcal F$ has an irreducible $(L\setminus \{<\})$-reduct.
	Then the class of all finite ordered $\mathcal F$-free structures is a Ramsey class.
\end{theorem}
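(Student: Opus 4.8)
The plan is to prove Theorem~\ref{thm:HN} by the partite construction, in the form adapted to languages with function symbols (the recursive partite construction of Appendix~\ref{arecursive}). Write $\K = \Forb(\mathcal F)$ for the class of all finite ordered $\mathcal F$-free $L$-structures; we must produce, for every $\str A,\str B\in\K$, some $\str C\in\K$ with $\str C\longrightarrow(\str B)^{\str A}_2$. A first, standard reduction lets us assume that the $(L\setminus\{<\})$-reduct of $\str A$ is irreducible: a general $\str A$ splits along its irreducible pieces and $\str A$-Ramseyness is recovered by iterated amalgamation, using that $\K$ is an amalgamation class. That fact, and more generally the preservation of $\mathcal F$-freeness under free amalgamation, rests on a single observation used throughout: if the $(L\setminus\{<\})$-reduct of every member of $\mathcal F$ is irreducible, then no $\str F\in\mathcal F$ can embed into a free amalgam of two structures without landing entirely in one of the two sides, because in a free amalgam two vertices from opposite sides lie in no common relation and no common function, while the Gaifman graph of $\str F$ is complete. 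In particular the free amalgam of two members of $\K$ is again in $\K$, regardless of whether $\mathcal F$ is finite.

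The heart of the argument is the usual two layers. \emph{Partite lemma} (local layer): for our irreducible $\str A$ and each $n$ there is an $n$-partite $\mathcal F$-free ordered structure $\str L$ whose transversal copies of $\str A$ are as generic as possible and which is Ramsey for transversal copies of $\str A$ patterned by which of the $n$ parts they meet; this is obtained by encoding transversal copies of $\str A$ as words and invoking the Hales--Jewett theorem. \emph{Partite construction} (global layer): choose $n$ with $n\longrightarrow(|B|)^{|A|}_2$ in the classical sense; let the base picture $\str P_0$ be the disjoint union, over all increasing maps from the vertex order of $\str B$ into the $n$ parts, of one transversally placed copy of $\str B$ (which is $\mathcal F$-free by the observation above); enumerate the transversal copies of $\str A$ in $\str P_0$ as $\theta_1,\dots,\theta_M$; and build a chain $\str P_0,\str P_1,\dots,\str P_M$ in which $\str P_k$ is formed from many disjoint copies of $\str P_{k-1}$ by identifying them along $\theta_k$ and then mapping the result onto a copy of the partite-lemma object $\str L$ so as to make any prescribed colouring constant on the $\theta_k$-copies of each type. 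Put $\str C=\str P_M$. Given a $2$-colouring $\chi$ of $\Emb(\str A,\str C)$, reading the partite steps backwards yields an embedded copy of $\str P_0$ on which $\chi$, restricted to transversal copies of $\str A$, depends only on the $|A|$-element subset of parts used; the choice of $n$ then yields $|B|$ parts all of whose $|A|$-element subsets receive one colour, and the copy of $\str B$ placed transversally on exactly those parts is $\chi$-monochromatic, since any copy of $\str A$ inside it occupies $|A|$ distinct parts and is therefore transversal. At each step one checks $\str P_k\in\K$: $\mathcal F$-freeness survives because the identification along $\theta_k$ is arranged to be a free amalgamation, to which the Gaifman-graph observation applies --- this is exactly the engine of Theorem~\ref{thm:NR}.

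The main obstacle, and the one genuinely new relative to the relational Theorem~\ref{thm:NR}, is controlling the function symbols in the amalgamation step. For the identification of copies of $\str P_{k-1}$ along $\theta_k$ to be a free amalgamation producing a \emph{valid} $L$-structure (one closed under the functions), the copy $\theta_k[\str A]$ must itself be closed in $\str P_{k-1}$; but the partite construction naturally produces copies of $\str A$ whose closure is strictly larger. This is precisely what the recursive partite construction of Appendix~\ref{arecursive} is built to handle: one runs the construction by induction on the size of the closure of the distinguished copy, amalgamating first along the closures (which are themselves irreducible once one records the requisite function data) and only afterwards along $\str A$ itself --- the ``add unary functions to represent closures'' idea from the bowtie-free case, carried out in full generality for function symbols of arbitrary arity. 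Verifying that this recursion terminates and that every intermediate structure stays $\mathcal F$-free and closed is the technical core of the proof; once it is in place, the rest is the by-now classical partite machinery, together with the reduction to irreducible $\str A$ and the Ramsey-theorem bookkeeping described above.
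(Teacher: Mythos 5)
Your proposal follows the right general strategy (the Hales--Jewett-based partite construction, run on top of Theorem~\ref{thm:unNR}, with functions handled through closures), and you correctly isolate the genuine obstacle: in the picture-lemma step one freely amalgamates copies of the previous picture over the subsystem sitting above $\alpha_i[\str A]$, and if that subsystem is not closed under the functions, the glued-in copies of the picture (hence of $\str B$) are no longer closed substructures of the result. However, the resolution of exactly this obstacle is the entire content of Theorem~\ref{thm:HN} beyond Theorem~\ref{thm:NR}, and your proposal replaces it with a gesture --- ``induction on the size of the closure of the distinguished copy, amalgamating first along the closures and only afterwards along $\str A$'' --- that is neither developed nor obviously workable: different copies of $\str A$ with the same projection have different (and uncontrolled, possibly non-transversal) closures, so it is unclear what one enumerates, what one amalgamates over, or why the induction terminates. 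The paper's device is quite different and quite specific: functions are re-encoded as $(n+1)$-ary relations, one proves a version of the unrestricted theorem for colouring $U$-closed embeddings (Theorem~\ref{thm:models2}), and each step of the outer partite construction is performed in two stages --- a ``half-closed'' picture lemma (Lemma~\ref{lem:indpicutreU}) producing monochromatic \emph{closed} copies of $\str A$ inside a possibly non-closed copy of the picture, followed by an entire auxiliary partite construction (Lemma~\ref{lem:rpartite}) run on top of that little picture to restore closedness of the copy of the picture itself. Only afterwards is $\mathcal F$-freeness imposed by one more round of the induced partite construction. Without some concrete substitute for this recursive step, your proof does not go through.

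Two smaller but real problems. First, the opening ``standard reduction'' to the case where the $(L\setminus\{<\})$-reduct of $\str A$ is irreducible is neither standard nor justified: Ramseyness for colouring a structure that decomposes as a free amalgam of pieces does not follow from Ramseyness for the pieces by ``iterated amalgamation'' (this is the same phenomenon as vertex-Ramsey not implying edge-Ramsey), and fortunately the partite construction does not need this reduction at all --- ordered structures are already irreducible as $L$-structures, and the partite lemma works for any transversal $\str A$. Second, your $\mathcal F$-freeness argument (``the identification along $\theta_k$ is a free amalgamation, to which the Gaifman-graph observation applies'') only covers the amalgamation half of the picture lemma; the Hales--Jewett power object is \emph{not} built by free amalgamation and in the non-induced form it genuinely creates irreducible substructures lying in no copy of $\str B$. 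What saves the day is the induced partite lemma: the power $\str B^N$ is an $\str A$-partite system, so its irreducible substructures are transversal and project, via a homomorphism-embedding, into a copy of $\str B$ in the base (invariant~(\ref{thm:inducedpartite:3}) of Theorem~\ref{thm:inducedpartite}). Also note that for languages with functions, irreducibility is Definition~\ref{def:irreducible} (not being a free amalgam of proper substructures), not completeness of the Gaifman graph; the conclusion you want still holds, but the justification must be phrased accordingly.
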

We will prove this theorem in the appendix. Its consequence is, in particular, that if $\K$ is a free amalgamation class then $\K^<$ is Ramsey. The expansion property for free amalgamation classes was analysed in~\cite{Evans3}.
\begin{remark}\label{rem:allthousy_funkce}
	Technically, the citation~\cite{Hubicka2016} is incorrect as~\cite{Hubicka2016} works with standard non-set-valued model-theoretic functions. However, for linearly ordered locally finite structures it actually makes no difference if one allows set-valued functions: One can equivalently have countably many standard functions for every set-valued one and use the $i$-th one to address the $i$-th smallest vertex in the image. This is, however, not the case for unordered structures where set-valued functions are strictly more general (this will be relevant in Section~\ref{sec:eppa}).
\end{remark}

A related result, using a different adjustment of the partite construction, was at the same time also obtained by Bhat, Nešetřil, Reiher, and Rödl~\cite{bhat2016ramsey},
showing that the class of finite ordered partial Steiner triple system is Ramsey for colouring \emph{strong} subsystems.
This result translates naturally to the setup of free amalgamation classes in the language $L$ with a single binary function $\func{}{}$:
Recall that a \emph{partial Steiner triple
	system} is a pair $(A,\mathcal A)$ where $A$ is set of vertices and $\mathcal A\subseteq \binom{A}{3}$ satisfies $\vert E\cap E'\vert \leq 1$ for every $E,E'\in \mathcal A$ such that $E\neq E'$.
For every such partial Steiner triple system one can  construct an $L$-structure $\str{A}$ by putting $\func{A}{}(a,b)=\{c\}$ if and only if $\{a,b,c\}\in \mathcal A$, and putting $\func{A}{}(a,b)=\emptyset$ otherwise.
This yields a free amalgamation class where substructures correspond precisely to strong subsystems in the sense of Bhat, Nešetřil, Reiher, and Rödl.
This approach also easily generalizes to designs~\cite{Hubicka2017designs}.

\subsection{Systematic approach: homomorphism-embeddings and completions}\label{sec:systematic}
Amalgamation classes without the free amalgamation property present the next interesting problem. We have already seen a Ramsey expansion of finite partial orders (Theorem~\ref{thm:posets}). Prompted by the authors of~\cite{Kechris2005}, Ne\v set\v ril proved the following theorem (see also a related result of Dellamonica and R{\"o}dl~\cite{Dellamonica2012}).
\begin{theorem}[Ne\v set\v ril, 2007~\cite{Nevsetvril2007}]
	\label{thm:nesetril-metric-spaces}
	The class of all finite linearly ordered metric spaces is a Ramsey class.
\end{theorem}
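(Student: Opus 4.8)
The plan is to prove the theorem by the \emph{completion method}: realise finite ordered metric spaces as the ``saturated'' members of a larger ordered class, and run the partite construction so that it never leaves the metric world. Work in the language $L=\{<\}\cup\{E_d:d\in\mathbb R_{>0}\}$ with each $E_d$ binary, and identify a finite ordered metric space $(X,\rho,<)$ with the ordered $L$-structure on $X$ in which $(x,y)\in E_d$ precisely when $x\neq y$ and $\rho(x,y)=d$; each such structure is irreducible, since its Gaifman graph is complete. Call this class $\mathcal M$, and let $\mathcal R$ be the ambient class of all finite ordered $L$-structures in which every pair carries at most one label and which contain no \emph{non-metric cycle} -- no set of distinct vertices inducing a labelled cycle one of whose edge labels strictly exceeds the sum of the others. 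Every $\str B\in\mathcal R$ has a natural completion: equip $B$ with the shortest-path metric determined by the given labels as edge lengths; the absence of non-metric cycles makes this a genuine metric agreeing with every label already present, so the identity is a homomorphism-embedding of $\str B$ into a member of $\mathcal M$ (it is an embedding on each irreducible substructure, since those are already metric spaces). Note $\mathcal M\subseteq\mathcal R$.

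The first observation is that $\mathcal M$ is a strong amalgamation class: the amalgam of $\str B_1,\str B_2\in\mathcal M$ over a common $\str A$ is the completion, inside $\mathcal R$, of their free amalgam -- that is, the free amalgam with every cross-pair labelled by its shortest-path distance -- and this leaves the distances inside $\str B_1$ and $\str B_2$ untouched, since a shortcut through the other side would be a non-metric cycle in one of them. With this in hand I would run the Ne\v set\v ril--R\"odl partite construction of Appendix~\ref{appendix:partite} for a given $\str B\in\mathcal M$: over a rigid relational Ramsey object supplied by the unrestricted Ne\v set\v ril--R\"odl theorem (Theorem~\ref{thm:unNR}), build the $\str B$-partite system and iteratively amalgamate the partial pictures along the successive copies of $\str A$ -- but performing each amalgamation as the \emph{metric} amalgam (free amalgam followed by shortest-path completion) rather than the free one, so that every intermediate picture, and the final object $\str C$, lies in $\mathcal M$. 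The Partite Lemma, a product-Ramsey statement about colourings of transversals, is unaffected by this change.

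The step I expect to be the main obstacle is precisely this replacement of free amalgamation by metric amalgamation inside the construction. The classical partite construction is tuned to free amalgamation -- no relation is ever created across parts -- and one must check that the extra distances forced by each completion step do not create new copies of $\str A$ that spoil the inductive transfer of colours through the levels of the construction. Equivalently, $\mathcal M$ is \emph{not} of the form ``$\mathcal F$-free for an irreducible family $\mathcal F$'': the obstructing non-metric cycles are not irreducible, and for unbounded distances not even of bounded size, so Theorems~\ref{thm:NR} and~\ref{thm:HN} do not apply off the shelf. The content of the proof is that $\mathcal M$ nevertheless behaves like a ``locally finite'' strong amalgamation subclass of the genuinely free amalgamation class of bad-triangle-free ordered edge-labelled graphs, and it is this that makes the partite construction go through in its full strength. (For orientation it helps to first treat metric spaces whose distances lie in a fixed finite set -- there the obstructing non-metric cycles have bounded length, so the bookkeeping is finite -- although the theorem as stated is for arbitrary real distances.)
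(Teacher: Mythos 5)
Your high-level strategy is the paper's: shortest-path completion, non-metric cycles as the obstructions, the observation that for a fixed finite distance set the obstructing cycles have bounded length, and the slogan that $\mathcal M$ is a locally finite subclass of an ambient edge-labelled Ramsey class. Indeed the paper proves exactly Proposition~\ref{prop:metric_completion} (your shortest-path completion, with the infimum capped by the maximum distance $D$ so it stays positive) and Lemma~\ref{lem:metric_unordered_locally_finite} (local finiteness with $n=\lceil\max(S)/\min(S)\rceil$, where $S$ is the distance set of $\str B$; condition~(\ref{locallyfinite:1}) of Definition~\ref{defn:locfin} forces the witness to be $S$-edge-labelled, which is what reduces arbitrary real distances to a finite set --- you treat this as an optional warm-up, but it is automatic), then handles the order via Lemma~\ref{lem:locally_finite_add_order} and invokes Theorem~\ref{thm:hn_completions}.

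The genuine gap is in your execution: you propose to perform the metric completion \emph{at every amalgamation step} of the partite construction, so that each intermediate picture already lies in $\mathcal M$. You correctly flag this as ``the main obstacle'' but do not resolve it, and it does not resolve: the cross-part distances forced by an intermediate completion destroy the partite-system structure (the projection $\pi_{\str P_i}\colon\str P_i\to\str D$ is no longer a homomorphism-embedding unless $\str D$ is itself already a metric space, which is the theorem you are trying to prove), and they create new transversal copies of $\str A$ whose projections were processed at earlier stages of the backward induction, so the colour-transfer argument breaks. The paper's resolution is to \emph{not} complete during the construction: run the partite construction entirely freely (Theorem~\ref{thm:sparseningRamsey}), obtaining a non-metric witness $\str C$ whose small substructures map to tree amalgams of copies of $\str B$ and which has a homomorphism-embedding to the ordered $\str C_0$, and perform the shortest-path completion exactly once at the end. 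A single terminal completion is harmless precisely because $\str A$ and $\str B$ are irreducible: every embedding of $\str A$ or $\str B$ into $\str C$ survives as an embedding into the completion $\str C'$, a colouring of $\Emb(\str A,\str C')$ pulls back to one of $\Emb(\str A,\str C)$, and the new copies of $\str A$ created by the completion never matter because the monochromatic copy of $\str B$ is found inside $\str C$, where all copies of $\str A$ it contains are intrinsic to $\str B\in\mathcal M$. Relocating the completion to the end is not a cosmetic choice --- it is the idea your proposal is missing.
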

The proof of this theorem was a starting point for Ne\v set\v ril and first author's generalizations towards classes described by
forbidden homomorphisms~\cite{Cherlin1999} which they analysed earlier for homogeneous expansions~\cite{Hubicka2013,hubivcka2015ramsey,Hubicka2009,Hubicka2017graham}. Before we state the main theorems, let us consider the following abstract question: Given an amalgamation class $\mathcal K$, how much additional knowledge is necessary to prove that it is a Ramsey class?

There is a nice intuition why amalgamation alone is not sufficient to build Ramsey objects, which was popularized by Ne\v set\v ril under the name of $\langle \str{A},\str{B},\str{C} \rangle$-hypergraphs:
Given structures $\str{A}$, $\str{B}$,  and $\str{C}$, the \emph{$\langle \str{A},\str{B},\str{C} \rangle$-hypergraph} is the hypergraph
with vertex set $\Emb(\str{A},\str{C})$ where $E\subseteq \Emb(\str{A},\str{C})$ forms a hyperedge if and only
if there exists $e\in \Emb(\str{B},\str{C})$ such that $E=\{e\circ f:f\in \Emb(\str{A},\str{B})\}$.
Intuitively, vertices are copies of $\str{A}$ and hyperedges are copies of $\str{B}$.  It easy to see that $\str{C}\longrightarrow (\str{B})^\str{A}_2$
if and only if the chromatic number of the \emph{$\langle \str{A},\str{B},\str{C} \rangle$-hypergraph} is at least 3.

The reason why, given $\str{A},\str{B}\in \mathcal K$, a structure $\str{C}\in \mathcal K$ satisfying $\str{C}\longrightarrow (\str{B})^\str{A}_2$ cannot be built out of $\str{A}$ and $\str{B}$
using the amalgamation property of $\mathcal K$ only is then easy to explain: Assume that there is such an abstract construction and apply it to the class $\mathcal{K}$ of finite ordered graphs for $\str A$ being a vertex and $\str B$ being an edge, always taking the free amalgam of graphs (and amalgamating the orders arbitrarily). Let $\str C$ be the resulting putative Ramsey witness, and observe that the graph reduct of $\str C$ is acyclic. Consequently, the $\langle \str{A},\str{B},\str{C} \rangle$-hypergraph, which is in this case isomorphic to the graph reduct of $\str C$, has chromatic number 2, a contradiction.

Proofs of Ramseyness thus need to contain a construction which is not amalgamation-based and produces a more complicated $\langle \str{A},\str{B},\str{C} \rangle$-hypergraph
containing cycles as depicted in Figure~\ref{fig:multiamalg}. On the other hand, tree-like $\langle \str{A},\str{B},\str{C} \rangle$-hypergraphs are easy to understand and have many nice properties, so it would be very convenient if one did have them as Ramsey witnesses.

\begin{figure}
	\centering
	\includegraphics{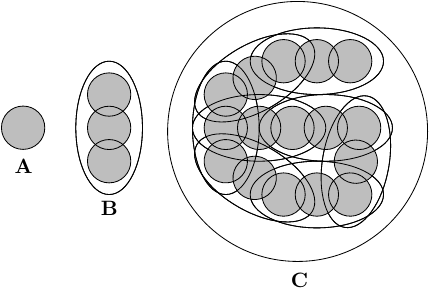}
	\caption{Construction of Ramsey objects as  $\langle \str{A},\str{B},\str{C} \rangle$-hypergraphs.}
	\label{fig:multiamalg}
\end{figure}

In~\cite{Hubicka2016}, Nešetřil and the first author address this dilemma. In order to state their results, we need two more definitions:
\begin{definition}[Tree Amalgam {\cite[Definition 7.1]{Hubicka2018EPPA}}]\label{defn:tree-amalgamation}
	Let $L$ be a language and let $\str A$ be a finite $L$-structure. We inductively define a \emph{tree amalgam of copies of $\str A$}:
	\begin{enumerate}
		\item If $\str D$ is isomorphic to $\str A$ then $\str D$ is a tree amalgam of copies of $\str A$.
		\item If $\str B_1$ and $\str B_2$ are tree amalgams of copies of $\str A$, $\str D$ is an $L$-structure, and $f_1\colon \str D\to \str B_1$ and $f_2\colon \str D\to \str B_2$ are embeddings such that $f_i[D]$ is a subset of some irreducible substructure of $\str B_i$ for $i\in \{1,2\}$, then the free amalgam of $\str B_1$ and $\str B_2$ over $\str D$ with respect to $f_1$ and $f_2$ is also a tree amalgam of copies of $\str A$.
	\end{enumerate}
\end{definition}
\begin{remark}
	Note that since we only amalgamate over substructures of irreducible substructures, it is possible to track the system of ``controlled'' copies of $\str A$ in the tree amalgam and this system is a tree. We remark that~\cite[Definition 7.1]{Hubicka2018EPPA} is slightly less restrictive as it has no requirements on $f_i[D]$. Since it is only used for structures $\str A$ which contain a binary relation forming a complete graph in~\cite{Hubicka2018EPPA}, this difference does not materialize in practice. In retrospect, it seems more practical to give a slightly more restrictive definition and avoid having additional requirements in statements using it, and so we decided to remedy it here.
\end{remark}

\begin{definition}[Homomorphism-embedding~\cite{Hubicka2016}]\label{def:homomorphism-embedding}
	Let $\str{A}$ and $\str{B}$ be structures.
	A homomorphism $f\colon\str{A}\to \str{B}$ is
	a \emph{homomorphism-embedding} if the restriction $f\vert _{\str C}$ is an embedding whenever $\str C$ is an irreducible
	substructure of $\str{A}$.
\end{definition}

While not stated in this form, the constructions in~\cite{Hubicka2016} actually prove the following (which we will prove in Appendix~\ref{sec:iterated}, see also~\cite{hubika2020structural}):
\begin{theorem}
	\label{thm:sparseningRamsey}
	Let $L$ be a language, $n$ an integer, and $\str{A}$, $\str B$, and $\str C_0$ finite $L$-structures such that $\str A$ is irreducible and $\str{C}_0\longrightarrow(\str{B})^\str{A}_2$.
	Then there exists a finite $L$-structure $\str{C}$ such that $\str{C}\longrightarrow (\str{B})^\str{A}_2$ and
	\begin{enumerate}
		\item there exists a homomorphism-embedding $\str{C}\to \str{C}_0$,
		\item\label{thm:sparseningRamsey:2} for every substructure $\str{C}'$ of $\str{C}$ with at most $n$ vertices there exists
		      a structure $\str{T}$ which is a tree amalgam of copies of $\str{B}$, and a homomorphism-embedding $\str{C}'\to\str{T}$, and
		\item\label{thm:sparseningRamsey:3} every irreducible substructure of $\str{C}$ extends to a copy of $\str{B}$. (Formally, if $\str C'\subseteq \str C$ is irreducible then there is an embedding $f\colon \str B\to \str C$ with $C'\subseteq f[B]$.)
	\end{enumerate}
\end{theorem}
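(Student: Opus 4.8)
The plan is to produce $\str C$ via the Ne\v set\v ril--R\"odl \emph{partite construction} (in the form given in Appendix~\ref{appendix:partite}) applied to the Ramsey witness $\str C_0$, making two choices with care --- the ``zero picture'' is assembled from copies of $\str B$, and the auxiliary \emph{Partite Lemma} is invoked in a sparse form --- and then to read the three extra conclusions off the structure of the construction. Recall its shape: a \emph{$\str C_0$-partite structure} is an $L$-structure $\str P$ with a homomorphism-embedding $\pi\colon\str P\to\str C_0$, the \emph{projection}; since $\str A$ is irreducible, every copy of $\str A$ in $\str P$ is \emph{transversal} (mapped isomorphically by $\pi$), and likewise we speak of transversal copies of $\str B$. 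Enumerating the copies of $\str B$ in $\str C_0$ as $\theta_1,\dots,\theta_m$, one builds $\str C_0$-partite structures $\str P_0,\dots,\str P_m$: the zero picture $\str P_0$ is a disjoint union of transversal copies of $\str B$, one over each $\theta_j$ with suitable multiplicity; and $\str P_k$ is obtained from $\str P_{k-1}$ by applying the Partite Lemma to the part $\str P_{k-1}\vert_{\theta_k}$ (a $\str B$-partite structure), producing a $\str B$-partite $\str D_k$ with $\str D_k\longrightarrow(\str P_{k-1}\vert_{\theta_k})^{\str A}_2$ over transversal copies, and then gluing one copy of $\str P_{k-1}$ onto $\str D_k$ for each transversal copy of $\str P_{k-1}\vert_{\theta_k}$ in $\str D_k$, every gluing being a \emph{free amalgamation over a substructure of an irreducible substructure}. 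The downward induction at the core of the partite construction, together with $\str C_0\longrightarrow(\str B)^{\str A}_2$, yields $\str P_m\longrightarrow(\str B)^{\str A}_2$ over transversal copies; as $\str A$ is irreducible, \emph{all} copies of $\str A$ in $\str P_m$ are transversal, so the $L$-structure $\str C:=\str P_m$ satisfies $\str C\longrightarrow(\str B)^{\str A}_2$.

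Property~(1) is then immediate: $\pi_m\colon\str C\to\str C_0$ is a homomorphism-embedding, the $\str C_0$-partite property being preserved by all of the free amalgamations performed along the way. Property~(3) is an invariant of the construction: in $\str P_0$ every irreducible substructure lies in a single transversal copy of $\str B$; inductively, the Partite Lemma may be required so that every irreducible substructure of its output $\str D_k$ lies in some transversal copy of $\str P_{k-1}\vert_{\theta_k}$, hence in a glued copy of $\str P_{k-1}$, and since the glued copies are attached by free amalgamation no irreducible substructure of $\str P_k$ can span two of them; so every irreducible substructure of $\str P_k$ lies inside one glued copy of $\str P_{k-1}$ and the induction hypothesis applies. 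For $\str C=\str P_m$ this, combined with transversality, says exactly that every irreducible substructure of $\str C$ extends to a copy of $\str B$ in $\str C$.

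The crux of the proof, and the main obstacle, is property~(2). I would prove, by induction on $k$, the stronger statement that every substructure of $\str P_k$ on at most $n$ vertices admits a homomorphism-embedding into some tree amalgam of copies of $\str B$; property~(2) is then the case $k=m$. The base case $\str P_0$ is trivial, since a small substructure lies in a single transversal copy of $\str B$. For the inductive step, use the \emph{sparse} Partite Lemma, which can be arranged so that every $\le n$-vertex substructure of $\str D_k$ homomorphism-embeds into a tree amalgam of copies of the input $\str P_{k-1}\vert_{\theta_k}$; inside such a tree amalgam, replace each controlled copy of $\str P_{k-1}\vert_{\theta_k}$ by the tree amalgam of copies of $\str B$ supplied by the induction hypothesis applied to the $\le n$-vertex trace of our substructure on that copy --- this is legitimate because a homomorphism-embedding carries irreducible substructures onto irreducible substructures, so the amalgamation sites of the outer tree amalgam stay inside irreducible substructures after the substitution and the result is again a tree amalgam of copies of $\str B$. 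This settles substructures lying in $\str D_k$. A general $\le n$-vertex substructure $\str C'$ of $\str P_k$ is covered by its traces on $\str D_k$ and on finitely many glued copies of $\str P_{k-1}$, any two such traces meeting only in a substructure of an irreducible substructure (by the form of the gluing and clause~(2) of Definition~\ref{defn:tree-amalgamation}); applying the cases already proved to the individual traces and gluing the resulting tree amalgams along this tree pattern --- again using that homomorphism-embeddings preserve irreducibility, so the gluings are legitimate tree-amalgam steps --- produces a tree amalgam of copies of $\str B$ with a homomorphism-embedding from $\str C'$.

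The two ingredients needing genuine work are the existence and exact statement of the sparse Partite Lemma --- for $\str A$ a single vertex this amounts to the existence of ``high-girth'' partite Ramsey hypergraphs, while the general case is the technically heaviest part of the whole argument --- and the bookkeeping that every amalgamation site occurring in the partite construction, and in the tree amalgams assembled from its output, lies inside an irreducible substructure; this is precisely what makes the tree-amalgam structure stable under the compositions of homomorphism-embeddings on which property~(2) relies.
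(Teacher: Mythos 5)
There are two genuine gaps. The first is local: your Partite Lemma, as stated, is false. You index the construction by copies $\theta_k$ of $\str B$ in $\str C_0$ and ask for $\str D_k\longrightarrow(\str P_{k-1}\vert_{\theta_k})^{\str A}_2$ over \emph{all} transversal copies of $\str A$. But whenever $\str B$ contains copies of $\str A$ with two distinct images, colouring each transversal copy of $\str A$ in $\str D_k$ by its projection into $\theta_k$ shows that no part-preserving copy of $\str P_{k-1}\vert_{\theta_k}$ can be monochromatic (it contains a transversal copy of $\str B$, hence copies of $\str A$ with every projection). This is precisely why the construction in Appendix~\ref{appendix:partite} is indexed by copies of $\str A$ in the base structure: each step controls only the copies of $\str A$ with one fixed projection $\alpha_i$, and the partite lemma is applied to the part of the picture lying above the irreducible $\alpha_i(\str A)$.

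The second gap is structural and is the heart of the matter: a single pass of the partite construction cannot deliver property~(\ref{thm:sparseningRamsey:2}) for arbitrary $n$, and your inductive step for it does not go through. When you decompose a small substructure of $\str P_k$ into its traces on the glued copies, the overlaps of these traces live in the attaching parts of the gluing; they project into $\theta_k\cong\str B$ (or, in the standard indexing, into $\alpha_i(\str A)$) but are \emph{not themselves contained in any irreducible substructure of the picture} --- they may contain several vertices of one part, or vertices of different parts extending to no common transversal copy. Hence free-amalgamating the tree amalgams obtained for the individual traces over these overlaps is not a tree-amalgam step in the sense of Definition~\ref{defn:tree-amalgamation} (your parenthetical appeal to its clause~(2) concerns the object being built, not the picture's gluing pattern), and nothing guarantees the result homomorphism-embeds into a tree amalgam of copies of $\str B$. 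The paper's resolution (Theorem~\ref{thm:tree_invariant}) is to push each trace down to the base structure $\str D$ via the projection, where the overlap genuinely sits inside the irreducible $\alpha_i(\str A)$ and the extra clause of Definition~\ref{defn:abnlocallytreelike} routes its image into a copy of $\str A$ of the tree amalgam --- but this needs $\str D$ itself to be $(\str A,\str B,n-1)$-locally tree-like, which an arbitrary Ramsey witness $\str C_0$ is not. One pass therefore raises the tree-likeness parameter by exactly one, and the actual proof iterates the \emph{entire} partite construction $n$ times; this $n$-fold iteration is the essential idea and is absent from your proposal. No ``sparse partite lemma'' is needed anywhere (the Hales--Jewett power projects into $\str A$ and hence into $\str B$, so its substructures are trivially tree-amalgamable), which is fortunate, since nothing of the strength you defer to is available in this generality.
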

Note that $\str C$ need not be linearly ordered (in fact, the second property implies that it almost never is linearly ordered). Nevertheless, $\str C_0$ will typically be linearly ordered, and thus from the existence of a homomorphism-embedding $\str C\to \str C_0$ it follows that $<_\str C$ is irreflexive, antisymmetric, and acyclic (when seen as a directed graph). Consequently, there is a linear order extending $<_\str C$.

\begin{remark}\label{rem:tree_amalgam_ramsey}
	Theorem~\ref{thm:sparseningRamsey} is, in fact, the result of interesting and fruitful interactions between the areas of structural Ramsey theory and EPPA (see Section~\ref{sec:eppa}). The first preprint of~\cite{Hubicka2016} appeared in 2016. In 2017, Aranda, Bradley-Williams, Hubi{\v c}ka, Karamanlis, Kompatscher, Konečný, and Pawliuk~\cite{Aranda2017, Aranda2017a, Aranda2017c} applied~\cite{Hubicka2016} to obtain Ramsey expansions of metric spaces from Cherlin's catalogue of metrically homogeneous graphs~\cite{Cherlin2011b,Cherlin2013}, and they also studied EPPA for these classes. It was very obvious that the arguments needed for applying~\cite{Hubicka2016} (to prove Ramseyness) and the arguments needed for applying the Herwig--Lascar theorem~\cite{herwig2000} (to prove EPPA) were very similar.

	This observation was the motivation for Nešetřil and the two authors to seek a general theorem for EPPA with assumptions similar to the main result of~\cite{Hubicka2016} (Theorem~\ref{thm:hn_completions}). The search was successful, resulting in~\cite{Hubicka2018EPPA} (see Section~\ref{sec:eppa} and Theorem~\ref{thm:hkn_completions}). Curiously, an EPPA analogue of Theorem~\ref{thm:sparseningRamsey} had to be proved first in order to carry out a crucial induction (see Theorem~\ref{thm:hkn}). After isolating this theorem, it was quite clear that the partite construction in~\cite{Hubicka2016} also proves an analogous statement. This was announced in the habilitation thesis of the first author~\cite{hubika2020structural}; Appendix~\ref{sec:iterated} is the first appearance of the full proof.
\end{remark}

For practical applications (showing that a given class is Ramsey) one typically thinks about structures produced by Theorem~\ref{thm:sparseningRamsey} as partial structures
which need to be completed back to a structure in the given class:
\begin{definition}[Completion~\cite{Hubicka2016}]
	Let $\str{C}$ be a structure. A structure $\str{C}'$ is a \emph{completion}
	of $\str{C}$ if there is a homomorphism-embedding $\str{C}\to\str{C}'$. It is a \emph{strong} completion if the homomorphism-embedding is injective. (Typically, $\str C'$ will be irreducible, hence the name completion.)
\end{definition}
Observe that if $\mathcal K$ is a class of irreducible structures, the amalgamation property for $\mathcal K$
can be equivalently formulated as follows: For $\str{A}$, $\str{B}_1$,
$\str{B}_2 \in \mathcal K$, and embeddings $\alpha_1\colon\str{A}\to\str{B}_1$ and
$\alpha_2\colon\str{A}\to\str{B}_2$, there is $\str{C}\in \mathcal K$
which is a  completion of the free amalgam of $\str{B}_1$ and $\str{B}_2$ over $\str{A}$ with respect
to $\alpha_1$ and $\alpha_2$ (which itself need not be in $\mathcal K$). Similarly, strong amalgamation
is just a strong completion of the free amalgamation.

In the following we define a locally finite subclass. Note that this is not related to locally finite substructures defined in Section~\ref{sec:lachlan}.
\begin{definition}[Locally Finite Subclass~\cite{Hubicka2016}]\label{defn:locfin}
	Let $L$ be a language, let $\mathcal R$ be a class of finite $L$-structures, and let $\mathcal K$ be a subclass of $\mathcal R$. We say
	that $\mathcal K$ is a \emph{locally finite subclass of $\mathcal R$} if for every $\str B\in \mathcal K$ and every $\str{C}_0 \in \mathcal R$ there is an integer $n = n(\str B, \str {C}_0)$ such that
	every finite $L$-structure $\str C$ has a completion $\str C'\in \mathcal K$, provided that it satisfies the following:
	\begin{enumerate}
		\item\label{locallyfinite:1} Every irreducible substructure of $\str C$ extends to a copy of $\str B$ in $\str C$ (formally, if $\str D\subseteq \str C$ is irreducible then there is an embedding $f\colon \str B\to \str C$ with $D\subseteq f[B]$),
		\item\label{locallyfinite:2} there is a homomorphism-embedding from $\str{C}$ to $\str{C}_0$, and
		\item\label{locallyfinite:3} every substructure of $\str{C}$ on at most $n$ vertices has a comple\-tion in $\mathcal K$.
	\end{enumerate}
	A locally finite subclass is \emph{strong} if the completion $\str C'$ can always be chosen to be strong.
\end{definition}

The following main result of~\cite{Hubicka2016} (cf. Remark~\ref{rem:allthousy_funkce}) is a direct corollary of Theorem~\ref{thm:sparseningRamsey}.
\begin{theorem}[Hubička--Nešetřil, 2019~\cite{Hubicka2016}]
	\label{thm:hn_completions}
	Let $L$ be a language, let $\mathcal R$ be a Ramsey class of finite $L$-structures and let $\mathcal K$ be a hereditary locally finite subclass of $\mathcal R$ with the amalgamation property consisting of irreducible structures. Then $\mathcal K$ is Ramsey.
\end{theorem}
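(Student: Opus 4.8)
The plan is to funnel a given pair $\str A,\str B\in\mathcal K$ through the partite construction packaged in Theorem~\ref{thm:sparseningRamsey}. Since $\mathcal K$ consists of irreducible structures, both $\str A$ and $\str B$ are irreducible. As $\mathcal R$ is Ramsey, I first fix $\str C_0\in\mathcal R$ with $\str C_0\longrightarrow(\str B)^\str A_2$, and I let $n=n(\str B,\str C_0)$ be the integer provided by the hypothesis that $\mathcal K$ is a locally finite subclass of $\mathcal R$. Applying Theorem~\ref{thm:sparseningRamsey} to $L$, $n$, $\str A$, $\str B$ and $\str C_0$ produces a finite $L$-structure $\str C$ with $\str C\longrightarrow(\str B)^\str A_2$ together with its three listed properties. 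The rest of the argument is to verify that $\str C$ meets hypotheses~\ref{locallyfinite:1}--\ref{locallyfinite:3} of Definition~\ref{defn:locfin} (with ``$\str B$'' and ``$\str C_0$'' there instantiated by our $\str B$ and $\str C_0$), so that Definition~\ref{defn:locfin} returns a completion $\str C^\star\in\mathcal K$ of $\str C$, and then to transport the partition arrow from $\str C$ to $\str C^\star$.

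Hypotheses~\ref{locallyfinite:1} and~\ref{locallyfinite:2} of Definition~\ref{defn:locfin} are verbatim conclusion~\ref{thm:sparseningRamsey:3} and the first conclusion of Theorem~\ref{thm:sparseningRamsey}. For hypothesis~\ref{locallyfinite:3}, take any substructure $\str C'\subseteq\str C$ on at most $n$ vertices; by conclusion~\ref{thm:sparseningRamsey:2} of Theorem~\ref{thm:sparseningRamsey} there exist a tree amalgam $\str T$ of copies of $\str B$ and a homomorphism-embedding $\str C'\to\str T$. Since the composition of two homomorphism-embeddings is again a homomorphism-embedding, it is enough to produce a completion of $\str T$ in $\mathcal K$. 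So the theorem reduces to the claim that \emph{every tree amalgam of copies of $\str B$ has a completion in $\mathcal K$}.

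I would prove this claim by induction on the construction of a tree amalgam (Definition~\ref{defn:tree-amalgamation}). A copy of $\str B$ is its own completion. For the inductive step, let $\str T$ be the free amalgam of tree amalgams $\str B_1,\str B_2$ over $\str D$ with respect to embeddings $f_1,f_2$ with $f_i[D]$ inside an irreducible substructure $\str I_i$ of $\str B_i$, and let $g_i\colon\str B_i\to\str B_i^\star$ be homomorphism-embeddings onto completions $\str B_i^\star\in\mathcal K$ obtained by induction. As $g_i$ restricts to an embedding on $\str I_i$, the composite $g_i\circ f_i\colon\str D\to\str B_i^\star$ is an embedding, whence $\str D\in\mathcal K$ by the hereditary property. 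Now I invoke the observation that, for a class of irreducible structures, the amalgamation property amounts to completing free amalgams inside the class: it yields $\str T^\star\in\mathcal K$ together with a homomorphism-embedding $h$ from the free amalgam $\str E$ of $\str B_1^\star,\str B_2^\star$ over $\str D$ (with respect to $g_1\circ f_1,g_2\circ f_2$) into $\str T^\star$. Writing $\gamma_i\colon\str B_i\to\str T$ and $\beta_i\colon\str B_i^\star\to\str E$ for the amalgamation embeddings, the map sending $x\in\gamma_i[B_i]$ to $h(\beta_i(g_i(\gamma_i^{-1}(x))))$ is well defined (the two formulas agree on the shared copy of $\str D$ since $\beta_1\circ g_1\circ f_1=\beta_2\circ g_2\circ f_2$), it restricts to an embedding on every irreducible substructure of $\str T$ (each such substructure lies in one side $\gamma_i[B_i]$, where the map is a composite of maps that are embeddings on irreducible substructures), and --- the delicate point --- it is a homomorphism; hence $\str T^\star$ is a completion of $\str T$, closing the induction. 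With the claim established, $\str C$ has a completion $\str C^\star\in\mathcal K$ via a homomorphism-embedding $g$; pulling a colouring of $\Emb(\str A,\str C^\star)$ back along $g$, applying $\str C\longrightarrow(\str B)^\str A_2$, and pushing the resulting monochromatic copy of $\str B$ forward along $g$ yields $\str C^\star\longrightarrow(\str B)^\str A_2$ --- here irreducibility of $\str A$ and $\str B$ guarantees that $g\circ e$ and $g\circ f$ remain embeddings whenever $e$ and $f$ are embeddings of $\str A$ and $\str B$ into $\str C$. As $\str A\in\mathcal K$ was arbitrary, $\mathcal K$ is Ramsey.

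Granting Theorem~\ref{thm:sparseningRamsey}, the overall shape of the proof is forced by the definition of a locally finite subclass, and nearly every step is routine bookkeeping. The one place I expect to need genuine care is the inductive claim about tree amalgams, and within it the verification that the gluing map $\str T\to\str T^\star$ is a homomorphism and not merely an embedding on irreducible substructures. The facts that make this work are that in a free amalgam every relation tuple and every function tuple lies in one of the two sides, that the maps $g_i\circ f_i$ are embeddings, and that they coincide after composition with the $\beta_i$; the restriction in Definition~\ref{defn:tree-amalgamation} that $f_i[D]$ lie inside an irreducible substructure is precisely what makes $g_i\circ f_i$ an embedding and hence makes this verification succeed. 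The remaining care is in tracking the tree structure through the induction so that the amalgamations in $\mathcal K$ can be performed one at a time.
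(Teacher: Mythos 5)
Your proposal is correct and follows essentially the same route as the paper's proof: fix $\str C_0$ via the Ramsey property of $\mathcal R$, apply Theorem~\ref{thm:sparseningRamsey}, verify Definition~\ref{defn:locfin} to obtain a completion in $\mathcal K$, and transport the arrow using irreducibility of $\str A$ and $\str B$. The inductive claim you prove about completing tree amalgams of copies of $\str B$ is precisely Observation~\ref{obs:tree_amalg_completion}, which the paper states and proves separately and then cites; your inline argument for it matches the paper's.
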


\begin{remark}
	Note that Definition~\ref{defn:locfin} differs slightly from the original version from~\cite{Hubicka2016} (as well as from the version from~\cite{Hubicka2018EPPA}). In particular:
	\begin{itemize}
		\item Here we do not demand that structures in $\mathcal R$ or $\mathcal K$ are irreducible (instead, we add this requirement to the statement of the corresponding theorems).
		\item The original version asks that every irreducible substructure of $\str C$ is in $\mathcal K$, while here we say that every irreducible substructure of $\str C$ extends to a copy of $\str B$ in $\str C$. If $\mathcal K$ is hereditary (which is the case in the main theorems) then every locally finite subclass in the sense of~\cite{Hubicka2016} is also locally finite in the sense of this survey.
	\end{itemize}
	Similarly, the version of Theorem~\ref{thm:hn_completions} from~\cite{Hubicka2016} requires $\mathcal R$ to consist of irreducible structures, while here we only demand that $\mathcal K$ does. In light of Proposition~\ref{prop:ordernecessary}, this difference is purely cosmetic, but in the proof we give here we only need irreducibility of members of $\mathcal K$.
\end{remark}

In the proof of Theorem~\ref{thm:hn_completions}, we will first use Theorem~\ref{thm:sparseningRamsey} to produce a Ramsey witness and then we will use local finiteness to find its completion in $\mathcal K$. The following observation, which was first proved in~\cite{Hubicka2018EPPA}, tells us that small substructures of the structure $\str C$ produced by Theorem~\ref{thm:sparseningRamsey} have completions in $\K$.
\begin{observation}[Observation~9.4 of~\cite{Hubicka2018EPPA}]\label{obs:tree_amalg_completion}
	Let $\mathcal K$ a class of finite structures with the amalgamation property. Given $\str A\in \mathcal K$ and $\str D$ which is a tree amalgam of copies of $\str A$, if every irreducible substructure of $\str A$ is in $\mathcal K$ then there is $\str D'\in \mathcal K$ and a homomorphism-embedding $\str D\to \str D'$.
\end{observation}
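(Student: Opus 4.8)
The plan is to induct on the construction of the tree amalgam $\str D$ according to Definition~\ref{defn:tree-amalgamation}, gluing together the homomorphism-embeddings obtained for the two halves via the amalgamation property of $\mathcal K$. The one auxiliary fact needed is that \emph{every irreducible substructure of a tree amalgam of copies of $\str A$ is isomorphic to an irreducible substructure of $\str A$}. This is itself proved by induction on the construction: it is trivial in the base case, and if $\str D$ is the free amalgam of $\str B_1$ and $\str B_2$ over some $\str D_0$, then an irreducible substructure $\str I\subseteq\str D$ must be contained, via the amalgam embeddings, in $\str B_1$ or in $\str B_2$ --- otherwise $\str I$ would be the free amalgam of $\str I\cap\str B_1$ and $\str I\cap\str B_2$ over their intersection, because freeness of $\str D$ guarantees that no tuple lying in a relation or in a function of $\str D$, hence of $\str I$, is split between the two sides; this contradicts irreducibility (Definition~\ref{def:irreducible}). (For relational languages this is exactly the statement that the Gaifman graph of a free amalgam is the free amalgam of the Gaifman graphs.) In particular, the hypothesis that every irreducible substructure of $\str A$ lies in $\mathcal K$ propagates: every irreducible substructure of any tree amalgam of copies of $\str A$ lies in $\mathcal K$, and so does every substructure of such (here one uses that $\mathcal K$ is closed under substructures, which holds in every application of this observation).

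For the main induction, the base case $\str D\cong\str A$ is immediate: take $\str D'=\str A\in\mathcal K$ with the isomorphism $\str D\to\str A$, which is a homomorphism-embedding. For the inductive step, let $\str D$ be the free amalgam of $\str B_1$ and $\str B_2$ over $\str D_0$ with respect to embeddings $f_i\colon\str D_0\to\str B_i$, with amalgam embeddings $\beta_i\colon\str B_i\to\str D$, and with $f_i[D_0]$ contained in an irreducible substructure $\str E_i\subseteq\str B_i$. By the inductive hypothesis there are $\str B_i'\in\mathcal K$ and homomorphism-embeddings $g_i\colon\str B_i\to\str B_i'$. Since $\str E_i$ is irreducible, the restriction $g_i\vert_{\str E_i}$ is an embedding; since $f_i$ is an embedding into $\str B_i$ whose image lies in $\str E_i$, it is an embedding $\str D_0\to\str E_i$; hence $h_i:=g_i\circ f_i\colon\str D_0\to\str B_i'$ is an embedding. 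As $\str D_0$ is isomorphic to a substructure of an irreducible substructure of $\str A$ (by the auxiliary fact applied to $\str E_i$), we have $\str D_0\in\mathcal K$, so the amalgamation property of $\mathcal K$ yields $\str D'\in\mathcal K$ with embeddings $\beta_i'\colon\str B_i'\to\str D'$ satisfying $\beta_1'\circ h_1=\beta_2'\circ h_2$. Define $g\colon\str D\to\str D'$ by $g(\beta_i(b))=\beta_i'(g_i(b))$ for $b\in B_i$; this is well defined on the overlap because $\beta_1\circ f_1=\beta_2\circ f_2$ and $\beta_1'\circ h_1=\beta_2'\circ h_2$. One then checks that $g$ is a homomorphism-embedding: it is a homomorphism because, by freeness of $\str D$, every tuple lying in a relation or in a function of $\str D$ is entirely on one side, where $g$ coincides with the homomorphism $\beta_i'\circ g_i\circ\beta_i^{-1}$; and it restricts to an embedding on every irreducible substructure of $\str D$, because by the auxiliary fact such a substructure lies on one side, where $g$ is a composition of the embeddings $\beta_i^{-1}$, $g_i$ restricted to an irreducible substructure, and $\beta_i'$.

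The verifications that the various composites are embeddings and that $g$ is well defined are routine. The step that genuinely requires care --- and which I expect to be the main obstacle --- is checking that the glued map $g$ is a homomorphism-embedding, and, relatedly, pushing the auxiliary fact through in the presence of (set-valued) function symbols: both hinge on \emph{freeness} of the amalgam $\str D$, which is precisely what localises every relational or functional tuple, and every irreducible substructure, to a single side, so that $g_1$ and $g_2$ can be glued without producing a tuple that $g$ fails to preserve or an irreducible substructure on which $g$ fails to be injective. A secondary (and harmless) point is the use of heredity of $\mathcal K$ to place the overlap $\str D_0$ into $\mathcal K$ so that the amalgamation property can be invoked.
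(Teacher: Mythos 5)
Your proposal is correct and follows essentially the same route as the paper: induction on the recursive construction of the tree amalgam, using that the homomorphism-embeddings supplied by the induction hypothesis restrict to embeddings on the (subset of an) irreducible overlap, and then invoking the amalgamation property of $\mathcal K$ to glue the two completions. The paper's proof is terser --- it leaves implicit both the auxiliary fact that irreducible substructures of a free amalgam live on one side and the verification that the glued map is a homomorphism-embedding --- but these are exactly the details you fill in, so the two arguments coincide in substance.
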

\begin{proof}
	We will proceed by induction on the recursive definition of $\str D$. If $\str D$ is isomorphic to $\str A$, then the statement clearly holds. Otherwise there is a structure $\str E$ such that $\str D$ is the free amalgam of its proper substructures $\str D_1$ and $\str D_2$ over embeddings $\alpha_1\colon \str E\to \str D_1$ and $\alpha_2\colon \str E\to \str D_2$, where $\str D_1$ and $\str D_2$ are tree amalgams of copies of $\str A$, and $\alpha_i[E]$ is a subset of an irreducible substructure of $\str D_i$ for $i\in \{1,2\}$. By the induction hypothesis there are $\str D_1',\str D_2'\in \mathcal K$ and homomorphism-embeddings $f_1\colon \str D_1\to \str D_1'$ and $f_2\colon \str D_2\to \str D_2'$. Since $\alpha_i[E]$ is a subset of an irreducible substructure of $\str D_i$ for $i\in \{1,2\}$ and $f_1$ and $f_2$ are homomorphism-embeddings, it follows that they are embeddings when restricted to $\alpha_1[E]$ and $\alpha_2[E]$ respectively. Consequently, we can put $\str D'$ to be an amalgam of $\str D_1'$ and $\str D_2'$ over the embeddings $f_1\circ \alpha_1$ and $f_2\circ\alpha_2$.
\end{proof}

\begin{proof}[Proof of Theorem~\ref{thm:hn_completions}]
	Fix $\str A,\str B\in \mathcal K$.
	Use the Ramsey property of $\mathcal R$ to obtain $\str C_0\in \mathcal R$ such that $\str C_0\longrightarrow(\str B)^\str A_2$. Put $n=n(\str B,\str C_0)$ (from Definition~\ref{defn:locfin}), and obtain $\str C$ using Theorem~\ref{thm:sparseningRamsey}. In particular, every irreducible substructure of $\str C$ extends to a copy of $\str B$ in $\str C$ and there is a homomorphism-embedding $\str C\to \str C_0$.

	Let $\str D$ be a substructure of $\str C$ on at most $n$ vertices. By Theorem~\ref{thm:sparseningRamsey} it has a homomorphism-embedding to a tree amalgam of copies of $\str B$, and since $\mathcal K$ is hereditary, has the amalgamation property, and consists of irreducible structures, Observation~\ref{obs:tree_amalg_completion} gives us a completion of $\str D$ in $\mathcal K$. Consequently, as $\mathcal K$ is a locally finite subclass of $\mathcal R$, we get $\str C'\in \mathcal K$ and a homomorphism-embedding $g\colon \str C\to \str C'$. Note that whenever $f$ is an embedding $\str A\to \str C$ or $\str B\to \str C$ then $g\circ f$ is an embedding $\str A\to \str C'$ resp. $\str B\to \str C'$, since both $\str A$ and $\str B$ are irreducible. Consequently, a colouring of $\Emb(\str A,\str C')$ gives us a colouring of $\Emb(\str A,\str C)$, and as $\str C\longrightarrow(\str B)^\str A_2$, we obtain a monochromatic embedding $f\colon \str B\to \str C$. The embedding $g\circ f$ then witnesses that, indeed, $\str C'\longrightarrow(\str B)^\str A_2$.
\end{proof}

In applications it would often be much more convenient if Definition~\ref{defn:locfin} promised that every substructure on at most $n$ vertices has a strong completion in $\mathcal K$, not only a completion. Luckily, in strong amalgamation classes in languages where all functions are unary one can obtain injective homomorphism-embeddings from non-injective ones, and thus assume the existence of strong completions in Definition~\ref{defn:locfin}:
\begin{fact}[Proposition~11.3 of~\cite{Hubicka2018EPPA}, see also Proposition~2.6 of~\cite{Hubicka2016}]\label{fact:strong_homemb}
	Let $L$ be a language where all functions are unary, let $\mathcal C$ be a strong amalgamation class of $L$-structures, let $\str A$ be a finite $L$-structure and let $f\colon \str A\to \str B$ be a homomorphism-embedding for some $\str B\in \mathcal C$. Then there is $\str B'\in \mathcal C$ and an injective homomorphism-embedding $f'\colon \str A\to \str B'$.
\end{fact}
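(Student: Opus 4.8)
The plan is to induct on the \emph{defect} $d(f):=\card{A}-\card{f[A]}$ of the homomorphism-embedding $f$. When $d(f)=0$ the map $f$ is already injective and one simply takes $\str B'=\str B\restriction f[A]\in\mathcal C$ (hereditary property). When $d(f)>0$ the inductive step will build $\str B^+\in\mathcal C$ together with a homomorphism-embedding $f^+\colon\str A\to\str B^+$ satisfying $d(f^+)<d(f)$, after which the inductive hypothesis applied to $f^+$ finishes the argument.

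The combinatorial input is an observation about colliding vertices: if $f(u)=f(v)$ with $u\ne v$, then $u$ and $v$ do not lie together in any irreducible substructure of $\str A$, since otherwise $f$ restricted to that substructure would not be injective, contradicting that $f$ is a homomorphism-embedding. In particular no tuple of a relation of $\str A$ contains both $u$ and $v$, and $v\notin\func{A}{}(u)$, $u\notin\func{A}{}(v)$ for all $\func{}{}\in L$; hence $\str A$ is the free amalgam of $\str A\restriction(A\setminus\{v\})$ and $\str A\restriction(A\setminus\{u\})$ over $\str A\restriction(A\setminus\{u,v\})$ and, the point that will be used repeatedly, every irreducible substructure of $\str A$ lives inside $A\setminus\{u\}$ or inside $A\setminus\{v\}$. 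Writing $\str B_0:=\str B\restriction f[A]\in\mathcal C$ and (after relabelling) assuming $f$ surjects onto $\str B_0$, with $b:=f(u)=f(v)$, the idea is to split $b$ into two copies in the target and reroute $v$ to the new one. In a relational language this is quick: take $\str B^+\in\mathcal C$ to be a strong amalgam of $\str B_0$ with a disjoint isomorphic copy of itself over $\str B_0\restriction(B_0\setminus\{b\})$, let $b'$ be the copy of $b$, and put $f^+(v)=b'$ and $f^+(x)=f(x)$ for $x\ne v$. Since a homomorphism need only preserve relations, and since (again by the observation) $v$ shares no relational tuple with another preimage of $b$, each relational tuple of $\str A$ is mapped by $f^+$ into a single one of the two copies of $\str B_0$, so $f^+$ is a homomorphism; it is a homomorphism-embedding because every irreducible substructure of $\str A$ maps into a single copy; and $d(f^+)=d(f)-1$ because $u$ still maps to $b$.

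The real difficulty --- and where I expect the main obstacle --- lies with the unary functions, because a homomorphism must match function values \emph{with equality}. The rerouting fails exactly when some $a\in A$ has $\func{A}{}(a)$ meeting the fibre $f^{-1}(b)$ in the single vertex $v$: then $f^+(a)$ is still a vertex of $\str B_0$ whose $\func{}{}$-value inside $\str B^+$ is forced to contain both $b$ and $b'$, while $f^+[\func{A}{}(a)]$ contains only $b'$. The fix I would pursue is to split not merely $b$ but the whole down-set of $b$ under the directed functional edges of $\str B_0$ --- the set $D$ of those $y\in B_0$ from which $b$ is reachable along a chain $y=y_0$, $y_{i+1}\in\func{B_0}{}(y_i)$, $y_k=b$ --- carrying along the induced structure on $D$ together with the functional targets of $D$ lying outside $D$: one then forms $\str B^+$ as the amalgam of $\str B_0$ with a copy of a suitable hull of $D$, glued along their common part, and reroutes the entire $f$-preimage of the fresh copy of $D$ consistently, keeping $u$ on the old side so that the fibre of $b$ splits and the defect still drops. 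Checking that $f^+$ remains a homomorphism-embedding again reduces to the fact that each irreducible substructure of $\str A$ maps into a single ``layer'', so nothing new is created inside the image of an irreducible substructure; the delicate point, which is the technical heart of the matter, is that in a strong amalgamation class all of whose functions are unary the amalgam can be chosen so that no functional edge leaves the copied part. (An alternative organisation of this step is through the \Fraisse{} limit --- or a countable $\omega$-saturated model --- $\str M$ of $\mathcal C$: since a strong amalgamation class has trivial algebraic closure, after embedding $\str B_0\hookrightarrow\str M$ one can realise the vertices of the fibres of $f$ by distinct generic points of $\str M$, one fibre at a time, the same bookkeeping resurfacing as the requirement that each new point avoid the finitely many relevant forbidden configurations over the already-placed part.) With the reduction step in hand, iterating it --- equivalently, invoking the inductive hypothesis --- delivers the required injective homomorphism-embedding into a member of $\mathcal C$.
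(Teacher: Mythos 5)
Your relational-language argument is the paper's own: the paper phrases it as a minimal counterexample rather than an induction on the defect, but the step is identical --- strongly amalgamate two copies of the target over its restriction to the image minus the collision point $b$, and reroute one element of the fibre over $b$ to the fresh copy. For the case the paper actually writes out, you match it. For unary functions the paper gives no proof at all (it defers to Proposition~11.3 of the EPPA paper), so you are attempting more than the paper does, and you correctly identify both the obstruction (rerouting a single preimage of $b$ violates the equality $f[\func{A}{}(a)]=\func{B}{}(f(a))$) and the right remedy (amalgamate over $B_0\setminus D$, where $D$ is the set of vertices whose generated substructure contains $b$; this set is function-closed in the complement exactly because $D$ is upward closed under functional edges, so the ``delicate point'' you flag --- that no functional edge leaves the copied part --- is in fact automatic from strongness of the amalgam).

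The genuine gap is at the load-bearing step you leave unspecified: after forming the amalgam you must partition $f^{-1}[D]$ into an ``old'' and a ``new'' side so that (i) the assignment is constant on every irreducible substructure of $\str A$ --- and note that relational tuples, not only functional edges, impose such constraints, since a tuple with coordinates on both sides would not be preserved by $f^+$ --- and (ii) $u$ and $v$ receive different sides, since otherwise the fibre over $b$ does not split and the defect does not drop. As written, ``reroute the entire preimage of the fresh copy of $D$'' puts $u$ and $v$ on the same side, and you never argue that a separating assignment exists. The missing lemma is true and not hard once isolated: for $a\in f^{-1}[D]$ the substructure of $\str A$ generated by $a$ is irreducible (any substructure containing $a$ contains all of it), so $f$ is injective on it and it contains a unique preimage $r(a)$ of $b$; if $a$ and $c$ lie in a common irreducible substructure $\str E$, then the substructure generated by $E$ is again irreducible, contains both $r(a)$ and $r(c)$, and $f$ is injective on it, forcing $r(a)=r(c)$. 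Hence $r$ is constant on the components of the ``co-irreducibility'' graph on $f^{-1}[D]$, and since $r(u)=u\neq v=r(v)$, these components can be two-coloured with $u$ and $v$ apart. Without some such argument your inductive step is not established.
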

The proof with unary functions is a bit technical and we refer the reader to~\cite{Hubicka2018EPPA}. Here we only give the conceptually similar proof for relational languages.
\begin{proof}[Proof of Fact~\ref{fact:strong_homemb} for Relational $L$]
	Assume for a contradiction that there is an $L$-structure $\str A$ with a homo\-morphism-embedding $f\colon \str A\to \str B$ for some $\str B\in \mathcal C$ such that there is no $\str B'\in \mathcal C$ and an injective homomorphism-embedding $f'\colon \str A\to \str B'$. Among all such counterexamples pick one with minimal $\lvert \str A\rvert$.

	It follows (here we are using that $L$ is relational) that there are vertices $x,y\in A$ such that $f(x) = f(y)$, and $f$ is injective on $A\setminus\{y\}$. Note that $f(x) = f(y)$ implies that $x$ and $y$ are in no relations of $\str A$ together. Put $\str A' = f(\str A\setminus \{x,y\})$, let $\str B'\in \mathcal C$ be a strong amalgam of two copies of $\str B$ over $\str A'$, and let $\beta_1\colon \str B\to \str B'$ and $\beta_2\colon \str B\to \str B'$ be the corresponding embeddings. Define a function $f'\colon A\to B'$ such that $f'(v) = \beta_1(f(v))$ if $v\neq y$ and $f'(y) = \beta_2(f(y))$. It is easy to verify that $f'\colon \str A\to \str B'$ is an injective homomorphism-embedding.
\end{proof}

\begin{corollary}
	Let $\mathcal K$ be a locally finite subclass of $\mathcal R$. If $\mathcal K$ is a strong amalgamation class and all functions in the language of $\mathcal K$ are unary then $\mathcal K$ is a strong locally finite subclass of $\mathcal R$.
\end{corollary}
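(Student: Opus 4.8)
The plan is to take the (non-strong) completion guaranteed by the hypothesis that $\mathcal K$ is a locally finite subclass of $\mathcal R$, and then upgrade it to a strong completion via Fact~\ref{fact:strong_homemb}, whose hypotheses are met since $\mathcal K$ is a strong amalgamation class in a language with only unary functions.

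Concretely, I would fix $\str B\in \mathcal K$ and $\str C_0\in \mathcal R$ and let $n=n(\str B,\str C_0)$ be the integer provided by the locally finite subclass property of $\mathcal K$ in $\mathcal R$; I claim the \emph{same} $n$ witnesses that $\mathcal K$ is a \emph{strong} locally finite subclass. So let $\str C$ be an arbitrary finite $L$-structure satisfying conditions~\ref{locallyfinite:1}--\ref{locallyfinite:3} of Definition~\ref{defn:locfin} for this $n$ (note these conditions still only mention ordinary, not strong, completions, so nothing changes on the hypothesis side). By the choice of $n$ there is a completion $\str C'\in \mathcal K$ of $\str C$, i.e.\ a homomorphism-embedding $g\colon \str C\to \str C'$. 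Now I would apply Fact~\ref{fact:strong_homemb}, taking its ``$\mathcal C$'' to be $\mathcal K$, its ``$\str A$'' to be $\str C$, and its ``$f$'' to be $g$: this yields some $\str C''\in \mathcal K$ together with an \emph{injective} homomorphism-embedding $\str C\to \str C''$. By definition, $\str C''$ is then a strong completion of $\str C$ lying in $\mathcal K$, which is exactly what is required.

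I do not expect any real obstacle here: the corollary is an essentially immediate consequence of Definition~\ref{defn:locfin} together with Fact~\ref{fact:strong_homemb}. All the genuine content sits inside Fact~\ref{fact:strong_homemb} itself---proved in the excerpt for relational $L$ and cited from~\cite{Hubicka2018EPPA} for the case of unary functions---where the strong amalgamation property and the unarity of the functions are used to ``split'' any vertex on which the homomorphism-embedding fails to be injective, by strongly amalgamating two copies of the target over the image of the remaining vertices. The only point that warrants a word of care is checking that ``strong'' in the definition of a strong locally finite subclass modifies only the completion of $\str C$ (and not the small-substructure completions in condition~\ref{locallyfinite:3}), so that the hypotheses we must verify for $\str C$ are literally the same as in the non-strong case.
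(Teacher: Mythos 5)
Your proof is correct and is exactly the argument the paper intends: the corollary is stated immediately after Fact~\ref{fact:strong_homemb} precisely so that one takes the ordinary completion supplied by local finiteness and upgrades it to an injective one via that fact, and your check that ``strong'' only modifies the output completion (not the hypotheses on small substructures) is the right point to be careful about. Nothing further is needed.
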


\medskip

Theorem~\ref{thm:hn_completions} could be used to show Ramseyness of all ``primal structural'' (see Section~\ref{sec:category}) Ramsey classes known at that time.
This motivated the title ``All the Ramsey classes'' for Ne\v set\v ril's 2016 talk at Tel Aviv University\footnote{\url{https://www.youtube.com/watch?v=_pfa5bogr8g}}. However, after some consideration, it was turned into the less ambitious and more colloquial form of ``All those Ramsey classes''.
This name still holds: the vast majority of Ramsey classes known today follow from Theorem~\ref{thm:hn_completions}. Nevertheless, there are exceptions, in particular those based on dual Ramsey theorems or the Milliken tree theorem discussed in Section~\ref{exceptions}.

Next, we review some typical applications of Theorem~\ref{thm:hn_completions}.

\subsubsection{Metric spaces}
\label{sec:metric}
We start with a proof of Theorem~\ref{thm:nesetril-metric-spaces}. Given $S\subseteq \mathbb R^{{>}0}$, an \emph{$S$-edge-labelled graph} is a graph with edges labelled by numbers from $S$ (or equivalently, a relational structure with $\lvert S\rvert$ binary relations such that all of them are symmetric and irreflexive and every pair of vertices is in at most one relation). An $\mathbb R^{{>}0}$-edge-labelled triangle is \emph{non-metric} if it has labels $a,b,c$ and $a > b+c$. Complete $S$-edge-labelled graphs which embed no non-metric triangles correspond to metric spaces with distance set $S$.

A \emph{non-metric cycle} is an $\mathbb R^{{>}0}$-edge-labelled cycle with labels $a_0,\ldots,a_{k-1}$ such that $a_0 > \sum_{i\geq 1} a_i$. The following proposition will be our key ingredient.
\begin{prop}\label{prop:metric_completion}
	For every $S\subseteq \mathbb R^{{>}0}$ and every $S$-edge-labelled graph $\str G$, if at least one of $S$ and $\str G$ are finite then the following are equivalent:
	\begin{enumerate}
		\item\label{prop:metric_completion:1} $\str G$ has a completion to a metric space,
		\item\label{prop:metric_completion:2} $\str G$ has a strong completion to a metric space,
		\item\label{prop:metric_completion:3} there is no non-metric cycle $\str C$ with a homomorphism-embedding $\str C\to \str G$,
		\item\label{prop:metric_completion:4} there is no non-metric cycle $\str C$ with an embedding $\str C\to \str G$.
	\end{enumerate}
\end{prop}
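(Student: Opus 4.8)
The plan is to prove the cycle of implications $(1)\Rightarrow(3)\Rightarrow(4)\Rightarrow(2)\Rightarrow(1)$. Of these, $(2)\Rightarrow(1)$ and $(3)\Rightarrow(4)$ are immediate, since a strong completion is a completion and an embedding is a homomorphism-embedding. For $(1)\Rightarrow(3)$, suppose $\str C$ is a non-metric cycle with labels $a_0,\dots,a_{k-1}$, where the edge labelled $a_0$ joins vertices $v_0,v_1$ and $a_0>a_1+\cdots+a_{k-1}$, and suppose $g\colon\str C\to\str G$ is a homomorphism-embedding; let $h\colon\str G\to\str M$ be a homomorphism-embedding into a metric space $\str M$ (which exists by $(1)$). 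Since every edge of $\str C$ is an irreducible substructure, $h\circ g$ is an embedding on each edge, so the distance in $\str M$ between the $(h\circ g)$-images of the endpoints of the $i$-th edge is exactly $a_i$; in particular the images of $v_0,v_1$ are at distance $a_0$. Iterating the triangle inequality in $\str M$ along the complementary path gives $a_0\le a_1+\cdots+a_{k-1}$, a contradiction.

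The substantive implication is $(4)\Rightarrow(2)$. Assume no non-metric cycle embeds into $\str G$. For vertices $x\neq y$ in a common connected component of $\str G$, let $\delta(x,y)$ be the minimum total label over all walks from $x$ to $y$; the hypothesis that one of $S,\str G$ is finite guarantees this minimum is attained and strictly positive, so that $\delta$ is a genuine distance function. The crucial claim is that $\delta(x,y)$ equals the label of every edge $xy$ of $\str G$. One inequality is clear. If some edge $xy$ of label $c$ admitted a walk of strictly smaller total label, then concatenating a shortest such walk with the edge $yx$ would produce a closed walk of $\str G$ carrying a \emph{distinguished} edge whose label $c$ strictly exceeds the sum of the labels of the remaining edges. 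Among all closed walks of $\str G$ carrying such a distinguished edge, choose one, $W$, with the fewest edges; note $W$ has at least three edges since $\str G$ has no loops and at most one edge between any two vertices. If two vertices of $W$ coincided, the sub-closed-walk retaining the distinguished edge would be strictly shorter and still carry it, contradicting minimality; so the vertices of $W$ are pairwise distinct. If $W$ had a chord in $\str G$, then either that chord together with the arc of $W$ it spans forms a strictly shorter closed walk carrying a distinguished edge (namely the chord, when it is long relative to the arc), or else using the chord to shortcut that arc yields a strictly shorter closed walk still carrying the original distinguished edge; either way minimality is contradicted. Hence $W$ is an induced cycle of $\str G$, and the canonical bijection from the abstract non-metric cycle (with the labels read off $W$) to the vertex set of $W$ is an embedding into $\str G$, contradicting $(4)$. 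This proves the claim.

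Given the claim, define $\str G'$ on the vertex set of $\str G$ by making each pair $x\neq y$ in a common component an edge of label $\delta(x,y)$; the triangle inequality follows by concatenating walks, so each component of $\str G$ becomes a metric space and, by the claim, the identity map embeds $\str G$ restricted to that component as an injective homomorphism-embedding. If $\str G$ is connected this is already a strong completion; otherwise amalgamate the component metric spaces over a freshly added apex vertex (equivalently, take the shortest-path metric of the tree obtained by joining each component to the apex through a chosen basepoint), which still contains each component isometrically, and obtain a strong completion of $\str G$ — establishing $(2)$. The main obstacle is the minimality argument in the proof of the claim: one must extract from an arbitrary non-metric \emph{closed walk} a genuine induced (chordless) non-metric \emph{cycle}, and it is exactly this that upgrades the easy homomorphism-embedding obstruction to the embedding obstruction in $(4)$, so that $(3)$ and $(4)$ end up equivalent with no separate argument; the treatment of disconnected $\str G$ and the bookkeeping ensuring $\delta$ is attained and positive (which is where the finiteness hypothesis is used) are routine.
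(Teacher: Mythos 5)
Your proof is correct and follows essentially the same strategy as the paper's: the shortest-path/walk completion for the substantive direction, together with a minimality argument (eliminating repeated vertices and chords) that upgrades a non-metric closed walk to an embedded induced non-metric cycle. The only organizational differences are that the paper splits your $(4)\Rightarrow(2)$ into $(4)\Rightarrow(3)$ (a vertex-minimal non-metric cycle with a homomorphism-embedding is in fact embedded) followed by $(3)\Rightarrow(2)$, and it handles disconnected $\str G$ by capping all distances at the maximum label $D$ occurring in $\str G$ rather than amalgamating components over an apex vertex; both variations are immaterial.
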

\begin{proof}
	Clearly, (\ref{prop:metric_completion:2}) implies (\ref{prop:metric_completion:1}) and (\ref{prop:metric_completion:3}) implies (\ref{prop:metric_completion:4}).

	It is easy to see that non-metric cycles do not have completions to metric spaces, and as being a completion transfers to substructures, we see that (\ref{prop:metric_completion:1}) implies (\ref{prop:metric_completion:3}).

	To see that (\ref{prop:metric_completion:4}) implies (\ref{prop:metric_completion:3}), assume that there is a non-metric cycle $\str C$ with a ho\-mo\-mor\-phism-embedding $f\colon \str C\to \str G$, and among all such cycles, pick one with the least number of vertices. Note that if there are $x,y\in C$ with $f(x)=f(y)$ then one can find a shorter non-metric cycle in $f(\str C)$ (by taking one containing the longest edge of $f(\str C)$), hence $f$ is injective. Similarly, if there are $x,y\in C$ such that $xy$ is not an edge of $\str C$, but it is an edge of $f(\str C)$ then this edge splits $f(\str C)$ into two cycles of which at least one is non-metric and has fewer vertices, hence $f$ is in fact an embedding.

	It remains to prove that (\ref{prop:metric_completion:3}) implies (\ref{prop:metric_completion:2}), let $\str G$ be an $S$-edge-labelled graph such that no non-metric cycle has a homomorphism-embedding to $\str G$, and let $D$ be the maximum distance appearing in $\str G$ (such $D$ exists as either $S$ or $\str G$ are finite).
	Define a function $d\colon G^2\to \mathbb R^{{\geq} 0}$ as follows:
	$$d(x,y) = \inf\left(\{D\} \cup \{\|\str P\| : \str P\text{ is a path between $x$ and $y$ in $\str G$}\}\right),$$
	where $\|\str P\|$ is the sum of all labels of the path. (Note that $d(x,x)=0$ for every $x\in G$.) It is easy to see that $(G,d)$ is a metric space: It is well-defined, symmetry follows from the fact that paths are not directed, and $d(x,y) \leq d(x,z)+d(z,y)$, as both $d(x,z)$ and $d(z,y)$ are witnessed by some paths, and concatenating them leads to a path from $x$ to $y$ of length at most $d(x,z)+d(z,y)$. Moreover, since $\str G$ or $S$ are finite, the infimum is actually a minimum, and hence non-zero for $x\neq y$.

	Clearly, one can define an $\mathbb R^{{>}0}$-edge-labelled graph $\str G'$ from $(G,d)$ by defining the relations according to $d$. It follows that the identity $\str G\to\str G'$ is a (strong) completion if and only if for every edge $xy$ of $\str G$ with label $s$ we have that $d(x,y) = s$.

	As the edge $xy$ is a path between $x$ and $y$, it follows that $d(x,y)\leq s$. But if $d(x,y) < s$, it means that there is a path $\str P$ in $\str G$ between $x$ and $y$ of length $d(x,y)$. Adding the edge $xy$ to $\str P$, we obtain a homomorphism-embedding of a non-metric cycle to $\str G$. As we assumed that there is no such homomorphism-embedding, it follows that $\str G'$ is a strong completion of $\str G$.
\end{proof}

\begin{remark}
	The construction of a completion of $G$ in Proposition~\ref{prop:metric_completion} appears in various contexts. In Section~\ref{sec:eppa} it will be useful that every automorphism of $G$ is also an automorphism of its completion. Another nice fact about it is that it is \emph{canonical}; having canonical completions of free amalgams has turned out to be an important ingredient for understanding normal subgroups of automorphism groups of homogeneous structures, see \eg{}~\cite{Higman54,Truss1985,Droste1989,Glass1993,Droste1999, Droste2000,Macpherson2011b,Tent2013b,Tent2013,Evans2016,Li2018,Li2019,Li2020,LiPhd,calderoni2020simplicity,Evanssimplicity,Kaplan2025,Tent2025,Li2025}.
\end{remark}

\begin{lemma}\label{lem:metric_unordered_locally_finite}
	The class $\mathcal K$ of all finite metric spaces is a strong locally finite
	subclass of the class $\mathcal R$ of all finite $\mathbb R^{{>}0}$-edge-labelled graphs.
\end{lemma}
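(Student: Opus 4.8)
The plan is to derive the lemma directly from Proposition~\ref{prop:metric_completion}. Recall that in the language of $\mathbb R^{{>}0}$-edge-labelled graphs an irreducible substructure is precisely a complete edge-labelled graph, so every edge of a structure lies in one of its two-vertex irreducible substructures. Given $\str B\in\mathcal K$ and $\str C_0\in\mathcal R$, I would first dispose of the case that $\str B$ has at most one vertex: then condition~(\ref{locallyfinite:1}) of Definition~\ref{defn:locfin} forces any $\str C$ satisfying it to be edgeless (a single edge would be a two-vertex irreducible substructure admitting no embedding into $\str B$), hence $\str C$ contains no non-metric cycle and Proposition~\ref{prop:metric_completion} already supplies a strong completion of $\str C$ in $\mathcal K$; here $n=1$ works. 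Otherwise, let $S_{\str B}\subseteq\mathbb R^{{>}0}$ be the finite, nonempty set of distances occurring in $\str B$, and set $\mu=\min S_{\str B}>0$, $M=\max S_{\str B}$, and $n=\lfloor M/\mu\rfloor+1$, so that $n$ depends only on $\str B$.

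Next, suppose $\str C$ is a finite $\mathbb R^{{>}0}$-edge-labelled graph satisfying conditions~(\ref{locallyfinite:1})--(\ref{locallyfinite:3}) of Definition~\ref{defn:locfin} for this $n$; I want a strong completion of $\str C$ in $\mathcal K$. By~(\ref{locallyfinite:1}), every edge of $\str C$ extends, together with its endpoints, to a copy of $\str B$, so every label occurring in $\str C$ lies in $S_{\str B}$. Applying Proposition~\ref{prop:metric_completion} with $S=S_{\str B}$ and $\str G=\str C$ (legitimate since $\str C$ is finite), it suffices to show that no non-metric cycle has a homomorphism-embedding into $\str C$; this will even yield a \emph{strong} completion. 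Assume otherwise. Then, by the equivalence of~(\ref{prop:metric_completion:3}) and~(\ref{prop:metric_completion:4}) in Proposition~\ref{prop:metric_completion}, some non-metric cycle embeds into $\str C$ as a substructure, and I would take one, say $\str P$ with labels $a_0>a_1+\cdots+a_{k-1}$, having the fewest vertices. Since every $a_i$ lies in $S_{\str B}$ we have $(k-1)\mu\le a_1+\cdots+a_{k-1}<a_0\le M$, and therefore $k\le n$.

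Finally, I would invoke condition~(\ref{locallyfinite:3}): the copy of $\str P$ inside $\str C$ is a substructure on at most $n$ vertices, so it has a completion in $\mathcal K$, i.e.\ a completion to a metric space --- but, as noted in the proof of Proposition~\ref{prop:metric_completion}, a non-metric cycle admits no completion to a metric space, a contradiction. Hence no non-metric cycle homomorphism-embeds into $\str C$, and Proposition~\ref{prop:metric_completion} furnishes the desired strong completion $\str C'\in\mathcal K$. I do not expect a serious obstacle: the real work is already contained in Proposition~\ref{prop:metric_completion}. The only point needing care is extracting the length bound $n$ on minimal non-metric cycles from the distance set of $\str B$ (via condition~(\ref{locallyfinite:1})), so that $n$ is well-defined and independent of $\str C$, together with the trivial degenerate case $\lvert\str B\rvert\le 1$.
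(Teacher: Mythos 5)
Your proof is correct and follows essentially the same route as the paper's: bound $n$ by the ratio $\max(S)/\min(S)$ of the distances occurring in $\str B$, use condition~(\ref{locallyfinite:1}) to see that $\str C$ only carries labels from that set so any non-metric cycle in it has at most $n$ vertices, rule such cycles out via condition~(\ref{locallyfinite:3}), and conclude with Proposition~\ref{prop:metric_completion}. Your explicit treatment of the degenerate case $\lvert B\rvert\le 1$ is a small extra care the paper's one-paragraph proof passes over silently.
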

\begin{proof}
	Given $\str B \in \mathcal K$, let $S$ be the set of distances in $\str B$, and put $n=\left\lceil \frac{\max(S)}{\min(S)}\right\rceil$. Now, for an arbitrary $\str C_0\in \mathcal R$ (we will not need it), let $\str C$ be as in Definition~\ref{defn:locfin}.

	It follows by condition~(\ref{locallyfinite:1}) of Definition~\ref{defn:locfin} that $\str C$ is a finite $S$-edge-labelled graph. Since the largest non-metric cycle using only distances from $S$ has at most $n$ vertices, it follows by Proposition~\ref{prop:metric_completion} that $\str C$ indeed has a strong completion to a metric space, and hence $\mathcal K$ is indeed a strong locally finite subclass of $\mathcal R$.
\end{proof}

With enough optimism one might think that Lemma~\ref{lem:metric_unordered_locally_finite} directly allows us to apply Theorem~\ref{thm:hn_completions}. However, $\mathcal R$ is not Ramsey, which is one of the conditions that one needs to satisfy. In view of Proposition~\ref{prop:ordernecessary}, one could try to expand by linear orders and this turns out to actually work in general under some minor assumptions:
\begin{lemma}\label{lem:locally_finite_add_order}
	Let $L$ be a language not containing the symbol $<$, let $\mathcal R$ be a class of finite $L$-structures and let $\mathcal K$ be a strong locally finite subclass of $\mathcal R$ consisting of irreducible structures. Let $\mathcal R^<$ and $\mathcal K^<$ be classes of free orderings of $\mathcal R$ and $\mathcal K$ respectively (see Definition~\ref{defn:freeorder}). Then $\mathcal K^<$ is a strong locally finite subclass of $\mathcal R^<$.
\end{lemma}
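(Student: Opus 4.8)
The plan is to reduce to the hypothesis that $\mathcal K$ is a strong locally finite subclass of $\mathcal R$ by passing to $L$-reducts, and to reinstate the linear order only at the very end. Given $\str B^<\in\mathcal K^<$ and $\str C_0^<\in\mathcal R^<$, put $\str B=\str B^<\vert_L\in\mathcal K$ and $\str C_0=\str C_0^<\vert_L\in\mathcal R$, and let $n=n(\str B,\str C_0)$ be the integer supplied by the hypothesis; we claim this same $n$ witnesses that $\mathcal K^<$ is a strong locally finite subclass of $\mathcal R^<$. So fix a finite $L^<$-structure $\str C^<$ meeting conditions~(1)--(3) of Definition~\ref{defn:locfin} for $\mathcal K^<,\mathcal R^<,\str B^<,\str C_0^<,n$, and write $\str C=\str C^<\vert_L$. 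First we check that $\str C$ meets conditions~(1)--(3) for $\mathcal K,\mathcal R,\str B,\str C_0,n$; then the hypothesis provides a strong completion $\phi\colon\str C\to\str C'$ with $\str C'\in\mathcal K$, and it remains to expand $\str C'$ by a suitable linear order.

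The transfer of the conditions rests on one elementary observation: expanding a structure by relations cannot destroy irreducibility, since a free-amalgam decomposition of the expansion restricts, after forgetting the new relations, to one of the reduct. Hence for every irreducible $L$-substructure $\str D\subseteq\str C$ the substructure $\str D^<\subseteq\str C^<$ carrying the induced $<$ is an irreducible $L^<$-substructure; feeding $\str D^<$ into condition~(1) (resp.~(3)) for $\str C^<$ and taking $L$-reducts yields condition~(1) (resp.~(3)) for $\str C$. Here one uses that the restriction to a smaller language of a homomorphism, embedding, or hom-embedding is again such (for hom-embeddings, reuse the observation), and that $L$-reducts of members of $\mathcal K^<$ lie in $\mathcal K$. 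Condition~(2) for $\str C$ is simply the $L$-reduct of the hom-embedding $\str C^<\to\str C_0^<$; we also record for later that this map, being a homomorphism into the strict linear order $<_{\str C_0^<}$, forces $<_{\str C^<}$ to be an acyclic digraph.

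Finally we build the ordered completion. The injective map $\phi$ transports $<_{\str C^<}$ to an acyclic digraph on $\phi[C]\subseteq C'$, which we extend to a linear order $\prec$ on the finite set $C'$; then $(\str C',\prec)$ is an ordered $L^<$-structure lying in $\mathcal K^<$, as its $L$-reduct $\str C'$ lies in $\mathcal K$. It remains to verify that $\phi\colon\str C^<\to(\str C',\prec)$ is an injective hom-embedding. Injectivity and the homomorphism property are immediate (on the $L$-symbols $\phi$ is unchanged, and $\phi$ sends $<_{\str C^<}$ into $\prec$ by construction). The real point is that $\phi$ restricts to an \emph{embedding} on every irreducible $L^<$-substructure $\str E^<\subseteq\str C^<$. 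On the $L$-symbols: condition~(1) for $\str C^<$ gives an embedding $g\colon\str B^<\to\str C^<$ with $E\subseteq g[B]$, and its $L$-reduct exhibits a copy $g[\str B]\subseteq\str C$ of $\str B$, which is irreducible because $\mathcal K$ consists of irreducible structures; hence the hom-embedding $\phi$ embeds $g[\str B]$, and therefore also its substructure $\str E$, into $\str C'$. On the relation $<$: the copy $g[\str B^<]$ is linearly ordered by $<_{\str C^<}$ (as $\str B^<$ is ordered and $g$ is an embedding), so for distinct $x,y\in E$ exactly one of $(x,y),(y,x)$ lies in $<_{\str C^<}$; since $<_{\str C^<}\subseteq\prec$ and $\prec$ is antisymmetric, $\phi(x)\prec\phi(y)$ forces $(x,y)\in{<_{\str C^<}}\cap E^2={<_{\str E^<}}$. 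Thus $\phi$ is an embedding on $\str E^<$, hence a hom-embedding, and being injective it witnesses that $(\str C',\prec)\in\mathcal K^<$ is a strong completion of $\str C^<$.

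I expect the main obstacle to be precisely this last verification: an irreducible $L^<$-substructure of $\str C^<$ can have a reducible $L$-reduct (adding a linear order may turn a reducible structure irreducible), so one cannot simply invoke that $\phi$ embeds the irreducible $L$-substructures of $\str C$. Condition~(1), combined with the hypothesis that members of $\mathcal K$ are irreducible, is exactly what bridges this gap, while condition~(2) is what guarantees that a linear extension $\prec$ compatible with $<_{\str C^<}$ exists in the first place.
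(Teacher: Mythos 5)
Your proposal is correct and follows essentially the same route as the paper's proof: reduce to the $L$-reducts (using that adding relations preserves irreducibility to transfer conditions (1)--(3)), obtain a strong completion $\str C'\in\mathcal K$, use the homomorphism-embedding into the linearly ordered $\str C_0^<$ to extend the transported order relation to a linear order on $C'$, and finally verify the embedding property on irreducible $L^<$-substructures via condition (1) and the irreducibility of members of $\mathcal K$. You correctly identify the one genuinely delicate point (irreducible $L^<$-substructures may have reducible $L$-reducts) and resolve it exactly as the paper does, even spelling out the antisymmetry argument for the order that the paper leaves implicit.
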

\begin{proof}
	Given $\str B\in \mathcal K^<$ and $\str C_0\in \mathcal R^<$, put $\str B^- = \str B\vert _L$ and $\str C_0^- = \str C_0\vert _L$. Note that $\str B^-\in \mathcal K$ and $\str C_0^-\in \mathcal R$. Since $\mathcal K$ is a strong locally finite subclass of $\mathcal R$, we get $n = n(\str B^-,\str C_0^-)$ witnessing this. Let $\str C$ be an $L^<$-structure with the following properties:
	\begin{enumerate}
		\item Every irreducible substructure of $\str C$ extends to a copy of $\str B$ in $\str C$,
		\item there is a homomorphism-embedding $f\colon \str{C}\to\str{C}_0$, and
		\item every substructure of $\str{C}$ on at most $n$ vertices has a completion in $\mathcal K^<$.
	\end{enumerate}
	Put $\str C^- = \str C\vert _L$. If $\str C^-$ induces an irreducible structure on $D\subseteq C$, then so does $\str C$. Consequently, we have the following:
	\begin{enumerate}
		\item Every irreducible substructure of $\str C^-$ extends to a copy of $\str B^-$ in $\str C^-$,
		\item $f$ a homomorphism-embedding $\str{C}^-\to\str{C}_0^-$, and
		\item every substructure of $\str{C}^-$ on at most $n$ vertices has a completion in $\mathcal K$.
	\end{enumerate}
	Consequently, as $\mathcal K$ is a strong locally finite subclass of $\mathcal R$, we get $\str C'\in \mathcal K$ and an injective homomorphism-embedding $g\colon \str C^-\to\str C'$.

	As every irreducible substructure of $\str C$ has an embedding to $\str B$, we know that the relation $<_\str C$ is irreflexive and antisymmetric. However, $\str C$ theoretically could contain arbitrarily long sequences of vertices $c_0,c_1,\ldots,c_{m-1}$ such that $c_i <_\str C c_{i+1}$ for every $0\leq i < m-1$, and $c_{m-1} <_\str C c_0$, with no other $<_\str C$ relations between these vertices (in other words, oriented cycles in the binary relation $<$). Clearly, such structures have no completion to a linear order.

	However, we also know that there is a homomorphism-embedding $f\colon\str C\to \str C_0$. Note that $\str C_0$ is linearly ordered, and hence contains no oriented cycles in $<_{\str C_0}$. Since the image of an oriented cycle contains an oriented cycle, we find that $<_{\str C}$ is an oriented acyclic graph. Since $g$ is injective, the image $g[<_\str C]$ is an oriented acyclic graph on $\str C'$, and thus there is a linear order $<_{\str C'}$ on $\str C'$ extending $g[<_\str C]$. Clearly, $g$ is an injective homomorphism from $\str C$ to $(\str C',<_{\str C'})$.

	It remains to prove that if $\str D\subseteq \str C$ is irreducible then $g$ restricted to $\str D$ is embedding. Note, however, that in such a case there is $\str B'\subseteq \str C$ isomorphic to $\str B$ such that $\str D\subseteq \str B'$, and so we know that $\str C^-$ induces an irreducible substructure on $B'$. Consequently, $g\vert _{B'}$ is an embedding, and so in particular $g\vert _D$ is also an embedding.
\end{proof}

Theorem~\ref{thm:nesetril-metric-spaces} now follows easily:
\begin{proof}[Proof of Theorem~\ref{thm:nesetril-metric-spaces}]
	Let $\mathcal K^<$ be the class of free orderings of $\mathcal K$, the class of all finite metric spaces. Lemma~\ref{lem:metric_unordered_locally_finite} tells us that $\mathcal K$ is a strong locally finite subclass of $\mathcal R$, the class of all finite $\mathbb R^{{>}0}$-edge-labelled graphs. Since $\mathcal K$ consists of irreducible structures, Lemma~\ref{lem:locally_finite_add_order} implies that $\mathcal K^<$ is a strong locally finite subclass of the class $\mathcal R^<$ of free orderings of $\mathcal R$, which is Ramsey by Theorem~\ref{thm:NR}. It is easy to check that $\mathcal K^<$ is hereditary and has the (strong) amalgamation property (free amalgams of metric spaces contain no non-metric cycles), and hence Theorem~\ref{thm:hn_completions} applies.
\end{proof}

This shows us that, as the linear order is independent from the rest of the structure, we can the linear order separately when discussing the existence of a completion. Note however that we needed $\mathcal K$ to consist of irreducible structures in order to observe that the order in $\str C$ is defined on the exactly same pairs as the distances. This is a small but important detail, as otherwise it would be impossible to separate the treatment of the order from the metric. This is witnessed in the following example:

\begin{example}
	Let $L$ be the language consisting of binary relations $<$ and $E$, and let $\mathcal C$ be the class of all finite $L$-structures in which $<$ is a linear order and $E$ is an equivalence relation. It is easy to see that $\mathcal C$ is a strong amalgamation class consisting of irreducible structures. However, when we forget the order relation, many structures become reducible. This has serious consequences: If $\str C$ is a graph then it always has a completion to an equivalence: Just consider the complete graph on the same vertex set. On the other hand, if $\str C$ is an $L$-structure such that $<$ is irreflexive and antisymmetric and $E$ is reflexive and symmetric then there might be some pairs of vertices which are in the $<$ relation but not in the $E$ relation, hence we can no longer simply complete $\str C$ to a complete graph.
\end{example}

In the following section we will see an example where the order plays a more central role.
\subsubsection{Equivalences}\label{sec:equivalences}
Let $\mathcal K$ consist of all finite $\{E,N\}$-edge-labelled complete graphs where $E$ is transitive. (In other words, $E$ together with the diagonal is an equivalence relation and $N$ consists of the non-equivalent pairs.) Clearly, $\mathcal K$ is a strong amalgamation class which consists of irreducible structures. Let $\str C$ be an $\{E,N\}$-edge-labelled graph. We want to characterize when we can complete $\str C$ to $\mathcal K$.

Let $\ell$ be the labelling of edges of $\str C$. The natural attempt is to emulate the proof of Proposition~\ref{prop:metric_completion} and define a new labelling $\ell'$ on pairs of vertices of $\str C$ such that $\ell'(u,v) = E$ if and only if there exists a path in $\str C$ between $u$ and $v$ such that all edges of the path are labelled by $E$, and $\ell'(u,v) = N$ otherwise. Clearly, whenever $\ell'(u,v)=E$ then it needs to be so in any completion of $\str C$ due to transitivity.

Define $\str C'$ to be the $\{E,N\}$-edge-labelled complete graph on vertex set $C$ where the label of $uv$ is $\ell'(u,v)$. Observe that $\str C'\in \mathcal K$ and that if $\ell\subseteq \ell'$ then the identity is a homomorphism-embedding $\str C\to \str C'$ (and so $\str C'$ is a strong completion of $\str C$). Assume that $\ell'$ does not extend $\ell$, and let $u,v\in C$ be such that $\ell(u,v)$ is defined and different from $\ell'(u,v)$. If $\ell(u,v)=E$ then $\ell'(u,v)=E$ which is witnessed by the path $uv$. So $\ell(u,v)=N$ and $\ell'(u,v)=E$. But this means that there is a path from $u$ to $v$ with all edges labelled by $E$. Consequently, $\str C$ does not have any completion into $\mathcal K$, as cycles with one edge labelled by $N$ and all other edges labelled by $E$ do not have such a completion. Unfortunately, the shortest witnessing path for $u$ and $v$ being equivalent can be arbitrarily long, hence there is no bound on the size of the largest obstacle.

In the proof of Lemma~\ref{lem:locally_finite_add_order} we have seen an analogous situation happen with the linear order and the solution there was to invoke condition~(\ref{locallyfinite:2}) of Definition~\ref{defn:locfin}. This is, however, not applicable in this case. Instead, we need to side-step the issue:

Let $L'$ be the language consisting of one unary function $\func{}{}$ and two unary relations $O$ and $I$. Let $\mathcal D$ be the class of all finite $L'$-structures $\str A$ such that $O_\str A$ and $I_\str A$ form a partition of the vertex set $A$, $\func{A}{}(x) = \emptyset$ if and only if $x\in I_\str A$, and for every $x\in O_\str A$ it holds that $\func{A}{}(x) = \{y\}$ for some $y\in I_\str A$.
We will call the vertices from $O_\str A$ the \emph{original} vertices of $\str A$ and the vertices from $I_\str A$ the \emph{imaginary} vertices of $\str A$. Notice that there is a simple correspondence between structures from $\mathcal D$ and structures from $\mathcal K$: Given $\str A\in \mathcal D$, define an $L$-structure $T(\str A)\in \mathcal K$ with vertex set $O_\str A$, putting $uv\in E_{T(\str A)}$ if and only if $\func{A}{}(u)=\func{A}{}(v)$ and $uv\in N_{T(\str A)}$ otherwise. On the other hand, given $\str A\in \mathcal K$, let $I$ be the set of all equivalence classes of $\str A$ and define an $L'$-structure $U(\str A)\in \mathcal D$ with vertex set $A\cup I$ such that all vertices from $A$ are in the relation $O_{U(\str A)}$, all vertices from $I$ are in the relation $I_{U(\str A)}$ and for every $u\in A$, $\nbfunc{U(\str A)}{}(u) = \{c\}$, where $c\in I$ is the equivalence class whose $u$ is a member in $\str A$.

Note that $\str A = T(U(\str A))$ for every $\str A\in \mathcal K$ and that $T$ and $U$ are functors (\ie{}, they preserve embeddings and their compositions). Note also that the membership conditions of $\mathcal D$ ($O$ and $I$ form a partition of the vertex set, the domain of $\func{}{}$ is $O$ and the image of $\func{}{}$ is a subset of $I$) are conditions on irreducible substructures, and so the class $\mathcal D^<_0$ of free orderings of members of $\mathcal D$ is Ramsey by Theorem~\ref{thm:HN}. However, the map $T$ does not lift to linearly ordered structures as there is no canonical way of defining an ordering on $I$ which would be stable under taking substructures. In fact, it turns out that adding all linear orders does not lead to a Ramsey class:

\begin{observation}\label{obs:equiv_non_ramsey}
	The class $\mathcal K^{<}_0$ of free orderings of members of $\mathcal K$ is not Ramsey.
\end{observation}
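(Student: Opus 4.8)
I would exhibit $\str{A},\str{B}\in\mathcal{K}^{<}_0$ for which no $\str{C}\in\mathcal{K}^{<}_0$ satisfies $\str{C}\longrightarrow(\str{B})^{\str{A}}_2$, i.e.\ for which \emph{every} $\str{C}\in\mathcal{K}^{<}_0$ admits a $2$-colouring of $\Emb(\str{A},\str{C})$ with no monochromatic copy of $\str{B}$. Take $\str{A}$ to be the one-vertex structure (so $\Emb(\str{A},\str{C})$ is just the vertex set of $\str{C}$) and take $\str{B}$ to be a small \emph{non-convex} configuration, concretely the three-vertex structure $b_1<b_2<b_3$ with $b_1E_{\str{B}}b_3$ (so $\{b_1,b_3\}$ is one equivalence class) and $b_2$ in a separate class. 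A copy of $\str{B}$ in $\str{C}$ is then a triple of vertices $u<v<w$ with $uE_{\str{C}}w$ and $v$ not equivalent to $u$, and it is monochromatic exactly when $u,v,w$ receive the same colour. So the statement reduces to: every ordered equivalence $\str{C}$ has a $2$-colouring of its vertices in which no such ``sandwich'' triple is monochromatic.

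\textbf{Key steps.} First I would record the easy calibrating case: if $\str{C}$ has at most two classes, colouring each class with its own colour works, since a sandwich triple has its two outer vertices in one class and its middle vertex in the other, and no sandwich triple lives inside a single class --- this is the argument of Example~\ref{ex:2komega}. The real work is the general case, where $\str{C}$ may have unboundedly many classes and colouring by class fails, because the ``straddling'' relation on the classes of $\str{C}$ (one class has a vertex lying, in the order of $\str{C}$, strictly between two vertices of another) need not be bipartite. Here I would colour the classes non-uniformly and coordinated with the linear order: process the classes of $\str{C}$ one at a time, and when (re-)inserting a class $K$, colour its elements so as to avoid both creating a monochromatic sandwich with outer vertices in $K$ and leaving a vertex of $K$ that is order-sandwiched by an already-monochromatic pair of some other class with no legal colour. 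Making this induction close requires either choosing the processing order with care or strengthening the inductive hypothesis to remember the colour pattern along the elements of each class.

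\textbf{Main obstacle and an alternative.} I expect this colouring construction to be the only genuine difficulty; everything else is bookkeeping, and as a sanity check one verifies that the sandwich hypergraph of $\str{C}$ can never contain a Fano plane as a sub-hypergraph (the required betweenness constraints on the seven points are jointly unsatisfiable), which makes $2$-colourability plausible. Alternatively one can sidestep the explicit colouring: $\mathcal{K}^{<}_0$ is readily seen to be a strong amalgamation class (amalgamate the equivalences canonically and the orders freely), so by Theorem~\ref{fraissethm} it has a \Fraisse{} limit $\str{M}$, which is $\omega$-categorical and relational, hence locally finite. Were $\mathcal{K}^{<}_0$ Ramsey, Theorem~\ref{thm:KPT} would make $\Aut(\str{M})$ extremely amenable, so it would fix a point in the compact $\Aut(\str{M})$-flow of all linear orders on the countable set of equivalence classes of $\str{M}$; but $\Aut(\str{M})$ can swap any two classes $K$ and $L$ --- the substructure induced on $K\cup L$ is the generic linear order with two dense--codense colour classes, whose colour-swapping automorphism extends to $\str{M}$ by back-and-forth --- so no linear order on the classes can be $\Aut(\str{M})$-invariant, a contradiction. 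On this route the delicate point is precisely verifying that the colour-swapping automorphism of $K\cup L$ extends to an automorphism of $\str{M}$.
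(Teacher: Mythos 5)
Your main plan fails at the reduction step: with $\str A$ a single vertex, the lemma you would need --- that every finite ordered equivalence admits a vertex $2$-colouring with no monochromatic sandwich triple --- is simply false, so no choice of processing order or strengthened induction hypothesis can close the argument. Concretely, let $\str C$ have seven classes $K_1,\dots,K_7$, each of size three, interleaved round-robin: $x_1<\dots<x_7<y_1<\dots<y_7<z_1<\dots<z_7$ with $K_i=\{x_i,y_i,z_i\}$. In any $2$-colouring each class contains a monochromatic pair, described by one of $3\cdot 2=6$ types (which two of the three positions, and which colour); by pigeonhole two classes $K_i,K_j$ with $i<j$ receive the same type, and then one of the triples $(x_i,x_j,y_i)$, $(y_i,y_j,z_i)$, $(x_i,x_j,z_i)$ is a monochromatic copy of your sandwich $\str B$. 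So this $\str C$ is a Ramsey witness for your pair $(\str A,\str B)$, and the single vertex is in general the wrong choice of $\str A$ here: a vertex has a unique expansion in the canonical (convex) Ramsey expansion, so vertex-colourings cannot detect the failure of Ramseyness. (Your Fano-plane sanity check is not probative --- non-$2$-colourable $3$-uniform hypergraphs need not contain a Fano plane.) The obstruction lives one level up, at pairs: an ordered non-equivalent pair has two ``hidden'' expansions according to whether the class of the smaller vertex precedes the class of the larger one. This is exactly what the paper's proof exploits: it takes $\str A$ to be the ordered non-equivalent pair $a_0<a_1$, keeps your $\str B$, fixes an arbitrary linear order $\ll$ on the equivalence classes of $\str C$, and colours an embedding $f$ of $\str A$ by whether $[f(a_0)]\ll[f(a_1)]$; the two sub-pairs $(b_0,b_1)$ and $(b_1,b_2)$ of any copy of $\str B$ then receive opposite colours because $[b_0]=[b_2]$.

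Your alternative route via Theorem~\ref{thm:KPT} is sound and would yield a correct proof: $\mathcal K^{<}_0$ is indeed a strong amalgamation class, the space of linear orders on the countable set of classes of its \Fraisse{} limit is a legitimate flow, and the class-swapping automorphism exists by a routine back-and-forth (any order-preserving finite partial map respecting the class partition is a partial isomorphism, since the order is free). Note, however, that the paper's colouring is precisely the finitary shadow of this dynamical argument: ``no $\Aut(\str M)$-invariant linear order on the classes'' becomes ``an arbitrary linear order on the classes yields a colouring of non-equivalent pairs that every copy of $\str B$ must split''. I would recommend writing up that finitary version directly; it avoids extreme amenability entirely and is two lines long once the right $\str A$ is chosen.
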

\begin{proof}
	Let $\str A\in \mathcal K^{<}_0$ be the structure with $A = \{a_0,a_1\}$ where $a_0 < a_1$ and the edge $a_0a_1$ has label $N$, and let $\str B\in \mathcal K^{<}_0$ be the structure with $B=\{b_0,b_1,b_2\}$ such that $b_0 < b_1 < b_2$, the edge $b_0b_2$ has label $E$ and the edges $b_0b_1$ and $b_1b_2$ have label $N$. Let $\str C\in \mathcal K^{<}_0$ be arbitrary. We will prove that $\str C\not\longrightarrow (\str B)^\str A_2$.

	For $x\in C$, we will denote by $[x]$ its equivalence class in $\str C$. Pick an arbitrary linear order $\ll$ of all equivalence classes in $\str C$, and define a colouring $c\colon \Emb(\str A,\str C) \to 2$, such that, for an embedding $f\colon \str A\to \str C$, we put $c(f) = 0$ if and only if $[f(a_0)] \ll [f(a_1)]$, and $c(f) = 1$ if and only if $[f(a_1)] \ll [f(a_0)]$. Clearly, $c$ attains both colours on every embedding $\str B\to \str C$.
\end{proof}

When presented with a Ramsey problem, it is often a good strategy to look for \emph{bad colourings} such as the one in Observation~\ref{obs:equiv_non_ramsey} as they often point in the direction of the desired expansion. In this concrete case, we know by Proposition~\ref{prop:ordernecessary} that we have to add an order, and consequently we will always have the structure $\str A\in \mathcal K^{<}_0$ from the proof of Observation~\ref{obs:equiv_non_ramsey}. Therefore, one needs to avoid the particular $\str B$ from the proof. Note that the key property of $\str B$ was that no matter how the equivalence classes are ordered, it contains both a copy of $\str A$ where the order of vertices agrees with the order of their equivalence classes, as well as a copy of $\str A$ where they disagree.

We say that $\str B \in \mathcal K^{<}_0$ is \emph{convexly ordered} if every equivalence class forms an interval in the order. Let $\mathcal K^{<}$ be the subclass of $\mathcal K^{<}_0$ consisting of the convexly ordered structures. We will now prove:

\begin{theorem}[\cite{Kechris2005}]
	$\mathcal K^{<}$ is Ramsey.
\end{theorem}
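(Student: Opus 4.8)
The plan is to transfer the Ramsey property from $\mathcal D^<_0$, the class of free orderings of $\mathcal D$, which is Ramsey by Theorem~\ref{thm:HN}, to $\mathcal K^<$ along the functors $T$ and $U$ promoted to the ordered setting (a standard functorial transfer; cf.\ the categorical preadjunctions of~\cite{masulovic2016pre}). Although $T$ does not lift to a functor on $\mathcal K^{<}_0$, it does lift to a functor $\mathcal D^<_0\to\mathcal K^<$: for $\hat{\str A}\in\mathcal D^<_0$ the vertex set of $T(\hat{\str A})$ is $O_{\hat{\str A}}$, and we order it by declaring $u<v$ if and only if the imaginary vertex $\func{}{}(u)$ precedes $\func{}{}(v)$ in $<_{\hat{\str A}}$, or $\func{}{}(u)=\func{}{}(v)$ and $u<_{\hat{\str A}}v$. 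First I would check that $T(\hat{\str A})$ is then a convexly ordered member of $\mathcal K^<$: transitivity of $E$ is inherited from $T$ being a functor into $\mathcal K$, and convexity holds because two original vertices in the same $E$-class share the same value of $\func{}{}$, so the middle of any three vertices ordered as above stays in the class. Functoriality on embeddings is routine, since embeddings in $\mathcal D^<_0$ preserve $\func{}{}$, $O$, $I$ and $<$.

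On the $U$-side, for $\str B\in\mathcal K^<$ I would equip $U(\str B)$ with the linear order $\hat{\str B}$ in which every imaginary vertex precedes every original vertex, the original vertices keep the order of $\str B$, and the imaginary vertices (the $E$-classes of $\str B$) are ordered by the order induced on them by the convex ordering of $\str B$; then $\hat{\str B}\in\mathcal D^<_0$. Two things must be checked. First, $T(\hat{\str B})=\str B$: the edge labels agree because $u,v$ lie in the same class of $\str B$ exactly when $\func{}{}(u)=\func{}{}(v)$ in $\hat{\str B}$, and the order defined in the previous paragraph coincides with $<_{\str B}$ precisely because $\str B$ is convexly ordered. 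Second, $e\mapsto U(e)$ sends each $e\in\Emb(\str A,\str B)$ to some $U(e)\in\Emb(\hat{\str A},\hat{\str B})$ with $T(U(e))=e$; the only non-routine point is that $U(e)$ is order-preserving on imaginary vertices, which is where convexity is really used: if a class $c$ of $\str A$ precedes a class $c'$, then every element of $c$ precedes every element of $c'$ in $\str A$, hence (as $e$ is order-preserving) in $e[\str B]$ as well; the images $e[c],e[c']$ lie in distinct classes of $\str B$ because $e$ preserves $N$; and since $\str B$ is convexly ordered these two classes are ordered so that the class of $e[c]$ precedes the class of $e[c']$.

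With $T$ and $U$ in hand the transfer is standard. Given $\str A,\str B\in\mathcal K^<$, use the Ramsey property of $\mathcal D^<_0$ to get $\hat{\str C}\in\mathcal D^<_0$ with $\hat{\str C}\longrightarrow(\hat{\str B})^{\hat{\str A}}_2$, and put $\str C=T(\hat{\str C})\in\mathcal K^<$. Given a $2$-colouring $\chi$ of $\Emb(\str A,\str C)$, set $\hat\chi(g)=\chi(T(g))$ for $g\in\Emb(\hat{\str A},\hat{\str C})$; this is well defined since $T(g)$ is an embedding $\str A\to\str C$ (recall $T(\hat{\str A})=\str A$ and $T(\hat{\str C})=\str C$). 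Pick a $\hat\chi$-monochromatic $\hat f\in\Emb(\hat{\str B},\hat{\str C})$ and let $f=T(\hat f)\in\Emb(\str B,\str C)$. For every $e\in\Emb(\str A,\str B)$ one has $f\circ e=T(\hat f)\circ T(U(e))=T(\hat f\circ U(e))$ with $\hat f\circ U(e)\in\{\hat f\circ g:g\in\Emb(\hat{\str A},\hat{\str B})\}$, so $\chi(f\circ e)=\hat\chi(\hat f\circ U(e))$ does not depend on $e$; thus $\str C\longrightarrow(\str B)^{\str A}_2$. The main obstacle, and the only genuine use of convexity, is the bookkeeping in the middle step: choosing the order on $\hat{\str B}$ so that simultaneously $T\circ U$ is the identity on $\mathcal K^<$ and $U$ stays a functor into $\mathcal D^<_0$.
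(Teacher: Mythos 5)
Your proof is correct, but it takes a genuinely different route from the paper's. The paper first carves out the subclass $\mathcal D^<\subseteq\mathcal D^<_0$ of orderings in which originals precede imaginaries and respect the order of their $\func{}{}$-images, proves via an explicit completion argument that $\mathcal D^<$ is a (strong) locally finite subclass of $\mathcal D^<_0$, invokes Theorem~\ref{thm:hn_completions} to conclude that $\mathcal D^<$ is Ramsey, and only then transfers to $\mathcal K^<$ along functors $T^<,U^<$ that simply keep the order on $O$. You bypass the completion machinery entirely: you work with all of $\mathcal D^<_0$ (Ramsey already by Theorem~\ref{thm:HN}) and absorb the ``convexification'' into the functor $T$ itself, by ordering the original vertices lexicographically -- first by the position of their imaginary $\func{}{}$-image, with ties broken by the original order. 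I checked the points you flag as routine: $T$ does preserve embeddings (an embedding in $\mathcal D^<_0$ preserves both $\func{}{}$ and $<$, hence the derived lexicographic order), $T(\hat{\str A})$ is indeed convexly ordered, $T\circ U=\mathrm{id}$ on $\mathcal K^<$ uses convexity exactly where you say, and the identity $f\circ e=T(\hat f\circ U(e))$ makes the colour transfer go through even though $T$ is far from surjective on embeddings. Note that on the paper's subclass $\mathcal D^<$ your $T$ agrees with the paper's $T^<$, so your functor is an extension of theirs to all of $\mathcal D^<_0$. What your shortcut loses is precisely what the paper advertises as the point of its longer proof: the local-finiteness/completion argument for $\mathcal D^<$ is the ingredient that generalizes to richer classes carrying an equivalence together with additional structure, where one cannot simply discard the original order between classes and re-derive it from the imaginaries. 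For the theorem as stated, however, your argument is complete and simpler.
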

This result can be deduced from the product Ramsey theorem. This is probably simpler than the proof below which uses Theorem~\ref{thm:hn_completions}, however, our proof can be adapted to much more complex structures with equivalences.
\begin{proof}
	We have seen that the class $\mathcal D^<_0$ of all linear orderings of members of $\mathcal D$ is Ramsey. Let $\mathcal D^<$ be the subclass of $\mathcal D^<_0$ such that $\str A\in \mathcal D^<$ if and only if for every $x,y\in \str A$ the following are true:
	\begin{enumerate}
		\item If $x\in O_\str A$ and $y\in I_\str A$ then $x <_\str A y$, and
		\item if $x,y\in O_\str A$ and $\func{A}{}(x) <_\str A \func{A}{}(y)$ then $x <_\str A y$.
	\end{enumerate}
	We now prove that $\mathcal D^<$ is a (strong) locally finite subclass of $\mathcal D^<_0$ and then use Theorem~\ref{thm:hn_completions} to infer that $\mathcal D^<$ is Ramsey. We thus need to verify Definition~\ref{defn:locfin}. It turns out that we will only use conditions~(\ref{locallyfinite:1}) and~(\ref{locallyfinite:2}), and so we can always put $n=0$. Fix arbitrary $\str B\in\mathcal D^<$ and $\str C_0\in \mathcal D^<_0$ and let $\str C$ be a finite $(L'\cup \{<\})$-structure with a homomorphism-embedding $\str C\to \str C_0$ such that every irreducible substructure of $\str C$ extends to a copy of $\str B$ in $\str C$. Observe that $\str C\vert _{L'}$ is in $\mathcal D$ (as $\mathcal D$ is described by forbidden irreducible structures, and every irreducible substructure of $\str C\vert _{L'}$ embeds to $\str B\vert _{L'} \in \mathcal D$), and so we only need to complete the order.

	Note that---as before---the existence of a homomorphism-embedding to $\str C_0$ implies that the relation $<_\str C$ is acyclic. Consequently, we can fix a linear order $\ll_I$ on $I_\str C$ extending $<_\str C$ as well as, for every $y\in I_\str C$, a linear order $\ll_y$ on vertices $\{x\in O_\str C : F_\str C(x) = \{y\}\}$ extending $\leq_\str C$. This allows us to define a binary relation $<_{\str C'}$ on vertex set $C$ such that for every distinct $x,y\in C$ we have $x <_{\str C'} y$ if and only if one of the following holds:
	\begin{enumerate}
		\item $x\in O_\str C$ and $y\in I_\str C$,
		\item $x,y\in I_\str C$ and $x\ll_I y$,
		\item $x,y\in O_\str C$ and $F_\str C(x) \ll_I F_\str C(y)$, or
		\item $x,y\in O_\str C$, $F_\str C(x) = F_\str C(y)$, and $x \ll_{F_\str C(x)} y$.
	\end{enumerate}
	Let $\str C'$ be the structure obtained from $\str C$ by replacing $<_\str C$ with $<_{\str C'}$. It is easy to see that $<_{\str C'}$ is a linear order and that $\str C'\in \mathcal D^<$. We will show that $<_{\str C'}$ extends $<_\str C$. Towards this, observe first that there are no $x\in O_\str C$ and $y\in I_\str C$ with $y <_\str C x$ (for every $x,y\in C$ with $y <_\str C x$ we know that the substructure induced by $\str C$ on $\{x,y\}$ is irreducible, and hence embeds to $\str B\in \mathcal D^<$ where we know that all original vertices come before all imaginary ones).

	This, together with $\ll_I$ extending $<_\str C$, implies that if there are vertices $x,y\in C$ with $x<_\str C y$ and $y <_{\str C'} x$ then $x,y\in O_\str C$. Since the orders $\ll_{z}$ extend $<_\str C$ inside the equivalence classes, it follows that $F_\str C(x) \neq F_\str C(y)$, and thus, by the construction of $<_{\str C'}$, we know that $F_\str C(y) \ll_I F_\str C(x)$. Observe that the substructure of $\str C$ with vertex set $\{x,y,F_\str C(x), F_\str C(y)\}$ is irreducible and hence embeds to $\str B\in \mathcal D^<$. Consequently, since $x \leq_\str C y$, we see that $F_\str C(x) \leq_\str C F_\str C(y)$. This is in contradiction with $\ll_I$ extending $\leq_\str C$.

	So $<_{\str C'}$ is a linear order which extends $<_\str C$ and the identity is a homomorphism embedding $\str C \to \str C'$. Consequently, $\mathcal D^<$ is a (strong) locally finite subclass of $\mathcal D^<_0$. It is easy to see that it has the amalgamation property, consists of irreducible structures, and is hereditary, and so Theorem~\ref{thm:hn_completions} implies that it is Ramsey.

	\medskip

	We will now define a pair of functors $T^<\colon \mathcal D^<\to \mathcal K^{<}$ and $U^<\colon \mathcal K^{<}\to \mathcal D^<$ such that $T^<$ and $U^<$ behave exactly as $T$ and $U$ respectively on the unordered reducts, $T^<$ simply keeps the order on $O$, and $U^<$ keeps the order on the original vertices, puts all members of $O$ before all members of $I$, and orders $\func{}{}(x) < \func{}{}(y)$ if and only if all members of the equivalence class represented by $\func{}{}(x)$ are smaller than all members of the equivalence class of $\func{}{}(y)$. Note that since $\mathcal K^{<}$ consists of convexly ordered structures, the order is defined on every pair of vertices. It is easy to check that $\str A = T^<(U^<(\str A))$ and that $T^<$ and $U^<$ respect embeddings and their compositions.

	Now, pick arbitrary $\str A,\str B\in\mathcal K^{<}$. As $\mathcal D^<$ is Ramsey, there is $\str C\in \mathcal D^<$ such that $\str C\longrightarrow (U^<(\str B))^{U^<(\str A)}_2$. We will prove that $T^<(\str C) \longrightarrow (\str B)^\str A_2$. Towards that, fix an arbitrary colouring $c\colon \Emb(\str A,T^<(\str C))\to 2$. This gives rise to a colouring $c'\colon \Emb(U^<(\str A),\str C)\to 2$, and hence there is $g'\colon U^<(\str B)\to \str C$ which is monochromatic. Consequently, restricting to the original vertices, we get an embedding $g\colon \str B\to T^<(\str C)$ which is monochromatic with respect to $c$.
\end{proof}

\begin{remark}
	Note the similarity between the completion by shortest paths for metric spaces and the completion we defined for equivalences. This is not a coincidence, because if we consider metric spaces with distances $\{1,3\}$ then distance 1 describes an equivalence relation and distance 3 its complement. In general, the \emph{ultrametric spaces} are those where the allowed distances are so far apart that the triangle inequality can be equivalently stated as $\max(d(x,z),d(y,z))\geq d(x,y)$ and they describe families of refining equivalences. Ramsey expansions of ultrametric spaces have been identified by Nguyen Van Th\'e~\cite{The2010}. See~\cite{Konecny2018b} for more context and further generalizations.
\end{remark}

\medskip

We have now seen two examples of classes with obstacles to a completion of unbounded size: One was because of orders, the other because of equivalences. The way to deal with orders was to use the existence of a homomorphism-embedding into an already ordered structure, while the way to deal with equivalences was to use unary functions to explicitly represent the equivalence classes. Both these methods are, by now, standard heuristics used when applying Theorem~\ref{thm:hn_completions}:
\begin{itemize}
	\item Theorem~\ref{thm:posets} says that the class of all finite posets is Ramsey when equipped with a linear extension~\cite{Nevsetvril1984}. This can be proved using Theorem~\ref{thm:hn_completions} by letting $\mathcal K$ be the class of all finite posets with a linear extension, $\mathcal R$ the class of all linearly ordered directed graphs, and putting $n(\str B, \str C_0) = \lvert C_0\rvert$ in Definition~\ref{defn:locfin}.
	\item Presence of definable equivalences is a hindrance for local finiteness. When applying Theorem~\ref{thm:hn_completions}, one needs to first understand these definable equivalences and then \emph{eliminate imaginaries} -- introduce imaginary vertices representing the equivalence classes, link them with the original vertices using functions and, possibly, add more structure on the imaginary vertices to obtain the strong amalgamation property. Finally, one has to only expand by orders convex with respect to the imaginaries. Several examples are shown in~\cite{Hubicka2016}, see also~\cite{Hubicka2017sauerconnant} or~\cite{Konecny2018b,Hubicka2017sauer} for more complicated examples.
\end{itemize}

These heuristics have worked remarkably well: Hubička and Nešetřil~\cite{Hubicka2016} obtained canonical Ramsey expansion for all of Sauer's $S$ metric spaces~\cite{sauer2013b} where $S\cup \{0\}$ is closed, this extra condition was later removed by Hubička, Konečný, Nešetřil, and Sauer~\cite{HubickaSauerSmetric}. Aranda, Bradley-Williams, Hubi{\v c}ka, Karamanlis, Kompatscher, Konečný, and Pawliuk~\cite{Aranda2017, Aranda2017a, Aranda2017c} obtained canonical Ramsey expansion of metric spaces from Cherlin's catalogue of metrically homogeneous graphs~\cite{Cherlin2011b,Cherlin2013}.\footnote{This is how the second author jumped on the bandwagon when he, as an undergraduate, cheekily joined the 2016 Ramsey DocCourse semester organized by Nešetřil and the first author in Prague and, together with some other participants, started looking at metrically homogeneous graphs.} Braunfeld introduced $\Lambda$-ultrametric spaces (motivated by his classification of multidimensional permutation structures~\cite{sam2,braunfeld2018classification}) and found their canonical Ramsey expansion; his proof again follows this heuristic~\cite{Sam}. All these arguments are based on some analogue of Proposition~\ref{prop:metric_completion}, and Hubička, Konečný, and Nešetřil then developed an abstract framework of generalized metric spaces and identified their canonical Ramsey expansions~\cite{Hubicka2017sauerconnant,Hubicka2017sauer,Konecny2018b}. There are many more examples in Section~4 of~\cite{Hubicka2016}. We remark that some partial results have been obtained earlier by Nguyen Van Th\'e~\cite{The2010} and Soki\'c~\cite{Sokic2017}.
\begin{figure}
	\centering
	\includegraphics[scale=0.7]{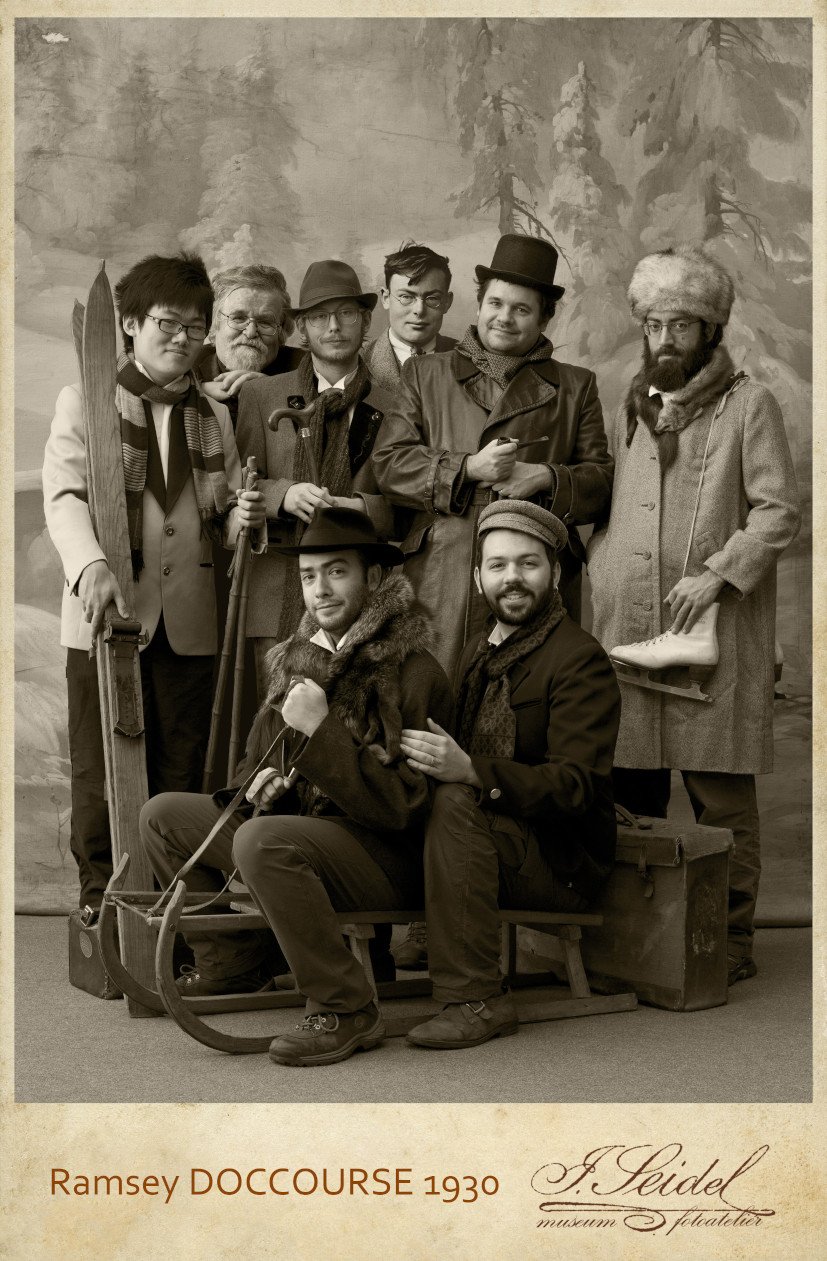}
	\caption{Some participants of the Ramsey DocCourse photographed at the world famous Seidel's studio in \v Cesk\'y Krumlov: Eng Keat Hng, Jaroslav Ne\v set\v ril, David Bradley-Williams, Frank Ramsey, Jan Hubi\v cka, and Miltiadis Karamanlis in the back, Andrés Aranda and Micheal Pawliuk at the front. The second author, being a young undergraduate, had no finances to travel to the photo studio.}
	\label{fig:doccourse}
\end{figure}

\subsubsection{The exceptions}
\label{exceptions}
The current cookbook method for obtaining a Ramsey expansion for a class $\mathcal C$ of $L$-structures is to start by understanding the definable equivalence relations on the structures, then eliminate imaginaries, understand what kind of orders the structures contain and, with all this knowledge, try to understand obstacles to completions. Hopefully, they turn out to have bounded size and then one typically needs to expand the class by convex orders in order to be able to get a linear order on all original vertices as well as imaginaries. Usually, the natural maps between the expansion of the original class, and the expansion of the class eliminating imaginaries turn out to be functorial and Theorem~\ref{thm:hn_completions} then does all the heavy lifting.

\medskip

Of course, every rule has an exception, and so there are multiple Ramsey classes where the proofs do not follow the cookbook.

\begin{example}[Two-graphs]\label{ex:twographs}
	A 3-uniform hypergraph $\str A$ is called a \emph{two-graph} if there are an even number of hyperedges on every quadruple of vertices. Two-graphs are very interesting structures and there is a rich literature about them, see \eg{}~\cite{Cameron1999,Seidel1973}. For our purposes, the important property of two-graphs is that they represent the so-called \emph{switching classes} of graphs: Given a graph $\str G$, one can define its \emph{associated two-graph} $T(\str G)$ on the same vertex set, putting a hyperedge on a triple if and only if there are an odd number of edges spanned by the triple. Conversely, for every (not necessarily finite) two-graph $\str A$, there is a graph $\str G$ such that $\str A = T(\str G)$.

	This gives us a Ramsey expansion for two-graphs by simply considering the class of all ordered graphs equipped also with the two-graph relation. It was proved by Evans, Nešetřil, and the authors (see Proposition~7.1 of~\cite{eppatwographs}) that this expansion has the expansion property, and thus is in fact the canonical one.
\end{example}

Note that the standard cookbook would only suggest adding a linear order, which does not work. It was important to understand the combinatorial properties of the class in order to guess its Ramsey expansion correctly. On the other hand, after guessing it, the Ramsey property follows directly from the Nešetřil--R\"odl theorem (Theorem~\ref{thm:NR}).

\begin{example}[Boolean Algebras]\label{ex:ba}
	Consider the class $\BA$ of all finite Boolean algebras in the language $L=\{0,1,\vee,\wedge,\neg\}$, and put $L^+ = L\cup \{<\}$. Given $\str A\in \BA$ we say that a linear order $<$ on $A$ is \emph{antilexicographic}\footnote{In~\cite{Kechris2005}, they use a slightly different (though first-order interdefinable) definition of an antilexicographic order. Our variant makes the reduction to the dual Ramsey theorem theorem slightly more streamlined.} if the following holds:
	\begin{quote}
		Let $a_0 < \cdots < a_{k-1}$ be the atoms of $\str A$, then for every $x,y\in A$ we have that $x < y$ if and only if there is $0\leq \ell<k$ such that $x\wedge a_\ell = a_\ell$, $y\wedge a_\ell = 0$, and for every $0 \leq i <\ell$ it holds that $x\wedge a_i = y\wedge a_i$.
	\end{quote}
	Let $\BA^+$ be the class of all $L^+$-structures $\str A$ such that $\str A\vert _L \in \BA$ and $<$ is antilexicographic on $\str A$. Then $\BA^+$ is the canonical Ramsey expansion of $\BA$~\cite{Kechris2005}. They used the dual Ramsey theorem which is a special case of the Graham--Rothschild theorem~\cite{Graham1971}. In fact, there is no known proof of the Ramsey property of $\BA^+$ using Theorem~\ref{thm:hn_completions}. In a way, this is not so surprising for two reasons: First, the Ramsey property of $\BA^+$ is in spirit just a reflection of the dual Ramsey theorem in the primal structural setting (see Section~\ref{sec:category}), while most of the other Ramsey classes we have seen are ``truly structural''. Second, there is at most one (isomorphism type of a) Boolean algebra in $\BA$ with $n$ atoms for every $n$, so $\BA$ is actually an unstructured category, even though presented as a class of structures. This is very similar to, say, the class of all finite linear orders, whose Ramseyness also does not follow by Theorem~\ref{thm:hn_completions}, one has to use the Ramsey theorem. Since it is very nice, we will now present a proof that $\BA^+$ is Ramsey using the dual Ramsey theorem and what is basically a straightforward upgrade of Birkhoff's duality.

	We will prove that for $\str A,\str B\in\BA^+$, if we denote by $E_A = \{a_0 < \cdots < a_{k-1}\}$ the atoms of $\str A$ and by $E_B = \{b_0 < \cdots < b_{\ell - 1}\}$ the atoms of $\str B$ then $\Emb(\str A,\str B)$ is in 1-to-1 correspondence with rigid surjections $E_B\to E_A$, where a surjection $f\colon E_B\to E_A$ is \emph{rigid} if for every $0\leq i < i' < k$ we have that $\min\{j : f(b_j) = a_i\} < \min\{j : f(b_j) = a_{i'}\}$. To simplify the arguments, we will assume that elements of $\str A$ and $\str B$ are subsets of $E_A$ and $E_B$ respectively.

	To see one direction, let $g\colon E_B\to E_A$ be a rigid surjection, and define $f\colon \str A\to \str B$ by sending $f(X) = g^{-1}[X] = \{y\in E_B : g(y) \in X\}$. Clearly, $f$ is an embedding $\str A\vert _L \to \str B_L$. To see that it is monotone with respect to $<$, pick arbitrary sets $X,Y\subseteq E_A$ such that $X<Y$. This means that $m = \min(X\triangle Y) \in X$, and so $\min(f(X) \triangle f(Y)) = \min(g^{-1}[X\triangle Y]) = g^{-1}(m) \in f(X)$. Here, $X\triangle Y$ is the symmetric difference of $X$ and $Y$.

	For the other direction, let $f\colon \str A\to \str B$ be an embedding, and define $g\colon E_B\to E_A$ such that $g(y) = x$ if and only if $y\in f(\{x\})$. Clearly, $g$ is a surjection. To see that it is rigid, pick arbitrary $0\leq i < i' < k$, and define $j = \min\{h : g(b_h) = a_i\}$ and $j' = \min\{h : g(b_h) = a_{i'}\}$. We need to prove that $j < j'$. By definition, $b_j = \min(f(\{a_i\}))$ and $b_{j'} = \min(f(\{a_{i'}\}))$. By the definition of antilexicographic order, since $a_i < a_{i'}$, we obtain that, indeed, $j<j'$.

	Observe that our maps from $\Emb(\str A,\str B)$ to rigid surjections and back are mutually inverse: This follows from the fact that each embedding $\str A\to \str B$ is uniquely determined by its restriction to ${E_A\choose 1}$ from which one can derive the corresponding rigid surjection. This implies that the Ramsey property for $\BA^+$ is equivalent to an analogue of the Ramsey theorem for rigid surjections, which happens to be known as the dual Ramsey theorem and is a special case of the Graham--Rothschild theorem~\cite{Graham1971}.
\end{example}
There are a few other examples of this kind such as finite-dimensional vector spaces over a finite field (with an analogue of the antilexicographic ordering) whose Ramseyness was proved by Graham, Leeb, and Rothschild~\cite{Graham1972} and the canonical Ramsey expansion was then discussed in~\cite{Kechris2005}. We remark that Bodirsky and Bodor~\cite{Bodirsky2024} recently obtained a Ramsey expansion of RCC5, a structure closely related to a first-order reduct of the countable atomless Boolean algebra. Although they used other methods, Theorem~\ref{thm:hn_completions} can actually be used to prove this Ramsey property by a variant of the proof that partial orders with a linear extension are Ramsey.

\begin{example}[Trees and $C$-relations]\label{ex:crelations}
	There are various powerful Ramsey theorems for partitions of trees, some of which will play a key role in Section~\ref{sec:big_ramsey}. Similarly as with Boolean algebras, some of them have a reflection in the primal structural setting (see Section~\ref{sec:category}): Let $L=\{C\}$ be a language with a ternary relation $C$, and let $\mathcal C$ be the class of all finite $L$-structures $\str A$ satisfying the following properties for every $a,b,c,d\in A$:
	\begin{enumerate}
		\item $C(a,b,c) = C(a,c,b)$,
		\item if $C(a,b,c)$ then $\neg C(b,a,c)$,
		\item if $C(a,b,c)$ then either $C(a,d,c)$ or $C(d,b,c)$,
		\item if $a\neq b$ then $C(a,b,b)$, and
		\item if $a\neq b\neq c$ then $C(a,b,c)$ or $C(b,a,c)$ or $C(c,a,b)$.
	\end{enumerate}
	One should think of vertices of $\str A$ as of the leaves of a rooted finite binary branching tree, and $C(a,b,c)$ denotes that the meet of $b$ and $c$ is further from the root than the meet of the triple $a,b,c$. Let $L^+$ be an expansion of $L$ by the order symbol $<$. Let $\mathcal C^+$ be the class of all \emph{convex orderings} of members of $\mathcal C$, that is, an $L^+$-structure $\str A$ is in $\mathcal C^+$ if and only if $\str A\vert _L \in \mathcal C$ and $<$ is a linear ordering of $A$ such that if $C(a,b,c)$ and $b<c$ then either $a<b<c$ or $b<c<a$. It was proved by Bodirsky and Piguet that $\mathcal C^+$ is Ramsey~\cite{Bodirsky2010} (they give a direct proof but also observe that it is a consequence of the Milliken theorem~\cite{Milliken1979}), Bodirsky~\cite{Bodirsky2015} later proved that $\mathcal C^+$ is the canonical expansion of $\mathcal C$.

	While $\mathcal C$ contains many non-isomorphic structures of the same cardinality, there are still relatively few of them (compared to ``really random'' structures such as graphs), and so it is perhaps not so surprising that there is no known proof of the Ramsey property of $\mathcal C^+$ using Theorem~\ref{thm:hn_completions}.
\end{example}

Soki\'c proved that semilattices with a linear extension are Ramsey~\cite{sokic2015semilattices}. Similarly as for $C$-relations, there is also no known proof using Theorem~\ref{thm:hn_completions}.

We thus have exceptions of two types: One type are structural reflections of some other Ramsey-type theorems, and it would be very interesting to see if they are bi-interpretable with a class which can be proved to be Ramsey by Theorem~\ref{thm:hn_completions} (see Question~\ref{q:boolean_allthose}). The other type are ``standard'' classes of structures whose Ramsey expansions can be obtained by Theorem~\ref{thm:hn_completions}, but they are, for some reason, more complicated and cannot be discovered using the cookbook methods. In Section~\ref{sec:orientations} we will see another example. In Section~\ref{sec:open} we will present several open problems; some of them seem to be of the first type, while others seem to be of the second type.

\subsection{A counterexample}\label{sec:orientations}
After finding many new examples of Ramsey classes, Question~\ref{q:ramsey_precompact} was eventually answered
negatively by an example found by David Evans~\cite{Evans} using a model-theoretic tool of the Hrushovski construction
(extending his earlier work~\cite{Evans2003}), showing:
\begin{theorem}[\cite{Evans2}]
	There exists a countable $\omega$-categorical structure $\str{M}$ with the property that
	if $H \leq \Aut(\str{M})$ is extremely amenable, then $H$ has infinitely many orbits on $M^2$. Consequently,
	$\str{M}$ has no precompact Ramsey expansion.
\end{theorem}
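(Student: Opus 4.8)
The plan is to treat the two assertions separately. The ``consequently'' part is a soft consequence of the first assertion together with the Kechris--Pestov--Todor\v cevi\' c correspondence: if $\str M^+$ were a precompact Ramsey expansion of $\str M$, then $\Aut(\str M^+)$ would be extremely amenable by Theorem~\ref{thm:KPT}; it is a subgroup of $\Aut(\str M)$, and precompactness forces each of the finitely many $\Aut(\str M)$-orbits on $M^2$ (finitely many since $\str M$ is $\omega$-categorical) to split into only finitely many $\Aut(\str M^+)$-orbits, so $\Aut(\str M^+)$ would have finitely many orbits on $M^2$, contradicting the first assertion. All the work therefore goes into constructing $\str M$ and proving the first assertion, and there I would begin with the reduction behind Proposition~\ref{prop:ordernecessary}: an extremely amenable subgroup $H\le\Aut(\str M)$ (which we may assume closed, its closure being again extremely amenable) acts continuously on the compact space $\Linorder(M)$ of linear orders on $M$ and hence fixes some linear order $\prec$. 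So it suffices to build an $\omega$-categorical $\str M$ for which \emph{no} subgroup of $\Aut(\str M)$ that preserves a linear order on $M$ can have finitely many orbits on $M^2$.

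To construct $\str M$ I would run a \emph{collapsed} Hrushovski predimension construction. Working in a class of finite (hyper)graphs equipped with a suitable predimension function $\delta$, one has the notion of a \emph{strong} (self-sufficient) substructure and a generic structure obtained by free amalgamation over strong substructures; to gain $\omega$-categoricity one passes to the collapsed version $\str M=\str M_\mu$, where a function $\mu$ bounds, for each isomorphism type of a minimal strong extension $\str A\subseteq\str B$ (that is, $\delta(\str B/\str A)=0$ with no proper intermediate strong set), the number of copies of $\str B$ over a fixed $\str A$. For a suitable $\mu$ the structure $\str M_\mu$ is $\omega$-categorical, and the point of the construction is to arrange \emph{simultaneously} that $\str M$ still contains, for every $k$, a minimal strong extension $\str A_k\subseteq\str B_k$ with $\card{B_k\setminus A_k}=k$ whose relative automorphism group acts richly on $B_k\setminus A_k$ -- transitively, say, or as a copy of $\mathbb Z/k\mathbb Z$ -- with all vertices of $B_k\setminus A_k$ conjugate over each vertex of $A_k$ by elements of $\Aut(\str M)$. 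Showing that such ``long, highly symmetric'' minimal extensions survive the collapse \emph{while} $\omega$-categoricity is retained is the first delicate point: $\mu$ must be small enough to collapse the construction yet large enough to keep these configurations, and the two demands pull against each other.

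The heart of the argument is then to convert this reservoir of symmetry into a growth estimate on the number of $2$-orbits. Suppose for contradiction that $H\le\Aut(\str M)$ preserves a linear order $\prec$ and has only $r<\infty$ orbits on $M^2$. Fix a copy of $\str A_k\subseteq\str B_k$ sitting strongly inside $\str M$ for some $k\gg r$; then $H$ preserves both the restriction of $\prec$ to $A_k\cup B_k$ and the strong-closure relation on it. Since $\Aut(\str M)$ realizes the full relative symmetry of $\str B_k$ over $\str A_k$ but any element of $H$ must respect the now-fixed linear order of the $k$ vertices of $B_k\setminus A_k$, distinct ``positions'' of these vertices relative to $A_k$ cannot be matched up by $H$; using that the predimension and closure structure propagate such distinctions to the pairs they sit in, one concludes that the pairs inside $A_k\cup B_k$ fall into at least $g(k)$ distinct $H$-orbits for some $g(k)\to\infty$, which for large $k$ contradicts $r<\infty$. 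I expect this last step to be the main obstacle: certifying that two superficially alike pairs are genuinely not $H$-conjugate requires a careful analysis of how a fixed linear order interacts with the canonical amalgamation and closure operators of the Hrushovski construction -- exactly the bookkeeping carried out in \cite{Evans2}.
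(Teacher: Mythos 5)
Your handling of the ``consequently'' part is fine, but the reduction on which the whole construction rests --- ``it suffices to build $\str M$ so that no subgroup of $\Aut(\str M)$ preserving a linear order has finitely many orbits on $M^2$'' --- extracts the wrong consequence of extreme amenability, and the intermediate claim is actually \emph{false} for the kind of structure you (and \cite{Evans2}) are building. The example in \cite{Evans2} is the generic structure $\str M_F$ for the class $\mathcal C_F$ of finite graphs all of whose subgraphs $\str H$ satisfy $\delta(\str H)\geq\log(\vert H\vert)$ (turned into a free amalgamation class $\mathcal C_F^+$ by adding closure functions, as in Section~\ref{sec:orientations}); the control function already forces closures of finite sets to be uniformly bounded, which is what gives $\omega$-categoricity, so no $\mu$-collapse is needed. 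But precisely because closures of pairs are uniformly bounded, the class of free linear orderings of $\mathcal C_F^+$ is again an amalgamation class, its \Fraisse{} limit $(\str M_F,\prec)$ is homogeneous, and $\Aut(\str M_F,\prec)$ is an order-preserving subgroup of $\Aut(\str M_F)$ with only \emph{finitely} many orbits on $M^2$ (a $2$-orbit is determined by the isomorphism type of the finite closure of the pair together with the order on it). So the statement you reduce to cannot be proved. Your sketch of the contradiction only shows that the $k$ vertices of $B_k\setminus A_k$ inside one fixed copy are pairwise non-conjugate over $A_k$; that does not produce infinitely many orbits on $M^2$ globally, since nothing prevents $H$ from identifying order-isomorphic pairs sitting in different copies.

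The mechanism actually used in \cite{Evans2} (and sketched in Section~\ref{sec:orientations}) is a different compact flow. A graph lies in $\mathcal C_0\supseteq\mathcal C_F$ if and only if it admits a $2$-orientation (every vertex of outdegree at most $2$, via the marriage theorem), so the space of $2$-orientations of $\str M$ is a nonempty compact space on which $\Aut(\str M)$ acts continuously, and an extremely amenable $H\leq\Aut(\str M)$ must fix a $2$-orientation --- a much stronger constraint than fixing a linear order. A fixed $2$-orientation yields, for every $k$, the $H$-invariant binary relation ``there is an oriented path of length $k$ from $x$ to $y$''; since every vertex of $\str M$ has infinite degree in the graph but outdegree at most $2$ in the orientation, oriented paths of unbounded length occur, so infinitely many of these relations are nonempty and separate pairs, giving infinitely many $H$-orbits on $M^2$. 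This is the missing idea: one needs an $\Aut(\str M)$-flow whose fixed points are inherently non-precompact expansions, and the linear-order flow is not such a flow.
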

While the example originates from connections to topological dynamics, it can
be understood in purely combinatorial terms. In fact, it is a nice
example where the connections to topological dynamics aided the development on the combinatorial
side.

Given a graph $\str{G}$ with $n$
vertices and $m$ edges, we define its \emph{predimension} as $\delta(\str{G})=2n-m$
(this name will be justified later, for now, just think of it as a measurement of sparsity of the graph).

Put $\mathcal C_0$ to be the class of all finite graphs $\str{G}$ such that
$\delta(\str{H})\geq 0$ for every subgraph $\str{H}$ of $\str{G}$, and put
$\mathcal C_F$ to be the class of all finite graphs $\str{G}$ such that $\delta(\str{H})\geq \log(\vert\str{H}\vert)$ for every subgraph $\str{H}$ of $\str{G}$.

Given a graph $\str{G}$, a \emph{$2$-orientation} of $\str G$ is an oriented graph created from $\str{G}$ by choosing an orientation of every
edge such that the outdegree of every vertex is at most $2$.  Every vertex $v$ with outdegree $d<2$ is called a \emph{root}, and
the value $2-d$ is its \emph{multiplicity}. Given a vertex $u\in G$, we denote by $\mathrm{roots}(u)$ the set of all roots reachable
by an oriented path from $u$ (that is, all edges of the path are oriented away from $u$, towards the root).  We call a set $A\subseteq G$ \emph{successor-closed} if there are no edges
oriented $u\to v$ such that $u\in A$ and $v\notin A$.
We call $A$ \emph{successor-$d$-closed} if it is
successor-closed and for every vertex $u\in G\setminus A$, the  set $\mathrm{roots}(u)\setminus A$ is non-empty.
We call a graph $\str{G}$ \emph{$2$-orientable} if there exists a $2$-orientation of $\str{G}$.

Given graphs $\str{G}$ and $\str{H}$, we write $\str{G}\leq_s \str{H}$ if $\str{G}$ is an induced subgraph
of $\str{H}$ and for every $\str{G}'$ which is a subgraph of $\str{H}$ and contains $\str{G}$
it holds that $\delta(\str{G})\leq \delta(\str{G}')$.

It is easy to see that every 2-orientable graph $\str{G}$ is in $\mathcal C_0$ and
$\delta(\str{G})$ is equal to the sum of multiplicities of roots of this
orientation.  It is an interesting exercise to use  the marriage theorem to
verify the following statement:
\begin{theorem}[Theorem 3.4 of \cite{Evans2}]
	$\mathcal C_0$ is precisely the class of all finite graphs for which there exists a $2$-orientation.
	Moreover, for every $\str{G}\in \mathcal C_0$ and every $\str{H}\leq_s\str{G}$ there exists a 2-orientation of $\str{G}$
	in which $H$ is successor-closed.
\end{theorem}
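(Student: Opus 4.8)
The plan is to derive both assertions from Hall's marriage theorem, obtaining the reverse inclusion of the first sentence as the special case $\str H=\emptyset$ of the ``moreover'' part. One direction is immediate: if $\str G$ carries a $2$-orientation and $\str H\subseteq\str G$, then every edge of $\str H$ is oriented away from one of its two endpoints, both of which lie in $H$, so the number of edges of $\str H$ is at most $\sum_{v\in H}\mathrm{outdeg}_{\str G}(v)\leq 2\card{\str H}$, giving $\delta(\str H)\geq 0$; hence $\str G\in\mathcal C_0$. For the substantive direction, fix $\str G\in\mathcal C_0$ and an induced subgraph $\str H$ with $\str H\leq_s\str G$. I would build a bipartite graph $B$ whose left class is $E(\str G)$ and whose right class consists of two formal copies $v^1,v^2$ of each $v\in G$, joining an edge $e=\{u,v\}$ to the copies of $u$ and of $v$, \emph{except} that if $e$ is a \emph{crossing} edge (exactly one endpoint in $H$) then $e$ is joined only to the two copies of its endpoint outside $H$. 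A matching of $B$ saturating the left class assigns to each edge $e$ a ``tail'' $\phi(e)\in e$ with at most two edges sharing any tail; orienting each $e$ away from $\phi(e)$ produces a $2$-orientation of $\str G$, and since every crossing edge is then oriented \emph{into} its non-$H$ endpoint, no edge leaves $H$, i.e.\ $H$ is successor-closed. So it suffices to verify Hall's condition for $B$.

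For $F\subseteq E(\str G)$, partition $F=F_H\cup F_{\mathrm{cr}}\cup F_{\mathrm{out}}$ into edges with both endpoints in $H$, crossing edges, and edges avoiding $H$; let $W_0\subseteq H$ be the vertices incident to $F_H$ and let $X\subseteq G\setminus H$ be the non-$H$ vertices incident to some edge of $F_{\mathrm{cr}}\cup F_{\mathrm{out}}$. The $B$-neighbourhood of $F$ is exactly the set of copies of the vertices in $W_0\cup X$, of size $2(\card{W_0}+\card{X})$, so Hall's condition for $F$ is $\card{F}\leq 2(\card{W_0}+\card{X})$. The subgraph of $\str G$ on vertex set $W_0$ with edge set $F_H$ is a subgraph of $\str G$, hence lies in $\mathcal C_0$, so $\card{F_H}\leq 2\card{W_0}$. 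For the remaining edges, consider the induced subgraph $\str G'=\str G[H\cup X]$, which is a subgraph of $\str G$ containing $\str H$; then $\delta(\str H)\leq\delta(\str G')$ by $\str H\leq_s\str G$, and writing out $\delta$ and cancelling the common $2\card{\str H}$ shows that the number of edges of $\str G'$ not lying inside $H$ is at most $2\card{X}$. Since $F_{\mathrm{cr}}\cup F_{\mathrm{out}}$ consists of such edges, $\card{F_{\mathrm{cr}}}+\card{F_{\mathrm{out}}}\leq 2\card{X}$. Adding the two estimates gives $\card{F}\leq 2(\card{W_0}+\card{X})$, so Hall applies and the desired orientation exists. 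Taking $\str H=\emptyset$ (which indeed satisfies $\emptyset\leq_s\str G$ precisely because $\str G\in\mathcal C_0$) then yields that $\mathcal C_0$ is contained in the class of $2$-orientable graphs, completing the proof.

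The only genuinely delicate step, and the one I expect to be the main obstacle, is this Hall verification: one must realize that the constraint ``$H$ successor-closed'' is encoded by deleting, for each crossing edge, its $H$-side copies from $B$, and then choose exactly the right test subgraph — namely $\str G$ induced on $H$ together with the used outside vertices $X$ — to feed into the definition of $\leq_s$, treating the edges internal to $H$ separately via plain membership in $\mathcal C_0$. Once the bipartite graph is set up this way, the required inequality falls out by cancellation of the $2\card{\str H}$ terms, and everything else is routine bookkeeping.
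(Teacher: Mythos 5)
Your proof is correct and follows exactly the marriage-theorem route the survey alludes to (it leaves the statement as an ``interesting exercise'' citing Evans--Hubi\v cka--Ne\v set\v ril): the bipartite graph with doubled vertex copies, the trick of attaching crossing edges only to their non-$H$ endpoint to force successor-closedness, and the Hall verification via $\card{F_H}\leq 2\card{W_0}$ together with $\delta(\str H)\leq\delta(\str G[H\cup X])$ all check out, including the reduction of the first assertion to the case $\str H=\emptyset$.
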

From this it follows:
\begin{corollary}
	For every $\str{A}, \str{B}, \str{B}' \in \mathcal C_0$ satisfying $\str{A}\leq_s \str{B}$ and $\str{A}\leq_s \str{B'}$,
	the free amalgam of $\str{B}$ and $\str{B}'$ over $\str{A}$ is in $\mathcal C_0$.
\end{corollary}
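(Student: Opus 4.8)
The plan is to prove this directly from the preceding theorem, which identifies $\mathcal C_0$ with the class of $2$-orientable graphs: it therefore suffices to exhibit a single $2$-orientation of the free amalgam $\str D$ of $\str B$ and $\str B'$ over $\str A$. Before doing anything else I would record the structural shape of $\str D$ from the definition of free amalgam in the paper: $D = B\cup B'$ with $B\cap B' = A$, every edge of $\str D$ lies entirely in $B$ or entirely in $B'$, and in particular there is no edge of $\str D$ with one endpoint in $B\setminus A$ and the other in $B'\setminus A$. Thus the edge set of $\str D$ is the disjoint union of $E(\str B)$ and $E(\str B')\setminus E(\str A)$.

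The construction is then to glue two carefully chosen orientations of the pieces. Since $\str A\leq_s\str B$, the ``moreover'' part of the preceding theorem gives a $2$-orientation $O_B$ of $\str B$ in which $A$ is successor-closed; likewise, using $\str A\leq_s\str B'$, a $2$-orientation $O_{B'}$ of $\str B'$ in which $A$ is successor-closed. Now orient every edge of $\str D$ lying in $\str B$ as in $O_B$, and every remaining edge (those in $E(\str B')\setminus E(\str A)$) as in $O_{B'}$; this is well defined because the two prescriptions overlap only on $E(\str A)\subseteq E(\str B)$, which is handled by the first rule. One then checks the out-degree of each vertex $v$ of $\str D$ in three cases. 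If $v\in B\setminus A$, all edges at $v$ lie in $\str B$ and are oriented by $O_B$, so the out-degree is at most $2$; symmetrically for $v\in B'\setminus A$, where all edges at $v$ lie in $\str B'$ and, since $v\notin A$, none lie in $\str A$, so all are oriented by $O_{B'}$. If $v\in A$, its edges inside $\str B$ are oriented by $O_B$ and contribute out-degree at most $2$, while its edges to $B'\setminus A$ are oriented by $O_{B'}$ and, because $A$ is successor-closed in $O_{B'}$, every one of them is oriented towards $v$ and so contributes nothing to the out-degree of $v$. Hence every vertex has out-degree at most $2$, $\str D$ is $2$-orientable, and $\str D\in\mathcal C_0$.

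The main obstacle is precisely the behaviour at the interface vertices $v\in A$: a naive gluing of arbitrary $2$-orientations of $\str B$ and $\str B'$ could push $v$ to out-degree $3$ or more by combining out-edges on both sides. The point of the argument, and the reason the hypotheses $\str A\leq_s\str B$ and $\str A\leq_s\str B'$ are there, is that successor-closedness of $A$ in \emph{both} orientations forces all the ``cross'' edges at an $A$-vertex to point inward, so no out-degree is added from the other side. The only other thing to verify is the purely formal claim that the free amalgam introduces no edges between $B\setminus A$ and $B'\setminus A$, which is immediate from the paper's definition of free amalgam applied to the (binary) edge relation.
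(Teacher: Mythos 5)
Your proof is correct and follows essentially the same route as the paper: both invoke the ``moreover'' part of the preceding theorem to obtain $2$-orientations of $\str B$ and $\str B'$ in which $A$ is successor-closed, and then glue them on the free amalgam. The one (welcome) difference is that the paper resolves the potential conflict on the edges of $\str A$ by asserting that the two orientations can be chosen to agree there, whereas you sidestep this entirely by keeping $O_B$'s orientation on all of $E(\str B)$ and observing that successor-closedness of $A$ in $O_{B'}$ constrains only the cross-edges between $A$ and $B'\setminus A$, so no agreement on $E(\str A)$ is needed.
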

To see this, choose $2$-orientations of $\str{B}$ and $\str{B}'$ such that $A$ is successor closed in both.
It is easy to see that they can be chosen such that the orientations of $\str{A}$ agree, then we can use free amalgamation to obtain a $2$-oriented graph.
Consequently, $\mathcal C_0$ is closed for free amalgamations over successor-closed substructures.

Given graphs $\str{G}$ and $\str{H}$, we write $\str{G}\leq_d \str{H}$ if $\str{G}$ is an induced subgraph
of $\str{H}$ and for every $\str{G}'\neq \str{G}$ which is a subgraph of $\str{H}$ and contains $\str{G}$
it holds that $\delta(\str{G})< \delta(\str{G}')$. It can be verified:
\begin{fact}
	For every $\str{G}\in \mathcal C_F$ and every subgraph $\str{H}$ of $\str{G}$ it holds that $\str{H}\leq_d\str{G}$ if and only if there exists a  2-orientation of $\str{G}$ such that $H$ is successor-$d$-closed.
\end{fact}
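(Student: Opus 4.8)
The plan is to reduce the equivalence to one counting identity relating the predimension of an induced subgraph to a fixed $2$-orientation, and then to express both $\str{H}\leq_d\str{G}$ and ``$H$ is successor-$d$-closed'' purely in terms of it. Fix a $2$-orientation $O$ of $\str{G}$. For $v\in G$ put $\mathrm{mult}_O(v)=2-\mathrm{outdeg}_O(v)$, so $v$ is a root exactly when $\mathrm{mult}_O(v)>0$; for $W\subseteq G$ put $\rho_O(W)=\sum_{v\in W}\mathrm{mult}_O(v)$ and let $\partial_O(W)$ be the number of edges of $\str{G}$ directed out of $W$ in $O$. Since each edge of $\str{G}[W]$ (the subgraph of $\str{G}$ induced on $W$) is an out-edge at exactly one vertex of $W$, that subgraph has $\sum_{v\in W}\mathrm{outdeg}_O(v)-\partial_O(W)$ edges, whence
\[
\delta(\str{G}[W])=2\lvert W\rvert-\sum_{v\in W}\mathrm{outdeg}_O(v)+\partial_O(W)=\rho_O(W)+\partial_O(W).
\]
In particular $W$ is successor-closed iff $\partial_O(W)=0$, and then $\delta(\str{G}[W])=\rho_O(W)$; moreover $\rho_O$ is additive over disjoint unions. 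Finally, since deleting edges only increases $\delta$ and $\str{H}$ is induced (which $\leq_d$ presupposes), $\str{H}\leq_d\str{G}$ is equivalent to the assertion that $\delta(\str{G}[W])>\delta(\str{H})$ for every $W$ with $H\subsetneq W\subseteq G$.

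For the easy direction I would argue as follows. Given a $2$-orientation $O$ making $H$ successor-$d$-closed, take $W=H\cup S$ with $\emptyset\neq S\subseteq G\setminus H$. By the identity $\delta(\str{H})=\rho_O(H)$ and $\delta(\str{G}[W])=\rho_O(H)+\rho_O(S)+\partial_O(W)$. If $S$ contains a root then $\rho_O(S)\geq1$ and we are done. Otherwise $S$ is root-free; choosing $u\in S$, successor-$d$-closedness supplies a directed path from $u$ to a root $r\notin H$. Were $W$ successor-closed this path would stay in $W$, and it could not enter $H$ (once inside the successor-closed set $H$ it stays there, forcing $r\in H$), so it would stay in $S$ and produce a root in $S$ --- a contradiction. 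Hence $\partial_O(W)\geq1$, and in both cases $\delta(\str{G}[W])>\delta(\str{H})$, so $\str{H}\leq_d\str{G}$.

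For the harder direction I would first note that $\str{H}\leq_d\str{G}$ implies $\str{H}\leq_s\str{G}$, so, because $\str{G}\in\mathcal C_F\subseteq\mathcal C_0$, the ``moreover'' part of Theorem~3.4 of~\cite{Evans2} gives a $2$-orientation $O$ of $\str{G}$ in which $H$ is successor-closed; then I would show $H$ is automatically successor-$d$-closed. If not, some $u\in G\setminus H$ has $\mathrm{roots}_O(u)\subseteq H$. Let $W$ be the successor-closure of $\{u\}$, \ie{} the set of all vertices reachable from $u$ along directed paths, and set $S=W\setminus H$; then $S\neq\emptyset$ and $S$ is root-free, since a root of $S$ would lie in $\mathrm{roots}_O(u)\setminus H$. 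As out-edges of $H$ stay in $H$ and out-edges of $W$ stay in $W$, the set $H\cup S$ is successor-closed, so the identity yields $\delta(\str{G}[H\cup S])=\rho_O(H\cup S)=\rho_O(H)=\delta(\str{H})$; since $H\cup S\supsetneq H$, this contradicts the reformulation of $\str{H}\leq_d\str{G}$.

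I expect the harder direction to be the main obstacle: one must manufacture, out of a hypothetical failure of the ``$d$''-clause, a proper extension of $H$ that does not strictly raise the predimension, and spotting that the successor-closure of a single offending vertex is the right object --- together with the observation that it remains root-free outside $H$ --- is the one genuinely creative point. The path-tracing in the easy direction is the other place requiring care; the rest is bookkeeping with the counting identity. (The argument uses $\str{G}\in\mathcal C_F$ only via the inclusion $\mathcal C_F\subseteq\mathcal C_0$ needed for Theorem~3.4.)
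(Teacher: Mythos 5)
Your proof is correct and follows essentially the same route as the paper's (two-sentence) argument: reduce to a $2$-orientation in which $H$ is successor-closed via $\leq_d\Rightarrow\leq_s$ and Theorem~3.4, then exploit the identity expressing predimension as the sum of root multiplicities (plus, in your more careful version, the out-boundary term). You have simply filled in the details the paper leaves implicit, including the correct reduction of $\leq_d$ to induced subgraphs and the path-tracing/root-freeness arguments.
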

To see this, notice that $\str{H}\leq_s\str{G}$  and thus $\str{G}$ can be $2$-oriented so that $H$ is successor-closed.
The additional condition about roots comes from the fact that in every $2$-orientation the predimension is equal to the sum of multiplicities
of roots.
\begin{corollary}
	For every $\str{A}, \str{B}, \str{B}' \in \mathcal C_F$ satisfying $\str{A}\leq_d \str{B}$ and $\str{A}\leq_d \str{B'}$
	the free amalgam of $\str{B}$ and $\str{B}'$ over $\str{A}$ is in $\mathcal C_F$.
\end{corollary}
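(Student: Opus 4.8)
\emph{The plan.} I would prove the stronger statement that the free amalgam $\str D$ of $\str B$ and $\str B'$ over $\str A$ actually lies in $\mathcal C_F$ (this also yields $\str D\in\mathcal C_0$, since $\delta\geq\log|\cdot|\geq 0$). As a graph, $\str D$ certainly exists: it has vertex set $B\cup B'$ with $B\cap B'=A$ and no edges between $B\setminus A$ and $B'\setminus A$. So the task is to check $\delta(\str H)\geq \log|\str H|$ for every (not necessarily induced) subgraph $\str H\subseteq \str D$, where $|\str H|$ denotes the number of vertices. (Throughout, $\log$ is base $2$; the integrality of $\delta$ is what makes the constants close.)

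First I would record two easy facts. (i) $\mathcal C_F$ is closed under taking subgraphs, directly from its definition (the membership condition already quantifies over all subgraphs). (ii) \emph{Localisation of $\leq_d$:} if $\str A\leq_d\str B$ and $\str H_1\subseteq\str B$, then $\str H_0:=\str A\cap\str H_1\leq_d\str H_1$, where $\str A\cap\str H_1$ is the induced subgraph of $\str H_1$ on $A\cap V(\str H_1)$. To see (ii): given $\str H_0\subsetneq\str G'\subseteq\str H_1$, form the subgraph $\str A\cup\str G'$ of $\str B$; it is distinct from $\str A$ (otherwise $V(\str G')\subseteq A$ would force $\str G'=\str H_0$), so $\delta(\str A)<\delta(\str A\cup\str G')$. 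Since $\delta(\str A\cup\str G')=\delta(\str A)+\delta(\str G')-\delta(\str A\cap\str G')$ by inclusion-exclusion and $\str A\cap\str G'=\str H_0$, this rearranges to $\delta(\str H_0)<\delta(\str G')$. As $\delta$ is integer-valued, (ii) gives in particular: whenever $\str H_0\subsetneq\str H_1$, then $\delta(\str H_1)\geq\delta(\str H_0)+1$.

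Now fix $\str H\subseteq\str D$ and set $\str H_1=\str H\cap\str B$, $\str H_2=\str H\cap\str B'$, $\str H_0=\str H\cap\str A$ (intersections of subgraphs), with $n_i=|\str H_i|$. Since $\str D$ is a free amalgam, every edge of $\str H$ lies in $\str B$ or in $\str B'$; hence $|\str H|=n_1+n_2-n_0$ with $n_0\leq\min(n_1,n_2)$, and the modularity of $\delta$ gives $\delta(\str H)=\delta(\str H_1)+\delta(\str H_2)-\delta(\str H_0)$. By (i) all three of $\str H_0,\str H_1,\str H_2$ lie in $\mathcal C_F$, so $\delta(\str H_i)\geq\log n_i$. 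If $n_0=n_1$, then (as $\str H_0$ is an induced subgraph of $\str H_1$ on all its vertices) $\str H_0=\str H_1$ and $\str H=\str H_2\in\mathcal C_F$, and we are done; symmetrically if $n_0=n_2$. Otherwise $n_0<n_1$ and $n_0<n_2$, and by symmetry we may assume $n_1\leq n_2$. Write $d_i=\delta(\str H_i)$, so $d_1\geq\log n_1$, $d_2\geq\log n_2$, $d_0\geq 0$, and by (ii) (using $\str H_0\subsetneq\str H_1$) $d_1\geq d_0+1$. If $d_0\leq\log n_1-1$, then $\delta(\str H)=d_1+d_2-d_0\geq\log n_1+\log n_2-(\log n_1-1)=\log n_2+1$; if $d_0>\log n_1-1$, then $\delta(\str H)=d_1+d_2-d_0\geq(d_0+1)+\log n_2-d_0=\log n_2+1$. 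Either way $\delta(\str H)\geq\log n_2+1=\log(2n_2)\geq\log(n_1+n_2-n_0)=\log|\str H|$, as required.

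\emph{The main obstacle.} The individual steps are routine, so the real point is conceptual: the crude bounds $\delta(\str H_i)\geq\log n_i$ are by themselves \emph{insufficient} (for small $\str H_i$ they are far too lossy), and the missing ingredient is precisely the integer gap $\delta(\str H_1)\geq\delta(\str H_0)+1$ coming from $\str A\leq_d\str B$ via its localisation (ii); one must also handle the degenerate cases $n_0=n_1$, $n_0=n_2$. There is an alternative route mirroring the $\mathcal C_0$ corollary --- pick $2$-orientations of $\str B$ and $\str B'$ in which $A$ is successor-$d$-closed (available by the stated Fact, since $\str A\leq_d\str B,\str B'$), arrange them to agree on $\str A$, and free-amalgamate to get a $2$-orientation of $\str D$ in which $A$ is still successor-$d$-closed --- but since the Fact is an equivalence only for graphs already known to be in $\mathcal C_F$, turning this orientation into a proof that $\str D\in\mathcal C_F$ again requires essentially the predimension computation above, so I would present the direct argument.
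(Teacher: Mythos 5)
Your proof is correct, and it is genuinely more than what the survey itself provides: the paper states this corollary with no argument at all, in contrast to the $\mathcal C_0$ analogue, for which it sketches the orientation argument (choose $2$-orientations of $\str B$ and $\str B'$ with $A$ successor-closed, make them agree on $\str A$, and freely amalgamate). As you correctly observe in your closing paragraph, transplanting that sketch to $\mathcal C_F$ only yields a $2$-orientation of the amalgam in which $A$ is successor-$d$-closed, hence $\delta\geq 0$ on subgraphs, i.e.\ membership in $\mathcal C_0$ --- it does not by itself deliver the bound $\delta(\str H)\geq\log\lvert\str H\rvert$, and the stated Fact cannot be invoked for the amalgam without already knowing it lies in $\mathcal C_F$. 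Your direct route --- modularity of $\delta$ on the free decomposition $\str H=\str H_1\cup\str H_2$ with $\str H_1\cap\str H_2=\str H_0$, the crude bounds $\delta(\str H_i)\geq\log n_i$ from subgraph-closedness of $\mathcal C_F$, and crucially the integer gap $\delta(\str H_1)\geq\delta(\str H_0)+1$ extracted from $\str A\leq_d\str B$ by your localisation step --- is essentially the predimension computation in the source~\cite{Evans2} to which the survey implicitly defers, and your case split (including the degenerate cases $n_0=n_1$ or $n_0=n_2$) is complete. The only point worth flagging is that the final inequality $\log n_2+1\geq\log(n_1+n_2-n_0)$ uses $\log 2\leq 1$, so the argument needs the logarithm to be taken in a base at least $2$ (or natural log), which you state up front and which is consistent with the intended reading of $\mathcal C_F$.
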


Classes $\mathcal C_0$ and $\mathcal C_F$ can be turned to amalgamation classes
$\mathcal C^+_0$ and $\mathcal C^+_F$ respectively by extending the language $L$ by
function symbols $\func{}{1},\func{}{2},\ldots$, one for each arity.
Given a graph $\str{G}\in \mathcal C_0$, a subset $A\subseteq G$, and an arbitrary enumeration $\vec{a}$ of $A$, we put
$\func{\str{G}}{\vert A\vert}(\vec{a})$ to be the inclusion-smallest subset $H\subseteq G$ such that
$A\subseteq H$ and $\str{H}\leq_s \str{G}$, where $\str{H}$ is the graph induced
by $\str{G}$ on $H$. Similarly, given a graph in $\str{G}\in \mathcal C_F$, a subset
$A\subseteq G$, and an arbitrary enumeration $\vec{a}$ of $A$, we put
$\func{\str{G}}{\vert A\vert}(\vec{a})$ to be the inclusion-smallest subset $H$ of $G$ such that
$A\subseteq H$ and $\str{H}\leq_d \str{G}$ where $\str{H}$ is the graph induced
by $\str{G}$ on $H$.  This yields \Fraisse{} limits $\str{M}_0$ and $\str{M}_F$ respectively.

What are the Ramsey expansions of $\mathcal C^+_0$ and $\mathcal C^+_F$?  By the
same argument as the one we discussed in Proposition~\ref{prop:ordernecessary}
(showing that the automorphism group of every Ramsey structure fixes linear order
of vertices), one can show that all extremely amenable subgroups of $\Aut(\str{M}_0)$ and $\Aut(\str{M}_F)$
must fix a 2-orientation.  Since every vertex in $\str{M}_0$ and $\str{M}_F$ has infinite degree,
there must be oriented paths of unbounded length~\cite[Theorem 3.7]{Evans2}, which means that the
expansions must be non-precompact.

This new example suggests that the traditional setup of KPT-cor\-re\-spon\-dence can be extended.
While neither $\mathcal C^+_0$ nor $\mathcal C^+_F$ have a precompact Ramsey expansion,
it is still possible to obtain an optimal expansion in the sense that the corresponding extremely
amenable subgroups $\Aut(\str{M}_0)$ and $\Aut(\str{M}_F)$ are maximal~\cite[Theorem 6.9]{Evans2}.
See~\cite{sullivan2024sparse,sullivan2024flows} for further analysis of this example.

More recently, Kwiatkowska~\cite{kwiatkowska2018universal} gave a related example, also based on a tree-like structure of the underlying class.

\subsection{Connections to computer science}
The 2010s have also been fruitful for applications of Ramsey classes in computer science, mainly through work of Bodirsky, Pinsker, and their coauthors. For example, it is decidable whether one finite-language first-order reduct of a Ramsey structure is a primitive-positive reduct of another one~\cite{Bodirsky2011a}. Recently, Rydval showed that if every finitely bounded strong amalgamation class has a computable finitely bounded Ramsey expansion (which is open) then the decision problem whether, given a finite set $\mathcal F$ of finite structures, the class of all finite $\mathcal F$-free structures is a \Fraisse{} class, is decidable~\cite{Rydval2024}. (Here, a class $\K$ of finite $L$-structures is \emph{finitely bounded} if there is a finite set $\mathcal F$ of finite $L$-structures such that an $L$-structure $\str A$ is in $\K$ if and only if no member of $\mathcal F$ embeds to $\str A$, and a structure $\str M$ is \emph{finitely bounded} if and only if $\age(\str M)$ is.)

However, the arguably most prominent application is the method of canonical functions used for classifying computational complexities of constraint satisfaction problems of first-order reducts of homogeneous structures. See~\cite{BodirskyBook} for context, details, as well as more applications, here we only review some of the results.

Given a finite relational language $L$ and an $L$-structure $\str M$, the \emph{constraint satisfaction problem} (or \emph{CSP}) over $\str M$ is the computational problem which gets a finite $L$-structure $\str A$ on the input and has to decide if there exists a homomorphism $\str A\to \str M$. Many standard computational problems are, in fact, CSPs: For example 3-SAT, satisfiability of systems of linear equations over a finite field, feasibility of linear programming, or graph 3-colourability to name a few. Following a long progress, the \emph{Finite CSP dichotomy theorem} of Bulatov and Zhuk showed that every CSP over a finite relational structure is either solvable in polynomial time, or NP-complete~\cite{Bulatov2017,Zhuk2017,Zhuk2020}. CSPs over infinite structures are much wilder (in particular, they capture even undecidable problems), but Bodirsky and Pinsker isolated a subclass for which they conjecture that a dichotomy holds:
\begin{conjecture}[Bodirsky--Pinsker, 2011; See~\cite{Bodirsky2019}]\label{conj:csp}
	Let $\str M$ be a first-order reduct of a finitely bounded homogeneous structure in a finite relational language. Then either CSP over $\str M$ is solvable in polynomial time, or it is NP-complete.
\end{conjecture}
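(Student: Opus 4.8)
Conjecture~\ref{conj:csp} is of course open; what follows is the programme by which one would attack it, via the \emph{universal-algebraic approach} to infinite-domain constraint satisfaction, together with the point at which the programme currently stalls.

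First I would reduce the complexity classification of $\mathrm{CSP}(\str M)$ to the structure of the polymorphism clone $\mathrm{Pol}(\str M)$ (the set of finitary operations $M^n\to M$ preserving all relations of $\str M$): by the standard Galois correspondence, relations pp-definable in $\str M$ are exactly those preserved by $\mathrm{Pol}(\str M)$, and pp-interpretations/pp-constructions only coarsen the clone, so $\mathrm{CSP}$-complexity depends on $\mathrm{Pol}(\str M)$ up to the relevant (topological) closure. The hypothesis that $\str M$ is a first-order reduct of a \emph{finitely bounded homogeneous} structure $\str H$ is what makes this tractable: one replaces $\str H$ by a Ramsey expansion $\str H^+$ of its age (whose existence, in a finite relational language, is exactly the content of Question~\ref{q:ramsey_finite}; see Theorem~\ref{thm:hn_completions} for the main tool producing such expansions), and then the Ramsey property of $\Age(\str H^+)$ feeds a compactness-plus-Ramsey extraction argument showing that every polymorphism of $\str M$ is locally interpolated by \emph{canonical} functions --- operations whose behaviour depends only on the $\Aut(\str H^+)$-orbits of the argument tuples. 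An $n$-ary canonical operation thus induces a genuine \emph{finite-domain} operation on the finite set of $n$-orbits, so identities satisfied by $\mathrm{Pol}(\str M)$ become finite combinatorial questions, to which the finite-domain algebraic machinery applies.

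Second comes the dividing line. The hardness half is essentially settled in this generality: if $\str M$ pp-constructs all finite structures --- equivalently, if there is a uniformly continuous height-one (h1) clone homomorphism from $\mathrm{Pol}(\str M)$ onto the clone of projections on a two-element set --- then, by transferring a Siggers-type minor condition and invoking the Bulatov--Zhuk theorem on the finite side~\cite{Bulatov2017,Zhuk2017,Zhuk2020}, $\mathrm{CSP}(\str M)$ is NP-hard (and it is clearly in NP for finite relational $\str M$). So the conjecture asserts that the \emph{absence} of an h1 clone homomorphism to projections is already sufficient for polynomial-time solvability, and the whole content of Conjecture~\ref{conj:csp} is this tractability implication.

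For the tractability half I would argue as follows. Assume no h1 clone homomorphism from $\mathrm{Pol}(\str M)$ to projections exists. Using canonicity, extract from $\mathrm{Pol}(\str M)$ canonical polymorphisms witnessing a nontrivial minor condition (a canonical pseudo-Siggers operation, or the finer near-unanimity / totally-symmetric-like identities one gets by analysing the finite algebra on orbits). Then convert this algebraic input into an algorithm: (i) when the canonical behaviour yields bounded width, the $k$-consistency (Datalog) algorithm decides $\mathrm{CSP}(\str M)$, exactly as in the finite case lifted through the orbit algebra; (ii) in the remaining cases, apply the \emph{smooth approximations} technique of Mottet and Pinsker, which reduces $\mathrm{CSP}(\str M)$ to finitely many CSPs over finite ``smooth-approximation'' structures on which the Bulatov--Zhuk algorithm runs, using a loop-lemma argument to exclude the hard case; (iii) sweep up affine-type behaviour with a few-subpowers / Gaussian-elimination algorithm lifted to the infinite setting (cf.\ the range-rigid / canonical-function toolkit of~\cite{Mottet2021}); see~\cite{BodirskyBook} for the ambient framework.

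\textbf{The main obstacle.} Step (ii)--(iii) is precisely where the conjecture is open: there is at present no single algorithmic metatheorem covering \emph{all} first-order reducts $\str M$ of finitely bounded homogeneous structures with no h1 clone homomorphism to projections. Smooth approximations and the attendant loop lemmas dispatch large families (reducts of the random graph and of the homogeneous graphs, of finite unions of $\omega$-categorical structures, phylogeny/tree-like structures, many hypergraph cases), but a uniform treatment is missing --- and it is not even known that every finitely bounded strong amalgamation class admits a \emph{finitely bounded} Ramsey expansion (Question~\ref{q:ramsey_finite}), which the canonical-function argument needs as input. A secondary, more structural obstacle is verifying that the h1-clone-homomorphism condition is genuinely the right borderline, i.e.\ that it coincides with $\overline{\textsc{pp}}$-constructibility of the ``hard'' projection clone; in the infinite-domain setting this hinges on subtle uniform-continuity phenomena that have no analogue in the finite Bulatov--Zhuk picture.
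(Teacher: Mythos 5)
The statement you were given is the Bodirsky--Pinsker dichotomy conjecture; the paper presents it as an open conjecture and contains no proof of it, so there is nothing to compare your argument against. You correctly recognise this, and your programme sketch matches the paper's own discussion of the surrounding machinery: the reduction to polymorphism clones, the use of canonical functions over a Ramsey expansion, the NP-hardness side via pp-constructions and Bulatov--Zhuk, the smooth approximations of Mottet and Pinsker, and the fact that the current methods only reach the weakening where ``finitely bounded homogeneous'' is replaced by ``finitely bounded Ramsey''. Your identification of the bottleneck is also the one the paper points to.

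Two small imprecisions worth fixing. First, membership of $\mathrm{CSP}(\str M)$ in NP is not because ``$\str M$ is finite relational'' ($\str M$ is infinite); it follows from $\str M$ being a first-order reduct of a \emph{finitely bounded} homogeneous structure, which lets one guess a complete atomic diagram of the instance and verify it against the finitely many bounds. Second, you conflate two distinct open questions: Question~\ref{q:ramsey_finite} asks whether every amalgamation class in a finite relational language has a Ramsey expansion in a finite relational language, whereas the statement your argument actually needs as input --- that every \emph{finitely bounded} homogeneous structure has a \emph{finitely bounded} Ramsey expansion --- is Bodirsky's Conjecture~11.1.1, which the paper states separately right after the CSP discussion. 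These are related but not the same, and the canonical-function machinery specifically wants finite boundedness of the expansion.
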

Note that this in particular captures all finite structures. See~\cite{BodirskyBook} for more details.

The method of canonical functions (see \eg{}~\cite{Bodirsky2011} for details) was developed with the goal of verifying Conjecture~\ref{conj:csp} for first-order reducts of concrete homogeneous structures, and it has succeeded in several instances~\cite{Bodirsky2017,Bodirsky2019b,Bodirsky2015b,Bodirsky2012,Kompatscher2017}. However, as an intermediate step, one has to classify all first-order reducts of the given structure (see Section~\ref{sec:reducts}), which tends to be very laborious. Recently, Mottet and Pinsker~\cite{mottet2022smooth} introduced a powerful method of \emph{smooth approximations} which is often able to circumvent this laborious step. Nevertheless, it still uses canonical functions and thus also Ramsey expansions. Consequently, the present methods are only able to say something about the following weakening of Conjecture~\ref{conj:csp}:
\begin{quote}
	Let $\str M$ be a first-order reduct of a finitely bounded Ramsey structure in a finite relational language. Then either CSP over $\str M$ is solvable in polynomial time, or it is NP-complete.
\end{quote}
This was in fact the motivation behind Question~\ref{q:ramsey_finite}, as well as the following conjecture:
\begin{conjecture}[{Bodirsky~\cite[Conjecture~11.1.1]{BodirskyBook}}]
	Every finitely bounded homogeneous structure has a finitely bounded Ramsey expansion.
\end{conjecture}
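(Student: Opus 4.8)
The plan is to run the ``cookbook'' of Section~\ref{sec:systematic} in as uniform a way as possible. Let $\str M$ be a finitely bounded homogeneous structure in a finite relational language $L$, so that $\Age(\str M)$ is the class of finite $\mathcal F$-free structures for a finite set $\mathcal F$ of finite $L$-structures; note $\str M$ is automatically $\omega$-categorical. The goal is to build a \emph{finite} expansion $L^+\supseteq L$ (allowing new relation symbols and, crucially, unary set-valued function symbols as in Remark~\ref{rem:set_valued}) together with an $L^+$-expansion $\str M^+$ of $\str M$ such that $\Age(\str M^+)$ is a free (or at least strong) amalgamation class of irreducible structures which is again finitely bounded, and then to add a generic linear order. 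If $\Age(\str M^+)$ is a free amalgamation class, then its class of free orderings is Ramsey directly by Theorem~\ref{thm:HN}, and it is finitely bounded, since its forbidden structures are $\mathcal F$, the finitely many irreducible obstructions of the free amalgamation class, the finitely axiomatizable linear-order axioms, and the axioms describing the new closure functions. If $\Age(\str M^+)$ is only a strong amalgamation class, one instead exhibits its free orderings as a hereditary locally finite subclass of a suitable finitely bounded ambient Ramsey class (the free orderings of the class of $\mathcal G$-free structures for a finite $\mathcal G$ whose members have irreducible unordered reducts, Ramsey by Theorem~\ref{thm:HN}) and applies Theorem~\ref{thm:hn_completions}; this is the metric-space pattern of Section~\ref{sec:metric}.

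Building $\str M^+$ proceeds in two moves. First I would name the algebraic closures: since $\str M$ is $\omega$-categorical, the algebraic closure of a finite set is finite with cardinality bounded uniformly in the size of the set, so one can add unary set-valued functions witnessing these closures and pass to the class of algebraically closed structures --- using set-valued rather than ordinary functions precisely to avoid destroying automorphisms, exactly as in the bowtie-free example. Second I would analyse the $0$-definable equivalence relations and \emph{eliminate imaginaries}: introduce new imaginary sorts for the classes, link imaginary vertices to original ones by unary functions, put on the imaginary sorts whatever $L$-induced structure is needed to regain strong amalgamation, and then restrict to orderings convex with respect to the imaginary sorts, just as for equivalences in Section~\ref{sec:equivalences}. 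After these moves one hopes $\Age(\str M^+)$ is a strong (ideally free) amalgamation class.

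The main obstacle --- and the reason the conjecture is open --- is twofold, and both parts must use finite boundedness of $\str M$ in an essential way. First, eliminating one layer of $0$-definable equivalences may create new $0$-definable equivalences on the expanded structure, so a priori one faces an infinite regress; one needs a uniform argument that this stabilizes after boundedly many steps in a free amalgamation class --- essentially a \emph{finitely bounded imaginary-elimination theorem}, which is exactly what no current technique provides. Second, even granting a strong amalgamation presentation in the extended language, verifying condition~(\ref{locallyfinite:3}) of Definition~\ref{defn:locfin} requires that the minimal non-completable configurations --- the analogues of the non-metric cycles of Proposition~\ref{prop:metric_completion}, or of a long $E$-path closed by a single $N$-edge in Section~\ref{sec:equivalences} --- have size bounded by a function of $\max_{\str F\in\mathcal F}\lvert F\rvert$ alone. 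Every known instance (Sauer's $S$-metric spaces, Cherlin's metrically homogeneous graphs, $\Lambda$-ultrametric spaces, $\ldots$) proves such a bound by a hands-on, class-specific argument; a uniform bound using only finite boundedness is not known. One should also keep in mind that even the a priori weaker statement that every finitely bounded homogeneous structure has a \emph{precompact} Ramsey expansion --- the finitely bounded case of Question~\ref{q:ramsey_precompact} --- is open, and that passing from precompact to finitely bounded is a further genuine subtlety (Question~\ref{q:ramsey_finite}).

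Finally, once a finitely bounded Ramsey expansion $\str M^+$ is in hand, there is nothing more to do: by the discussion following Theorem~\ref{thm:metrizable} and question~\ref{Q2}, $\str M^+$ can be trimmed to one that additionally has the expansion property relative to $\Age(\str M)$ while staying precompact, and with care the trimming preserves finite boundedness --- but the conjecture as stated asks only for \emph{some} finitely bounded Ramsey expansion, so this step is optional. In short, the whole difficulty is concentrated in the structural analysis of closures and imaginaries for an arbitrary finitely bounded homogeneous structure and in the uniform boundedness of completion obstacles; a proof would amount to making the cookbook of Section~\ref{sec:systematic} work with no class-specific input.
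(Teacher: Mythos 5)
This statement is a \emph{conjecture} --- the paper records it as an open problem of Bodirsky and offers no proof, so there is nothing in the paper to compare your argument against. What you have written is not a proof but a research programme, and to your credit you say so explicitly: the two steps on which everything hinges --- a uniform ``finitely bounded imaginary-elimination theorem'' guaranteeing that naming algebraic closures and eliminating $0$-definable equivalences terminates after boundedly many rounds in a strong (or free) amalgamation class, and a uniform bound, in terms of $\max_{\str F\in\mathcal F}\lvert F\rvert$ alone, on the size of minimal obstacles to completion so that condition~(\ref{locallyfinite:3}) of Definition~\ref{defn:locfin} can be verified --- are exactly the missing ingredients, and neither is supplied. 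Every successful instance of the cookbook (metric spaces, equivalences, metrically homogeneous graphs, $\Lambda$-ultrametric spaces) obtains these two facts by a hands-on analysis specific to the class; no argument is known that extracts them from finite boundedness alone, and indeed even the weaker precompact version (Question~\ref{q:ramsey_precompact} restricted to finitely bounded structures) and the finite-language version (Question~\ref{q:ramsey_finite}) are open. So the gap is not a technical oversight in an otherwise complete argument; it is the entire content of the conjecture.

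Two smaller cautions on the parts you do spell out. First, your claim that if $\Age(\str M^+)$ is a free amalgamation class then its free orderings are finitely bounded needs care: Theorem~\ref{thm:HN} gives the Ramsey property, but finite boundedness of the expansion requires that the closure functions you introduce are themselves axiomatizable by finitely many forbidden finite configurations, which is an additional property of the closure operator (uniform local finiteness of $\mathrm{acl}$ is automatic from $\omega$-categoricity, but a \emph{finite} bound on the obstructions to being correctly closed is not). Second, the paper itself flags (Section~\ref{sec:hypertournaments}) that the $H_4$-free $3$-hypertournaments are a finitely bounded strong amalgamation class for which the cookbook demonstrably fails at the local-finiteness step, so any eventual proof must either find a richer expansion there or abandon the Theorem~\ref{thm:hn_completions} route entirely; a correct solution cannot simply assert that the cookbook works ``with no class-specific input.''
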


\subsubsection{Reducts of homogeneous structures}\label{sec:reducts}
It is well-known that a permutation group $G$ is a closed subgroup of $\Sym(\mathbb N)$ if and only if it is the automorphism group of some relational structure with vertex set $\mathbb N$. One can show that a countable structure $\str M^-$ is a first-order reduct of a structure $\str M$ if and only if $\Aut(\str M^-)$ is a closed supergroup of $\Aut(\str M)$, both seen as closed subgroups of $\Sym(M)$.

The first result in this direction is by Cameron from 1976~\cite{Cameron1976} who classified all first-order reducts of $(\mathbb Q, {<})$, although he did not yet use this language. In his argument, Ramsey's theorem (or equivalently, the Ramsey property of $\age((\mathbb Q,{<}))$) was used. In 1991, Thomas~\cite{Thomas1991} used the Nešetřil--Rödl theorem to classify first-order reducts of the countable random graph as well as first-order reducts of countable homogeneous $K_m$-free graphs for $m\geq 3$. In the same paper he conjectured that every countable structure in a finite relational language has only finitely many first-order reducts up to first-order interdefinability (or, in other words, its automorphism group has only finitely many closed supergroups in $\Sym(\mathbb N)$ assuming that its vertex set is $\mathbb N$). In 1996, he verified his conjecture for first-order reducts of the random $k$-uniform hypergraph for every $k\geq 3$~\cite{thomas1996reducts}, and Junker and Ziegler~\cite{junker2008116} later verified it for first-order reducts of the order of the rationals with one constant.

Thanks to the method of canonical functions, first-order reducts of many other structures have been classified~\cite{Agarwal2016,Bodirsky2016,bodirsky201542,Linman2015,pach2014reducts,Pongrcz2017}, always relying on structural Ramsey theory as the key ingredient.

\subsection{Some concepts from category theory}
\label{sec:category}
The distinction between ``primal'' (or ``direct'') versus ``dual'', or ``unstructured'' versus ``structural'' Ramsey theorems is somewhat vague, but it is a useful concept which can be motivated by category theory.

First we discuss the difference between primal and dual Ramsey theorems. \emph{Primal} theorems are colouring morphisms from some object $A$ to an object $C$ and looking for a monochromatic morphism from an object $B$ to $C$, while \emph{dual} theorems are colouring morphisms from $C$ to $A$, looking for a monochromatic morphism from $C$ to $B$ (see Figure~\ref{fig:diagram}).

\begin{figure}
	\begin{center}
		\begin{tikzcd}
			& B & & & B\\
			A && C & A && C
			\arrow[""{name=0, anchor=center, inner sep=0}, "", from=2-1, to=1-2]
			\arrow[""{name=0, anchor=center, inner sep=0}, "", from=2-1, to=2-3]
			\arrow[""{name=0, anchor=center, inner sep=0}, "", from=1-2, to=2-3]
			\arrow[""{name=0, anchor=center, inner sep=0}, "", from=1-5, to=2-4]
			\arrow[""{name=0, anchor=center, inner sep=0}, "", from=2-6, to=2-4]
			\arrow[""{name=0, anchor=center, inner sep=0}, "", from=2-6, to=1-5]
		\end{tikzcd}
	\end{center}
	\caption{Diagram of primal (left) and dual (right) Ramsey theorem.}
	\label{fig:diagram}
\end{figure}

An example of a primal Ramsey theorem is Ramsey's theorem itself which talks about finite linear orders with embeddings, and an example of a dual Ramsey theorem is the Graham--Rothschild theorem~\cite{Graham1971} which can (for the empty alphabet) be understood as a theorem about the category of finite linear orders with surjections that are \emph{rigid} in the following sense: for every $x<y$ it holds that $\min(f^{-1}[x]) < \min(f^{-1}[y])$.
(The name rigid surjection was first used by Pr\"omel and Voigt~\cite{promel1986hereditary}, though this view of the Graham--Rothschild theorem is due to Leeb~\cite{Leeb}.)
Note that this is much stronger than the categorical dualization of Ramsey's theorem.

As far as we know, the term ``dual Ramsey theorem'' was first used in the 1980's by Ne\v set\v ril and R\"odl~\cite{Nevetvril1980dual}, and became established a few years later mainly thanks to the work of Carlson and Simpson~\cite{carlson1984}. Recently, Solecki~\cite{solecki2023dual} gave a general method for dualizing Ramsey-type statements using the Galois connection: The objects which Ramsey-type theorems are concerned with typically contain natural partial orders (for the Ramsey theorem itself this is simply the linear order, but it can be a (model theoretic) tree in the case of the Milliken tree theorem~\cite{Milliken1979}, Deuber's theorem for trees~\cite{deuber1975generalization}, or a related result of Leeb~\cite{graham1975some}). Given two partial orders $(S,\sqsubseteq_S)$ and $(T,\sqsubseteq_T)$ a pair $(f,e)$ is called a \emph{Galois connection} if $f\colon T\to S$ and $e\colon S\to T$ are maps such that
\begin{enumerate}
	\item $f\circ e$ is the identity of $S$, and,
	\item for every $a\in T$ it holds that $e\circ f(a)\sqsubseteq_T a$.
\end{enumerate}
As discussed, the classical Ramsey theorem is concerned with the category of finite linear orders with embeddings. Rigid surjections are then precisely those surjections $f$ for which there exists an embedding $e$ forming a Galois connection.  Solecki's dual Ramsey for trees~\cite{solecki2023dual} does exactly the same for the aforementioned result of Leeb where objects are trees (viewed as partial order with the meet operation). Independently, but in a similar fashion, Todor\v{c}evi\'{c} and Tyros dualized the finite variant of Milliken's tree theorem~\cite{todorcevic2022dual}.

\medskip

Second, Ramsey theorems can be \emph{structural} or \emph{unstructured}. Here the distinction is not precise, but unstructured Ramsey theorems typically talk about categories with at most one (or perhaps a few) objects of any given cardinality up to isomorphism. Prototypical examples of unstructured Ramsey theorems are Ramsey's theorem and the Graham--Rothschild theorem, while a structural Ramsey theorem is, for example, any instance of Theorem~\ref{thm:unNR} for $L\neq \{<\}$. (Although a philosophical debate might be conducted for the cases where $L$ only contains unary relations and $<$.)

Finally, given a Ramsey theorem (typically an unstructured one) there is a categorical construction suggesting its possible structural variants: Let $\mathcal C$ be a category and let $L$ be a multiset of objects of $\mathcal C$ (it is possible for an object to appear multiple times in $L$). The \emph{Prömel-indexed category} $\mathcal C^L$ has objects $(a, (e_s : s\in L))$ where $a\in \mathcal C$ and $e_s \subseteq \mathrm{hom}(s,a)$ for every $s\in L$, and given $(a, (e_s : s\in L)), (b, (f_s : s\in L)) \in \mathcal C^L$, the morphisms from $(a, (e_s : s\in L))$ to $(b, (f_s : s\in L))$ are precisely those morphisms $m\in \mathrm{hom}(a,b)$ such that for every $s\in L$ we have that $f_s = me_s$. See Figures~\ref{fig:indcat} and \ref{fig:structural}. This concept can be found in Prömel's paper~\cite{promel1985induced}\footnote{Prömel calls it just \emph{indexed category}. Since this turns out to be an established term in category theory which means something else, we added a prefix to avoid confusion.} which discusses it as a motivation to prove a dual structural Ramsey theorem. This direction was further developed by Frankl, Graham, and R\"odl~\cite{frankl1987}, and Solecki~\cite{Solecki2010} (compare the last citation with its primal form~\cite{Solecki2012}).
\begin{figure}[t]
	\begin{center}
		\begin{tikzcd} & s \\ a && b \arrow["{e_s \ni \varepsilon}"', from=1-2, to=2-1] \arrow["{\varphi \in f_s}", from=1-2, to=2-3] \arrow[""{name=0, anchor=center, inner sep=0}, "m"', from=2-1, to=2-3] \arrow["\circlearrowleft"{description}, draw=none, from=0, to=1-2] \end{tikzcd}
	\end{center}
	\caption{Morphisms in the Prömel-indexed category.}
	\label{fig:indcat}
\end{figure}
\begin{figure}[t]
	\begin{center}
		\begin{tikzcd}
			&& b \\
			\\
			&& s \\
			a &&&& c
			\arrow[""{name=0, anchor=center, inner sep=0}, " ", from=1-3, to=4-5]
			\arrow[""', from=3-3, to=1-3]
			\arrow[""', from=3-3, to=4-1]
			\arrow["", from=3-3, to=4-5]
			\arrow[""{name=1, anchor=center, inner sep=0}, "", from=4-1, to=1-3]
			\arrow[""{name=2, anchor=center, inner sep=0}, " "', from=4-1, to=4-5]
			\arrow["\circlearrowleft"{description}, draw=none, from=3-3, to=2]
			\arrow["\circlearrowleft"{description}, draw=none, from=3-3, to=0]
			\arrow["\circlearrowleft"{description}, draw=none, from=1, to=3-3]
		\end{tikzcd}
	\end{center}
	\caption{Diagram of structural Ramsey theorems.}
	\label{fig:structural}
\end{figure}

For example, let $\mathcal C$ be the category of finite linear orders with embeddings and put $L = \{E\}$ where $E$ is a 2-element linear order. Then an object $\mathcal C^L$ is a finite linear order $a$ together with a collection of embeddings from $E$ to $a$. In other words, objects in $\mathcal C^L$ can be viewed as finite linearly ordered graphs and one can check that morphisms in $\mathcal C^L$ are precisely embeddings of linearly ordered graphs. In general, the Prömel-indexed categories over finite linear orders with embeddings are isomorphic precisely to categories of finite linearly ordered relational structures with embeddings.

The partite construction is actually a method for producing structured Ramsey theorems over unstructured Ramsey categories with certain properties. This was already known to Nešetřil and Rödl, see also a recent paper of Junge~\cite{junge2023categorical} presenting the abstract partite construction in the categorical setting. In this survey we only focus on Prömel-indexed categories over finite linear orders, or in other words, classes of ordered structures, but an abstract framework, phrasing (as much as possible of) the constructions from Appendix~\ref{appendix:partite} in an abstract categorical language, ideally implying analogues of Theorems~\ref{thm:HN} and~\ref{thm:sparseningRamsey} for Ramsey categories other than linear orders with embeddings, is currently being developed.

However, in some cases, the same Ramsey theorem can be viewed through multiple lenses. For example, the Graham--Rothschild theorem for rigid surjections is, as we have seen, an obviously unstructured Ramsey theorem. However, it turns out that it is equivalent to the Ramsey property of the class of all finite Boolean algebras with an antilexicographic order (see Example~\ref{ex:ba}), which is a class of finite linearly ordered structures, and hence in the scope of Nešetřil's classification programme. Despite this, it is visible from the proof of the Graham--Rothschild theorem that the dual unstructured setting is the natural setting for it (and it is thus, perhaps, not surprising that there is no known proof of the Ramsey property of the class of all finite Boolean algebras with an antilexicographic order using Theorem~\ref{thm:hn_completions}, see also Question~\ref{q:boolean_allthose}).

Another example of this is the class of all finite (binary branching) $C$-relations with convex orders (see Example~\ref{ex:crelations}). Again, this is a class of finite relational structures with the Ramsey property. Nevertheless, these structures represent leaves of rooted binary trees with a lexicographic order. Moreover, the Ramsey property for this class is a consequence of the (finite variant of the) Milliken theorem~\cite{Milliken1979} (see~\cite{Bodirsky2010}), which is a generalization of the infinite Ramsey theorem to trees, if the Ramsey theorem is understood as a theorem about trees with no branchings. In particular, Milliken's theorem is a primal unstructured Ramsey theorem, and thus, even though $C$-relations fall into the scope of Nešetřil's classification programme, this explains why there is no known proof of the Ramsey property of the class of all finite (binary branching) $C$-relations with convex orders using Theorem~\ref{thm:hn_completions}.

\medskip

A category $\mathcal C$ is \emph{Ramsey} if for every pair of objects $A,B\in \mathcal C$, there is an object $C\in \mathcal C$ such that for every colouring $c\colon \mathcal C(A,C) \to \{0,1\}$, there is a morphism $f\colon B\to C$ such that $c$ is constant on $f\circ \mathcal C(A,B)$. If $\mathcal C$ is a class of finite structures with embeddings then this exactly corresponds to the Ramsey property. Recently, Hadek found an equivalent characterization of Ramsey categories:

\begin{theorem}[Hadek, 2025~\cite{Hadek2025}]\label{thm:hadek}
	Let $\mathcal C$ be a locally finite essentially small category. Then the following are equivalent:
	\begin{enumerate}
		\item $\mathcal C$ is confluent and Ramsey.
		\item Every $\mathcal C^{\mathrm{op}}$-shaped diagram consisting of finite non-empty sets has a solution.
	\end{enumerate}
\end{theorem}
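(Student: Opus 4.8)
The plan is to prove the two implications separately. The implication $(1)\Rightarrow(2)$ is essentially the statement that a confluent Ramsey category satisfies a compactness principle; I expect it to be technical but standard in method. The implication $(2)\Rightarrow(1)$ is where the content lies: extracting the concrete Ramsey property from the abstract solvability hypothesis is the step I expect to be the main obstacle.

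For $(1)\Rightarrow(2)$, let $F\colon\mathcal{C}^{\mathrm{op}}\to\mathbf{Set}$ be a functor with every $F(A)$ finite and non-empty, and call a family $(x_A)_A$ with $x_A\in F(A)$ a \emph{solution} if $F(f)(x_B)=x_A$ for every morphism $f\colon A\to B$ of $\mathcal{C}$. The set of solutions is the intersection, over all morphisms $f$, of the clopen subsets $\{(x_A)\mid F(f)(x_B)=x_A\}$ of the compact space $\prod_{A}F(A)$ (the product ranging over the objects of $\mathcal{C}$, essential smallness making it set-indexed), so by compactness it suffices to solve every finite subdiagram, that is, to handle finitely many objects $A_1,\dots,A_k$ and finitely many morphisms among them. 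Here I would use confluence (the categorical incarnation of the joint embedding and amalgamation properties) to choose an object $D_0$ admitting a morphism from each $A_i$, and then iterate the Ramsey property to build a chain $D_0\to D_1\to\cdots\to D_k$ in which $D_l$ is chosen so that every finite colouring of $\mathcal{C}(A_l,D_l)$ is constant on $h\circ\mathcal{C}(A_l,D_{l-1})$ for some $h\colon D_{l-1}\to D_l$. Put $C=D_k$ and fix any $y\in F(C)$; then the assignments $e\mapsto F(e)(y)$ are $\lvert F(A_i)\rvert$-colourings of $\mathcal{C}(A_i,C)$, and descending the chain while extracting a monochromatic factor at each level yields a single morphism $\mu\colon D_0\to C$ such that for each $i$ the value $F(\mu\circ e)(y)$ does not depend on $e\in\mathcal{C}(A_i,D_0)$. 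Writing $x_{A_i}$ for this value gives a solution of the subdiagram, since for $\phi\colon A_i\to A_j$ among the chosen morphisms and any $\nu\in\mathcal{C}(A_j,D_0)$ one has $F(\phi)(x_{A_j})=F(\mu\circ\nu\circ\phi)(y)=x_{A_i}$, because $\nu\circ\phi\in\mathcal{C}(A_i,D_0)$.

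For $(2)\Rightarrow(1)$ I would argue by contraposition, turning any failure of confluence or of the Ramsey property into a $\mathcal{C}^{\mathrm{op}}$-shaped diagram of finite non-empty sets with no solution. That $(2)$ forces confluence I expect to be the more straightforward half, obtained by encoding a span with no cocone as such a diagram built from the hom-functors of $\mathcal{C}$ (local finiteness is what keeps all the sets finite). The substance is that $(2)$ forces the Ramsey property, and here the choice of diagram is delicate. The obvious candidate — sending an object $C$ to the finite set of colourings of $\mathcal{C}(A,C)$ that have no monochromatic copy of $B$, which is non-empty for every $C$ exactly when the Ramsey property fails at $(A,B)$, and is functorial under pullback — does \emph{not} work: that diagram always admits a solution, namely a globally coherent ``bad colouring'', and this is entirely consistent with the failure of the Ramsey property. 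The witnessing diagram must therefore record more, for instance a distinguished copy of $B$ together with almost-constant data along its copies of $A$, whose global coherence becomes impossible once confluence is available; making this precise, and pinpointing how confluence forces the contradiction, is the crux.

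This last construction is the step I expect to be hardest, since it requires reverse-engineering the combinatorial Ramsey property out of an abstract compactness statement. Moreover, both hypotheses in $(1)$ are genuinely needed. Confluence cannot be dropped: the category freely generated by a single parallel pair of arrows $f,g\colon s\to t$ is neither confluent nor Ramsey and already violates $(2)$, since functoriality imposes no relation between $F(f)$ and $F(g)$, so taking $F(s)=F(t)=\{0,1\}$ with $F(f)=\mathrm{id}$ and $F(g)$ the transposition produces a two-object diagram with no solution. The Ramsey property cannot be dropped either, so the impossibility argument for the witnessing diagram must use confluence and the negation of the Ramsey property simultaneously.
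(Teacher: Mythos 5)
The survey states this theorem only with a citation to Hadek's paper and contains no proof of it, so there is nothing in the paper to compare your argument against; I am judging the proposal on its own terms. Your direction $(1)\Rightarrow(2)$ is correct and essentially complete: the compactness reduction to finite subdiagrams, the use of the joint embedding property to find $D_0$ receiving all the relevant objects (you should add that a finite subdiagram may meet several connected components of $\mathcal C$, in which case you solve each component separately, since morphisms never cross components), and the backward extraction of a monochromatic factor along the chain $D_0\to\cdots\to D_k$ all work as written.

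The genuine gap is the entire direction $(2)\Rightarrow(1)$, which you leave open, and the reason you leave it open is a false claim. You assert that the ``obvious'' diagram $\mathrm{Bad}(X)=\{\chi\colon \mathcal C(A,X)\to 2 \text{ with no monochromatic copy of } B\}$, with pullback along postcomposition, ``always admits a solution, namely a globally coherent bad colouring''. It does not. A solution is a family $(\chi_X)_X$ satisfying $\chi_X(e)=\chi_Y(f\circ e)$ for every $f\colon X\to Y$ and $e\in\mathcal C(A,X)$; taking $X=A$ and $e=\mathrm{id}_A$ forces $\chi_Y(f)=\chi_A(\mathrm{id}_A)$ for every $f\colon A\to Y$, so every $\chi_Y$ is a \emph{constant} colouring. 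A constant colouring makes every copy of $B$ monochromatic, so (provided $\mathcal C(A,B)\neq\emptyset$; otherwise the Ramsey property at $(A,B)$ holds vacuously with $C=B$) the value $\chi_B$ cannot lie in $\mathrm{Bad}(B)$, which is a contradiction since $\mathrm{id}_B$ gives a monochromatic copy of $B$ in $B$. Hence whenever the Ramsey property fails at $(A,B)$, the diagram $\mathrm{Bad}$ has finite non-empty values and no solution --- exactly the witness you declared impossible, with no confluence, no ``distinguished copy of $B$'', and no extra data needed. The crux you identify dissolves. What remains genuinely unaddressed is that $(2)$ forces confluence: there the natural candidate (say, $D(X)=$ the set of $Z\in\{A,B\}$ admitting a common cocone with $X$, with inclusion maps) has possibly empty values and needs patching, so that half does still require an actual construction rather than the one-sentence gesture you give.

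A further correction to your closing remarks: the category freely generated by a parallel pair $f,g\colon s\to t$ \emph{is} confluent in the paper's sense --- the joint embedding property asks only for a cocone over every pair of objects (and $t$ serves), not for parallel arrows to be coequalized. Your example therefore shows that the Ramsey hypothesis in $(1)$ cannot be dropped, not that confluence cannot; for the latter, use the span $A\leftarrow E\to B$ in which $A$ and $B$ have no common cocone.
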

Here, a category is \emph{locally finite} if there are only finitely many morphisms between every pair of objects, it is \emph{essentially small} if it is equivalent to a category with only set-many objects, and it is \emph{confluent} if it is a disjoint union of categories with the ``joint embedding property'' (that is, for every pair of objects $A,B$, there is an object $C$ and morphisms $A\to C$ and $B\to C$). For example, the age of every structure satisfies all three properties. A $\mathcal C^{\mathrm{op}}$-shaped diagram consisting of finite non-empty sets is a functor $D$ from $\mathcal C^{\mathrm{op}}$ to finite non-empty sets, and a \emph{solution} $s$ of $D$ assigns every object $A\in \mathcal C^{\mathrm{op}}$ an element $s(A) \in D(A)$ such that for every morphism $f\in \mathcal C^{\mathrm{op}}(A,B)$ it holds that $D(f)(s(A)) = s(B)$.

One can check that for $\mathcal C = (\omega,\leq)$ (the category with natural numbers as objects and an arrow from $n$ to $m$ if and only if $n\leq m$) the second property in Theorem~\ref{thm:hadek} is precisely K{\H o}nig's lemma~\cite{Konig1927}.

\medskip

For more interactions of category theory and structural Ramsey theory see \eg{} \cite{Graham1972,Leeb,Nevsetvril1977,nevsetvril1977partitions,nevsetril1979partition,jevzek1983ramsey,Nevsetvril1984,Nevsetril2005,Kubis2014,masulovic2016pre,draganic2019ramsey,barbosa2020categorical,bartovs2021weak,Scow2021,BartosovaSemiretractions,mavsulovic2021kechris,Iyer2022,junge2023categorical,barbosa2023tukey,solecki2024finite,Hadek2025}.

\section{2020s: Extending the classification programme}
Recently, the classification programme has grown in two new directions: to the seemingly unrelated extension property for partial automorphisms (EPPA) and to big Ramsey structures.

\subsection{Classification of EPPA classes}\label{sec:eppa}
A class $\K$ of finite structures has the \emph{extension
	property for partial automorphisms} (\emph{EPPA} for short, sometimes also called the
\emph{Hrushovski property}) if for every $\str{A}\in \K$ there exists
$\str{B}\in \K$ containing $\str{A}$ as a substructure
such that every partial automorphism of $\str{A}$ (an
isomorphism between two substructures of $\str{A}$) extends to an automorphism of
$\str{B}$.  We call such $\str{B}$  an {\em EPPA-witness for
		$\str{A}$}.

While EPPA and the Ramsey property may not seem related at first glance,
there is an analogue of Observation~\ref{obs:ramseyamalg} for EPPA:
\begin{observation}\label{obs:eppaamalgamation}
	Let $\K$ be a class of finite structures with EPPA and the joint embedding property. Then $\K$ has the amalgamation property.
\end{observation}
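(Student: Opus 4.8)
The plan is to mimic the proof of Observation~\ref{obs:ramseyamalg}, with the EPPA-witness playing the role of the Ramsey witness. Fix $\str A,\str B_1,\str B_2\in\mathcal K$ together with embeddings $\alpha_1\colon\str A\to\str B_1$ and $\alpha_2\colon\str A\to\str B_2$; the goal is to produce $\str C\in\mathcal K$ with embeddings $\beta_1\colon\str B_1\to\str C$ and $\beta_2\colon\str B_2\to\str C$ satisfying $\beta_1\circ\alpha_1=\beta_2\circ\alpha_2$.

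First I would apply the joint embedding property to obtain $\str D\in\mathcal K$ with embeddings $f_1\colon\str B_1\to\str D$ and $f_2\colon\str B_2\to\str D$. Then $e_1=f_1\circ\alpha_1$ and $e_2=f_2\circ\alpha_2$ are two embeddings of $\str A$ into $\str D$, so $p=e_2\circ e_1^{-1}$ is an isomorphism between the substructures $e_1[A]$ and $e_2[A]$ of $\str D$, i.e.\ a partial automorphism of $\str D$. Next I would invoke EPPA to get an EPPA-witness $\str C\in\mathcal K$ for $\str D$; thus $\str D\subseteq\str C$ and $p$ extends to an automorphism $g$ of $\str C$.

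Now set $\beta_1$ to be $f_1$ followed by the inclusion $\str D\subseteq\str C$, and $\beta_2$ to be $f_2$ followed by the inclusion $\str D\subseteq\str C$ and then by $g^{-1}$; both are embeddings as composites of embeddings. For the coherence condition, note that for every $a\in A$, since $g$ extends $p$ we have $g(e_1(a))=p(e_1(a))=e_2(a)$, hence $g^{-1}(e_2(a))=e_1(a)$, and therefore $\beta_2(\alpha_2(a))=g^{-1}(f_2(\alpha_2(a)))=g^{-1}(e_2(a))=e_1(a)=f_1(\alpha_1(a))=\beta_1(\alpha_1(a))$. So $\str C$ with $\beta_1,\beta_2$ is the desired amalgam, and $\mathcal K$ has the amalgamation property.

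Every step here is an unwinding of definitions, so there is no serious obstacle; the only point needing a little care is to keep the various embeddings straight and to record that the partial automorphism $p$ of $\str D$ has domain exactly $e_1[A]$, so that extending it across $\str C$ determines $g$ on the copy of $\str A$ coming from $\str B_1$ — which is precisely what forces the two composites to coincide.
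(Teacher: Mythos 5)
Your proof is correct and is essentially the paper's own argument: joint embedding to a common $\str D$, the partial automorphism sending one copy of $\str A$ to the other, and an EPPA-witness to extend it to a global automorphism that is then composed with one of the two embeddings. The only cosmetic difference is that the paper post-composes the extended automorphism with $\beta_1'$ while you post-compose its inverse with $f_2$; these are interchangeable.
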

\begin{proof}
	Let $\K$ be such a class and let $\str{A},\str{B}_1,\str{B}_2\in \K$, $\alpha_1\colon \str A \to \str B_1$, $\alpha_2\colon \str A\to \str B_2$ be as in Definition~\ref{defn:amalg}. Pick $\str B\in \K$ with embeddings $\beta_1'\colon \str B_1\to \str B$ and $\beta_2'\colon \str B_2\to \str B$, and let $\str C\in\K$ be an EPPA-witness for $\str B$ (in particular, we know that $\str B\subseteq \str C$).

	Let $\varphi$ be the partial automorphism of $\str B$ with domain $\beta_1'\circ\alpha_1(\str A)$ such that for every $x\in A$ we have that $\varphi(\beta_1'\circ\alpha_1(x)) = \beta_2'\circ\alpha_2(x)$ and let $\theta$ be its extension to an automorphism of $\str C$ (this means that $\theta\circ\beta_1'\circ\alpha_1 = \beta_2'\circ\alpha_2$). Finally, putting $\beta_1 = \theta\circ\beta_1'$ and $\beta_2 = \beta_2'$ certifies that $\str C$ is an amalgam of $\str B_1$ and $\str B_2$ over $\str A$ with respect to $\alpha_1$ and $\alpha_2$.
\end{proof}

EPPA was originally motivated by the
\emph{small index conjecture} from group theory which states that a subgroup of the automorphism group of the countable random graph is of countable index if and only if it contains a stabilizer
of a finite set. This conjecture was proved by Hodges, Hodkinson, Lascar, and Shelah in 1993~\cite{hodges1993b}.
To complete the argument
they used a result of Hrushovski~\cite{hrushovski1992} who, in 1992, established that the class of all finite graphs has EPPA.  After this, the quest of identifying new classes of structures with EPPA continued. Herwig generalized Hrushovski's result to all finite relational structures~\cite{Herwig1995} (in a fixed finite language) and later to the classes of all finite relational structures omitting certain sets of irreducible substructures~\cite{herwig1998}. He also developed several combinatorial strategies for establishing EPPA for a given class. This early development culminated in the Herwig--Lascar theorem~\cite{herwig2000}, one of the deepest results in the area, which established a structural condition for a class to have EPPA:

\begin{theorem}[Theorem~3.2 of~\cite{herwig2000}]\label{thm:HL}
	Let $L$ be a finite relational language, let $\mathcal F$ be a finite set of finite $L$-structures, and let $\str A$ be a finite $L$-structure such that there is no $\str F\in \mathcal F$ with a homomorphism $\str F\to \str A$. Then there is a finite $L$-structure $\str B$ which is an EPPA-witness for $\str A$ such that there is no $\str F\in \mathcal F$ with a homomorphism $\str F\to \str B$.
\end{theorem}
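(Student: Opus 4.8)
The plan is to obtain the theorem as a quick corollary of an EPPA analogue of the ``sparsening'' Theorem~\ref{thm:sparseningRamsey} (namely Theorem~\ref{thm:hkn}, the lemma underpinning the general EPPA transfer result Theorem~\ref{thm:hkn_completions}), which I would take as a black box, since it is established by an EPPA version of the partite construction in the appendix. Before invoking it I would make two reductions on $\mathcal F$. First, we may assume that every $\str{F}\in\mathcal F$ is connected: if $\str{F}$ has connected components $\str{F}_1,\dots,\str{F}_k$, replace $\str{F}$ in $\mathcal F$ by those $\str{F}_i$ admitting no homomorphism to $\str{A}$; the family stays finite, $\str{A}$ stays $\mathcal F$-free, and any $\str{B}$ that is free over the new family is free over the old one, since if all components of some $\str{F}$ mapped homomorphically into $\str{B}$ then --- none having been discarded --- they would all map into $\str{A}$, giving a homomorphism $\str{F}\to\str{A}$. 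Second --- and this is the one genuinely technical reduction I would need to carry out carefully --- we may assume each $\str{F}\in\mathcal F$ is irreducible, equivalently that the class $\mathcal K$ of finite $\mathcal F$-free $L$-structures is a free amalgamation class.

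Now set $n=\max_{\str{F}\in\mathcal F}\card{F}$. Applying the EPPA analogue of Theorem~\ref{thm:sparseningRamsey} to $\str{A}$ and $n$ yields a finite $L$-structure $\str{B}$ with $\str{A}\subseteq\str{B}$ such that every partial automorphism of $\str{A}$ extends to an automorphism of $\str{B}$ (so $\str{B}$ is already an EPPA-witness for $\str{A}$), and such that for every substructure $\str{C}'$ of $\str{B}$ on at most $n$ vertices there exist a tree amalgam $\str{T}$ of copies of $\str{A}$ and a homomorphism-embedding $g\colon\str{C}'\to\str{T}$. It remains to see that $\str{B}$ is $\mathcal F$-free. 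Suppose $h\colon\str{F}\to\str{B}$ is a homomorphism with $\str{F}\in\mathcal F$, and let $\str{C}'$ be the substructure of $\str{B}$ induced on the image of $h$. Since $\str{F}$ is irreducible, $\str{C}'$ is irreducible and has at most $\card{F}\le n$ vertices, so we obtain such $\str{T}$ and $g$; then $g\circ h\colon\str{F}\to\str{T}$ is a homomorphism, and as $g$ restricted to the irreducible $\str{C}'$ is an embedding, its image is an irreducible substructure of $\str{T}$. But every irreducible substructure of a tree amalgam of copies of $\str{A}$ is contained in a single copy of $\str{A}$ --- this follows by iterating, along the tree, the fact that an irreducible substructure of a free amalgam lies in one of the two factors --- so the image of $g\circ h$ lies in a copy of $\str{A}$, and hence $\str{F}$ maps homomorphically into $\str{A}$, contradicting the hypothesis. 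This completes the proof.

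The whole weight of the argument therefore falls on the EPPA analogue of Theorem~\ref{thm:sparseningRamsey}: producing a \emph{finite} EPPA-witness all of whose bounded substructures are ``tree-like''. This is the main obstacle. It is proved by an EPPA version of the Ne\v set\v ril--R\"odl partite construction, and the delicate point --- as the survey indicates --- is that it must be run for \emph{coherent} EPPA (compatible families of extensions) in order for the induction inside the partite construction to close. For context I would recall that Herwig and Lascar originally proved this along a completely different, group-theoretic route: one realizes the EPPA-witness as a structure acted on by a suitable finite quotient of a free group, and the fact that one can pass to a finite quotient without creating any of the forbidden configurations is precisely the Ribes--Zalesskii theorem that products of finitely many finitely generated subgroups are closed in the profinite topology on a free group. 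The partite-construction proof is self-contained but, like the classical arguments it is modelled on, quite subtle.
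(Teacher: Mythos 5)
The survey does not prove Theorem~\ref{thm:HL} at all: it is quoted from~\cite{herwig2000}, whose argument is the group-theoretic one you sketch at the end (finite quotients of free groups, the profinite topology, Ribes--Zalesskii). So you are proposing a genuinely new derivation from Theorems~\ref{thm:nreppa} and~\ref{thm:hkn}, and unfortunately it has a fatal gap sitting exactly where you flag ``the one genuinely technical reduction'': the passage from connected to irreducible members of $\mathcal F$ is not a technicality but is impossible in general, and it is precisely where the entire content of the theorem lives. The class of structures omitting homomorphisms from a family of \emph{irreducible} structures is always a free amalgamation class (an irreducible homomorphic image lands in one factor of a free amalgam), whereas the class omitting homomorphisms from a connected but reducible $\str F$ typically is not: for one binary relation per distance, the non-metric $4$-cycle with labels $1,1,1,4$ is the free amalgam, over its two non-adjacent vertices, of two paths neither of which admits a homomorphism from that cycle, so no family of irreducible obstacles defines the same class. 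This is exactly why Solecki needed the full Herwig--Lascar theorem, rather than Herwig's earlier theorem on forbidden irreducible configurations, to get EPPA for metric spaces. The same phenomenon breaks your final step for reducible $\str F$: a connected $\str F$ can map homomorphically into a tree amalgam of copies of $\str A$ (its image passing through the overlaps) even when it does not map into $\str A$, so neither the tree-amalgam property nor irreducible-structure faithfulness rules it out. What survives of your argument is precisely the irreducible case --- where you do not even need Theorem~\ref{thm:hkn}, since irreducible-structure faithfulness in Theorem~\ref{thm:nreppa} already pushes the (irreducible) image of $\str F$ into $\str A$ by an automorphism --- and that is Herwig's theorem, not Herwig--Lascar's.

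There is also a warning sign worth internalizing: as reproduced in this survey, the statement omits a hypothesis of the original Theorem~3.2 of~\cite{herwig2000}, namely that there exists \emph{some}, possibly infinite, $\mathcal F$-free structure containing $\str A$ in which every partial automorphism of $\str A$ extends to an automorphism; the conclusion is that a finite such witness then exists. Without this hypothesis the statement is false. Take $L$ to consist of one binary relation, $\str A$ a single directed edge $a\to b$, and $\mathcal F=\{\str F\}$ with $\str F$ the directed path on three vertices. There is no homomorphism $\str F\to\str A$, yet any EPPA-witness $\str B\supseteq\str A$ must have an automorphism $g$ with $g(a)=b$, whence $(b,g(b))$ is an edge of $\str B$ and $a\to b\to g(b)$ is a homomorphic image of $\str F$; so no $\mathcal F$-free EPPA-witness exists, finite or infinite. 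Any purported unconditional proof therefore has to break somewhere, and yours breaks at the irreducibility reduction. To our knowledge, deriving the full Herwig--Lascar theorem from the partite-construction machinery of Theorem~\ref{thm:hkn} is not known to be possible; the implication established in the literature goes the other way, replacing \emph{applications} of Theorem~\ref{thm:HL} by applications of Theorem~\ref{thm:hkn_completions}, not reproving Theorem~\ref{thm:HL} itself.
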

Solecki~\cite{solecki2005} used Theorem~\ref{thm:HL} to prove EPPA for the class of all
finite metric spaces. This was independently obtained by
Vershik~\cite{vershik2008}, see also~\cite{Pestov2008,rosendal2011,rosendal2011b,sabok2017automatic,Hubicka2018metricEPPA} for other proofs. Metric spaces are a particularly nice example as the proofs have very different flavours: Some are purely combinatorial~\cite{Hubicka2018metricEPPA}, while others use the profinite topology on free groups, the {R}ibes--{Z}alesskii theorem~\cite{Ribes1993}, and Marshall Hall's theorem~\cite{hall1949}. Solecki's argument was refined by Conant~\cite{Conant2015} for
certain classes of generalized metric spaces and metric spaces with
certain forbidden subspaces.

During the 2016 meeting ``Model theory of finite and pseudofinite structures'' in Leeds, the first author noticed that the condition in Theorem~\ref{thm:HL} is a special case of Definition~\ref{defn:locfin}, and hence the proofs of EPPA using Theorem~\ref{thm:HL} and the Ramsey property using Theorem~\ref{thm:hn_completions} are similar. (Notice, though, that while Ramsey classes consist of rigid structures, such classes have EPPA only in the trivial cases.) This initiated the Classification Programme of EPPA Classes.

The first explicit contribution to this programme was given, during the Ramsey DocCourse 2016, by Aranda, Bradley-Williams, Hubi{\v c}ka, Karamanlis, Kompatscher, Konečný, and Pawliuk \cite{Aranda2017,Aranda2017c,Aranda2017a}, who, simultaneously with Ramsey expansions, classified EPPA for classes from Cherlin's catalogue of metrically homogeneous graphs (with the exception of certain antipodal classes which were later handled by the second author~\cite{Konecny2019a}). Many other EPPA classes are known~\cite{hodkinson2003,Ivanov2015,Siniora2,Hubicka2017sauer,Siniora,Evans2,HubickaSemigenericAMUC,eppatwographs,otto2017,Hallback2020,Evans3,Hubicka2018EPPA,BradleyEPPAnumbers,HubickaSemigeneric}.

Inspired by both the statement and the proof strategy of Theorem~\ref{thm:hn_completions}, Hubička, Konečný, and Nešetřil proved an analogous result for EPPA. In order to state it, we need two more definitions:

\begin{definition}
	Let $\str B$ be an EPPA-witness for $\str A$. We say that $\str B$ is \emph{irreducible-structure faithful} if whenever $\str D\subseteq \str B$ is irreducible, there is an automorphism $f\in \Aut(\str B)$ such that $f(\str D) \subseteq \str A$.
\end{definition}
Notice that this implies that every irreducible substructure of $\str B$ extends to a copy of $\str A$ in $\str B$, a property which we are already familiar with thanks to Theorem~\ref{thm:sparseningRamsey} and Definition~\ref{defn:locfin}. This notion originates with Hodkinson and Otto~\cite{hodkinson2003} who introduced it for relational structures under the name \emph{clique faithfulness} and proved that if a relational structure $\str A$ has an EPPA-witness then there is in fact a clique faithful EPPA-witness for $\str A$. The general notion was introduced by Evans, Hubička, and Nešetřil who proved the existence of irreducible-structure faithful EPPA-witnesses in languages where all functions are unary~\cite{Evans3}.

\begin{definition}[Coherent EPPA~\cite{Siniora}]
	\label{defn:coherent}
	Let $\str B$ be an EPPA-witness for $\str A$. We say that $\str B$ is \emph{coherent} if there is a map $\Psi$ from partial automorphisms of $\str A$ to $\Aut(\str B)$ such that if $f,g$ are partial automorphisms of $\str A$ such that $\dom(g) = \range(f)$ then $\Psi(g\circ f) = \Psi(g)\circ\Psi(f)$. A class $\mathcal C$ has coherent EPPA if it has EPPA and the EPPA-witnesses can be chosen to be coherent.
\end{definition}
This dates back to Bhattacharjee and Macpherson~\cite{Bhattacharjee2005} who proved that graphs have coherent EPPA and that this implies that the automorphism group of the random graph contains a dense locally finite subgroup. Solecki and Siniora~\cite{solecki2009,Siniora} isolated this definition and proved its dynamical consequence, as well as a sufficient condition for a class to have coherent EPPA (a generalization of Theorem~\ref{thm:HL}).

We are now ready to state the EPPA analogue of Theorem~\ref{thm:sparseningRamsey}:
\begin{theorem}[\cite{Hubicka2018EPPA}]
	\label{thm:hkn}
	Let $L$ be a language consisting of relations and unary functions, let $\str A$ be a finite irreducible $L$-structure, let $\str{B}_0$ be a finite EPPA-witness for $\str{A}$ and let $n$ be an integer. There is a finite $L$-structure $\str{B}$ satisfying the following.
	\begin{enumerate}
		\item $\str B$ is an irreducible-structure faithful EPPA-witness for $\str A$.
		\item There is a homomorphism-embedding $\str B\to \str B_0$.
		\item For every substructure $\str{C}$ of $\str{B}$ on at most
		      $n$ vertices there is a tree amalgam $\str D$ of copies of $\str A$ and a homomorphism-embedding $f\colon \str C\to\str D$.
		\item If $\str B_0$ is coherent then so is $\str B$.
	\end{enumerate}
\end{theorem}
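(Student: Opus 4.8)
The plan is to run the partite construction that proves Theorem~\ref{thm:sparseningRamsey} (detailed in Appendix~\ref{sec:iterated}), replacing its Ramsey ingredient by an EPPA ingredient. Under this translation the structure $\str{C}_0$ there corresponds to $\str{B}_0$ here, the output $\str{C}$ corresponds to $\str{B}$, the relation ``$\str{C}_0\longrightarrow (\str{B})^\str{A}_2$'' becomes ``$\str{B}_0$ is an EPPA-witness for $\str{A}$'', and the auxiliary structure $\str{B}$ of Theorem~\ref{thm:sparseningRamsey} is now also taken to be $\str{A}$ itself. Recall that an \emph{$\str{A}$-partite structure} is an $L$-structure whose vertex set is partitioned into parts indexed by the vertices of $\str{A}$ and in which the relevant copies of $\str{A}$ are \emph{transversal} (meeting each part in a single vertex, compatibly with the indexing). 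The construction carries three invariants through a finite iteration of an amalgamation step: a homomorphism-embedding onto the base picture, a partite form of EPPA (partial automorphisms of transversal copies of $\str{A}$ extend to partition-respecting automorphisms), and coherence of the chosen extensions.

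First I would set up the \emph{picture}: an $\str{A}$-partite structure $\mathbb{P}_0$ equipped with a homomorphism-embedding $\mathbb{P}_0\to\str{B}_0$ and a ``diagonal'' copy of $\str{B}_0$ inside $\mathbb{P}_0$, arranged so that the distinguished copy $\str{A}\subseteq\str{B}_0$ becomes a transversal copy, the transversal copies of $\str{A}$ in $\mathbb{P}_0$ correspond to copies of $\str{A}$ in $\str{B}_0$, and partial automorphisms carry over. Then I would enumerate the finitely many transversal copies of $\str{A}$ that need to be processed (enough of them so that every substructure on at most $n$ vertices eventually becomes tree-like), and repair them one at a time. The heart of the argument is a \emph{partite EPPA amalgamation step}: given an $\str{A}$-partite structure $\mathbb{P}$ and a transversal copy $\str{A}^*$, there is an $\str{A}$-partite $\mathbb{Q}\supseteq\mathbb{P}$ admitting a homomorphism-embedding $\mathbb{Q}\to\mathbb{P}$ (so $\mathbb{Q}$ contains no new irreducible substructures and no new short cycles) in which every partial automorphism of $\str{A}^*$ extends to a partition-respecting automorphism, with the extension assignment chosen coherently. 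This is a partite analogue of the constructions of Hrushovski~\cite{hrushovski1992} and Herwig~\cite{Herwig1995}: one amalgamates many copies of $\mathbb{P}$ over $\str{A}^*$ along the action of a suitable finite group encoding the partial automorphisms of $\str{A}^*$, whose existence is obtained by the kind of valuation argument underlying the Herwig--Lascar theorem~\cite{herwig2000}; the group action then supplies both the extending automorphisms and the coherent bookkeeping.

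Applying this step at stage $i$ to $\mathbb{P}_{i-1}$ relative to the $i$-th transversal copy produces $\mathbb{P}_i$ which still has a homomorphism-embedding to $\mathbb{P}_{i-1}$ (hence, by composition, to $\str{B}_0$), is still a partite EPPA-witness, and in which the small substructures being processed are now cut along subsets of transversal copies of $\str{A}$, i.e.\ are homomorphism-embedding images of tree amalgams of copies of $\str{A}$ (here one uses the bookkeeping behind Observation~\ref{obs:tree_amalg_completion}). After finitely many stages, let $\str{B}$ be the $L$-reduct of the final $\mathbb{P}_k$. Conclusion~(2) is just the composition of the stagewise homomorphism-embeddings; conclusion~(3) holds because the iteration ran long enough to cover every substructure on at most $n$ vertices; conclusion~(1) follows because $\str{A}$ is irreducible, so every irreducible substructure of $\str{B}$ is transversal and hence sits inside a diagonal copy of $\str{B}_0$, and the partite EPPA then both witnesses that $\str{B}$ is an EPPA-witness for $\str{A}$ and supplies the automorphisms required for irreducible-structure faithfulness; conclusion~(4) follows by composing the coherent assignments $\Psi$ produced at the individual stages.

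The main obstacle is the partite EPPA amalgamation step itself: one needs a genuinely \emph{finite} $\str{A}$-partite structure in which \emph{all} partial automorphisms of \emph{every} transversal copy of $\str{A}$ extend, while simultaneously keeping the homomorphism-embedding onto the base (creating no new irreducible substructures, so that no forbidden configuration is introduced) and keeping the assignment of extensions coherent — this is the most delicate part, being the EPPA counterpart of Hrushovski's and Herwig's theorems carried out in the partite setting, and it is precisely here that the hypotheses ``$L$ has only unary functions'' and ``$\str{A}$ irreducible'' enter (cf.\ Fact~\ref{fact:strong_homemb} and Remark~\ref{rem:allthousy_funkce}). A secondary subtlety, flagged in Remark~\ref{rem:tree_amalgam_ramsey}, is that conclusions~(1) and~(3) cannot be decoupled: the tree-amalgam invariant and the EPPA invariant must be maintained together throughout the iteration, which is exactly why this statement had to be isolated before it could drive the crucial induction in the proof of Theorem~\ref{thm:hkn_completions}.
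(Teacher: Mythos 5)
This theorem is quoted in the survey from~\cite{Hubicka2018EPPA} without proof, so the comparison is against the proof there. Your high-level architecture is right and matches that proof in outline: one iterates a basic EPPA-strengthening step $n$ times, maintains a homomorphism-embedding to the previous stage and a ``one level more tree-like'' invariant for small substructures, and composes the coherence maps at the end. The gap is in the basic step itself. You propose a \emph{partite} amalgamation step: an $\str A$-partite picture in which one processes transversal copies $\str A^*$ of $\str A$ one at a time by freely amalgamating many copies of the previous picture over $\str A^*$ along a group action, so that partial automorphisms of $\str A^*$ extend to ``partition-respecting automorphisms.'' This cannot work as stated. A partial automorphism of $\str A$ is an isomorphism between two \emph{different} substructures of $\str A$; in an $\str A$-partite system it permutes the parts, so no partition-respecting automorphism can extend it. Worse, the natural symmetries of a free amalgam of copies of $\mathbb P$ over a fixed copy $\str A^*$ are exactly those permuting the copies while fixing $\str A^*$ pointwise, which is useless for extending a partial map whose domain and range are distinct. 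This is precisely why the partite construction (whose backward induction processes one \emph{projection} of $\str A$ at a time, exploiting that colourings restrict) does not transfer to EPPA, where the object to be extended is global. The actual basic step in~\cite{Hubicka2018EPPA} abandons the partite machinery entirely: the witness is built as a valuation structure whose vertices are pairs $(x,\chi)$ with $x$ a vertex of the previous witness and $\chi$ a valuation function into a group, with relations defined by projection plus consistency of valuations --- exactly the flavour of Theorem~\ref{thm:nreppa} and of the explicit $n$-partite tournament construction in Section~6.1.1 of this survey. The group of valuations then supplies both the extending automorphisms and the coherence, and the valuations ``cut'' cycles, which is what drives the tree-amalgam invariant under iteration.

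A secondary problem is your derivation of conclusion~(1). Irreducibility of $\str A$ does not by itself force irreducible substructures of $\str B$ to be transversal (in the presence of unary functions and closures this must be engineered into the construction), and even granting that an irreducible substructure sits inside a ``diagonal copy of $\str B_0$,'' irreducible-structure faithfulness requires an automorphism of $\str B$ moving it into the \emph{distinguished} copy of $\str A$; mere containment in some copy of $\str B_0$ does not provide this. In the valuation construction this automorphism comes for free from the transitivity of the group action on valuations, which is another reason the group-theoretic basic step, rather than a free-amalgamation step, is essential here.
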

It is an evidence of the mutually beneficial interaction between the classification programmes of EPPA and Ramsey classes that the statement of Theorem~\ref{thm:sparseningRamsey} was actually inspired by Theorem~\ref{thm:hkn}, as in~\cite{Hubicka2016} only Theorem~\ref{thm:hn_completions} was stated (see Remark~\ref{rem:tree_amalgam_ramsey}). On the other hand, Theorem~\ref{thm:hkn} was first isolated as an intermediate step towards proving an EPPA analogue of Theorem~\ref{thm:hn_completions} which we now state:

\begin{theorem}[\cite{Hubicka2018EPPA}]
	\label{thm:hkn_completions}
	Let $L$ be a language consisting of relations and unary functions and let $\mathcal E$ be a class of finite $L$-structures which has EPPA. Let $\mathcal K$ be a hereditary locally finite automorphism-preserving subclass of $\mathcal E$ with the strong amalgamation property consisting of irreducible structures. Then $\mathcal K$ has EPPA. If $\mathcal E$ has coherent EPPA then so does $\mathcal K$.
\end{theorem}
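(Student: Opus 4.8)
The plan is to follow the proof of Theorem~\ref{thm:hn_completions} essentially verbatim, using the EPPA input Theorem~\ref{thm:hkn} in place of the Ramsey input Theorem~\ref{thm:sparseningRamsey}. Fix $\str A\in\mathcal K$; we must produce an EPPA-witness for $\str A$ lying in $\mathcal K$. Since $\mathcal E$ has EPPA, first pick $\str B_0\in\mathcal E$ which is an EPPA-witness for $\str A$, taking $\str B_0$ coherent if $\mathcal E$ has coherent EPPA. Let $n=n(\str A,\str B_0)$ be the integer witnessing that $\mathcal K$ is a locally finite subclass of $\mathcal E$, with $\str A$ in the role of ``$\str B$'' and $\str B_0$ in the role of ``$\str C_0$'' of Definition~\ref{defn:locfin}. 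Now apply Theorem~\ref{thm:hkn} to $\str A$, $\str B_0$ and $n$ to obtain a finite $L$-structure $\str B$ which is an irreducible-structure faithful EPPA-witness for $\str A$, admits a homomorphism-embedding $\str B\to\str B_0$, has the property that every substructure of $\str B$ on at most $n$ vertices homomorphism-embeds into some tree amalgam of copies of $\str A$, and is coherent whenever $\str B_0$ is.

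Next I would complete $\str B$ into $\mathcal K$, checking that $\str B$ meets the three hypotheses of Definition~\ref{defn:locfin} (now with $\str B$ playing the role of ``$\str C$''). Condition~(\ref{locallyfinite:1}) is precisely irreducible-structure faithfulness of $\str B$: for an irreducible $\str D\subseteq\str B$, an automorphism of $\str B$ carrying $\str D$ into $\str A$ exhibits a copy of $\str A$ in $\str B$ containing $\str D$. Condition~(\ref{locallyfinite:2}) is the homomorphism-embedding $\str B\to\str B_0$ from Theorem~\ref{thm:hkn}. For condition~(\ref{locallyfinite:3}), a substructure $\str C\subseteq\str B$ on at most $n$ vertices homomorphism-embeds into a tree amalgam $\str D$ of copies of $\str A$; since $\str A\in\mathcal K$, $\mathcal K$ is hereditary (so every irreducible substructure of $\str A$ lies in $\mathcal K$) and has the amalgamation property, Observation~\ref{obs:tree_amalg_completion} supplies $\str D'\in\mathcal K$ with a homomorphism-embedding $\str D\to\str D'$, and composing these (the composition of homomorphism-embeddings is again a homomorphism-embedding) gives a completion $\str C\to\str D'$ of $\str C$ in $\mathcal K$. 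Since $\mathcal K$ is an \emph{automorphism-preserving} locally finite subclass of $\mathcal E$, the resulting completion may be taken to be $\str C'\in\mathcal K$ with a homomorphism-embedding $g\colon\str B\to\str C'$ together with a group homomorphism $\Aut(\str B)\to\Aut(\str C')$, $\theta\mapsto\bar\theta$, satisfying $g\circ\theta=\bar\theta\circ g$ for every $\theta\in\Aut(\str B)$.

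It remains to verify that (an isomorphic copy of) $\str C'$ is an EPPA-witness for $\str A$, coherent when required. As $\str A$ is irreducible, $g\vert_{\str A}$ is an embedding, so we identify $\str A$ with the substructure it induces on $g[A]\subseteq C'$. Given a partial automorphism $\varphi$ of $\str A$, extend it to some $\theta\in\Aut(\str B)$ using that $\str B$ is an EPPA-witness for $\str A$; then $\bar\theta\in\Aut(\str C')$ satisfies $\bar\theta(g(x))=g(\theta(x))=g(\varphi(x))$ for every $x\in\dom(\varphi)$, so after the identification $\bar\theta$ extends $\varphi$. If $\mathcal E$ has coherent EPPA, then $\str B_0$, and hence $\str B$, is coherent; letting $\Psi$ be a coherent EPPA map for $\str B$ from Theorem~\ref{thm:hkn}, the composite $\varphi\mapsto\overline{\Psi(\varphi)}$ is a coherent EPPA map for $\str C'$ because $\theta\mapsto\bar\theta$ is a group homomorphism. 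Hence $\str C'$ witnesses (coherent) EPPA for $\str A$ inside $\mathcal K$.

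The routine ingredients are the verification of Definition~\ref{defn:locfin} and the stability of homomorphism-embeddings under composition; the deep content is entirely concentrated in Theorem~\ref{thm:hkn}, which we assume. Within the present reduction, the one genuinely delicate point — and the reason the theorem insists on strong (not merely ordinary) amalgamation and on $\mathcal K$ being automorphism-preserving — is guaranteeing that the completion $\str B\to\str C'$ can be chosen to transport automorphisms of $\str B$ to automorphisms of $\str C'$, functorially enough to carry along both the EPPA extensions and, for the coherent version, their coherence. For plain EPPA this bookkeeping is light once the automorphism-preserving completion is in hand; the coherence statement is what really forces the extra structure on the completion. Granting that, everything else parallels the proof of Theorem~\ref{thm:hn_completions}.
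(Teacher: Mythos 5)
Your proposal is correct and follows exactly the route the paper intends: the paper does not spell out a proof of Theorem~\ref{thm:hkn_completions} (it is cited to \cite{Hubicka2018EPPA}), but it explicitly presents it as the EPPA analogue of Theorem~\ref{thm:hn_completions}, whose proof you mirror step for step --- replace Theorem~\ref{thm:sparseningRamsey} by Theorem~\ref{thm:hkn}, verify the three conditions of Definition~\ref{defn:locfin} via irreducible-structure faithfulness, the homomorphism-embedding to $\str B_0$, and Observation~\ref{obs:tree_amalg_completion}, and then transport the extended automorphisms (and the coherence map) along the group homomorphism supplied by the automorphism-preserving completion. Your identification of where automorphism-preservation and coherence enter is exactly the right bookkeeping, so there is nothing to add.
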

Here, if $f\colon\str C\to\str C'$ is a homomorphism-embedding, we say that $\str C'$ is an \emph{automorphism-preserving completion of $\str C$} if $f$ is injective (that is, $\str C'$ is a strong completion of $\str C'$) and moreover there is a group homomorphism $f_\Gamma\colon \Aut(\str C)\to \Aut(\str C')$ such that for every $g\in \Aut(\str C)$ it holds that $f\circ g \circ f^{-1}\subseteq f_\Gamma(g)$. One can then generalize Definition~\ref{defn:locfin} to a \emph{locally finite automorphism-preserving subclass} by demanding that the completion $\str C'$ is automorphism-preserving.

\medskip

There is also an EPPA analogue of Theorem~\ref{thm:HN}. Note that unlike Theorem~\ref{thm:HN}, Theorem~\ref{thm:nreppa} does not explicitly talk about forbidden irreducible structures. However, from irreducible-structure faithfulness of $\str B$ it follows that if $\str A$ avoids some irreducible structure then so does $\str B$.
\begin{theorem}[\cite{Evans3}]
	\label{thm:nreppa}
	Let $L$ be a language consisting of relations and unary functions and let $\str A$ be a finite $L$-structure. There is a finite $L$-structure $\str B$, which is an irreducible-structure faithful coherent EPPA-witness for $\str A$. Consequently, the class of all finite $L$-structures has coherent EPPA.
\end{theorem}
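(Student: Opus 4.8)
The plan is to reduce to the purely relational case and then put the unary functions ``on top''. Since a finite structure mentions only finitely many symbols, and unused symbols may be interpreted as the empty relation, resp.\ as the constantly-$\emptyset$ function (an EPPA-witness for the resulting reduct is still one for $\str A$), we may assume $L$ is finite.

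\emph{The relational case.} Suppose first $L$ is a finite relational language and $\str A$ is a finite $L$-structure; we want a finite EPPA-witness $\str B$ for $\str A$ that is simultaneously irreducible-structure faithful and coherent. Plain EPPA here is the Herwig--Lascar theorem (Theorem~\ref{thm:HL} with $\F=\emptyset$); irreducible-structure (i.e.\ clique) faithfulness is the Hodkinson--Otto strengthening~\cite{hodkinson2003}; and coherence is the Solecki--Siniora strengthening~\cite{solecki2009,Siniora}. Getting all three at once is routine once one inspects the underlying coset/permutation-group constructions: the witness there is assembled out of copies of $\str A$, so every relation-carrying (hence every irreducible) tuple of $\str B$ is the image under some automorphism of such a tuple of $\str A$, which gives faithfulness; and the same assembly supplies the group homomorphism $\Psi$ from partial automorphisms of $\str A$ to $\Aut(\str B)$ witnessing coherence. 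This yields a finite coherent irreducible-structure faithful EPPA-witness for every finite relational structure.

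\emph{Adding unary functions.} Let now $L=L_{\mathrm{rel}}\cup\{F_1,\dots,F_p\}$ with the $F_i$ unary (set-valued) and let $\str A$ be a finite $L$-structure. The tempting reduction is to encode the functions by binary relations $E_i$ (with $E_i(x,y)$ iff $y\in F_i^{\str A}(x)$), apply the relational case to the encoded structure $\widehat{\str A}$, and read the resulting witness $\widehat{\str B}$ back as an $L$-structure. This transports partial automorphisms correctly: because $A$ is algebraically closed in $\str A$, the induced encoded substructures of $\widehat{\str A}$ on algebraically closed subsets of $A$ are exactly the encodings of the corresponding substructures of $\str A$, so every partial automorphism of $\str A$ is one of $\widehat{\str A}$. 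But it fails at the step of converting back: $\widehat{\str B}$ may carry ``stray'' $E_i$-edges leaving $A$, so $A$ need not be algebraically closed in the resulting $L$-structure and $\str A$ need not be its substructure. I expect \textbf{this to be the main obstacle}: stray edges cannot be excluded by forbidding finitely many homomorphic images (homomorphisms collapse parallel edges, so out-degrees and multiplicities are not controllable this way), so one cannot simply black-box the relational case. The resolution --- this is the content of~\cite{Evans3}, and it is the EPPA analogue of the ``on-the-top'' trick~\cite[Theorem~4.29]{Hubicka2016} on the Ramsey side --- is instead to \emph{run} a copies-of-$\str A$ based relational EPPA construction while carrying the unary functions through the assembly: since the $F_i$ are unary, each vertex's function-closure has bounded size and is copied along with the vertex, so $\str A$ survives as an algebraically closed substructure, and the automorphisms produced automatically commute with the $F_i$ (a boundary-crossing automorphism maps a vertex of $\str A$ together with its whole function-closure into another copy of $\str A$); here the extra generality of set-valued functions is genuinely used, as flagged in the paper. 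Irreducible-structure faithfulness and coherence survive because they only refer to copies of (irreducible substructures of) $\str A$ and to the assembly map $\Psi$. Finally, ``the class of all finite $L$-structures has coherent EPPA'' is immediate: any finite $L$-structure lies in this class, and one coherent EPPA-witness per structure is all the definition requires.
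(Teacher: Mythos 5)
The survey itself does not prove Theorem~\ref{thm:nreppa}; it is imported from~\cite{Evans3} with no argument given, so there is no in-paper proof to compare against. Judged on its own terms, your outline identifies the right strategy (settle the finite relational case via the Herwig--Lascar / Hodkinson--Otto / Solecki--Siniora line, then deal with the unary functions) and, crucially, the right obstacle: encoding the functions as binary relations and black-boxing the relational case fails because the relational witness can carry stray function-edges, so the copy of $\str A$ need not be algebraically closed. That diagnosis is accurate.

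There are nevertheless two genuine gaps. The decisive one is that the heart of the theorem --- the construction of a witness that carries the unary functions --- is never carried out: ``run a copies-of-$\str A$ based relational EPPA construction while carrying the unary functions through the assembly'' is a plan, and you explicitly defer its execution to~\cite{Evans3}. To turn this into a proof one must actually exhibit the witness (in~\cite{Evans3,Hubicka2018EPPA} this is a valuation-function construction indexed by the closures $\cl(v)$ of vertices under the functions), check that the functions are well defined on it, that $\str A$ sits inside it as a closed substructure, and that the extended automorphisms preserve the functions; none of that is routine bookkeeping, and coherence and faithfulness must be re-verified for the new construction rather than inherited ``because they only refer to copies of $\str A$''. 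Secondly, your justification of irreducible-structure faithfulness in the relational case --- ``every relation-carrying (hence every irreducible) tuple of $\str B$ is the image under some automorphism of such a tuple of $\str A$'' --- conflates tuples in relations with irreducible substructures: a triangle in a graph is irreducible but is not a single edge, and a witness in which every edge lies in a copy of $\str A$ can still contain triangles lying in no copy of $\str A$. Arranging that every irreducible substructure is moved into $\str A$ by a single automorphism is exactly the non-trivial content of~\cite{hodkinson2003} (the genericity of the valuations), so it should be invoked as such, not derived from the tuple-level observation. The statements you assert are all true, but as written the argument is an annotated roadmap to~\cite{Evans3} rather than a proof.
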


Note that in contrast to the Ramsey property, the EPPA results can only handle unary functions. To partially compensate for it, the true statements of the theorems in~\cite{Hubicka2018EPPA} allow for the language to be equipped with a permutation group (see~\cite{Hubicka2018EPPA} for details). Nevertheless, EPPA for, say, the class of all finite structures with a single binary function, remains an important open problem.

\subsubsection{Examples of classes with EPPA}
Theorem~\ref{thm:hkn_completions} can be applied similarly to Theorem~\ref{thm:hn_completions}. We will give an example:
\begin{example}[Metric spaces]
	In 2005, Solecki~\cite{solecki2005} proved that the class of all finite metric spaces has EPPA. The key part of his argument was essentially the construction in the proof of Proposition~\ref{prop:metric_completion} together with the observation that the completion is actually automorphism-preserving. He then used Theorem~\ref{thm:HL}, but with today's methods one can simply say that the class $\mathcal K$ of all finite metric spaces is a locally finite automorphism-preserving subclass of the class $\mathcal R$ of all finite structures in the language with a binary relation for every positive real number, which has coherent EPPA by Theorem~\ref{thm:hkn}, and so we can use Theorem~\ref{thm:hkn_completions}.
\end{example}

Notice that we obtained all the ingredients for a proof of EPPA in an intermediate step for a proof of the Ramsey property. However, sometimes this can also go the other way around:
\begin{example}[Diversities]
	A \emph{diversity} $\str A$ is a set $A$ equipped with a function $d\colon {A\choose {<}\omega}\to \mathbb R^{{\geq}0}$ such that for all finite subsets $X,Y,Z$ of $A$ it holds that $d(X) = 0$ if and only if $\lvert X\rvert \leq 1$, and if $Y\neq \emptyset$ then $d(X\cup Y) + d(Y\cup Z) \geq d(X\cup Z)$. Note that when we restrict $d$ to pairs, we obtain a metric. Diversities are thus generalizations of metric spaces (see for example~\cite{Hallbackphd} for more context), and in an analogous way as with metric spaces, we can see diversities as relational structures.

	Hallbäck used Theorem~\ref{thm:HL} to prove that the class $\mathcal D$ of all finite diversities has EPPA. However, in the course of applying Theorem~\ref{thm:HL}, he actually proved that $\mathcal D$ is a locally finite automorphism-preserving subclass of the class $\mathcal R$ of all finite relational structures in the corresponding language (see the proof of Theorem~5.6 in~\cite{Hallback2020}).

	We thus immediately get the following two theorems as corollaries of his construction:
	\begin{theorem}
		$\mathcal D$ has coherent EPPA.
	\end{theorem}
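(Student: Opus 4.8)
The plan is to deduce the theorem from the EPPA transfer result (Theorem~\ref{thm:hkn_completions}), mirroring exactly the way the Ramsey property of metric spaces was obtained from Theorem~\ref{thm:hn_completions} via Lemmas~\ref{lem:metric_unordered_locally_finite} and Proposition~\ref{prop:metric_completion}. First I would fix the relational language $L$ in which diversities live: for every positive real $r$ and every integer $k\geq 2$ let $L$ contain a $k$-ary relation symbol $R_{r,k}$, and encode a diversity $\str A=(A,d)$ as the $L$-structure with $\bar x\in R_{r,k}^{\str A}$ if and only if the $k$ coordinates of $\bar x$ are pairwise distinct and $d(\{x_1,\dots,x_k\})=r$. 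Every such structure is \emph{complete} (every tuple of distinct vertices lies in exactly one relation), hence members of $\mathcal D$ are irreducible; hereditarity and isomorphism-closedness are immediate from the definition of a diversity; and $L$ consists of relations only, so it trivially fits the hypotheses of Theorem~\ref{thm:hkn_completions}. Let $\mathcal R$ be the class of all finite $L$-structures; by Theorem~\ref{thm:nreppa}, $\mathcal R$ has coherent EPPA.

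The heart of the argument is to show that $\mathcal D$ is a hereditary locally finite automorphism-preserving subclass of $\mathcal R$ with the strong amalgamation property. For the completion I would use the diversity analogue of the shortest-path completion of Proposition~\ref{prop:metric_completion}: given a partial $L$-structure $\str C$ (a ``partially defined diversity''), define, for a finite $X\subseteq C$, a value $d'(X)$ as the infimum, capped by the largest diversity value $D$ present in $\str C$, over all ways of ``connecting'' $X$ by a tree of already-defined hyperedges of $\str C$ — a Steiner-tree-like quantity whose weight is the sum of the diversity values of the hyperedges used. One verifies that $d'$ satisfies the diversity axioms (non-degeneracy from finiteness, the diversity triangle inequality $d'(X\cup Y)+d'(Y\cup Z)\geq d'(X\cup Z)$ for $Y\neq\emptyset$ by concatenating connecting trees), that the resulting complete $L$-structure $\str C'$ is a (strong) completion of $\str C$ precisely when $\str C$ contains no ``non-diversity configuration'' (the obstruction playing the role of a non-metric cycle), and — crucially — that $d'$ is defined by a purely combinatorial minimisation, hence invariant under every automorphism of $\str C$; so each automorphism of $\str C$ is also an automorphism of $\str C'$, giving the required group homomorphism $\Aut(\str C)\to\Aut(\str C')$. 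Strong amalgamation of $\mathcal D$ is then the special case where $\str C$ is the disjoint union of $\str B_1$ and $\str B_2$ glued along a common substructure, only sets inside $B_1$ or inside $B_2$ having defined values. This is in essence the construction used in the proof of Theorem~5.6 of~\cite{Hallback2020}.

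To obtain local finiteness I would argue as in Lemma~\ref{lem:metric_unordered_locally_finite}: given $\str B\in\mathcal D$, let $S$ be the finite set of diversity values occurring in $\str B$; condition~(\ref{locallyfinite:1}) of Definition~\ref{defn:locfin} forces any admissible $\str C$ to use only values from $S$, and (as for metric spaces) one only needs conditions~(\ref{locallyfinite:1}) and~(\ref{locallyfinite:3}), so $n=n(\str B,\str C_0)$ may be taken independent of $\str C_0$. The key combinatorial lemma is that a \emph{minimal} non-diversity configuration using only values from $S$ has at most $n=n(S)$ vertices, where $n(S)$ depends only on $\max(S)/\min(S)$. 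This is the step I expect to be the main obstacle: unlike non-metric cycles, the obstructions for diversities are trees of hyperedges rather than cycles, so bounding their size requires an exchange/shortening argument showing that any long connecting tree realising a value strictly below a value already present can be pruned without raising that value. Once $n(S)$ is available, condition~(\ref{locallyfinite:3}) (every $\leq n$-vertex substructure has an automorphism-preserving completion in $\mathcal D$) together with the completion above gives an automorphism-preserving completion of all of $\str C$, so $\mathcal D$ is a hereditary locally finite automorphism-preserving subclass of $\mathcal R$ with strong amalgamation, consisting of irreducible structures. Theorem~\ref{thm:hkn_completions} then yields that $\mathcal D$ has EPPA, and since $\mathcal R$ has \emph{coherent} EPPA, the same theorem gives that $\mathcal D$ has coherent EPPA.
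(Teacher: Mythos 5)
Your proposal follows the paper's proof exactly: the paper likewise deduces the theorem from Theorem~\ref{thm:hkn_completions}, taking $\mathcal R$ to be the class of all finite relational structures in the diversity language (which has coherent EPPA by Theorem~\ref{thm:nreppa}) and using the fact that $\mathcal D$ is a hereditary locally finite automorphism-preserving subclass of $\mathcal R$ with strong amalgamation consisting of irreducible structures. The only difference is that the step you flag as the main obstacle --- the automorphism-preserving Steiner-tree-style completion and the bound on the size of minimal non-diversity obstructions --- is not reproved in the paper but cited directly from Hallb\"ack's proof of Theorem~5.6 of~\cite{Hallback2020}, so your sketch of it is a correctly oriented (if not fully carried out) reconstruction of an argument the paper treats as known.
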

	\begin{theorem}
		The class of free orderings of $\mathcal D$ is Ramsey.
	\end{theorem}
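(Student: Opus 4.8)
The plan is to follow the template of the proof of Theorem~\ref{thm:nesetril-metric-spaces} and reduce the statement to Theorem~\ref{thm:hn_completions}. First I would fix the relational language in which diversities live: for every integer $k\geq 2$ and every value $r\in\mathbb R^{{>}0}$ one takes a $k$-ary relation symbol recording that a $k$-element set has diversity exactly $r$, so that a diversity becomes an $L$-structure in which every subset of size at least $2$ lies in precisely one relation. Let $\mathcal R$ be the class of all finite $L$-structures (one should think of its members as ``partial diversities''). The crucial structural observation is that every genuine diversity is an \emph{irreducible} structure, since its Gaifman graph is complete; this is what makes the machinery of Section~\ref{sec:systematic} applicable.

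The heart of the argument is that $\mathcal D$ is a strong locally finite subclass of $\mathcal R$ in the sense of Definition~\ref{defn:locfin}. This is essentially the content of the proof of Theorem~5.6 in~\cite{Hallback2020}: given a partial diversity $\str C$ all of whose bounded-size substructures complete into $\mathcal D$ and which maps homomorphism-embedding-wise into some fixed $\str C_0$, one builds a completion of $\str C$ to a genuine diversity by a ``shortest-path''-type formula entirely analogous to the metric completion of Proposition~\ref{prop:metric_completion}, and this completion is strong --- in fact automorphism-preserving. The obstruction to completion (the diversity analogue of a non-metric cycle) has size bounded in terms of the ratio of the largest to the smallest value appearing in $\str B$, exactly as in Lemma~\ref{lem:metric_unordered_locally_finite}. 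I expect this verification to be the main technical obstacle of the whole proof, but since it is carried out in detail in~\cite{Hallback2020} I would simply invoke it. As a by-product one obtains that the free amalgam of two diversities over a common sub-diversity is a partial diversity with no such obstruction and hence admits a strong completion in $\mathcal D$, so that $\mathcal D$ is a hereditary strong amalgamation class consisting of irreducible structures.

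Finally I would add the order. Let $\mathcal D^<$ and $\mathcal R^<$ be the classes of free orderings of $\mathcal D$ and $\mathcal R$ (Definition~\ref{defn:freeorder}); note $\mathcal D^<\subseteq\mathcal R^<$. Since $\mathcal D$ is a strong locally finite subclass of $\mathcal R$ consisting of irreducible structures, Lemma~\ref{lem:locally_finite_add_order} shows that $\mathcal D^<$ is a strong locally finite subclass of $\mathcal R^<$. Now $\mathcal R^<$ is the class of all finite ordered structures in a relational language with no forbidden configurations, hence is Ramsey by the Nešetřil--Rödl theorem (Theorem~\ref{thm:NR} with $\mathcal F=\emptyset$). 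Moreover $\mathcal D^<$ is hereditary, consists of irreducible structures, and has the strong amalgamation property (the order component is free, and the diversity component amalgamates via the completion above). Therefore Theorem~\ref{thm:hn_completions} applies and yields that $\mathcal D^<$ is Ramsey, as desired.
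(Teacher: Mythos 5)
Your proposal is correct and follows essentially the same route as the paper: the paper likewise observes that Hallbäck's proof of Theorem~5.6 in~\cite{Hallback2020} already shows $\mathcal D$ is a locally finite automorphism-preserving (hence strong locally finite) subclass of the class $\mathcal R$ of all finite relational structures in the corresponding language, lifts this to free orderings via Lemma~\ref{lem:locally_finite_add_order} using irreducibility of diversities, and concludes by the Nešetřil--Rödl/Hubička--Nešetřil theorem for the ambient class together with Theorem~\ref{thm:hn_completions}. The only cosmetic difference is that the paper cites Theorem~\ref{thm:HN} where you cite Theorem~\ref{thm:NR} with $\mathcal F=\emptyset$, which amounts to the same thing for a purely relational language.
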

	\begin{proof}
		Since members of $\mathcal D$ are irreducible and automorphism-preserving completions are strong, we can use Lemma~\ref{lem:locally_finite_add_order} to lift Hallbäck's result to free orderings of $\mathcal D$ and $\mathcal R$, and apply Theorems~\ref{thm:HN} and~\ref{thm:hn_completions}.
	\end{proof}
	This answers a question of Hallbäck~\cite[Question~A.6]{Hallbackphd}.
\end{example}

There are, however, also structures for which EPPA does not follow from Theorem~\ref{thm:hkn_completions} so directly. Such examples are two-graphs and antipodal metric spaces~\cite{eppatwographs,Konecny2019a}, or semigeneric and $n$-partite tournaments~\cite{HubickaSemigeneric,HubickaSemigenericAMUC}:
\begin{example}[$n$-partite tournaments]
	Given $n\geq 2$, an $n$-partite tournament is an oriented graph $\str A$ whose vertices can be partitioned into parts $A_1,A_2,\ldots,A_n$ such that there are no edges between vertices from the same part and there is exactly one oriented edge between every pair of vertices from different parts. In~\cite{HubickaSemigeneric}, Hubička, Jahel, Konečný, and Sabok prove EPPA for $n$-partite tournaments; we show their construction below:

	Fix a finite $n$-partite tournament $\str A$ with parts
	$A_1,A_2,\ldots,A_n$. We will give an explicit construction of an $n$-partite tournament $\str B$ which is an EPPA-witness for $\str A$.
	Assume without loss of generality that $A=\{1,2,\ldots, k\}$, for every $x\in A_i$ and every $y\in A_j$ it holds that $x<y$ whenever $i<j$, and $\vert A_1\vert =\vert A_2\vert =\cdots=\vert A_n\vert$.

	Given vertex $x\in A_i$ for some $1\leq i\leq n$, we put $N(x) = A\setminus A_i$. The vertex set of $\str B$ consists of all pairs $(x,\chi)$ where $x\in A$ and $\chi$ is a function $N(x)\to \mathbb Z_2$. We let $(x,\chi)$ and $(x',\chi')$ be adjacent if and only if $x$ and $x'$ belong to different parts of $\str A$, and orient the edge
	from $(x,\chi)$ to $(x',\chi')$ if and only if either $x > x'$ and $\chi(x') + \chi'(x) = 1$, or $x < x'$ and $\chi(x') + \chi'(x) = 0$ (with addition in $\mathbb Z_2$). It is easy to observe that $\str B$ is an $n$-partite tournament with parts $B_i=\{(x,\chi) \in B : x\in A_i\}$.

	Next, we construct an embedding $\psi\colon \str A\to \str B$ (so that $\str B$ will then extend partial automorphisms of $\psi(\str A)$), putting $\psi(x) = (x,\chi_x)$ where $\chi_x\colon N(x) \to \mathbb Z_2$ satisfies $\chi_x(y)=1$ if and only if $y<x$ and there is an edge directed from $x$ to $y$ in $\str A$. It is easy to verify that $\psi$ is indeed an embedding of $\str A$ into $\str B$. Showing how to extend partial automorphisms of $\str A$ is a bit laborious, see~\cite{HubickaSemigeneric}.
\end{example}

The other constructions in~\cite{eppatwographs} or~\cite{HubickaSemigeneric} also have a similar product structure and do not invoke Theorems~\ref{thm:hkn} and~\ref{thm:nreppa}. However, the structures handled in~\cite{Konecny2019a} needed a combination of the ``ad hoc'' product argument and Theorems~\ref{thm:hkn} and~\ref{thm:nreppa} as they were metric-like. There, one needed the more general statements of Theorems~\ref{thm:hkn} and~\ref{thm:nreppa} (where there is additionally a permutation group on the language and a morphism from $\str A$ to $\str B$ is a pair $(f,g)$ where $g$ is an element of the permutation group and $f$ is an embedding from $\str A$ to $\str B^g$ where the relations are permuted by $g$, see~\cite{Hubicka2018EPPA} for details). In general, it seems that a possible cookbook for proving EPPA might consists of combining such a product structure with Theorems~\ref{thm:hkn} and~\ref{thm:nreppa}.

Of course, like with the Ramsey property, there are classes which are complete exceptions. Here, they come from two directions: First, there are various classes of finite structures which are homogeneous (and, hence, EPPA-witnesses for themselves), such as vector spaces over finite fields, or the class of all finite skew-symmetric bilinear forms~\cite{Cherlin2003}. Second, Siniora~\cite{Siniora2} proved that the class of all finite groups has EPPA.

\subsubsection{Topological dynamics}
Just like the Ramsey property, EPPA also has strong links
to topological dynamics. Let $\str M$ be the \Fraisse{} limit of a \Fraisse{} class, and assume for simplicity that $\str M$ is relational. Kechris and Rosendal~\cite{Kechris2007} observed that $\age(\str M)$ has EPPA if and only if $\Aut(\str M)$ can be written as the closure of a countable chain of compact subgroups. Siniora and Solecki~\cite{solecki2009,Siniora} proved that if $\age(\str M)$ has coherent EPPA then $\Aut(\str M)$ contains a dense locally finite subgroup, and Sági recently isolated a combinatorial property equivalent to the existence of a dense locally finite subgroup~\cite{Sagi2024}.

As we stated at the beginning of this section, the original motivation behind Hrushovski's proof of EPPA for graphs was the proof of the small index property of the random graph by Hodges, Hodkinson, Lascar, and Shelah~\cite{hodges1993b}. In fact, they proved that the random graph has \emph{ample generics} (its automorphism group $G$ has a comeagre orbit in its action on $G^n$ by diagonal conjugation for every $n\geq 1$), the small index property then followed as a corollary. Kechris and Rosendal~\cite{Kechris2007} abstracted their methods and proved various other corollaries of ample generics.

The nowadays standard method for proving ample generics for $\str M$ is proving EPPA and APA for $\age(\str M)$, where a class $\mathcal C$ has \emph{APA}, the \emph{amalgamation property with automorphisms}, if for every $\str{A},\str{B}_1,\str{B}_2\in \mathcal C$ and embeddings $\alpha_1\colon\str{A}\to\str{B}_1$, $\alpha_2\colon\str{A}\to\str{B}_2$ there exists $\str{C}\in \mathcal C$ with embeddings $\beta_1\colon\str{B}_1 \to \str{C}$ and $\beta_2\colon\str{B}_2\to\str{C}$ such that $\beta_1\circ\alpha_1 = \beta_2\circ\alpha_2$ (\ie{} $\str C$ is an amalgam of $\str B_1$ and $\str B_2$ over $\str A$ with respect to $\alpha_1$ and $\alpha_2$) and moreover whenever we have $f\in \Aut(\str B_1)$ and $g\in \Aut(\str B_2)$ such that $f(\alpha_1(A)) = \alpha_1(A)$, $g(\alpha_2(A)) = \alpha_2(A)$ and for every $a\in A$ it holds that $\alpha_1^{-1}(f(\alpha_1(a))) = \alpha_2^{-1}(g(\alpha_2(a)))$ (that is, $f$ and $g$ agree on the copy of $\str A$ we are amalgamating over), then there is $h\in \Aut(\str C)$ which extends $\beta_1 f \beta_1^{-1} \cup\beta_2 g \beta_2^{-1}$. Every free amalgamation class has the amalgamation property with automorphisms. Similarly as a strong completion of the free amalgam is a strong amalgam, automorphism-preserving completions of the free amalgam give rise to strong amalgams with automorphisms. Consequently, metric spaces or diversities do have APA, and hence ample generics.

\subsubsection{Classification Programme of EPPA Classes}

In the light of the recent connections between EPPA and Ramsey classes, we can extend
Nešetřil's classification programme to classes with EPPA. The basic
question for a given locally finite homogeneous $L$-structure $\str{M}$ is simply:

\begin{enumerate}[label=Q\arabic*,resume]
	\item \label{Q3} Does the class $\Age(\str{M})$ have EPPA?
\end{enumerate}
Motivated by the group-theoretic context, we
can additionally consider the following questions:
\begin{enumerate}[label=Q\arabic*,resume]
	\item \label{Q4} If the answer to~\ref{Q3} is positive, does $\Age(\str{M})$ have coherent EPPA?
	\item \label{Q5} If the answer to~\ref{Q3} is positive, does $\Age(\str{M})$ have APA?
	\item \label{Q6} If the answer to~\ref{Q3} is positive but the answer to~\ref{Q5} is negative, does $\Age(\str{M})$ still have ample generics?
\end{enumerate}

There are many open problems in this area, some of which we present in Section~\ref{sec:eppa_questions}.

\subsection{Infinite structural Ramsey theory}\label{sec:big_ramsey}
As discussed in Section~\ref{sec:1970s}, the notion of a Ramsey class was introduced as
a natural generalization of the finite Ramsey theorem.  Now we turn our attention
to the infinite Ramsey theorem:

\begin{theorem}[Infinite Ramsey theorem, 1930~\cite{Ramsey1930}]
	$$(\forall k,r\in \mathbb N)\omega \longrightarrow (\omega)^k_r$$
\end{theorem}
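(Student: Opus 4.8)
The plan is to argue by induction on $k$, keeping $r$ fixed (as remarked after the finite Ramsey theorem, one could reduce to $r=2$, but the induction runs just as smoothly for general $r$). For the base case $k=1$, a colouring $\chi\colon\binom{\omega}{1}\to r$ is in effect a colouring of the points of $\omega$, hence it partitions $\omega$ into $r$ classes; since $\omega$ is infinite, one of these classes is infinite, and it is the desired monochromatic set.

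For the inductive step I assume the statement for $k-1$ and, given $\chi\colon\binom{\omega}{k}\to r$, build a nested sequence of infinite sets $\omega=S_0\supseteq S_1\supseteq\cdots$ together with elements $a_i\in S_i$ and colours $c_i\in r$ by recursion. Having produced $S_i$, choose any $a_i\in S_i$ and define the auxiliary colouring $\chi_i\colon\binom{S_i\setminus\{a_i\}}{k-1}\to r$ by $\chi_i(X)=\chi(\{a_i\}\cup X)$; applying the induction hypothesis inside the infinite set $S_i\setminus\{a_i\}$ yields an infinite $S_{i+1}\subseteq S_i\setminus\{a_i\}$ on which $\chi_i$ is constant, and we let $c_i$ be that constant value. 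This produces the sequences $(a_i)_{i\in\omega}$ and $(c_i)_{i\in\omega}$.

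Finally I apply the pigeonhole principle to $(c_i)_{i\in\omega}$ to find a colour $c\in r$ with $c_i=c$ for infinitely many $i$, and set $S=\{a_i:c_i=c\}$, an infinite subset of $\omega$. To see that $\chi$ is constantly $c$ on $\binom{S}{k}$, take $Y\in\binom{S}{k}$ and let $a_i$ be its member of least index; then $c_i=c$ and, by construction, all remaining elements of $Y$ lie in $S_{i+1}$, so $Y\setminus\{a_i\}\in\binom{S_{i+1}}{k-1}$ and $\chi(Y)=\chi_i(Y\setminus\{a_i\})=c_i=c$. This is the whole argument; it is entirely classical, and the only point needing care is the bookkeeping in the recursion — one must ensure that once $a_i$ has been chosen, every element selected afterwards belongs to $S_{i+1}$, so that the colour of a $k$-subset of $S$ is pinned down by its least element. (The recursion tacitly uses countable/dependent choice, which is harmless in the usual setting; alternatively, one can give a slicker but less elementary proof by fixing a non-principal ultrafilter on $\omega$ and taking iterated ultrafilter limits.)
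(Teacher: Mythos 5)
Your argument is the standard and correct proof of the infinite Ramsey theorem: induction on $k$ with the pigeonhole base case, the recursive construction of the nested infinite sets $S_i$ and the ``link'' colourings $\chi_i(X)=\chi(\{a_i\}\cup X)$, and a final pigeonhole on the colours $c_i$. The paper itself gives no proof of this statement --- it is cited as a classical result of Ramsey (1930) --- so there is nothing to compare against; your bookkeeping (each later $a_j$ lies in $S_{i+1}$, so the colour of a $k$-set in $S$ is determined by its least-index element) is exactly the point that needs care, and you handle it correctly.
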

Again, we can re-formulate it using the structural partition arrow. Recall that we
denote by $\mathcal{LO}$ the set of all finite linear orders.  Let $(\omega,{<})$ be the order of the non-negative integers (seen as a structure). Then:
$$(\forall \str{A}\in \mathcal{LO})(\forall r\in \mathbb N)(\omega,{<}) \longrightarrow ((\omega,{<}))^\str{A}_r$$
It is natural to ask if $(\omega,{<})$ can be replaced by, say, $(\mathbb Q,{<})$ (the order of the rationals).
In 1933, Sierpi\'nski showed that this variation is not true~\cite{Sierpinski1933}:  Choose an arbitrary bijection $\varphi\colon\mathbb Q\to \omega$.
Let $\str{A}\in \mathcal{LO}$ be the (unique) linear order with $A=\{0,1\}$ and the natural ordering.
Define $\chi\colon\Emb(\str{A},\mathbb Q)\to 2$ by comparing the order of the rationals and the order of the non-negative integers. Explicitly:
$$\chi(f)=\begin{cases}
		0 & \hbox{if }\varphi\circ f(0)<\varphi\circ f(1)  \\
		1 & \hbox{if }\varphi\circ f(0)>\varphi\circ f(1).
	\end{cases}
$$
Called the \emph{Sierpi\'nski colouring}, $\chi$ has the property that
for every embedding $f\colon (\mathbb Q,{<})\to (\mathbb Q,{<})$ it attains both colours on $\{f\circ e:e\in
	\Emb(\str{A},\mathbb Q)\}$.  Indeed, assume for a contradiction that there is
$f\colon (\mathbb Q,{<})\to (\mathbb Q,{<})$ such that $\chi$ is constant (say, 0) on $\{f\circ e:e\in \Emb(\str{A},\mathbb Q)\}$.
This means that the order of the rationals is isomorphic to the order of $\omega$. Similarly, if $\chi$ is
constant 1 then the order of the rationals would be isomorphic to reversed order of
$\omega$ and neither of these two are true.

In 1968, Galvin showed that this is the ``worst possible counterexample'' in the following sense.
\begin{theorem}[Galvin, 1968~\cite{Galvin68,Galvin69}]
	Let $\str{A}$ be the linear order of size 2.
	Then for every integer $r$ and every colouring $\chi\colon\Emb(\str{A},\mathbb Q)\to r$
	there exists an embedding $f\colon (\mathbb Q,{<})\to (\mathbb Q,{<})$ such that
	$\vert \{\chi(f\circ e):e\in \Emb(\str{A},\mathbb Q)\}\vert \leq 2$.
\end{theorem}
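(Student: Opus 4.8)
The plan is to prove the statement in its now-standard reformulation: the \emph{big Ramsey degree} of the $2$-element chain in $(\mathbb Q,{<})$ is at most $2$. First I would observe that an element of $\Emb(\str A,\mathbb Q)$ is nothing but an ordered pair $(a,b)$ of rationals with $a<b$, so the colouring $\chi$ is really an $r$-colouring of $\binom{\mathbb Q}{2}$, and the conclusion is equivalent to the following: there is a set $S\subseteq\mathbb Q$ with $(S,{<})\cong(\mathbb Q,{<})$ on which $\chi$ attains at most two values on pairs. Indeed, given such an $S$, one lets $f\colon(\mathbb Q,{<})\to(\mathbb Q,{<})$ be an isomorphism onto $S$, and then $\{\chi(f\circ e):e\in\Emb(\str A,\mathbb Q)\}$ is exactly the set of colours of pairs from $S$.

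Next I would set up the classical tree coding: realise $(\mathbb Q,{<})$ as a \emph{diagonal antichain} $D$ in the infinite binary tree $2^{<\omega}$, that is, a set of pairwise incomparable finite $0/1$-sequences, no two of them of the same length, and linearly ordered by the lexicographic order (for incomparable $s,t$ one sets $s<_{\mathrm{lex}}t$ iff $s(k)=0$, where $k=|s\wedge t|$ is the length of the meet), such that $(D,{<_{\mathrm{lex}}})\cong(\mathbb Q,{<})$. Such a $D$ is produced by a routine recursion along an enumeration of $\mathbb Q$: having placed a finite diagonal antichain order-isomorphic to an initial segment of the enumeration, insert the next rational by branching a fresh node, of length larger than everything used so far, into the appropriate lexicographic gap. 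The role of the antichain condition is crucial: it guarantees that no node of $D$ is a prefix of another, which is what ultimately limits the number of ``shapes'' of pairs to two rather than four.

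The core of the argument is then the classification of pairs together with a homogenisation, and this is the step I expect to be the main obstacle. For $\{s,t\}\in\binom D2$ with $|s|<|t|$, define its \emph{type} to be $L$ if $s<_{\mathrm{lex}}t$ and $R$ if $t<_{\mathrm{lex}}s$ (equivalently, according to the bit $s(|s\wedge t|)$). There are exactly these two types, and every order-copy of $(\mathbb Q,{<})$ inside $D$ realises both of them, so one cannot hope for fewer than two colours. What must be shown is that one can pass to a sub-antichain $E\subseteq D$ which is still dense and without endpoints -- hence $(E,{<_{\mathrm{lex}}})\cong(\mathbb Q,{<})$ -- and on which $\chi$ is constant on the type-$L$ pairs, say with value $c_L$, and constant on the type-$R$ pairs, say with value $c_R$. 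The clean way to obtain this is to invoke a special case of Milliken's strong tree theorem (equivalently, the Halpern--L\"auchli theorem) for the tree $2^{<\omega}$: it yields a strong subtree whose top-level antichain is simultaneously homogeneous for each of the finitely many tree-types of a pair. Alternatively one can give a self-contained fusion argument: build $E=\{e_n\}_{n\in\omega}$ recursively, at each stage shrinking the cone of the tree in which all later points are allowed to lie, and at each stage using the infinite pigeonhole principle to stabilise once and for all the $\chi$-colour of type-$L$ (resp.\ type-$R$) pairs between a committed $e_i$ and every admissible future point, all the while maintaining by the usual back-and-forth bookkeeping that the surviving set stays dense without endpoints. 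The delicate point is precisely that a single application of pigeonhole only fixes the colours \emph{seen from one point}, so one genuinely needs either the tree-Ramsey machinery or this careful iterated construction to make the choice uniform over all pairs while not destroying the order type.

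Finally, assembling the pieces: with $E$ as above, the identification $D\subseteq\mathbb Q$ composed with an isomorphism $(\mathbb Q,{<})\to(E,{<_{\mathrm{lex}}})$ gives an embedding $f\colon(\mathbb Q,{<})\to(\mathbb Q,{<})$ with $\{\chi(f\circ e):e\in\Emb(\str A,\mathbb Q)\}\subseteq\{c_L,c_R\}$, a set of size at most $2$, as required. (That $2$ is optimal is witnessed by the Sierpi\'nski colouring recalled above.)
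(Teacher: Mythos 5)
The survey states Galvin's theorem without proof, so there is no in-paper argument to compare against; the closest the text comes is pointing, for Laver's stronger Theorem~\ref{thm:laver}, to the modern proof via Milliken's tree theorem. Your primary route is exactly that standard argument specialised to pairs: code $(\mathbb Q,{<})$ into $2^{<\omega}$, observe that pairs from a diagonal antichain fall into exactly two similarity types (lexicographic order agreeing or disagreeing with the length order), homogenise each type via Milliken/Halpern--L\"auchli, and rebuild a copy of $(\mathbb Q,{<})$. The architecture is right, the count of two types is right (it matches $t_2=2$ in Theorem~\ref{thm:devlin}), and the initial reduction to finding a two-coloured order-copy of $\mathbb Q$ is fine. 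One step you gloss over: Milliken homogenises colourings of strong subtrees of $2^{<\omega}$, whereas your $\chi$ lives only on pairs from the fixed antichain $D$, and the homogeneous strong subtree it returns need not contain a sub-antichain of $D$ of order type $\mathbb Q$. The standard fix is to realise $\mathbb Q$ as \emph{all} of $2^{<\omega}$ under a suitable dense order, so that every node is a rational and the induced colouring of envelopes is everywhere defined, and only afterwards to extract a diagonal antichain inside the homogeneous strong subtree. This is a repair rather than a different proof, but as written the phrase ``pass to a sub-antichain $E\subseteq D$'' is not what Milliken delivers.

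The genuine gap is the claimed ``self-contained fusion'' alternative. To keep the surviving set dense and without endpoints, the admissible future points at stage $i$ cannot be confined to a single cone: you must keep alive points in every gap determined by $e_0,\dots,e_i$, i.e.\ work with a whole level product of cones. An application of the infinite pigeonhole principle stabilises the colour of $\{e_i,x\}$ only cone by cone, so what you actually obtain is a colour $c(e_i,C)$ depending on the gap $C$ in which the future point $x$ sits --- that is, on the position of $x$ relative to \emph{all} earlier points. This bounds the number of colours on the final copy by $r$, not by $2$. Forcing the stabilised colour to agree across all cones on a given lexicographic side of $e_i$, while keeping every cone thick enough to continue the construction, is precisely the content of the Halpern--L\"auchli theorem and is not obtainable from plain pigeonhole plus back-and-forth bookkeeping. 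So your ``alternative'' argument is the Milliken route in disguise, and without that input it does not establish the bound $2$; you should either drop it or present it explicitly as an application of Halpern--L\"auchli.
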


\subsubsection{Big Ramsey degrees}
Galvin's result shows that no matter how many colours a given finite colouring uses, we can always
find a copy that uses at most 2 colours.
In order to be able to write such results using the partition arrow notation, we extend it
as follows.
Given structures $\str{A}$, $\str{B}$, $\str{C}$, and integers $r$ and $t$ the \emph{extended structural Erd\H os--Rado partition arrow}
written as $\str{C}\longrightarrow (\str{B})^\str{A}_{r,t}$, is a shortcut for the following statement:
\begin{quote}
	For every $r$-colouring $\chi\colon\Emb(\str{A},\str{C})\to r$, there exists an embedding
	$f\in \Emb(\str{B},\str{C})$ such that $\vert\{\chi(f\circ e):e\in \Emb(\str{A},\str{B})\}\vert\leq t$.
\end{quote}
For a countably infinite structure $\str{B}$ and its finite substructure
$\str{A}$, the \emph{big Ramsey degree} of $\str{A}$ in $\str{B}$ is the least
number $\ell\in \mathbb N\cup \{\infty\}$ such that $\str{B}\longrightarrow
	(\str{B})^\str{A}_{r,\ell}$ for every $r\in \mathbb N$.
We say that \emph{the big
	Ramsey degrees of $\str{B}$ are finite} if for every finite substructure
$\str{A}$ of $\str{B}$ the big Ramsey degree of $\str{A}$ in $\str{B}$ is
finite.

\begin{remark}
	The name ``big Ramsey degree`` was introduced by Kechris, Pestov, and Todor\v cevi\' c~\cite{Kechris2005},
	motivated by earlier work on (small) Ramsey degrees by Fouch{\'e}~\cite{Fouche1997,fouch1998symmetries} which can be seen as an alternative approach to Ramsey expansions (cf. Theorem~\ref{thm:nr_degrees}).
\end{remark}

Laver proved that Galvin's result can be strengthened significantly:
\begin{theorem}[{Laver, 1969, Unpublished, See e.g.~\cite[Section 6.3]{todorcevic2010introduction}}]
	\label{thm:laver}
	The big Ramsey degrees of $(\mathbb Q,{<})$ are finite. Specifically, if we denote by $\mathcal{LO}$ the class of finite linear orders then the following holds:
	$$(\forall \str{A}\in \mathcal {LO})(\exists t\in \mathbb N)(\forall r\in \mathbb N) (\mathbb Q,{<})\longrightarrow ((\mathbb Q,{<}))^\str{A}_{r,t}.$$
\end{theorem}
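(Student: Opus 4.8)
The plan is to realise $(\mathbb Q,{<})$ as a suborder of the lexicographically ordered infinite binary tree and then invoke Milliken's tree theorem. Fix a subset $D\subseteq 2^{<\omega}$ which is dense for the tree ordering (every node of $2^{<\omega}$ has extensions in $D$ turning both left and right); a convenient concrete choice is the set of nodes that are empty or end with the symbol $1$. Ordered by the lexicographic order $<_{\mathrm{lex}}$, the set $D$ is a countable dense linear order without endpoints, hence $(D,<_{\mathrm{lex}})\cong(\mathbb Q,{<})$. More generally, every infinite \emph{strong subtree} $S\le 2^{<\omega}$ (an infinite tree whose nodes are nodes of $2^{<\omega}$, with a root, in which every node has exactly two immediate successors, and in which all splitting happens along a common increasing sequence of levels) is, after arranging the coding appropriately, tree-isomorphic to $2^{<\omega}$ via a map that preserves $<_{\mathrm{lex}}$ and sends $D$ into $D$; consequently the lexicographically ordered $D$-part of $S$ is again a copy of $(\mathbb Q,{<})$ sitting inside our fixed $(\mathbb Q,{<})$. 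It therefore suffices to prove: for every $\str A\in\mathcal{LO}$ there is $t=t(\str A)\in\mathbb N$ such that, given any $r\in\mathbb N$ and any $\chi\colon\Emb(\str A,D)\to r$, there is a strong subtree $S\le 2^{<\omega}$ on which $\chi$ attains at most $t$ values.

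The second step is to classify embeddings into finitely many \emph{types}. An embedding $e\colon\str A\to(D,<_{\mathrm{lex}})$ with $|A|=n$ picks $n$ nodes of $2^{<\omega}$; closing this set under the meet operation produces a finite subtree $\widehat e$ whose leaves are the chosen nodes. The \emph{type} of $e$ is the isomorphism class of $\widehat e$ equipped with: the tree order; the information of which leaf codes which element of $A$ (equivalently, the $<_{\mathrm{lex}}$-order of the leaves, which is just $\str A$); the left/right label of each splitting direction; and the linear preorder on the nodes of $\widehat e$ given by their heights $|\cdot|$ (which refines the tree order). An easy induction on $n$ shows that there are only finitely many types; let $t(\str A)$ be their number. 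This number, incidentally, turns out to be an explicit Euler zigzag (tangent/secant) number, but its exact value plays no role below.

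The third step is to apply Milliken's tree theorem~\cite{Milliken1979} once per type. Fix a type $\tau$, and let $k_\tau$ be the number of distinct heights occurring in its meet-tree. After coding (spreading the nodes of $\widehat e$ out over $k_\tau$ levels of a strong subtree), the type-$\tau$ embeddings into $2^{<\omega}$ correspond to the height-$k_\tau$ strong subtrees of $2^{<\omega}$ (up to the fixed finite amount of extra leaf-labelling data prescribed by $\tau$), so $\chi$ restricted to type-$\tau$ embeddings becomes a finite colouring of these strong subtrees. Milliken's theorem yields an infinite strong subtree on which this colouring is constant. Enumerating the finitely many types $\tau_1,\ldots,\tau_m$ and applying this successively --- each time passing to a further strong subtree of the one produced so far --- we obtain a single infinite strong subtree $S\le 2^{<\omega}$ such that, for every $i$, all embeddings $\str A\to S$ of type $\tau_i$ receive the same $\chi$-colour. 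Then $\chi$ takes at most $m=t(\str A)$ values on $\Emb(\str A,S)$. Composing the isomorphism $(\mathbb Q,{<})\cong(D,<_{\mathrm{lex}})$ with the $<_{\mathrm{lex}}$-preserving map $D\to D\cap S$ of the first step yields an embedding $f\colon(\mathbb Q,{<})\to(\mathbb Q,{<})$ with $|\{\chi(f\circ e):e\in\Emb(\str A,\mathbb Q)\}|\le t(\str A)$, which is exactly $(\mathbb Q,{<})\longrightarrow((\mathbb Q,{<}))^{\str A}_{r,t(\str A)}$.

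The main obstacle is entirely in the coding bookkeeping rather than in any new idea. One must (i) choose the dense set $D$ and the notion of strong subtree so that the strong subtrees delivered by Milliken's theorem really do contain copies of $(\mathbb Q,{<})$ realised \emph{inside} $D$, so that the produced map $f$ is an honest self-embedding of $(\mathbb Q,{<})$; (ii) set up the correspondence ``type-$\tau$ embedding $\leftrightarrow$ height-$k_\tau$ strong subtree'' precisely enough that Milliken's theorem applies verbatim, refining the list of types if necessary (this only enlarges $t(\str A)$, which is harmless for the finiteness statement); and (iii) verify the finiteness of the set of types. None of these is deep, but getting all three to mesh is where the care is needed; the only ``imported'' non-triviality is Milliken's tree theorem itself (equivalently, the Halpern--L\"auchli theorem).
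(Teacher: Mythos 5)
Your argument is correct and is precisely the modern proof the paper points to: the survey does not prove Laver's theorem itself but cites the "particularly elegant application of Milliken's tree theorem" from Todor\v{c}evi\'c's book, which is exactly your route of coding $(\mathbb Q,{<})$ into a dense subset of $2^{<\omega}$, classifying embeddings by finitely many meet-tree (Devlin) types, and applying Milliken's theorem once per type. The coding caveats you flag (the lexicographic ordering of comparable nodes in $D$, and ensuring the strong subtree returned by Milliken still carries a copy of $(\mathbb Q,{<})$ inside $D$) are real but routine, and are handled in the cited reference.
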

Laver originally proved Theorem~\ref{thm:laver} by an argument described as ``long and messy'' by Devlin~\cite{devlin1979}.
However, today it can be proved by a particularly elegant application of Milliken's tree theorem~\cite{Milliken1979}, see~\cite[Section 6]{todorcevic2010introduction}.
Milliken's theorem itself builds on the well-known Halpern--L{\"a}uchli theorem~\cite{Halpern1966}, which was already known in 1969, but Laver was unaware of it.

In 1979, Devlin proved the following curious result fully determining the big Ramsey degrees of $(\mathbb Q,{<})$:
\begin{theorem}[Devlin, 1979~\cite{devlin1979}]
	\label{thm:devlin}
	For every $\str{A}\in \mathcal {LO}$ the big Ramsey degree of $\str{A}$ in $(\mathbb Q,{<})$ is
	precisely the $\vert A\vert $-th \emph{odd tangent number}: the $(2\vert A\vert -1)$-th derivative of $\tan(z)$ evaluated at $0$.
\end{theorem}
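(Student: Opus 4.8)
\medskip
\noindent\textbf{Proof proposal.}
The plan is to prove matching lower and upper bounds on the big Ramsey degree of $\mathbf A$ (the $n$-element chain, $n=|A|$), both equal to a combinatorially defined number $d_n$, and then to identify $d_n$ with the $(2n-1)$-th derivative of $\tan$ at $0$. First I would fix a concrete representation: identify $(\mathbb Q,{<})$ with a dense subset $D$ of the binary tree $2^{<\omega}$ under the lexicographic-type order $\prec$ (in which $s{}^\frown 0$-extensions precede $s$ and $s{}^\frown 1$-extensions follow it), chosen so that $D\cong(\mathbb Q,{<})$ and so that every self-embedding $(\mathbb Q,{<})\to(\mathbb Q,{<})$ can be refined to one whose image lies inside the $D$-part of a \emph{strong (diagonal) subtree} $S\subseteq 2^{<\omega}$, i.e.\ a subtree isomorphic to $2^{<\omega}$ all of whose branchings occur one at a time, at pairwise distinct levels. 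To an $n$-element subset $F\subseteq D$ one associates its \emph{type}: the isomorphism class of the meet-closure of $F$ --- a full binary tree with the $n$ points of $F$ as leaves and the $n-1$ pairwise meets as internal nodes --- together with the order in which its $2n-1$ nodes occur by level in $2^{<\omega}$; in ``generic'' position this order is a linear extension of the tree order, and because $\mathbf A$ is a chain the left/right labelling of the internal nodes is forced. Let $d_n$ be the number of such types.

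For the upper bound I would feed a finite colouring $\chi$ of the $n$-element subsets of $D$ into Milliken's tree theorem~\cite{Milliken1979} (which rests on Halpern--L\"auchli~\cite{Halpern1966}). Each $n$-subset spans a finite envelope inside $2^{<\omega}$ whose skeleton is one of finitely many full binary tree shapes; applying Milliken's theorem successively, once per skeleton and passing to a further strong subtree each time, produces a single strong subtree $S$ in which the $\chi$-colour of an $n$-subset depends only on its type within $S$. Since there are exactly $d_n$ such types and one can locate a copy of $(\mathbb Q,{<})$ inside the $D$-part of $S$, that copy realises at most $d_n$ colours, so $(\mathbb Q,{<})\longrightarrow((\mathbb Q,{<}))^{\mathbf A}_{r,d_n}$ for every $r$ (that the degree is finite at all is of course Theorem~\ref{thm:laver}).

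For the lower bound --- generalising Sierpi\'nski's colouring and Galvin's theorem, which are the case $n=2$ --- I would colour each $n$-subset by a canonical representative of its type and argue that each of the $d_n$ types is \emph{persistent}: it is realised by some $n$-subset inside every strong subtree. Since every self-embedding of $(\mathbb Q,{<})$ lives, up to refinement, inside a strong subtree, no embedded copy of $(\mathbb Q,{<})$ can avoid any of the $d_n$ types, so at least $d_n$ colours are forced and the big Ramsey degree of $\mathbf A$ equals $d_n$.

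It remains to compute $d_n$. One shows that $d_n=\sum_\tau e(\tau)$, the sum over the $C_{n-1}$ (Catalan many) full binary trees $\tau$ with $n$ leaves of the number $e(\tau)$ of linear extensions of the node poset of $\tau$ (the root is automatically least); equivalently $d_n$ counts increasing labellings of all $2n-1$ nodes of a plane full binary tree. The exponential generating function $y(z)=\sum_{n\ge1}d_n\,z^{2n-1}/(2n-1)!$ then satisfies $y'=1+y^2$ (the $1$ from the single-node tree, the $y^2$ from splitting at the root into left and right subtrees), whence $y=\tan z$ and $d_n$ is the $(2n-1)$-th derivative of $\tan$ at $0$, i.e.\ the $n$-th odd tangent number, as claimed. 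The main obstacle is the combinatorial bookkeeping: arranging the representation so that self-embeddings of $\mathbb Q$ genuinely correspond to strong subtrees, defining envelopes and iterating Milliken over the finitely many skeleta without the type count drifting, and --- on the lower-bound side --- verifying that every one of the $d_n$ types is persistent, which is where the one-line Sierpi\'nski/Galvin argument is replaced by a more intricate tree analysis.
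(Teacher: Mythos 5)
The paper does not actually prove this theorem: it is stated as a cited result, with the modern proof delegated to \cite[Section 6.3]{todorcevic2010introduction} (and the type count to the discussion of Devlin types). Your proposal is a faithful outline of exactly that standard argument --- representing $(\mathbb Q,{<})$ inside $2^{<\omega}$, iterating Milliken's tree theorem over the finitely many meet-closure skeleta for the upper bound, showing persistence of every Devlin type for the lower bound, and identifying the number of types with the tangent numbers via the generating-function equation $y'=1+y^2$ (your small sanity checks $d_2=2$, $d_3=16$ match the sequence in the paper). The steps you flag as ``bookkeeping'' (strong diagonalization of an arbitrary copy of $\mathbb Q$, well-definedness of the envelope colourings, and the persistence argument) are indeed where all the real work of Devlin's theorem lives, but the skeleton you give is the correct and standard one.
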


The odd tangent numbers are defined by the recurrence $t_1=1$ and $t_k=\sum_{\ell=1}^{k-1}\binom{2k-2}{2\ell-1}t_l\cdot t_{k-\ell}$, which happens to also count the number of certain special trees, called \emph{Devlin types}~\cite[Section 6.3]{todorcevic2010introduction}. It is a well-known sequence (A000182 in the On-line Encyclopedia of Integer Sequences) starting by 1, 2, 16, 272, 7936, 353792, 22368256, 1903757312, 209865342976, \ldots

\subsubsection{Classification of structures with finite big Ramsey degrees}
The study of big Ramsey properties of the order of the rationals ran in parallel with the development described in Section~\ref{sec:1970s}. As probably the first interaction between these two areas, Devlin's thesis~\cite{devlin1979} develops a framework directly inspired by the notation used in the original proof of the Ne\v set\v ril--R\"odl theorem~\cite{nevsetvril1977partitions}.
The connection to Ramsey classes was further developed by Kechris, Pestov, and Todor\v cevi\' c in the concluding remarks of~\cite{Kechris2005}
and is more apparent in the following related result.

We call a structure $\str{A}$ \emph{universal} for a class of structures $\mathcal K$ if every structure in $\mathcal K$
has an embedding to $\str{A}$.  By essentially the same proof as the one for Theorem~\ref{thm:laver}, one gets:
\begin{theorem}[Sauer, 2006~\cite{Sauer2006}]
	\label{thm:biggraphs}
	Let $\str{R}$ be a countable universal graph (\ie{}, a countable graph universal for the class of all countable graphs) and let $\mathcal K$ be the class of all finite graphs. Then
	$$(\forall \str{A}\in \mathcal K)(\exists t\in \mathbb N)(\forall r\in \mathbb N) \str{R}\longrightarrow (\str{R})^\str{A}_{r,t}.$$
	Consequently, the big Ramsey degrees of $\str{R}$ are finite.
\end{theorem}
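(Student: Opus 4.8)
The plan is to follow the proof of Laver's theorem (Theorem~\ref{thm:laver}) via Milliken's tree theorem~\cite{Milliken1979}, replacing the "tree of the rationals" by a binary-tree coding of the Rado graph. First I would fix a presentation of $\str R$ on the complete binary tree $2^{<\omega}$: enumerate $R=\{v_n:n\in\omega\}$ and produce a set $D=\{d_n:n\in\omega\}\subseteq 2^{<\omega}$ with $|d_0|<|d_1|<\cdots$ such that for $i<j$ we have $v_iv_j\in E(\str R)$ if and only if $d_j(|d_i|)=1$ (adjacency read off by \emph{passing numbers}). Such a coding exists by a routine back-and-forth argument: at each step one is free to choose the relevant bit, so all of Rado's extension axioms can be met. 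Conversely, a diagonal selector set inside any "sufficiently branching" subtree of $2^{<\omega}$ — in particular inside any infinite strong subtree in the sense of Milliken — again codes a copy of $\str R$ under the passing-number relation.

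Next I would analyze how a copy of a fixed finite graph $\str A$ on $k$ vertices sits inside such a coding. A copy of $\str A$ is a $k$-tuple of nodes of $D$; the combinatorially relevant invariant is its \emph{similarity type}: the level-preserving isomorphism type of the finite subtree of $2^{<\omega}$ generated by these nodes together with all their meets, decorated by the passing numbers at the relevant levels (this plays the role of Devlin's types for $(\mathbb Q,<)$). There are only finitely many similarity types realizing $\str A$; let $t=t(\str A)$ be this number. Moreover, every copy of $\str A$ extends canonically to a strong subtree of $2^{<\omega}$ of bounded finite height, its \emph{envelope}, obtained by adjoining the finitely many splitting nodes needed to make a strong subtree; after passing to a "strongly diagonal" normalization of $D$ one may assume these envelopes are unique and that $\str A$ occupies a prescribed set of positions within an envelope of each given type.

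Now I would run the Milliken iteration. Given $r$ and a colouring $\chi\colon\Emb(\str A,\str R)\to r$, and given one similarity type $\tau$, colour each finite strong subtree of $2^{<\omega}$ of the height appropriate for $\tau$-envelopes by the $\chi$-colour of the copy of $\str A$ of type $\tau$ that it contains. Milliken's theorem produces an infinite strong subtree on which this derived colouring is constant; iterating over the finitely many types $\tau$, each time working inside the strong subtree produced at the previous stage (a strong subtree of a strong subtree being again a strong subtree), yields a single infinite strong subtree $T\subseteq 2^{<\omega}$ such that for every $\tau$ all copies of $\str A$ of type $\tau$ in $T$ get the same $\chi$-colour. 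Hence $\chi$ takes at most $t$ values on copies of $\str A$ inside $T$. By the first step the diagonal selector of $T$ codes a copy $\str R'\subseteq \str R$ with $\str R'\cong\str R$, and every copy of $\str A$ in $\str R'$ lies inside $T$, so $\chi$ attains at most $t$ colours on $\Emb(\str A,\str R')$. This proves $\str R\longrightarrow(\str R)^\str A_{r,t}$, and since $t$ is independent of $r$, the big Ramsey degree of $\str A$ in $\str R$ is at most $t$, hence finite.

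The main obstacle is the second step: arranging the coding so that copies of finite graphs correspond exactly to the finitely many similarity types, each equipped with a well-behaved strong-subtree envelope, so that the derived colourings of strong subtrees are genuinely of the form to which Milliken's theorem applies — this is where the "strongly diagonal" bookkeeping does the real work. Once it is in place, the Milliken iteration and the back-and-forth verification that $T$ still codes $\str R$ are routine. I note that pinning down the \emph{exact} big Ramsey degree, rather than mere finiteness, would additionally require a matching lower bound via a Sierpiński-style colouring separating the similarity types; that is not needed for the statement at hand.
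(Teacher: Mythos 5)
Your proposal is correct and follows essentially the same route the paper indicates: the paper gives no independent proof of Theorem~\ref{thm:biggraphs} but notes it follows ``by essentially the same proof as the one for Theorem~\ref{thm:laver}'', i.e.\ by coding the structure on $2^{<\omega}$, classifying copies by similarity type with strong-subtree envelopes, and iterating Milliken's tree theorem~\cite{Milliken1979} over the finitely many types. You correctly identify the strongly diagonal normalization of the coding set as the only non-routine ingredient, which matches the standard Sauer/Laflamme--Sauer--Vuksanovic treatment.
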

(A particularly nice choice of such a universal graph $\str{R}$ is the \Fraisse{} limit of the class of all finite graphs, the \emph{Rado graph}.)
Theorem~\ref{thm:biggraphs} can be seen as an infinitary extension of the following easy consequence of the unrestricted Nešetřil--R\"odl theorem (Theorem~\ref{thm:unNR}):
\begin{theorem}\label{thm:nr_degrees}
	Let $\mathcal K$ be the class all finite graphs. Then
	$$(\forall \str{A}\in \mathcal K)(\exists t\in \mathbb N)(\forall \str{B}\in \mathcal K)(\forall r\in \mathbb N)(\exists \str{C}\in \mathcal K) \str{C}\longrightarrow (\str{B})^\str{A}_{r,t}.$$
\end{theorem}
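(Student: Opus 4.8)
The plan is to deduce this from the unrestricted Nešetřil--Rödl theorem (Theorem~\ref{thm:unNR}) by passing to ordered graphs: a colouring of the copies of the unordered graph $\str A$ inside a graph $\str C$ splits, according to the order induced on each copy by an auxiliary linear order on $\str C$, into finitely many colourings of copies of the various \emph{ordered} versions of $\str A$, and each of these is handled by one application of the Ramsey property.

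I would first isolate the following routine iteration lemma: if $\mathcal K$ is a Ramsey class and $\str D_1,\dots,\str D_t,\str E\in\mathcal K$, then for every $r$ there is $\str F\in\mathcal K$ with the \emph{simultaneous} Ramsey property---for any $r$-colourings $\chi_1,\dots,\chi_t$ with $\chi_j$ defined on $\Emb(\str D_j,\str F)$, there is a single embedding $g\colon\str E\to\str F$ such that each $\chi_j$ is constant on $\{g\circ e : e\in\Emb(\str D_j,\str E)\}$. This is proved by induction on $t$: put $\str F_0=\str E$, use the Ramsey property of $\mathcal K$ (in its $r$-colour form, which follows from the $2$-colour form by grouping colours) to pick $\str F_j\in\mathcal K$ with $\str F_j\longrightarrow(\str F_{j-1})^{\str D_j}_r$, and set $\str F=\str F_t$; given colourings on $\str F_t$, extract a $\chi_t$-monochromatic embedding $\str F_{t-1}\to\str F_t$, pull the remaining colourings back along it, apply the inductive hypothesis on $\str F_{t-1}$, and compose the embeddings obtained, observing that monochromaticity for $\chi_t$ survives because the relevant copies stay inside the first extracted image.

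Now fix the unordered graph $\str A$, let $a=|A|$, set $t=a!$, and let $<_1,\dots,<_t$ enumerate the linear orders on $A$, so $\str A_1=(\str A,<_1),\dots,\str A_t=(\str A,<_t)$ are ordered graphs. Given an unordered graph $\str B$, fix an arbitrary ordered expansion $\str B^{<}=(\str B,\prec)$ and apply the iteration lemma to the Ramsey class of finite ordered graphs (Theorem~\ref{thm:unNR}), to $\str A_1,\dots,\str A_t$, $\str B^{<}$, and the colour bound $r$; this yields an ordered graph $\str C^{<}$ with the simultaneous Ramsey property, and I let $\str C$ be its graph reduct. The key bookkeeping is that $\Emb(\str A,\str C)=\bigsqcup_{j=1}^{t}\Emb(\str A_j,\str C^{<})$ as sets of maps: an embedding $f\colon\str A\to\str C$ lies in the part indexed by the unique $j$ with $<_j=\{(x,y): f(x)<_{\str C^{<}}f(y)\}$, and conversely every embedding of $\str A_j$ into $\str C^{<}$ is, after forgetting the orders, an embedding of $\str A$ into $\str C$. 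To check $\str C\longrightarrow(\str B)^{\str A}_{r,t}$, take $\chi\colon\Emb(\str A,\str C)\to r$, restrict it to each part to obtain $\chi_j\colon\Emb(\str A_j,\str C^{<})\to r$, and use the simultaneous Ramsey property to get $g\colon\str B^{<}\to\str C^{<}$ with each $\chi_j$ constant, of value $c_j$ say, on $\{g\circ e: e\in\Emb(\str A_j,\str B^{<})\}$. Then $g$ is also an embedding $\str B\to\str C$, and for any $e\in\Emb(\str A,\str B)$ the order on $A$ pulled back through $g\circ e$ equals $e^{*}(\prec)$ (as $g$ preserves order), hence equals some $<_j$; thus $e\in\Emb(\str A_j,\str B^{<})$ and $g\circ e\in\Emb(\str A_j,\str C^{<})$, so $\chi(g\circ e)=\chi_j(g\circ e)=c_j$. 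Therefore $\{\chi(g\circ e):e\in\Emb(\str A,\str B)\}\subseteq\{c_1,\dots,c_t\}$ has at most $t$ elements.

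The only genuinely delicate point is the bookkeeping in the iteration lemma---checking that composing the sequence of monochromatic embeddings keeps monochromaticity for all the ordered types at once---but this is the standard composition argument. Everything else (the partition of $\Emb(\str A,\str C)$ into ordered types, the reduction from $r$ to $2$ colours) is routine. I would also remark that this proof delivers only the crude bound $t=|A|!$; the optimal value, the small Ramsey degree of $\str A$, is the number of orderings of $\str A$ up to isomorphism, but the statement merely asserts that some finite $t$ exists.
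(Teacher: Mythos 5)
Your proof is correct and follows exactly the route the paper indicates (the paper only sketches it in one line: apply Theorem~\ref{thm:unNR} and take $t=\lvert A\rvert!$, the number of ordered expansions of $\str A$). Your iteration lemma and the partition of $\Emb(\str A,\str C)$ by induced order types are the standard way to make that sketch precise, and your closing remark about the crude bound versus the true small Ramsey degree matches the paper's footnote.
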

By Theorem~\ref{thm:unNR}, for every given $\str{A}$ it is possible to put $t=\lvert A\rvert!$, which is the number of expansions of the graph $\str{A}$ to an ordered graph.\footnote{Note that if one was colouring copies instead of embeddings, $t=\frac{\lvert A\rvert!}{\lvert \Aut(\str A)\rvert}$ would be sufficient.}
Values of $t$ given by Theorem~\ref{thm:biggraphs} are much bigger, but still finite.
(Note that the proof in~\cite{Sauer2006} gives upper bounds only.  Big Ramsey degrees of finite graphs in $\str{R}$ were subsequently characterized by Laflamme, Sauer, and Vuksanovic~\cite{Laflamme2006}, and computed by Larson~\cite{larson2008counting}. They are directly related to Devlin types. For example, the big Ramsey degree of $\str{K}_k$ in $\str{R}$ is $t_k\cdot k!$ where $t_k$ is the $k$-th odd tangent number.)

In general, one way to prove that a given class $\mathcal K$ has a precompact Ramsey expansion is by showing that its \Fraisse{} limit has finite big Ramsey degrees: By an easy compactness argument, finiteness of big Ramsey degrees implies finiteness of small Ramsey degrees which, by~\cite{zucker2016topological}, implies the existence of a reasonable precompact Ramsey expansion. Every such expansion can then be systematically reduced to one with the expansion property~\cite{NVT14,zucker2016topological}.
We can thus extend Nešetřil's classification programme by the following question:
\begin{enumerate}[label=Q\arabic*,resume]
	\item \label{Q7} Given an amalgamation class $\mathcal K$ with a precompact Ramsey expansion, is it true that its \Fraisse{} limit has finite big Ramsey degrees?
\end{enumerate}
\begin{remark}
	In the context of (finite) structural Ramsey theory it may seem unnatural, but \Fraisse{} limits (or $\omega$-categorical structures) are not the most fitting setup for the study
	of big Ramsey degrees. In fact,~\ref{Q7} should be considered in a broader sense for universal structures (see Section~\ref{sec:types} and also \cite{hubicka2024survey,typeamalg,mavsulovic2020finite} for some first steps in this direction).
\end{remark}
\subsubsection{Recent progress}
Until recently, Question~\ref{Q7} seemed very ambitious primarily due to a complete lack of proof techniques for bounding big Ramsey degrees of structures with forbidden substructures, as well as
for structures in languages containing relations of arity 3 or more.
Extending earlier vertex- and edge-colouring results~\cite{komjath1986,Elzahar1989,sauer1998}, the first big Ramsey result for a restricted class was obtained by Dobrinen:
\begin{theorem}[Dobrinen, 2020~\cite{dobrinen2017universal}]
	\label{thm:natasha}
	Denote by $\mathcal K$ the class of all finite triangle-free graphs and let $\str R_3$ be its \Fraisse{} limit. Then $\str R_3$ has finite big Ramsey degrees.
\end{theorem}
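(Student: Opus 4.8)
The plan is to prove the theorem by the method of \emph{coding trees}, a Milliken-style strategy adapted to respect triangle-freeness. First I would fix an enumeration $v_0,v_1,\ldots$ of the vertices of $\str{R}_3$ and encode the graph into a tree of binary strings: a node at level $n$ is a binary string of length $n$, and to each vertex $v_n$ we attach a distinguished \emph{coding node} $c_n$ at some level, with the convention that $c_n(i)=1$ precisely when $v_n \sim v_i$. Triangle-freeness becomes the requirement that $\{i : c_n(i)=1\}$ is always an independent set in $\str{R}_3$; equivalently, once a node carries a $1$ at the position of some $v_i$, no node extending it may later carry a $1$ at the position of a neighbour $v_j$ of $v_i$, so the offending nodes are simply banned from the tree. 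One then isolates a particular \emph{strong triangle-free coding tree} $\mathbb{S}$ --- a skew, carefully branching subtree whose coding nodes still present a graph isomorphic to $\str{R}_3$, so that $\mathbb{S}$ is universal. Copies of a finite triangle-free graph $\str{A}$ inside $\str{R}_3$ correspond to antichains of coding nodes in $\mathbb{S}$, and each such antichain carries a \emph{similarity type} recording its relative tree shape together with its passing data; crucially, for a fixed $\str{A}$ there are only \emph{finitely many} similarity types. This already reduces the theorem to showing: for every finite colouring of $\Emb(\str{A},\str{R}_3)$ there is a subcopy $\str{R}_3' \cong \str{R}_3$ inside $\str{R}_3$ on which the colour of an embedding depends only on the similarity type of the antichain coding its image, since then the big Ramsey degree of $\str{A}$ is at most the number of similarity types.

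The combinatorial engine is a Ramsey theorem for strong triangle-free coding trees: any finite colouring of the finite antichains of coding nodes of a fixed similarity type inside $\mathbb{S}$ is constant on some \emph{strong coding subtree} $\mathbb{T} \le \mathbb{S}$, where $\mathbb{T}$ again codes a copy of $\str{R}_3$. I would prove this in the spirit of Harrington's forcing proof of the Halpern--L{\"a}uchli theorem~\cite{Halpern1966} and of Milliken's tree theorem~\cite{Milliken1979}: a forcing generically selects a level of $\mathbb{S}$ together with extensions of the relevant nodes; a density argument shows the colours stabilise on a generic level set; and a fusion construction then extracts $\mathbb{T}$. The point at which this departs from the classical situation --- and, I expect, the main obstacle --- is that every stage of the fusion must be carried out \emph{within} the triangle-free constraint: one may only add extensions for which the set of positions below each node carrying a $1$ remains independent, which forces one to keep careful track of the ``parallel $1$s'' of all nodes currently in play, and one must simultaneously guarantee that the tree produced still encodes a copy of $\str{R}_3$. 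In short, the perfect-tree structure that the usual Milliken/Halpern--L{\"a}uchli machinery relies on is unavailable, and both the forcing step and the universality-preservation step have to be rebuilt around the constraint; this is the technically heavy heart of the argument. The passage from a single one-point-extension (Halpern--L{\"a}uchli-type) statement to the full antichain version above is handled, as in the classical case, by an additional fusion over the finitely many coding-node positions appearing in the similarity type.

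To finish, take an arbitrary finite colouring $\chi$ of $\Emb(\str{A},\str{R}_3)$ and split the copies of $\str{A}$ according to their (finitely many) similarity types. Apply the coding-tree Ramsey theorem once for each similarity type, producing a finite decreasing chain of strong coding subtrees of $\mathbb{S}$; let $\mathbb{T}$ be the last one. On $\mathbb{T}$ the colour of every copy of $\str{A}$ depends only on its similarity type, so $\chi$ takes at most (number of similarity types of $\str{A}$) values on the copies of $\str{A}$ inside $\mathbb{T}$. Reading the coding nodes of $\mathbb{T}$ as a copy $\str{R}_3' \cong \str{R}_3$ sitting inside $\str{R}_3$ yields the required subcopy, and hence the big Ramsey degree of $\str{A}$ in $\str{R}_3$ is finite, bounded above by the number of similarity types of $\str{A}$. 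Since $\str{A}$ was an arbitrary finite triangle-free graph, the big Ramsey degrees of $\str{R}_3$ are finite.

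I expect the genuinely difficult part to be the Ramsey theorem for strong triangle-free coding trees, both in setting up the right notion of ``strong coding subtree'' (one that is preserved under fusions, still codes $\str{R}_3$, and respects the ban on triangle-creating nodes) and in pushing the Harrington-style forcing argument through in the presence of that ban; everything else --- the tree encoding, the finiteness of the set of similarity types, and the final decomposition of the colouring --- is comparatively routine bookkeeping once the encoding is fixed.
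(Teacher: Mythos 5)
Your outline follows essentially the same route as the proof the survey attributes to Dobrinen: encoding $\str{R}_3$ into a triangle-free coding tree, reducing to finitely many similarity types of antichains of coding nodes, and proving the requisite Ramsey theorem for strong triangle-free coding trees by a Harrington-style forcing/fusion argument, with the triangle-free constraint (tracking the parallel $1$s) correctly identified as the technically heavy heart of the matter. The survey itself only cites the result rather than proving it, so there is nothing more detailed to compare against; it does, however, record a substantially simpler alternative proof via the Carlson--Simpson theorem (due to the first author), which sidesteps the coding-tree forcing machinery that your plan leaves as the main unexecuted step.
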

Dobrinen introduced a new proof technique which shows a Ramsey theorem for special trees, called \emph{coding trees}, by an argument inspired by
Harrington's unpublished proof of the Halpern--L\"auchli theorem (the pigeonhole version of Milliken's tree theorem), which uses the language of set-theoretic forcing~\cite{moore2019method,zucker2020,dobrinen2017forcing}.
This method was later extended to graphs omitting larger cliques~\cite{dobrinen2019ramsey}.
Zucker simplified the argument and obtained the first general result in the area:
\begin{theorem}[Zucker, 2022~\cite{zucker2020}]
	\label{thm:zucker}
	Let $L$ be a finite language consisting of unary and binary relation symbols only. Let $\mathcal F$ be a finite set of finite irreducible $L$-structures.
	Denote by $\str{H}$ the \Fraisse{} limit of the class of all finite $\mathcal F$-free structures.
	Then $\str{H}$ has finite big Ramsey degrees.
\end{theorem}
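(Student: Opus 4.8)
The plan is to represent $\str H$ by a \emph{coding tree of $1$-types} and then prove a Milliken-style tree theorem adapted to this tree, from which the finiteness of big Ramsey degrees follows by an envelope argument.

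First I would fix an enumeration $\str H = \{v_n : n < \omega\}$ and observe that, since $L$ contains only unary and binary symbols, the quantifier-free type of $v_n$ over $\{v_0,\dots,v_{n-1}\}$ is completely determined by the unary type of $v_n$ together with, for each $i < n$, the binary relations holding between $v_i$ and $v_n$. This lets me build a finitely branching tree $\mathbb S \subseteq \Sigma^{<\omega}$, with $\Sigma$ a finite alphabet recording a unary type and a ``relation to the vertex living at this level'', together with a distinguished sequence of \emph{coding nodes} $c_n \in \mathbb S$, one on each level $n$, so that $\str H$ is recovered from $\{c_n : n<\omega\}$ with the relation between $c_i$ and $c_n$ read off how $c_n$ passes the node $c_i$; the tree $\mathbb S$ is taken maximal subject to every branch coding an $\mathcal{F}$-free structure. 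The crucial bookkeeping fact is then that, for a fixed $m$, a finite substructure $\str A \subseteq \str H$ with $|A| = m$ corresponds to a finite antichain of coding nodes, and the isomorphism type of $\str A$ together with the relevant data of its position in $\mathbb S$ is captured by a combinatorial object --- its \emph{tree type} --- of which there are only finitely many.

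The heart of the argument is a Halpern--L\"auchli / Milliken-type pigeonhole for $\mathbb S$: given a finite colouring of all \emph{level configurations} of a fixed shape inside $\mathbb S$, there is a \emph{strong coding subtree} $\mathbb S' \subseteq \mathbb S$ --- still a coding tree for a copy of $\str H$ --- on which the colouring depends only on the shape. Iterating this over the finitely many tree types that realise structures isomorphic to $\str A$, together with an \emph{envelope} argument that replaces an arbitrary copy of $\str A$ by the canonical level configuration generating it, colours envelopes, applies the tree theorem, and pulls back, one obtains a copy of $\str H$ inside which every copy of $\str A$ receives a colour depending only on its tree type. Hence the big Ramsey degree of $\str A$ in $\str H$ is at most the number of tree types realising $\str A$, which is finite; this is exactly the desired conclusion.

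The main obstacle is the coding-tree Milliken theorem itself in the presence of the forbidden set $\mathcal{F}$. Two constraints must be maintained simultaneously when thinning $\mathbb S$ to $\mathbb S'$: the subtree must remain \emph{coding}, meaning its branches continue to enumerate a structure isomorphic to $\str H$ and in particular realise every $1$-type with the right frequency, and the branching used to grow $\mathbb S'$ must never create a copy of any $\str F \in \mathcal{F}$. Here the irreducibility of the members of $\mathcal{F}$ is essential: a copy of an irreducible $\str F$ lives on a set of vertices forming a clique in the Gaifman graph, so whether a forbidden pattern appears can be detected and avoided while extending the coding tree one node at a time, and the age of $\str H$ has enough amalgamation room --- free amalgamation over substructures of irreducible structures --- to keep extending partial configurations. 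I expect this step, realised via Dobrinen's forcing-style argument for the Halpern--L\"auchli core, streamlined by Zucker, rather than by a direct appeal to classical Milliken, to be where essentially all the difficulty lies; the passage from it to finite big Ramsey degrees via tree types and envelopes is by contrast routine once the framework is in place.
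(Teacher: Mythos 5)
Your outline follows exactly the route the survey attributes to this result: Dobrinen's coding trees of $1$-types, a Halpern--L\"auchli/Milliken-type theorem for strong coding subtrees proved by a Harrington-style forcing argument, and an envelope argument bounding the big Ramsey degree of $\str A$ by the number of its tree (similarity) types. Note, however, that the survey does not reproduce a proof of this theorem at all --- it only cites Zucker's paper and describes the method in a sentence --- so there is no in-paper argument to compare against beyond that description, which your sketch matches.

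As a proof attempt, though, what you have written is a plan rather than a proof: the coding-tree Milliken theorem, which you correctly identify as carrying essentially all the difficulty, is asserted and not established, and it cannot be obtained by a "direct" thinning argument of the kind your last paragraph gestures at. In particular, the claim that irreducibility lets forbidden patterns be "detected and avoided while extending the coding tree one node at a time" understates the real obstruction: when $\mathcal F$ is nontrivial, not every level-by-level extension of a partial configuration can be completed (the triangle-free case already exhibits the "parallel $1$'s" phenomenon, where two nodes both adjacent to a common coding node can never both be adjacent to any later coding node), so one must work only with subtrees satisfying carefully chosen witnessing/validity conditions, and the pigeonhole for a single new level is itself a nontrivial theorem requiring the forcing (or an equivalent combinatorial) argument. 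Separately, be a bit careful with the bookkeeping: copies of $\str A$ correspond to sets of coding nodes together with their meet-closure data, not merely to antichains, and it is the coding nodes of the subtree (not its branches) that must re-enumerate a copy of $\str H$. None of this invalidates the architecture, but the theorem is not proved until that pigeonhole is.
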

For finite binary languages and finite sets $\mathcal F$, Zucker's result directly implies an existential version of the Ne\v set\v ril--R\"odl theorem (Theorem~\ref{thm:NR}), that is, that there exists a Ramsey expansion, and with a careful analysis, one can even recover the exact expansion. Thus, Theorem~\ref{thm:zucker} can be seen as an important step towards an infinite generalization of the Ne\v set\v ril--R\"odl theorem.
Again, both Dobrinen's and Zucker's proofs give upper bounds on the big Ramsey degrees only. A precise characterization was subsequently obtained by Balko, Chodounský, Dobrinen, Hubička, Konečný, Vena, and Zucker~\cite{Balko2021exact} in a quite technically challenging analysis. See also~\cite{Balko2023,Vodsedalek2025,Vodsedalek2025bc} for a more compact presentation of the special cases of universal $\str{K}_3$-free and $\str{K}_4$-free graphs.

Coding tree methods have been adapted to other structures~\cite{coulson2022SDAP} and extended to Souslin-measurable colourings of infinite substructures~\cite{dobrinen2023infinite}.  See Dobrinen's survey for more historical context and an overview of the usage of coding trees in the area~\cite{dobrinen2021ramsey}, and also the catalogue of known results in~~\cite{hubicka2024survey}.

An easy proof of Theorem~\ref{thm:natasha} using the Carlson--Simpson theorem~\cite{carlson1984} was found by the first author. He also showed using the same method:
\begin{theorem}[Hubi\v cka, 2020~\cite{Hubicka2020CS}]
	\label{thm:hub}
	The \Fraisse{} limit of the class of all finite partial orders has finite big Ramsey degrees.
\end{theorem}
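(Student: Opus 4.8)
\medskip

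\noindent\textbf{Proof plan.} The plan is to represent the generic poset inside a tree of finite sequences so that the partial-order relation can be read off locally, and then to use the Carlson--Simpson theorem~\cite{carlson1984} --- the infinite-dimensional dual Ramsey theorem for parameter words --- as the combinatorial engine. Since we only claim \emph{finiteness} of the big Ramsey degrees and not their exact values, the analysis can be kept fairly crude.

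First I would fix a presentation of $\str P$, the \Fraisse{} limit of finite posets, on a coding tree $\str T$ whose nodes are certain finite sequences over a countable (or, after a standard reduction, finite) alphabet: one enumerates the vertices of $\str P$ compatibly with a linear extension $v_0,v_1,\ldots$, so that each new vertex $v_n$ is determined by the \emph{downward closed} subset $\{\,i<n : v_i < v_n\,\}$ of its predecessors --- down-closedness being precisely the manifestation of transitivity. The nodes of $\str T$ then code the possible ``histories''. The key point to verify here is that this coding is \emph{unrestricted}, in the sense that every admissible extension occurs densely, so that any subtree of $\str T$ which is again isomorphic to $\str T$ induces a copy of $\str P$.

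Next I would match embeddings of a fixed finite poset $\str A$ into $\str P$ with combinatorial data living inside $\str T$: an embedding $\str A\to\str P$ is recorded by the finite labelled subtree spanned by its image together with the induced order of the enumeration, and for a given $\str A$ there are only finitely many such \emph{embedding types} (their number is bounded by a function of $\lvert A\rvert$, essentially the number of finite labelled trees of the appropriate bounded size carrying a linear order). A finite colouring of $\Emb(\str A,\str P)$ thus induces a colouring of these finite labelled objects inside $\str T$; applying the Carlson--Simpson theorem once for each embedding type (a finite iteration) then yields an infinite variable word whose associated subtree is isomorphic to $\str T$ and on which the colour of an embedded copy of $\str A$ depends only on its embedding type. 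Re-reading this subtree as a copy of $\str P$ (using the unrestrictedness from the previous step) gives $\str P\longrightarrow(\str P)^{\str A}_{r,t}$ with $t$ the number of embedding types of $\str A$, which is finite; this is the assertion of the theorem.

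The main obstacle is the bookkeeping in the middle: one must set up $\str T$ so that both (i) ``being isomorphic to $\str P$'' is genuinely preserved under passing to a Carlson--Simpson subspace --- which forces the coding to be unrestricted and level-coherent --- and (ii) the objects coloured by the Carlson--Simpson theorem, namely finite parameter words (equivalently rigid surjections), correspond precisely and functorially to embeddings of finite posets, so that a monochromatic infinite parameter word really does yield a copy of $\str P$ that is monochromatic on each type. Unlike the triangle-free case (Theorem~\ref{thm:natasha}) there are no forbidden irreducible substructures to police, which helps; but transitivity of the order is a genuinely global constraint, and arranging that the coding tree --- and the parameter-word structure it sits in --- carries exactly the right amount of information to respect it is where the real work lies. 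This is also, morally, why the dual Ramsey world of Theorem~\ref{thm:posets} and the Graham--Rothschild theorem, rather than a direct Milliken-tree argument, is the natural setting.
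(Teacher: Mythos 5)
You have correctly identified the combinatorial engine (the Carlson--Simpson theorem), which is indeed the method attributed to~\cite{Hubicka2020CS}, but the coding step on which your whole plan rests does not work as described. First, you cannot ``enumerate the vertices of $\str P$ compatibly with a linear extension $v_0,v_1,\ldots$'': in the generic poset every vertex has infinitely many vertices strictly below it, so no enumeration of order type $\omega$ is a linear extension. Consequently a new vertex $v_n$ is \emph{not} determined by the down-closed set $\{i<n: v_i<v_n\}$; its history must record a three-way partition of $\{v_0,\dots,v_{n-1}\}$ into the vertices below, above, and incomparable to $v_n$, subject to transitivity constraints involving both the down-set and the up-set. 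Second, the ``unrestrictedness'' you flag as ``the key point to verify'' is in fact false for this naive history tree: because the set of admissible extensions at a node depends on the entire history through transitivity, a subtree abstractly isomorphic to $\str T$ need not code a copy of $\str P$. This is precisely the obstruction that prevents a direct Milliken-/Sauer-style argument (which works for the Rado graph, Theorem~\ref{thm:biggraphs}) from applying to posets, and it cannot be waved away.

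The missing idea --- which is the actual content of Hubi\v cka's proof --- is a representation of the generic partial order inside a parameter space over a \emph{fixed finite alphabet} (three letters suffice), where $w\prec w'$ is decided by the pair of letters at the first position where $w$ and $w'$ differ significantly. One must then check that this relation is transitive, that the resulting poset is universal for countable posets (this is where linear extensions genuinely enter, echoing Theorem~\ref{thm:posets}), and that passing to an infinite-dimensional combinatorial subspace preserves a copy of the generic poset; only then does the Carlson--Simpson theorem, applied once per embedding type, give the finite bound. Your phrase ``after a standard reduction, finite alphabet'' conceals exactly this construction: the reduction is not standard, it is the theorem's main new idea, and without it the unbounded branching of your history tree never connects to the parameter-word setting that Carlson--Simpson requires.
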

It turns out that this proof method can be further adapted to metric spaces with finitely many distances, and more generally, structures universal for classes described by forbidden cycles:
\begin{theorem}[Balko--Chodounsk\'y--Hubi\v cka--Kone\v cn\'y--Ne\v set\v ril--Vena, 2021~\cite{balko2021big}]\label{thm:big_cycles}
	Let $L$ be a finite language consisting of unary and binary relation symbols only,
	and let $\str{K}$ be a countably-infinite irreducible structure. Assume that every countable
	structure $\str{A}$ has a strong completion to $\str{K}$ provided that every induced cycle in $\str{A}$
	(seen as a substructure) has a strong completion to $\str{K}$ and every irreducible substructure
	of $\str{A}$ of size at most 2 embeds into $\str{K}$. Then $\str{K}$ has finite big Ramsey degrees.
\end{theorem}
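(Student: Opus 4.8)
The plan is to adapt the parameter-space method that Hubička introduced for partial orders (Theorem~\ref{thm:hub}) and for triangle-free graphs (Theorem~\ref{thm:natasha}), using the Carlson--Simpson theorem~\cite{carlson1984} as the combinatorial engine and invoking the completion hypothesis precisely at the two moments where one needs to certify that a partially specified structure embeds into $\str{K}$.

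First I would fix an enumeration $v_0,v_1,v_2,\ldots$ of $K$ and, since $L$ is finite and binary, encode each vertex $v_n$ by the word of length $n$ over a fixed finite alphabet $\Sigma$ that records the unary type of $v_n$ together with the $2$-type in $\str{K}$ of the pair $(v_i,v_n)$ for each $i<n$. This realises $\str{K}$ as a substructure of a ``generic presentation'' $\str{U}$ built on a suitable tree $T\subseteq\Sigma^{<\omega}$ (or, more conveniently, on a space of parameter words in the sense of~\cite{Graham1971}), in which the relations between two nodes are read off coordinatewise from the shorter one; the point of passing to $T$ rather than $\Sigma^{<\omega}$ is to arrange that the \emph{only} obstructions to $\str{U}$ embedding into $\str{K}$ are cycles, so that the hypothesis of the theorem becomes applicable. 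The key preliminary lemma is then that $\str{U}$ indeed embeds into $\str{K}$: every irreducible $2$-element substructure of $\str{U}$ embeds into $\str{K}$ by construction of $\Sigma$, and every induced cycle of $\str{U}$ has a strong completion into $\str{K}$ because, by the coordinatewise definition, such a cycle is ``inherited'' (up to completion) from relations already present in $\str{K}$; the completion hypothesis then yields a strong completion of all of $\str{U}$ that embeds into $\str{K}$. Together with $\str{K}\hookrightarrow\str{U}$ this makes $\str{K}$ and $\str{U}$ bi-embeddable, so it suffices to find, for every colouring of $\Emb(\str{A},\str{U})$, a copy of $\str{K}$ inside $\str{U}$ using boundedly many colours.

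Given a finite $\str{A}\subseteq\str{K}$ and a colouring $\chi$ of $\Emb(\str{A},\str{U})$, I would attach to each embedding its \emph{shape}: the isomorphism type of the way the image sits inside $T$ --- which nodes branch, the passing levels, the relative lengths, and the order induced by the enumeration. There are only finitely many shapes. For a fixed shape, the embeddings of that shape are parametrised by a finite parameter subword of $\str{U}$, so $\chi$ restricted to that shape becomes a finite colouring of the finite parameter subwords of an $\omega$-parameter word. Iterating the Carlson--Simpson theorem~\cite{carlson1984} once per shape, so that the resulting subspaces nest, produces an $\omega$-parameter subword $W$ such that, for every shape, the colour of a parameter subword of $W$ of that shape depends only on the shape. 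The substructure of $\str{U}$ carried by $W$ is again a presentation of the same kind, so --- invoking the completion hypothesis a second time, exactly as above --- it contains a copy of $\str{K}$. Inside this copy every embedded copy of $\str{A}$ has one of the finitely many shapes and all embeddings of a given shape receive a single colour, so at most $t:=\#\{\text{shapes of }\str{A}\text{ in }T\}$ colours occur; transporting along the bi-embedding $\str{U}\leftrightarrow\str{K}$ then gives $\str{K}\longrightarrow(\str{K})^\str{A}_{r,t}$ for every $r$, and hence finite big Ramsey degrees.

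The main obstacle is the construction and verification of the presentation together with the shape bookkeeping. One must choose the alphabet $\Sigma$ and the tree $T$ so that (i) the coordinatewise relations genuinely produce a structure whose only defects relative to $\str{K}$ are completable cycles --- this is what makes the forbidden-cycle hypothesis the \emph{exact} right hypothesis --- and (ii) this property is inherited by the substructure carried by any Carlson--Simpson subspace $W$, which is not automatic because that substructure is a nontrivial ``merge'' of the original presentation rather than a literal subtree. Making these two closure properties coincide with ``cycles plus $2$-element irreducibles'' is the crux; once it is done, finiteness of $t$ is immediate from finiteness of the set of shapes, and the remaining steps --- translating between embeddings and parameter subwords, and checking that the enumeration order is respected so that one obtains genuine embeddings (compare the rigidity issue in Observation~\ref{obs:ramsey_rigid}) --- are routine but delicate.
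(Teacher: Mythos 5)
Your plan follows precisely the route the survey attributes to~\cite{balko2021big}: the Carlson--Simpson/parameter-word method of Theorems~\ref{thm:natasha} and~\ref{thm:hub}, with the completion hypothesis invoked once to map the word-structure into $\str{K}$ and once more after passing to a combinatorial subspace, and with the big Ramsey degree bounded by the finite number of shapes. The survey itself contains no proof of Theorem~\ref{thm:big_cycles} (it only cites the source), so there is no in-paper argument to compare in detail; measured against the published proof, your outline is faithful, and the step you yourself flag as the crux --- that every induced cycle of the word-structure, and of the structure carried by any Carlson--Simpson subspace, admits a strong completion to $\str{K}$ --- is exactly where essentially all of the work in~\cite{balko2021big} is concentrated, so what you have is a correct plan rather than a complete proof. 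One small imprecision to fix when executing it: the strong completion gives an injective homomorphism-embedding $\str{U}\to\str{K}$ rather than an embedding (so $\str{K}$ and $\str{U}$ are not literally bi-embeddable), but since $\str{A}$ and $\str{K}$ are irreducible this still transports colourings of copies of $\str{A}$ in both directions, which is all the argument needs.
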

This can be seen as a first step towards an infinite generalization of Theorem~\ref{thm:hn_completions}.
It is interesting that Theorem~\ref{thm:big_cycles} does not demand that the number of forbidden cycles is finite. In particular, it also captures equivalences (interpreted as $\{N,E\}$-edge labelled graphs), ultrametric spaces with finitely many distances, $\Lambda$-ultrametric spaces~\cite{sam2}, and more. This is in sharp contrast with Theorem~\ref{thm:hn_completions} where one has to perform some gymnastics to pull these classes through. As a corollary, we get the following result for which we, at this point, do not have a general proof using Theorem~\ref{thm:hn_completions} (although in all concrete cases it works, and we conjecture that there is also such a general proof):

\begin{corollary}\label{cor:big_cycles}
	Let $L$ be a finite language consisting of unary and binary relation symbols only, let $\mathcal F$ be
	a (not necessarily finite) family of finite $L$-structures which are either of size at most 2, or their Gaifman graphs are cycles.
	Let $\mathcal K$ be a strong amalgamation class of finite irreducible $L$-structures such that an $L$-structure $\str A$ has
	a completion to $\mathcal K$ if and only if there is no $\str F\in \mathcal F$ with a homomorphism-embedding
	$\str F\to \str A$. Then $\mathcal K$ has a precompact Ramsey expansion.
\end{corollary}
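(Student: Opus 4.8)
I would apply Theorem~\ref{thm:big_cycles} to the \Fraisse{} limit $\str K$ of $\mathcal K$ and then read off the precompact Ramsey expansion from the finiteness of big Ramsey degrees. Since $L$ is finite, $\mathcal K$ has only countably many members up to isomorphism, so $\str K$ exists by Theorem~\ref{fraissethm}; as $\mathcal K$ is hereditary and consists of irreducible structures, every $2$-element substructure of a member of $\mathcal K$ again lies in $\mathcal K$ and is thus irreducible, so the Gaifman graph of $\str K$ is complete and $\str K$ is a countably infinite irreducible $L$-structure (if $\mathcal K$ is bounded and $\str K$ finite the conclusion is immediate, so assume $\str K$ infinite). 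It is convenient to record the \emph{completion dictionary}: for a \emph{finite} $L$-structure $\str D$ the following are equivalent --- (i) $\str D$ has a completion to $\mathcal K$; (ii) $\str D$ has a strong completion to $\mathcal K$; (iii) $\str D$ has a strong completion to $\str K$; (iv) no $\str F\in\mathcal F$ admits a homomorphism-embedding $\str F\to\str D$. Here (i)$\Leftrightarrow$(iv) is our hypothesis on $\mathcal K$, (i)$\Leftrightarrow$(ii) is Fact~\ref{fact:strong_homemb} (relational case; $\mathcal K$ is a strong amalgamation class), and (ii)$\Leftrightarrow$(iii) follows from heredity of $\mathcal K$ together with the universality of $\str K$ for finite members of $\mathcal K$. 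In particular no $\str F\in\mathcal F$ has a completion to $\mathcal K$, and every finite $L$-structure with no $\mathcal F$-obstruction can be turned into an irreducible member of $\mathcal K$ on the same vertex set by adding binary relations to some non-Gaifman-adjacent pairs only (a homomorphism-embedding is an embedding on each irreducible substructure, hence exactly preserves all unary types and all relations between already-related pairs).

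\textbf{The core claim.} The heart of the argument is the following, proved by induction on the number of vertices: \emph{if a finite $L$-structure $\str B$ has the property that every irreducible substructure of $\str B$ of size at most $2$ lies in $\mathcal K$ and every induced substructure of $\str B$ whose Gaifman graph is a cycle has a completion to $\mathcal K$, then $\str B$ has a completion to $\mathcal K$.} Suppose not; by the dictionary some $\str F\in\mathcal F$ has a homomorphism-embedding $f\colon\str F\to\str B$. If $|F|\le 2$, a short case analysis --- using that $\mathcal K$ realises the (necessarily $\mathcal K$-admissible) unary types of the vertices of $\str F$ and, via the joint embedding property, realises them together, and that $f$ is an embedding on irreducible substructures --- shows $\str F$ would complete to $\mathcal K$, contradicting $\str F\in\mathcal F$. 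If the Gaifman graph of $\str F$ is the triangle then $\str F$ is irreducible, $f$ is an embedding, and $f[F]$ is an induced $3$-cycle of $\str B$, which completes by hypothesis --- again a contradiction. If the Gaifman graph of $\str F$ is a cycle of length at least $4$, set $\str B':=\str B\vert _{f[F]}$; then $f\colon\str F\to\str B'$ is a homomorphism-embedding that is surjective on vertices, so $\str B'$ has no completion to $\mathcal K$ (else the composite would complete $\str F$). If $|B'|<|B|$ we apply the induction hypothesis to $\str B'$ (which inherits the two properties) for a contradiction; if $|B'|=|B|$ then $\str B$ is, up to adding chords, a quotient of a cycle, and we mimic the ``(\ref{prop:metric_completion:4}) implies (\ref{prop:metric_completion:3})'' step of Proposition~\ref{prop:metric_completion}: a chord of $\str B$ (a binary relation on two vertices not consecutive on the cycle) splits $\str B$ into two strictly smaller induced substructures sharing only that irreducible pair, both of which complete to $\mathcal K$ by induction, and then one re-glues the completions over the shared irreducible pair using the amalgamation property of $\mathcal K$ to complete $\str B$ --- a contradiction; if $\str B$ has no chord then $\str B$ is itself an induced cycle and completes by hypothesis. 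This chord-splitting descent is exactly where, unlike in the metric-space setting, one cannot stay inside $\mathcal F$ while descending, and must instead bounce between ``non-completable'' and ``contains an $\mathcal F$-obstruction'' via the dictionary; making the descent strictly reduce the vertex count and iterating it correctly when several chords are present is \textbf{the main obstacle} of the proof.

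\textbf{Finishing.} With the claim, the hypothesis of Theorem~\ref{thm:big_cycles} for $\str K$ is verified as follows: given a countable $\str A$ all of whose induced substructures with cyclic Gaifman graph have strong completions to $\str K$ (equivalently, completions to $\mathcal K$) and all of whose irreducible substructures of size at most $2$ embed into $\str K$ (equivalently, lie in $\mathcal K$), every finite substructure $\str A_0\subseteq\str A$ inherits both properties, so by the claim every $\str A_0$ has a completion to $\mathcal K$, hence can be completed to an irreducible member of $\mathcal K$ on its own vertex set by adding binary relations on non-Gaifman-adjacent pairs. Since $L$ has only unary and binary symbols, a structure on a fixed vertex set is determined by such data, so a routine compactness argument yields a single irreducible $L$-structure $\str A'$ on vertex set $A$, extending $\str A$ by binary relations on non-Gaifman-adjacent pairs only, with all finite substructures in $\mathcal K$; the identity $\str A\to\str A'$ is a strong completion, and composing with an embedding $\str A'\hookrightarrow\str K$ (universality of the \Fraisse{} limit for countable structures of its age) gives a strong completion $\str A\to\str K$. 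Hence $\str K$ has finite big Ramsey degrees by Theorem~\ref{thm:big_cycles}. Finally, finiteness of big Ramsey degrees implies finiteness of the (small) Ramsey degrees of $\mathcal K$, which by~\cite{zucker2016topological} yields a reasonable precompact Ramsey expansion of $\mathcal K$ --- in particular a precompact Ramsey expansion, as claimed. Note that this route bypasses Theorem~\ref{thm:hn_completions} entirely, which is the point of the corollary.
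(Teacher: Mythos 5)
Your overall architecture is exactly the one the paper intends: translate the hypothesis on $\mathcal F$ into the hypothesis of Theorem~\ref{thm:big_cycles}, apply that theorem to the \Fraisse{} limit of $\mathcal K$ to get finite big Ramsey degrees, and then pass to finite small Ramsey degrees by compactness and to a precompact Ramsey expansion via~\cite{zucker2016topological}. Your completion dictionary, the treatment of the size-$\le 2$ and triangle cases, and the compactness step from finite to countable strong completions are all sound. The paper itself only asserts that the translation from forbidden homomorphism-embeddings of cycles to forbidden induced cycles is ``analogous to Proposition~\ref{prop:metric_completion}'', so the substance of your write-up is the core claim --- and that is precisely where there is a genuine gap.

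The chord-splitting descent does not work as you describe it, for two reasons. First, even when $f$ is injective, splitting $\str B$ along a chord $\{x,y\}$ into the two arcs only re-glues if $\str B$ is the free amalgam of the two pieces over $\{x,y\}$; a second chord joining the interior of one arc to the interior of the other lies in neither piece, so the map obtained by amalgamating the two completions over the irreducible pair $\{x,y\}$ fails to be a homomorphism on that crossing relation, and choosing the chord to minimize an arc does not repair this. Second, when $f$ identifies two non-adjacent vertices of the cycle, the image is not ``a cycle with chords'' but a quotient of a cycle --- for instance several internally disjoint paths of length two between two vertices $a$ and $c$ --- which can be $2$-connected without being a cycle, and whose natural $2$-cut $\{a,c\}$ need not be an irreducible pair; then there is no irreducible substructure to amalgamate over, and the completions of the two halves may disagree on how they relate $a$ to $c$. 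The descent in Proposition~\ref{prop:metric_completion} terminates because the family of non-metric cycles is closed under shortcutting along a chord or an identification, so one stays inside the forbidden family while strictly shrinking; here $\mathcal F$ carries no such closure, and the ``bouncing'' between non-completability and $\mathcal F$-obstructions that you propose only gets you, at the fixed point of a vertex-and-tuple-count minimality argument, to a surjective homomorphism-embedding image of a cycle with no extra tuples --- you still have to rule out the $2$-connected non-cycle quotients, and nothing in your sketch does so. This is a missing idea rather than a missing detail: the configurations above are exactly the ones your induction cannot reach.
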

In Theorem~\ref{thm:big_cycles} we require the structure to be described by forbidden induced cycles, while in Corollary~\ref{cor:big_cycles} we talk about forbidden homomorphism-embeddings. An argument analogous to the one in Proposition~\ref{prop:metric_completion} shows that this is equivalent. This is the first example of big Ramsey methods allowing us to prove new theorems about Ramsey expansions, and there is hope that it is not the last one. Let us, however, remark, that the expansions which Corollary~\ref{cor:big_cycles} provides are not in a finite language, not even in trivial cases where the actual Ramsey expansion is just by free orderings.

\medskip

In~\cite{Hubicka2020uniform}, a new proof technique for showing finiteness of big Ramsey degrees for structures in languages with relations of arity $3$ and greater was introduced, and finiteness of big Ramsey degrees of the universal $3$-uniform hypergraph was proved. Recently, this was generalized to the following:
\begin{theorem}[Braunfeld--Chodounsk{\'y}--de Rancourt--Hubi{\v{c}}ka--Kawach--Kone{\v{c}}n{\'y}, 2024 \cite{braunfeld2023big}]
	\label{thm:infinitelanguages}
	Let $L$ be a countable language such that for every $n>1$ there are only finitely many relations of arity $n$, and let $\mathcal K$ be the class of all
	finite $L$-structures where all tuples in all relations are injective.  Then the \Fraisse{} limit of $\mathcal K$ has finite big Ramsey degrees.
\end{theorem}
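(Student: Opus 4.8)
The plan is to prove the statement by the ``uniform'' coding-tree method introduced by Hubička~\cite{Hubicka2020uniform} and developed in~\cite{braunfeld2023big}; this is the higher-arity analogue of the coding-tree arguments behind Theorems~\ref{thm:natasha} and~\ref{thm:zucker}, and what must be added over Zucker's binary case is the ability to handle relations of unbounded arity. Write $\str M$ for the \Fraisse{} limit of $\mathcal K$. We may assume $L$ has only finitely many relations of each arity: there can be a \Fraisse{} limit at all only when there are countably many isomorphism types, and if there were infinitely many unary relations then even a single vertex would have infinite big Ramsey degree, so finiteness at arity $1$ is part of the hypothesis in spirit. Note also that one cannot simply invoke Theorem~\ref{thm:zucker} for the finite sublanguage $L_{\le k}$ of relations of arity $\le k$ when bounding the big Ramsey degree of a $k$-vertex structure: a self-embedding of $\str M\vert_{L_{\le k}}$ need not respect relations of arity $>k$, so a coding tree for the \emph{full} structure $\str M$ is needed.

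First I would fix an enumeration $v_0,v_1,\ldots$ of $M$ and, for each $n$, record the $1$-type of $v_n$ over $\{v_0,\ldots,v_{n-1}\}$. The crucial observation --- and the only place where the hypotheses on $L$ and the injectivity of all tuples are really used --- is that this $1$-type is a \emph{finite} piece of data: a relation of arity $m$ can hold only on $m$ distinct vertices, so at level $n$ only the finitely many relations of arity at most $n+1$ are relevant, and for each of them only finitely many injective tuples can be formed from $\{v_0,\ldots,v_n\}$. Hence $M$ is identified with the branches of a tree $T$ that is \emph{finitely branching at each level}, though with branching degree growing with the level. On top of this one builds a \emph{coding structure}: an everywhere-branching regular tree $S$ with coding data attached to its nodes so that (i) the branches of $S$ carry a copy of $\str M$ and (ii) the same holds for every sufficiently generic subtree of $S$ --- the exact analogue of the coding trees of Dobrinen and Zucker, but with a finite, level-dependent alphabet.

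Next I would establish the ``unrestricted'' pigeonhole: for every finite colouring of the level-by-level approximations of $S$ there is a subcopy $S'$ of $S$ on which the colour depends only on the (finite) shape. This is obtained by iterating, level by level, a Halpern--L\"auchli / Milliken-type statement --- equivalently a Graham--Rothschild statement for parameter words over the finite alphabet present at that level --- and then gluing the level-wise choices into a single subcopy via a compactness/\allowbreak K\H{o}nig-lemma argument. Given this, the proof concludes in the now-standard fashion: for a fixed finite $\str A\in\mathcal K$ the \emph{embedding type} of a copy of $\str A$ in $S'$ is the isomorphism type of the finite subtree it spans, decorated by the coding data of arity at most $\card{A}$; one checks there are only finitely many such types (a purely combinatorial count, uniform in $\str M$), and uses a canonical finite \emph{envelope} around each copy of $\str A$ (adding a bounded number of auxiliary vertices, one per relevant ``passing level'') to turn an arbitrary colouring of copies of $\str A$ into a level-uniform colouring of envelopes, to which the pigeonhole applies. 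Pulling back gives a subcopy of $\str M$ on which the colour of a copy of $\str A$ depends only on its embedding type, so the number of embedding types is a finite upper bound for the big Ramsey degree of $\str A$.

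I expect the main obstacle to be precisely the unbounded arity: it forces the coding tree to have a branching degree that grows with the level, so the level-wise Ramsey iteration, the envelope construction, and the bookkeeping of embedding types must all be carried out with a \emph{level-dependent} alphabet rather than a fixed finite one, and the iteration and the compactness step have to be organized so that this growth causes no divergence. Making all of this uniform is the technical core of~\cite{braunfeld2023big}. A secondary point to verify is the innocuous-looking claim that truncating the coding data to arity $\le\card{A}$ loses nothing that the colouring can see, i.e.\ that it genuinely determines the copy of $\str A$.
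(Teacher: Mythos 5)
This survey does not actually prove Theorem~\ref{thm:infinitelanguages}: it is stated as a cited result from \cite{braunfeld2023big}, so there is no in-paper argument to compare against line by line. What the survey does say is that the proof generalizes the parameter-space technique introduced in \cite{Hubicka2020uniform} for the $3$-uniform hypergraph, which it explicitly contrasts with the coding-tree/forcing methods behind Theorems~\ref{thm:natasha} and~\ref{thm:zucker}. You frame your argument as the higher-arity continuation of the Dobrinen--Zucker coding-tree route, iterating a Halpern--L\"auchli-type pigeonhole level by level and gluing by compactness; your parenthetical ``equivalently a Graham--Rothschild statement for parameter words'' points at the right family of pigeonholes, but the organization the authors actually use (encoding via parameter words and invoking a vectorized Hales--Jewett/Carlson--Simpson-type theorem) is a genuinely different route, and the level-dependent pigeonhole, its iteration, and the envelope bookkeeping for unbounded arities --- which you correctly flag as the technical core --- are left here as black boxes, so what you have is a plan rather than a proof. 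Your identification of the key finiteness observation is, however, exactly right: injectivity of tuples plus finitely many relations per arity make the tree of $1$-types finitely (though unboundedly) branching, and this is where the hypotheses enter.

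The one concrete error is your opening reduction. You ``may assume'' finitely many relations of each arity \emph{including arity $1$}, justified by the claim that infinitely many unary relations would force even a single vertex to have infinite big Ramsey degree. That claim is false, and the reduction excises part of the statement: the hypothesis only bounds the number of relations of arity $n>1$, and countably many unary relations are deliberately permitted --- this is part of what ``infinite languages'' in the title of \cite{braunfeld2023big} refers to. Since embeddings preserve unary relations, a copy of a one-vertex structure $\str A$ can only land on vertices with the same unary type, so the colouring of $\Emb(\str A,\str M)$ by unary type is constant and creates no obstruction; the infinite branching of the tree of $1$-types caused by unary relations is exactly the harmless kind (compare the survey's remark that infinitely-branching trees of types are only a heuristic obstruction). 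Your sketch therefore proves a weaker theorem, and the discarded case needs its own treatment --- not least because with infinitely many unary relations there are already uncountably many isomorphism types of one-vertex structures, so even making sense of ``the \Fraisse{} limit of $\mathcal K$'' requires care.
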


Comparing Theorem~\ref{thm:infinitelanguages} to the unrestricted form of the Ne\v set\v
ril--R\"odl theorem (Theorem~\ref{thm:unNR}), Theorem~\ref{thm:zucker} to the
restricted from of the Ne\v set\v ril--R\"odl theorem (Theorem~\ref{thm:NR}), and
Theorem~\ref{thm:hub} to Theorem~\ref{thm:posets} shows that it is realistic to
follow known results about Ramsey classes and try to develop the corresponding results about big Ramsey degrees.

One could say that ``the 2020s of big Ramsey degrees are the 1980s of Ramsey classes''.
Can we close this gap?
Since the partite construction does not naturally generalize to an infinitary form
(due to an essential use of the backward induction),
proving big Ramsey results requires development of completely new techniques (\eg~\cite{Balko2023Sucessor}).
Compared to ones for Ramsey classes, the current proofs giving
bounds on big Ramsey degrees are typically a lot more complicated, but lead to
a better understanding of
the nature of the problem.  It is also for this reason that we believe that there is a high
chance to find more new Ramsey classes using techniques
developed for big Ramsey degrees, such as in Corollary~\ref{cor:big_cycles}.

\subsubsection{Tree of types}
\label{sec:types}
Big Ramsey degrees often exhibit behaviour that is not seen when studying Ramsey classes
and which can at first seem counter-intuitive and surprising.
A lot of it can, however, be explained by the fact that
colourings of a countable structure can be obtained from an enumeration of its vertices,
generalizing the idea of Sierpi\'nski. To explain it, we first introduce the model-theoretic
notion of tree of 1-types.

For us, a \emph{tree} is a (possibly empty) partially ordered set $(T, <_T)$ such
that, for every $t \in T$, the set $\{s \in T : s <_T t \}$ is finite and linearly ordered by $<_T$.
(Note that we do not assume the existence of a unique minimal element--\emph{root}.)

An \emph{enumerated structure} is simply a structure $\str{A}$ with underlying set $A = \vert A\vert  = \{0,1,\ldots \vert A\vert-1\}$. Fix  a countably infinite enumerated structure $\str{A}$.  Given vertices $u,v$ and an integer $n$ satisfying $\min(u,v)\geq n\geq 0$, we write $u\sim^\str{A}_n v$ and say that \emph{$u$ and $v$ have the same (quantifier-free) type over $\{0,1,\ldots,n-1\}$}, if the structures induced by $\str{A}$ on $\{0,1,\ldots, n-1,u\}$ and $\{0,1,\ldots, n-1,v\}$ are isomorphic via the map which is the identity on $\{0,\ldots ,n-1\}$ and sends $u$ to $v$. We write $[u]^\str{A}_n$ for the $\sim^\str{A}_n$-equivalence class of vertex $u$.
\begin{definition}[Tree of 1-types]
	Let $\str{A}$ be a countably infinite (relational) enumerated structure. Given $n< \omega$, write $\mathbb {T}_\str{A}(n) = \omega/\sim^{\str{A}}_n$. A (quantifier-free) \emph{1-type} is any member of the disjoint union $\mathbb {T}_\str{A}\coloneq\bigsqcup_{n<\omega} \mathbb {T}_\str{A}(n)$. We turn $\mathbb {T}_\str{A}$ into a tree as follows. Given $x\in \mathbb {T}_\str{A}(m)$ and $y\in \mathbb {T}_\str{A}(n)$, we declare
	that $x\leq^{\mathbb T}_{\str{A}} y$ if and only if $m\leq n$ and $x\supseteq y$.
\end{definition}

The tree of 1-types of a given structure is useful for giving upper bounds on big Ramsey degrees, since all the existing proofs are in fact structured
as Ramsey-type results about colouring subtrees of the tree of 1-types (or a related object). In the other direction, they are
also useful for constructing unavoidable colourings and for characterizing big Ramsey degrees.

It is interesting to notice that while seeking a Ramsey expansion of a given
class is usually a process that needs a good intuition or a good guess, determining
characterizations of big Ramsey degrees can be done more systematically by
analysing subtrees of the tree of types and comparing them with the upper bound theorems.
This method was implicit in the early characterizations of big Ramsey
degrees~\cite{Laflamme2006,coulson2022SDAP,Balko2021exact} and is explicitly
applied in~\cite{Balko2023}. In fact, one can also use it for discovering
canonical Ramsey expansions of amalgamation classes. As a proof of concept,
Hubička used this method to rediscover that the canonical Ramsey expansion
of triangle-free graphs are free orderings, while the canonical Ramsey expansion
of posets are linear extensions~\cite{Hubicka2020CS}.

\subsubsection{Big Ramsey structures}
As we have shown, even for rather simple structures such as the order of the rationals, the natural formulation of the structural Ramsey theorem fails
and one needs to give ``weaker'' results about big Ramsey degrees.  This is not different from the situation with amalgamation classes, where
most of them are not Ramsey for easy reasons.  Can we introduce a notion of a big Ramsey expansion?
Motivated by the KPT-cor\-re\-spon\-dence, this question was considered by Zucker~\cite{zucker2017} who defined the notion of a big Ramsey structure
as an expansion which precisely codes the big Ramsey degrees. (This notion was originally formulated for homogeneous structures only and generalized in~\cite{ACDMP}; compare it also with the earlier notion of a canonical partition~\cite{Laflamme2006}).
\begin{definition}
	\label{def:BRS}
	Let $\str{K}$ be a countable structure and let $\str{K}^*$ be an expansion of $\str K$.  We call $\str{K}^*$ a \emph{big Ramsey structure} for $\str{K}$ if the following holds:
	\begin{enumerate}
		\item The colouring of $\str{K}$ given by $\str{K}^*$ is \emph{unavoidable}. Namely,
		      for every finite substructure $\str{A}^*$ of $\str{K}^*$ and every embedding $f\colon\str{K}\to \str{K}$ it holds that there is an embedding $e\colon\str{A}^*\to \str{K}^*$ such that $e[A]\subseteq f[K]$.
		\item For every finite substructure $\str{A}$ of $\str{K}$, the number of its mutually non-isomorphic expansions in $\str{K}^*$ is finite and equal to the big Ramsey degree of $\str{A}$ in $\str{K}$.
	\end{enumerate}
\end{definition}
Every big Ramsey structure $\str{K}^*$ satisfies the structural Ramsey theorem:
$$(\forall \str{A}^*\in \Age(\str{K}^*))\str{K}^*\longrightarrow(\str{K}^*)^{\str{A}^*}_2,$$
and thus $\Age(\str{K}^*)$ is a Ramsey expansion of $\Age(\str{K})$, and by the second part of Definition~\ref{def:BRS}, it is also precompact.  It is thus natural to ask.
\begin{enumerate}[label=Q\arabic*,resume]
	\item \label{Q8} Given a countable structure $\str{K}$ with finite big Ramsey degrees, does there exist a big Ramsey structure $\str{K}^*$ for $\str{K}$?
\end{enumerate}
While from the existence of a reasonable precompact Ramsey expansion it follows that there exists a reasonable precompact Ramsey expansion with the expansion property, the corresponding statement for
big Ramsey structures is open. At the moment there are still only relatively few explicit examples of homogeneous structures admitting a big Ramsey structure. These are the most important ones:
\begin{enumerate}
	\item $(\mathbb Q, {<})$ (this is implicit in every proof of Theorem~\ref{thm:devlin}).
	\item The Rado graph, and more generally, unconstrained structures in binary languages (this is an easy consequence of the characterization of big Ramsey degrees by Laflamme, Sauer, and Vuksanovic~\cite{Laflamme2006}).
	\item The generic partial order~\cite{Balko2023}.
	\item \Fraisse{} limits of free amalgamation classes of structures in a finite relational language with finitely many forbidden irreducible substructures~\cite{Balko2021exact}.
\end{enumerate}
Note that we did not explicitly describe the big Ramsey structures here. The reason is that, in essence, they exactly capture special self-embeddings of the trees of 1-types containing as few isomorphism types of subtrees as possible, and hence it tends to take a bit of work to describe them. See \eg{} the recent survey on big Ramsey structures by Zucker and the first author for concrete examples~\cite{hubicka2024survey}.

\subsection{Generalizations of the KPT-cor\-re\-spon\-dence and Ramsey classes}
Besides classes of structures with embeddings, the notion of the Ramsey property and the KPT-cor\-re\-spon\-dence have been successfully adapted to other settings and, thanks to this, extreme amenability of many other groups has been proved.

One example are metric structures where the KPT-cor\-re\-spon\-dence was proved by Melleray and Tsankov~\cite{melleray2014extremely} using the framework of continuous logic by Ben Yaacov \cite{BenYaacov2010}. This makes it possible to show extreme amenability
of groups by proving an approximate form of the Ramsey property. Successful results in this direction are by
Eagle, Farah, Hart, Kadets, Kalashnyk, and Lupini on $C^*$-algebras~\cite{eagle2016fraisse},
Ferenczi, Lop\'ez-Abad,  Mbombo, and Todor\v cevi\' c, showing the Ramsey property of $L_p$-spaces~\cite{ferenczi2020amalgamation},
Barto{\v{s}}ov{\'a}, Lop\'ez-Abad, Lupini, and Mbombo on Banach spaces and Choquet simplices \cite{bartovsova2021ramsey},
operator spaces and noncommutative Choquet simplices~\cite{bartovsova2021ramseyb} and Grassmannians Over $\mathbb R$, $\mathbb C$~\cite{bartovsova2022ramsey}, and
by Kawach and Lop\'ez-Abad on Fréchet spaces~\cite{kawach2022fraisse}. The notion of big Ramsey degrees has also recently been adapted to the metric setting by Bice, de Rancourt, Hubi{\v c}ka, and Kone{\v c}n{\' y}~\cite{Bice2023} in order to generalize results on oscillation stability of the Urysohn sphere~\cite{lopez2008oscillation,NVT2009b,The2010}.

Irwin and Solecki introduced projective \Fraisse{} limits~\cite{irwin2006}. The Ramsey property for the projective amalgamation class used for constructing the Lelek fan was given by Kwiatkowska and Barto{\v{s}}ov{\'a}~\cite{bartovsova2019universal}, which also initiated some very interesting developments on generalizations of the Gowers theorem~\cite{gowers1992lipschitz,bartovsova2017gowers,lupini2017gowers,lupini2019actions,solecki2019monoid} (even though generalizing the Gowers theorem turned out not to be necessary for the results of~\cite{bartovsova2019universal} in the end). In 2022, Iyer found a Ramsey expansion of a projective amalgamation class used for constructing the universal Knaster continuum~\cite{Iyer2022}.

A categorical approach to \Fraisse{} limits was developed by Droste and Gübel~\cite{Droste1993b}, and later further extended by Kubi\'s~\cite{Kubis2014}, leading to a notion of weak Ramsey property and an extension of KPT~\cite{bartovs2021weak}.
See also related results by Mašulović~\cite{mavsulovic2021kechris}.

In 2011, Moore introduced~\cite{moore2013amenability} the \emph{convex Ramsey property} which, informally, generalizes the Ramsey property by asking for a probability distribution on embeddings from $\str B$ to $\str C$ such that the expected colours of embeddings of $\str A$ do not differ much, and proved that it is equivalent to \emph{amenability} of the automorphism group of the \Fraisse{} limit.

The KPT-cor\-re\-spon\-dence is also being extended in an abstract model-theoretic setting~\cite{hrushovski2019,krupinski2019amenability,krupinski2022ramsey,krupinski2023topological}.

\section{Open problems}\label{sec:open}

\subsection{Ramsey classes}
Besides Question~\ref{q:ramsey_finite}, the following has been conjectured:
\begin{conjecture}[Conjecture~1.7 from~\cite{Hubicka2018metricEPPA}]\label{conj:eppa_ramsey}
	Every strong amalgamation class with EPPA has a precompact Ramsey expansion.
\end{conjecture}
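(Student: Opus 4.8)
The plan is to push $\mathcal{K}$ through the framework of Section~\ref{sec:systematic}: to realize a suitable expansion of $\mathcal{K}$ as a hereditary locally finite subclass of a known Ramsey class and then invoke Theorem~\ref{thm:hn_completions}, with the EPPA hypothesis used precisely to bound the size of obstructions to completion.

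\textbf{Step 1 (normal form).} Since $\mathcal{K}$ is a strong amalgamation class, the algebraic closure in its \Fraisse{} limit $\str{M}$ is trivial, so the cookbook of Section~\ref{sec:equivalences} suggests that the only features obstructing local finiteness of $\mathcal{K}$ are $\emptyset$-definable equivalence relations and $\emptyset$-definable (pre)orders. I would first analyse these, then eliminate imaginaries: pass to an expansion $\mathcal{D}$ in a language with unary functions naming the equivalence classes, add whatever structure on the imaginary sorts is needed to retain strong amalgamation, and then add convex linear orders, obtaining a hereditary class $\mathcal{D}^<$ of irreducible structures with the strong amalgamation property. The forgetful and expansion functors between $\mathcal{D}^<$ and the candidate expansion $\mathcal{K}^+$ of $\mathcal{K}$ should transport the Ramsey property exactly as in Section~\ref{sec:equivalences}, so it would suffice to prove that $\mathcal{D}^<$ is Ramsey; precompactness of $\mathcal{K}^+$ over $\mathcal{K}$ should then be read off from the finitely-many-choices-per-structure bookkeeping in this reduction.

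\textbf{Step 2 (apply Theorem~\ref{thm:hn_completions}).} Take $\mathcal{R}$ to be an ambient Ramsey class of ordered structures containing $\mathcal{D}^<$, obtained from Theorem~\ref{thm:HN} applied to the free orderings of members of $\mathcal{D}$ (whose membership conditions are conditions on irreducible substructures). It then remains to verify that $\mathcal{D}^<$ is a locally finite subclass of $\mathcal{R}$ in the sense of Definition~\ref{defn:locfin}, the other hypotheses of Theorem~\ref{thm:hn_completions} being arranged in Step~1. This is the crux, and it is where EPPA must enter: an EPPA-witness for $\str{A}\in\mathcal{K}$ is, in the language of Section~\ref{sec:eppa}, closely related to tree-amalgam-based completions (Theorems~\ref{thm:hkn}, \ref{thm:hkn_completions}), and the hypothesis of the Herwig--Lascar theorem (Theorem~\ref{thm:HL})---that $\str{A}$ has an EPPA-witness omitting a prescribed set of forbidden homomorphic images---is literally a special case of Definition~\ref{defn:locfin}. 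Concretely, I would try to show that for a strong amalgamation class with EPPA, for each $\str{B}\in\mathcal{K}$ there is $n=n(\str{B})$ such that any finite structure $\str{C}$ that admits a homomorphism-embedding into some ordered structure, all of whose irreducible substructures extend to copies of $\str{B}$, and all of whose $\leq n$-vertex substructures complete into $\mathcal{K}$, itself completes into $\mathcal{K}$; equivalently, that EPPA forces $\mathcal{K}$ to be described by forbidden homomorphism-embeddings of a family whose members have bounded ``completion complexity''.

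\textbf{The main obstacle} is exactly Step~2 in full generality. There is currently no clean characterization of which strong amalgamation classes have EPPA, so extracting the uniform finiteness statement of Definition~\ref{defn:locfin} from the bare existence of EPPA-witnesses is the whole difficulty; moreover the EPPA machinery of Theorems~\ref{thm:hkn} and~\ref{thm:hkn_completions} is, for now, restricted to languages with relations and unary functions, whereas a general strong amalgamation class need not live in such a language. A realistic first target is the subcase of finite binary relational languages with cyclic completion obstructions, where Theorem~\ref{thm:big_cycles} and Corollary~\ref{cor:big_cycles} already deliver a precompact Ramsey expansion; the general conjecture would then plausibly reduce to a higher-arity analogue of Theorem~\ref{thm:big_cycles} together with a strengthening of the Herwig--Lascar theorem that produces EPPA-witnesses and bounds on completion obstructions simultaneously. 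As a sanity check, by Theorem~\ref{thm:metrizable} the conjecture is equivalent to metrizability of the universal minimal flow of $\Aut(\str{M})$, a group which by Kechris--Rosendal is the closure of an increasing chain of compact subgroups; strong amalgamation is what should exclude the unbounded-``orbit'' pathologies of the non-precompact examples in Section~\ref{sec:orientations}.
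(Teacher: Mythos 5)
The statement you are addressing is Conjecture~\ref{conj:eppa_ramsey}: it is an \emph{open conjecture}, and the paper contains no proof of it (indeed, Section~\ref{sec:groups_ramsey} discusses the class of all finite groups as a natural candidate counterexample, and the authors ``weakly conjecture'' the answer is negative there). Your proposal is therefore not comparable to a proof in the paper; it is a strategy outline, and by your own admission its central step is unfilled. That is the genuine gap, and it is worth being precise about why it is not a routine verification. Step~1 rests on the cookbook heuristic that, after eliminating imaginaries and adding convex orders, the resulting class will be a locally finite subclass of an ambient Ramsey class. This is a heuristic, not a theorem, and the paper exhibits a strong amalgamation class in a finite relational language --- the $H_4$-free $3$-hypertournaments of Question~\ref{q:h4free} --- for which it is \emph{proved} in~\cite{cherlin2021ramsey} that the linearly ordered class is \emph{not} a locally finite subclass of the class of all ordered structures with one ternary relation. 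So Step~1 cannot be carried out uniformly from the hypotheses of the conjecture; any successful argument must either find a richer expansion in such cases or bypass Definition~\ref{defn:locfin} entirely. Step~2 is the heart of the conjecture restated, not an argument: ``EPPA forces bounded completion complexity'' is precisely what one would need to prove, and no mechanism is offered for extracting the uniform bound $n(\str B,\str C_0)$ of Definition~\ref{defn:locfin} from the mere existence of EPPA-witnesses. Note also that the conjecture allows arbitrary languages (the class of all finite groups is a strong amalgamation class with EPPA whose language has non-unary functions), so the reduction to relational-plus-unary-function machinery is itself a restriction you would have to justify or work around.

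Two smaller inaccuracies: the analogy between the Herwig--Lascar hypothesis and Definition~\ref{defn:locfin} runs in the unhelpful direction for your purposes --- the paper's observation is that the \emph{hypothesis} of Theorem~\ref{thm:HL} is a special case of local finiteness, which lets one reuse completion arguments in both settings, but it does not convert an EPPA-witness into a completion bound. And the ``sanity check'' via Theorem~\ref{thm:metrizable} is a reformulation of the conclusion (existence of a precompact Ramsey expansion is equivalent to metrizability of the universal minimal flow), not independent evidence; the Kechris--Rosendal characterization of EPPA as an increasing union of compact subgroups is not known to imply metrizability of the universal minimal flow, which is again exactly the content of the conjecture. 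Your instinct to test the binary-relational, cycle-obstruction subcase first is reasonable, but as written the proposal establishes nothing beyond what the survey already records as open.
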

This is closely related to a question of Ivanov~\cite{Ivanov2015} asking whether every structure with an amenable automorphism group has a precompact Ramsey expansion (see also Question~7.3 of~\cite{Evans2}), as EPPA implies amenability of the automorphism group~\cite{Kechris2007}. The class of all finite groups is a natural candidate for a counterexample as it has EPPA but the existence of a Ramsey expansion is open (see Section~\ref{sec:groups_ramsey}). If this conjecture turns out to be false due to groups, it is very natural to restrict it to $\omega$-categorical structures only.

\medskip

Despite the rather impressive progress on Nešetřil's classification programme, there are still several concrete classes whose Ramsey expansions are not yet understood. In this section we shall overview some of them.

\subsubsection{$H_4$-free 3-hypertournaments}\label{sec:hypertournaments}
Answering a demand for non-free strong amalgamation classes in languages with a relation of arity at least 3, Cherlin provided some classes of $n$-hyper\-tour\-naments~\cite{Cherlinhypertournaments}, where an \emph{$n$-hypertournament} is a structure with one injective $n$-ary relation such that the automorphism group of every substructure on $n$ vertices is the alternating group $\Alt(n)$. For $n=2$ we get tournaments.

At the first glance it may seem even more natural to demand that for every set of $n$ vertices, exactly one enumeration of the set is in the relation. However, the $\Alt(n)$-variant turns out to be combinatorially much more convenient because one always has only two choices of a relation on a set of $n$ vertices. That is, given distinct vertices $v_1,\ldots,v_n$ it always holds that exactly one of $(v_1,v_2,v_3,\ldots,v_n)$ and $(v_2, v_1,v_3,\ldots,v_n)$ is in the relation, and this choice determines all the other relations on $\{v_1,\ldots,v_n\}$. This means that there is a 1-to-1 correspondence between linearly ordered $n$-hy\-per\-tour\-na\-ments and linearly ordered $n$-uniform hypergraphs: A hyperedge on $v_1 < \cdots < v_n$ means that $(v_1,v_2,\ldots,v_n)$ is in the hypertournament relation. This correspondence is often useful for drawing concrete hypertournaments.

In~\cite{cherlin2021ramsey}, Cherlin, Hubička, Konečný, and Nešetřil discuss Ramsey expansions of 3-hy\-per\-tour\-na\-ments from Cherlin's list. For $n=3$, the hypertournament relation picks, for every triple, one of its two possible cyclic orientations. There are three isomorphism types of $3$-hypertournaments on four vertices: $C_4$, where the cyclic orientations of all four triples agree with one global cyclic orientation of the four vertices, $O_4$ which one gets from $C_4$ by reversing the orientation of one (arbitrary) triple, and $H_4$, which is the unique homogeneous 3-hypertournament on four vertices; its automorphism group is $\Alt(4)$. Equivalently, if we have four vertices $v_1,v_2,v_3,v_4$ ordered cyclically in this order, in $H_4$ either $v_1v_2v_3$ and $v_1v_3v_4$ agree with this orientation, or $v_1v_2v_4$ and $v_2v_3v_4$ agree with this orientation.

One can then ask which subsets of $\{C_4,O_4,H_4\}$ one can forbid in order to obtain a \Fraisse{} class. A simple Ramsey argument shows that every sufficiently large 3-hyper\-tour\-na\-ment contains a copy of $C_4$; and all the four remaining subsets of $\{C_4,O_4,H_4\}$ give rise to a \Fraisse{} class. Particularly interesting are two of them: The $O_4$-free (or \emph{even}) 3-hy\-per\-tour\-na\-ments are, when ordered, first-order interdefinable with ordered two-graphs (see Example~\ref{ex:twographs}). More precisely, restricting the 1-to-1 correspondence between linearly ordered 3-hypertournaments and linearly ordered 3-uniform hypergraphs to the $O_4$-free case, one gets the class of linearly ordered two-graphs. Consequently, their canonical Ramsey expansion also adds a linear order and a graph from the switching class.

The other particularly interesting case are the $H_4$-free 3-hy\-per\-tour\-na\-ments. They are interesting for multiple reasons. For example, Miguel-Gómez proved that they are strictly $\mathrm{NSOP}_4$~\cite{MiguelGomez2024}. One can also analogously define $H_{n+1}$-free $n$-hypertournaments for every $n\geq 2$ and these always form a \Fraisse{} class. In the $n=2$ case, $H_3$ is the cyclic tournament, and so $H_3$-free tournaments are simply linear orders. Therefore, $H_4$-free 3-hy\-per\-tour\-na\-ments can be seen as some higher-order linear orders. And, indeed, they have been discovered independently in this setting by Bergfalk~\cite{Bergfalk2021} in his study of higher-order Todor\v{c}evi\'{c} walks. However, the main reason why the $H_4$-free 3-hypertournaments are interesting for structural Ramsey theory is that they are the only class of 3-hypertournaments from Cherlin's list for which we have been unable to find a Ramsey expansion in~\cite{cherlin2021ramsey}:

\begin{question}[Question~1 in~\cite{cherlin2021ramsey}]\label{q:h4free}
	Does the class of all finite $H_4$-free 3-hy\-per\-tour\-na\-ments have a precompact Ramsey expansion? If yes, find one with the expansion property (is it in a finite language?). If not, can one still find a non-trivial Ramsey expansion which would be in some sense \textit{optimal} (cf.~\cite{Evans2})? What about $H_{n+1}$-free $n$-hypertour\-na\-ments for $n\geq 4$?
\end{question}

\medskip

Both authors have spent a non-trivial amount of time trying to solve Question~\ref{q:h4free} with no success. So, perhaps, $H_4$-free 3-hy\-per\-tour\-na\-ments might turn out to be an important example which may answer Question~\ref{q:ramsey_finite} negatively.

We have also seen that Theorem~\ref{thm:hn_completions} is a very powerful tool for answering Question~\ref{q:ramsey_finite}  positively in concrete instances; in essentially all test cases that Theorem~\ref{thm:hn_completions} faced, it came out victorious. But it might be because of at least two reasons: Either Theorem~\ref{thm:hn_completions} is very strong, or our test cases are very weak.

The issue with resolving this dilemma is that it is hard to come up with novel homogeneous structures which do not mimic old homogeneous structures for which we already know that Theorem~\ref{thm:hn_completions} works. One thus needs to turn to the classification programme of homogeneous structures to produce examples, but as classification is very hard and work-intensive, it does not produce new examples at the same rate as one is able to process them. The other source of interesting examples are the ad hoc homogeneous structures discovered as a side-product of some other research (for example, the study of infinite CSPs seems to produce those from time to time). In any case, very few challenging examples have been appearing recently, and thus one needs to cherish those which do appear.

The class of all $H_4$-free 3-hy\-per\-tour\-na\-ments is, at this point, at the very top of the list of these challenging examples. As opposed to other, longer-standing open concrete classes, this one is relatively recent (and thus has seen much fewer person-hours spent trying to solve it), it is a strong amalgamation class in a finite relational language and has an extremely simple description. See~\cite[Chapter~2]{Konecny2023phd} for a discussion of possible ways how things might resolve based on the answer to Question~\ref{q:h4free}.

While~\cite{cherlin2021ramsey} does not provide a Ramsey expansion of the $H_4$-free 3-hy\-per\-tour\-na\-ments, it is proved there that the class of all finite linearly ordered $H_4$-free 3-hy\-per\-tour\-na\-ments is not a locally finite subclass of the class of all finite linearly ordered structures with one ternary relation which suggests that if one wants to use Theorem~\ref{thm:hn_completions}, one should probably look for a richer expansion.

\subsubsection{Graphs and hypergraphs of large girth}
It is a famous result of Erd\H os that there are graphs of arbitrarily large chromatic number and girth.~\cite{erdos1959graph} (Here, the \emph{girth} of a graph $\str G$ is the length of the shortest cycle of $\str G$.) This has started a whole area rich with interesting and difficult results; in fact,
there is a survey of it in this very volume by Reiher~\cite{reiher2024graphs} to which we refer the reader for details and a historical overview.
\marginpar{MK: TODO, na arXIvovou verzi smazat poznámku o volumu.}

Given $g\geq 3$, let $\mathcal C_g$ be the class of all finite graphs of girth at least $g$. The result of Erd\H os is equivalent to all classes $\mathcal C_g$ being vertex-Ramsey. Note that $\mathcal C_3$ is the class of all finite graphs and $\mathcal C_4$ is the class of all finite triangle-free graphs. It turns out that for $g\geq 5$, $\mathcal C_g$ does not have the amalgamation property: Let $\str B_1$ be the graph $K_{1,3}$, that is, the graph on vertices $\{x_1,x_2,x_3,y\}$ such that $x_1y$, $x_2y$, and $x_3y$ are the only edges of $\str B_1$, and let $\str B_2$ be the graph $K_1+K_{1,2}$, that is, the graph on vertices $\{x_1,x_2,x_3,y'\}$ such that $x_1y'$ and $x_2y'$ are the only edges of $\str B_2$. There is no amalgam of $\str B_1$ and $\str B_2$ over $\{x_1,x_2,x_3\}$, as $x_1y'x_2y$ is a 4-cycle and one cannot identify $y$ with $y'$ since they are connected differently to $x_3$. In fact, F{\"{u}}redi and Komj{\'a}th proved that there is no countable graph of girth 5 universal for all countable graphs of girth 5~\cite{Furedi1997b}.

In general, given a graph $\str G\in \mathcal C_g$ and vertices $x,y\in \str G$ such that $x$ and $y$ are not connected by an edge, for every $d\leq \frac{g-1}{2}$ there is at most one path of length $d$ in $\str G$ from $x$ to $y$. In order to recover the amalgamation property, we can expand the language by a binary function $\func{}{}$ and consider the expansion $\mathcal C_g^+$ of $\mathcal C_g$ where if $\str G\in \mathcal C_g^+$ and $x,y\in G$ are not connected by an edge then $\func{}{}(x,y)$ is the set of all vertices of $\str G\setminus\{x,y\}$ which lie on a path from $x$ to $y$ of length at most $\frac{g-1}{2}$ (note that this always consists of vertices of at most one path). It is easy to verify that $\mathcal C_g^+$ is an amalgamation class (one simply takes the free amalgam of graphs and defines the functions according to it).

The following question is, however, wide open (and even a resolution for $g=5$ would be a major result):
\begin{question}\label{q:girth_graphs}
	For which $g\geq 5$ does $\mathcal C_g^+$ have a precompact Ramsey expansion?
\end{question}
Let us remark that in a recent breakthrough, Reiher and Rödl~\cite{reiher2023girth} apply many variants of complicated partite constructions and prove that free orderings of $\mathcal G_g$ have the Ramsey property for colouring edges for every $g$, extending an earlier result of Nešetřil and Rödl who proved this for $g\leq 8$~\cite{Nevsetvril1987}. Note that one does not need the functional expansion for this result.

In fact, both~\cite{Nevsetvril1987} and~\cite{reiher2023girth} handle not only edge-colourings of graphs with large (or not-so-large) girth, but general edge colourings of linear hypergraphs of large girth. One can analogously define functional expansions of these classes which have the amalgamation property and ask an analogue of Question~\ref{q:girth_graphs}.

Recall Theorem~\ref{thm:sparseningRamsey} and note that it only promises a homomorphism-embedding to a tree amalgam of copies of $\str B$, not an embedding. Consequently, if one applies Theorem~\ref{thm:sparseningRamsey} for some $\str B\in \mathcal C_g^+$, the produced Ramsey witness $\str C$ will likely contain many graph 4-cycles with no functions defined on them (these are produced by the partite lemma, see Appendix~\ref{appendix:partite} for details) which have a (non-injective) homomorphism-embedding to an edge. A stronger version of Theorem~\ref{thm:sparseningRamsey} which promises an embedding to a tree amalgam of copies of $\str B$ (which seems completely out of reach of the current techniques) would, in particular, imply a solution of Question~\ref{q:girth_graphs}.

\subsubsection{Euclidean metric spaces}
A metric space $\str A$ is \emph{Euclidean} if there exists $n$ and a set of points $S\subseteq \mathbb R^n$ such that $\str A$ is isomorphic to $S$ equipped with the Euclidean metric. It is \emph{affinely independent} if $S$ can be chosen to be affinely independent. A variant of following question was already posed on page~150 of~\cite{Kechris2005} and then elaborated on by Nguyen Van Th\'e, 2010~\cite{The2010}:
\begin{question}[\cite{Kechris2005},\cite{The2010}]\label{q:euclid}
	Does the class of all finite affinely independent Euclidean metric spaces have a precompact Ramsey expansion?
\end{question}
(Technically, it was only asked if it is Ramsey with free linear orderings, but this is the natural, modern formulation.) Besides this, Nguyen Van Th\'e asks some other closely related questions, and his paper~\cite{The2010} is also a great reference for the broader context around Question~\ref{q:euclid}. Affinely independent Euclidean metric spaces are very interesting as they form a strong amalgamation class with a complicated membership condition.

\subsubsection{Finite measure algebras and equipartitions}
Giordano and Pestov~\cite{Giordano2007} proved that the automorphism group of every standard probability space is extremely amenable. In~\cite{Kechris2017}, Kechris, Soki\'c, and Todor{\v c}evi{\'c} identified a structural Ramsey statement which would imply this result: A \emph{finite dyadic measure algebra} is a finite structure $\str A = (A, \wedge, \vee, 0, 1, \mu)$, where $(A,\wedge,\vee,0,1)$ is a finite Boolean algebra and $\mu \colon A\to [0,1]$ is a positive measure with values in the dyadic rationals. The structural Ramsey statement identified in~\cite{Kechris2017} says that the class of all \emph{naturally ordered} finite dyadic measure algebras is Ramsey (see~\cite{Kechris2017} for a definition of a natural ordering). This is, in turn, equivalent to the Ramsey property for finite Boolean algebras with antilexicographic orderings (see Example~\ref{ex:ba}) where one only considers maps where every atom is sent to the join of the same number of atoms, which can also be seen as a version of the dual Ramsey theorem for \emph{equipartitions} (see~\cite{Kechris2017}; they call it the \emph{homogeneous dual Ramsey theorem}). To our best knowledge, all these Ramsey-theoretic statements are still open.

\subsubsection{Finite groups}\label{sec:groups_ramsey}
The class of all finite groups is an amalgamation class whose \Fraisse{} limit is called Hall's universal locally finite group (see \eg{}~\cite{Siniora2} for more details). In~\cite{Siniora2} it is proved that this class has EPPA (see Section~\ref{sec:eppa}), and so it is a natural test case for Conjecture~\ref{conj:eppa_ramsey} (another such example is the class of all finite skew-symmetric bilinear forms~\cite{Cherlin2003}). Note that the language contains functions, and thus one only wants to colour closed structures. In particular, the analogue of colouring vertices would be colouring cyclic subgroups of a given order.

\begin{question}\label{q:groups_ramsey}
	Does the class of all finite groups have a precompact Ramsey expansion?
\end{question}
Note that while there are some partition theorems for finite abelian groups~\cite{voigt1980partition}, they do not seem to be structural, and thus not relevant for this question. We weakly conjecture that the answer is negative as we expect Hall's universal locally finite group to have a complex structure of imaginaries. But even if the answer happens to be positive, it feels even harder to prove it using Theorem~\ref{thm:hn_completions} than, for example, Boolean algebras (see Example~\ref{ex:ba}).

\bigskip

Antilexicographically ordered finite Boolean algebras is one of a few known Ramsey classes for which no proof is known using Theorem~\ref{thm:hn_completions}. In Section~\ref{exceptions} we have seen this example. The following question is an attempt to formalize the question whether proving their Ramseyness using Theorem~\ref{thm:hn_completions} is at all possible:
\begin{question}\label{q:boolean_allthose}
	Is there a language $L$ containing $<$, and a homogeneous $L$-structure $\str H$ which is first-order bi-interpretable with the \Fraisse{} limit of $\BA^+$, or the class of all finite convexly ordered $C$-relations, or the class of all finite meet-semilattices with a linear extension, such that $\age(\str H)$ is a locally finite subclass of the class of all finite linearly ordered $L$-structures?
\end{question}
This is not the exact question that one would want to ask (for example, if for some reason one needed to use Theorem~\ref{thm:sparseningRamsey} instead of Theorem~\ref{thm:hn_completions}, it would still count), but it is hard to formalize what it means to use some theorem to prove a statement known to be true.

Note that if a class $\K$ of finite $L$-structures is a locally finite subclass of the class of all finite $L$-structures then this gives us a lot of understanding of the homomorphism-embedding variant of the constraint satisfaction problem over the \Fraisse{} limit of $\K$. It turns out that understanding completions often leads to a polynomial-time algorithm. In fact, we do not know any example of a Ramsey class where this would not be the case (but it is possible that this is due to a lack of serious effort):
\begin{conjecture}
	Every homogeneous Ramsey structure $\str M$ in a finite relational language is first-order bi-interpretable with a homogeneous Ramsey structure $\str H$ in a finite relational language such that the homomorphism-embedding variant of the constraint satisfaction problem of $\str H$ is solvable in polynomial time.
\end{conjecture}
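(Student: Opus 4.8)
The plan is to prove the conjecture by reducing it to the ``primal structural'' situation governed by Theorem~\ref{thm:hn_completions}, where an understanding of completions directly yields a polynomial-time algorithm for the homomorphism-embedding CSP. Concretely, one would (i) show that every homogeneous Ramsey structure $\str{M}$ in a finite relational language is first-order bi-interpretable with the \Fraisse{} limit $\str{H}$ of a hereditary locally finite subclass $\mathcal{K}$ (in the sense of Definition~\ref{defn:locfin}) of the class of all finite linearly ordered $L$-structures, for some finite relational $L$; and (ii) show that for any such $\mathcal{K}$ the problem ``does a given finite $L$-structure have a completion in $\mathcal{K}$'', which is exactly the homomorphism-embedding CSP of $\str{H}$, is solvable in polynomial time. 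Part (i) amounts to an affirmative answer to a strengthening of Question~\ref{q:boolean_allthose} extended beyond the three named exceptions, and part (ii) formalises the folklore observation of Section~\ref{sec:systematic} that ``understanding completions leads to a polynomial-time algorithm''.

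For step (i) one first uses that $\str{M}$, being homogeneous in a finite relational language, is $\omega$-categorical, and that by Proposition~\ref{prop:ordernecessary} its automorphism group fixes a definable linear order, so up to quantifier-free interdefinability we may assume $\str{M}$ is ordered. One then runs the cookbook of Section~\ref{exceptions}: isolate the (finitely many, by $\omega$-categoricity) relevant definable equivalence relations, \emph{eliminate imaginaries} by adjoining unary functions to named imaginary vertices together with whatever extra structure on the imaginaries is needed for the strong amalgamation property, and finally expand only by orders convex over the imaginaries. The resulting expansion is a strong amalgamation class of irreducible structures; using the order (and the relational encoding of unary functions of Remark~\ref{rem:allthousy_funkce}) one argues it is a locally finite subclass of all finite ordered $L$-structures, and the whole passage is witnessed by a first-order bi-interpretation between $\str{M}$ and the new \Fraisse{} limit $\str{H}$.

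For step (ii) the key point is that the completion of a finite $\str{C}$ to $\mathcal{K}$, \emph{when it exists}, is canonical and is produced by a monotone fixpoint procedure generalising the shortest-path completion for metric spaces, the transitive-closure completion for equivalences, and the topological-sort completion for linear extensions (Section~\ref{sec:systematic}): one iteratively forces all tuples entailed by the forbidden obstructions and, in parallel, runs a bounded-consistency (Datalog-like) check that detects a forbidden configuration even though the minimal obstruction may have unbounded size, which is exactly the ``governed by cycles'' phenomenon of Theorem~\ref{thm:big_cycles} and Corollary~\ref{cor:big_cycles}. One then shows this procedure terminates after polynomially many rounds, each round polynomial-time, and that it correctly decides existence of a completion in $\mathcal{K}$. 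Since a first-order bi-interpretation yields polynomial-time many-one reductions in both directions between the corresponding homomorphism-embedding CSPs, this places $\mathrm{CSP}(\str{H})$ in $\mathrm{P}$.

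The main obstacle is step (i): it is precisely the still-open claim that ``all those Ramsey classes'' are, up to bi-interpretation, locally finite subclasses of ordered Ne\v set\v ril--R\"odl classes (cf. Question~\ref{q:ramsey_finite} and Question~\ref{q:boolean_allthose}). The documented exceptions -- antilexicographically ordered Boolean algebras, convexly ordered $C$-relations, semilattices with a linear extension -- are exactly the structural reflections of dual and tree Ramsey theorems, and for none of them is a bi-interpretation of the required kind known; any proof of the conjecture must therefore either supply such bi-interpretations or treat these ``unstructured'' categories directly (where the relevant CSPs happen to be trivially in $\mathrm{P}$, but not via Theorem~\ref{thm:hn_completions}). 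A secondary difficulty is to make the unbounded-obstruction completion algorithm of step (ii) uniform across all admissible $\mathcal{K}$ rather than case-by-case; the empirical record -- every known homogeneous Ramsey structure in a finite relational language has a polynomial-time homomorphism-embedding CSP -- is the principal evidence that both difficulties are surmountable. One might also want to replace Theorem~\ref{thm:hn_completions} by Theorem~\ref{thm:sparseningRamsey} in step (i), as suggested after Question~\ref{q:boolean_allthose}, at the cost of a more delicate extraction of the algorithm.
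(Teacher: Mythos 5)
The statement you are asked to prove is a \emph{conjecture} which the paper explicitly leaves open: the authors write that they ``do not know any example of a Ramsey class where this would not be the case (but it is possible that this is due to a lack of serious effort),'' and the only proof sketch they offer in the surrounding discussion concerns the ordinary homomorphism variant of CSP (trivialised by adding constant tuples), not the homomorphism-embedding variant that the conjecture is actually about. So there is no proof in the paper to compare against, and your proposal is not a proof either. You are candid about this for step~(i): reducing every homogeneous Ramsey structure in a finite relational language, up to first-order bi-interpretation, to a locally finite subclass of ordered Ne\v set\v ril--R\"odl classes is exactly the content of Question~\ref{q:boolean_allthose} and is open even for the three named exceptional families (antilexicographically ordered Boolean algebras, convexly ordered $C$-relations, semilattices with linear extensions). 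A reduction of an open conjecture to other open conjectures is a research programme, not a proof, and should be labelled as such.

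Step~(ii) contains a further unproved claim that you present as if it were routine. Definition~\ref{defn:locfin} only guarantees that a structure $\str C$ has a completion in $\mathcal K$ \emph{when} $\str C$ already satisfies the three hypotheses (every irreducible substructure extends to a copy of $\str B$, there is a homomorphism-embedding to some $\str C_0\in\mathcal R$, and all $n$-vertex substructures are completable); it is not a characterisation of completability for arbitrary finite inputs, and it supplies no algorithm. The ``monotone fixpoint procedure generalising shortest-path completion'' is verified in the paper only case by case (Proposition~\ref{prop:metric_completion} for metric spaces, the transitive-closure argument for equivalences, topological sort for linear extensions); there is no general theorem that the obstruction set of a locally finite subclass is recognisable in polynomial time, nor that completions, when they exist, are canonical. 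Indeed the paper's own Conjecture here is precisely the assertion that such an algorithm always exists after a suitable bi-interpretation, so assuming a uniform polynomial-time completion procedure in step~(ii) is circular. To make progress you would need, at minimum, either (a) a proof that every hereditary locally finite subclass of ordered structures (in the sense of Definition~\ref{defn:locfin}) has a polynomial-time decidable completion problem, or (b) a treatment of the exceptional ``unstructured'' classes that does not pass through Theorem~\ref{thm:hn_completions} at all; neither is supplied.
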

Let us remark that the conjecture is true for the standard, homomorphism variant of CSP without even using Ramseyness: If $\str M$ has no constant tuples in non-unary relations, then we can simply create $\str H$ from $\str M$ by adding all constant tuples to all non-unary relations, making its CSP trivial. If $\str M$ has some constant tuples then one has to introduce extra unary relations recording this information. This, clearly unsatisfactory, construction was suggested to us by Michael Pinsker. However, its existence indicates that one should refine the conjecture to avoid such constructions, for example to the following question (also suggested to us by Pinsker, see \eg{}~\cite{BodirskyBook} for a definition of a model-complete core):
\begin{question}
	Let $\str M$ be a model-complete core in a finite relational language whose age is Ramsey. Is $\str M$ first-order bi-interpretable with a model-complete core $\str H$ in a finite relational language whose age is Ramsey such that the constraint satisfaction problem of $\str H$ is solvable in polynomial time?
\end{question}

\subsection{EPPA}\label{sec:eppa_questions}

In contrast with the Ramsey property, we are not aware of any general questions or conjectures about EPPA or the existence of EPPA expansions. It is easy to see that finite linear orders, for example, do not have EPPA (since they are rigid, but do have non-trivial partial automorphisms). This generalizes to structures where one can locally find arbitrarily long linear orders (formally, structures with the \emph{strict order property}, see~\cite{Siniora2}).

Evans, Hubička, and Nešetřil~\cite{Evans2} showed that the class $\mathcal C_F$ from Section~\ref{sec:orientations} does not have EPPA and that $\mathcal C_F^+$ is its minimal expansion which might have EPPA, and Hubička, Konečný, and Nešetřil later proved that it indeed does have EPPA~\cite{Hubicka2018EPPA}. Nevertheless, EPPA is still wide open for some simple classes of structures:

\begin{question}[Herwig--Lascar~\cite{herwig2000}]\label{q:eppa_tournaments}
	Does the class of all finite tournaments have (coherent) EPPA?
\end{question}
Originally, Herwig and Lascar only ask for EPPA, asking for coherent EPPA is a natural extension if the original question is answered in the affirmative. They proved that this question is in fact equivalent to a problem in profinite group theory (see Section~6 of~\cite{herwig2000}). The reason why the class of all finite tournaments has stood up to all attacks so far is the following: Most of the EPPA constructions construct EPPA-witnesses where a lot of pairs of vertices are not in any relations together (and these witnesses are then, potentially, completed in an automorphism-preserving way). This in particular means that the EPPA-witnesses contain a lot of symmetric pairs. Completing to a tournament means that we need to choose an orientation for each such pair, thereby very likely killing many automorphisms. In fact, if $G$ is the automorphism group of a tournament then $G$ has odd order. For this reason, Question~\ref{q:eppa_tournaments} has a very strong group-theoretic flavour.

Note that Huang, Pawliuk, Sabok, and Wise~\cite{Sabok} disproved EPPA for a certain version of hypertournaments (different from the one from Section~\ref{sec:hypertournaments}) defined specifically to obtain a variant of the profinite-topology equivalence for which the topological statement is false. This naturally motivates the following question.

\begin{question}\label{q:eppa_hypertournaments}
	Does the class of all finite $n$-hy\-per\-tour\-na\-ments (see Section~\ref{sec:hypertournaments} for definitions) have (coherent) EPPA? What about the $H_4$-free 3-hypertour\-na\-ments or the $O_4$-free 3-hypertournaments?
\end{question}
Note that the $\{H_4,O_4\}$-free 3-hy\-per\-tour\-na\-ments do not have EPPA as these structures become just linear orders after naming a least vertex. We expect that the $O_4$-free 3-hypertournaments will behave as a combination of two-graphs (see~\cite{eppatwographs} and Example~\ref{ex:twographs}) and 3-hypertournaments with no restrictions.

\medskip

Besides tournaments, EPPA is also open for the class of all finite oriented graphs with no independent set of size $k$, $k\geq 3$. (Note that for $k=2$ we simply obtain tournaments.) This motivated the following question:

\begin{question}[Hubička--Jahel--Konečný--Sabok, 2024~\cite{HubickaSemigeneric}]
	For which $k\geq 2$ is there a finite $\ell$ such that for every finite oriented graph $\str A$ which contains no independent set of size $k$ there is a finite oriented graph $\str B$ which contains no independent set of size $\ell$ such that $\str A\subseteq \str B$ and every partial automorphism of $\str A$ extends to an automorphism of $\str B$?
\end{question}
For $k=2$ we are looking at tournaments and this question, in contrast with Question~\ref{q:eppa_tournaments}, actually allows the EPPA-witnesses to have some even-order automorphisms. Nevertheless, the existing constructions are not strong enough as the independent sets they produce grow with the number of vertices of $\str A$.

\medskip

Two-graphs (see Example~\ref{ex:twographs}) turned out to be a very interesting example for EPPA \cite{eppatwographs}. They are one of the reducts (in the sense of Section~\ref{sec:reducts}) of the random graph, and one can ask about EPPA of the other reducts:
\begin{question}[\cite{eppatwographs}]
	Which of the reducts of the random graph have (coherent) EPPA?
\end{question}
There are five reducts. The random graph itself and the set with no structure have been solved. For two-graphs, coherence is open. The last two reducts are the \emph{complementing graph} (adding an isomorphism between the random graph and its complement) and the ``complementing two-graph'' (the join of the automorphism group of the generic two-graph and the complementing graph) and for them both EPPA and coherent EPPA are open. In his Bachelor thesis, Beliayeu~\cite{Beliayeu2023bc} proved EPPA for the class of all finite graphs with loops with complementing embeddings (that is, every vertex may, but does not have to, have a loop, and loops also get complemented). This created a particularly interesting situation: It is open whether graphs with complementing embeddings have EPPA, but there are two possible expansions of graphs with complementing embeddings which have EPPA: Graphs with normal embeddings, and graphs with loops with complementing embeddings. Moreover, both of them can be seen as reducts of the countable random graph with a generic unary relation, and they are incomparable. It would be interesting to see if the meet of these two expansions are graphs with complementing embeddings, and whether the meet has EPPA.

In 1996, Thomas classified reducts of the homogeneous $k$-uniform hypergraphs for $k\geq 3$~\cite{thomas1996reducts}. The situation turns out to be similar to the random graph in that all the reducts correspond to some type of switching over some number of vertices. EPPA is completely open for all these classes (and it seems that the method for proving EPPA for two-graphs from~\cite{eppatwographs} does not generalize). We can thus ask:

\begin{question}
	Which reducts of the $k$-uniform hypergraph, $k\geq 3$, have (coherent) EPPA?
\end{question}
An answer for any one of the reducts except for the trivial ones would be very interesting.

\medskip

A major limitation of Theorem~\ref{thm:hkn} compared to Theorem~\ref{thm:sparseningRamsey} is the requirement that all functions are unary. Naturally, before trying to generalize it in its full strength, one may want to study some concrete cases. The following two questions point out to simple classes for which EPPA is open, the reason this time being the presence of non-unary algebraicity:
\begin{question}[Question~13.2 from~\cite{Hubicka2018EPPA}]
	Let $L$ be the language consisting of a single partial binary function and let $\mathcal C$ be the class of all finite $L$-structures. Does $\mathcal C$ have EPPA?
\end{question}

\begin{question}[Question~13.3 from~\cite{Hubicka2018EPPA}]
	Does the class of all finite partial Steiner triple systems have EPPA, where one only wants to extend partial automorphism between closed substructures? (A sub-hypergraph $H$ of a partial Steiner triple system $S$ is \emph{closed} if whenever $\{x,y,z\}$ is a triple of $S$ and $x,y\in H$, then $z\in H$.)
\end{question}

\medskip

Recall that the Classification Programme of EPPA Classes asks, in the presence of EPPA, whether the given class also has coherent EPPA, APA, and ample generics. The following two questions go in this direction:

\begin{question}[Evans--Hubička--Konečný--Nešetřil, 2020~\cite{eppatwographs}]
	Does the class of all finite two-graphs have coherent EPPA?
\end{question}
In~\cite{eppatwographs} it is proved that this class has EPPA. In fact, the automorphism group of the generic two-graph does have a dense locally-finite subgroup (a dynamical consequence of coherent EPPA, see Section~5 of~\cite{eppatwographs}), but the EPPA-witnesses from~\cite{eppatwographs} are not coherent. Two-graphs are thus a concrete candidate for an answer to a question of Siniora whether there is a class of finite structures with EPPA but not coherent EPPA~\cite[Question~3]{Siniora2}.

\begin{question}[Question~6.4 of~\cite{HubickaSemigeneric}]
	Does the class of all finite $n$-partite tournaments for some $2\leq n \leq \omega$, or the class of all finite semigeneric tournaments have coherent EPPA?
\end{question}
These classes do have EPPA by~\cite{HubickaSemigeneric}, and the existence of a dense locally finite subgroup of the automorphism groups of their \Fraisse{} limits is open. Note also that semigeneric tournaments do not have APA, but they do have ample generics, see~\cite{HubickaSemigeneric}.

\subsection{Big Ramsey structures}
In contrast to Ramsey classes and EPPA, there is still too much mystery regarding
big Ramsey degrees and structures and so there are no suggested dividing lines for the
existence of big Ramsey structures or finiteness of big Ramsey degrees. It seems
that if there is an infinitely-branching node above every node of the tree of 1-types
then the big Ramsey degree of some finite substructure is infinite
(see Section~7.1 of~\cite{braunfeld2023big}). However, there are also structures
with infinite big Ramsey degrees which do not have this property such as the
\Fraisse{} limit of all directed graphs omitting an infinite set of tournaments
found by Sauer~\cite{sauer2003canonical}.

Therefore, before one tries to identify meaningful conjectures, more concrete basic examples should be looked at and solved. In fact, the existence of a big Ramsey structure (and the exact characterization of big Ramsey degrees) has not been proved for any structure in a language with a relation of arity greater than 2. In particular, we have the following problem:

\begin{problem}[Problem~7.2 of~\cite{braunfeld2023big}, Problem~6.3.1 of~\cite{Balko2021exact}]\label{prob:3unif}
Characterize the big Ramsey degrees of the generic $3$-uniform hypergraph. Does the generic $3$-uniform hypergraph admit a big Ramsey structure?
\end{problem}
Once Problem~\ref{prob:3unif} is solved, the following problem is its natural generalization.
\begin{problem}[Problem~7.2 of~\cite{braunfeld2023big}, Problem~6.3.2 of~\cite{Balko2021exact}]
\label{Prob:LargeArities}
Given a relational language $L$ with finitely many relational symbols of each arity, characterize the big Ramsey degrees for the class $\mathcal K$ of all finite $L$-structures where tuples in each relation are injective. Does the \Fraisse{} limit of $\mathcal K$  admit a big Ramsey structure?
\end{problem}
An upper bound for these kinds of structures appeared in~\cite{braunfeld2023big}. We believe that a characterization is possible with the present tools and methodology and we expect that the result will naturally generalize the situation for the 3-uniform hypergraph.

\medskip

Curiously, even finiteness of big Ramsey degrees is open for some concrete simple free amalgamation structures in finite languages described by finitely many forbidden substructures, as soon as there is a relation of arity greater than 2. The following two are probably the most striking open problems on the way
towards an infinitary generalization of the Ne\v set\v ril--R\"odl theorem:
\begin{question}\label{q:badclique1}
	Let $\str{F}$ be a 3-uniform hypergraph on 4 vertices with all edges but one.
	Does the \Fraisse{} limit of the class of all finite $\str{F}$-free 3-uniform hypergraphs have finite big Ramsey degrees and a big Ramsey structure?
\end{question}
\begin{question}[Problem~1.4 of~\cite{typeamalg}]\label{q:badclique2}
	Let $L=\{E,H\}$ be a language with one binary relation $E$ and one ternary relation $H$. Let $\str{F}$ be the $L$-structure where $F=\{0,1,2,3\}, E_\str F=\{(1,0),(1,2),(1,3)\}, H_\str F=\{(0,2,3)\}$. Denote by $\mathcal K$ the class of all $L$-structures $\str{A}$ such that there is no monomorphism $\str{F}\to\str{A}$. Does the \Fraisse{} limit of $\mathcal K$ have finite big Ramsey degrees and a big Ramsey structure?
\end{question}

Once Questions~\ref{q:badclique1} and~\ref{q:badclique2} are answered, we hope that we will have enough understanding of big Ramsey degrees of free amalgamation classes in order to be able to prove a big Ramsey analogue of the Nešetřil--Rödl theorem:

\begin{problem}\label{prob:general}
Let $L$ be a finite relational language. Characterize those finite collections of  finite irreducible
$L$-structures $\mathcal F$ for which the \Fraisse{} limit of the class of all finite $\mathcal F$-free structures has finite big Ramsey degrees, resp. admits a big Ramsey structure.
\end{problem}

\medskip

When free amalgamation classes are understood, one can start aiming for a big Ramsey analogue of Theorems~\ref{thm:sparseningRamsey} or~\ref{thm:hn_completions} (or some other variant which can handle similar structures). In the Ramsey classes world, various variants of metric spaces turned out to be useful examples to look at first. Theorem~\ref{thm:big_cycles} implies that homogeneous metric spaces with finitely many distances have finite big Ramsey degrees; the bounds on them produced by Theorem~\ref{thm:big_cycles} are very far from optimal though. One should attempt to characterize the exact big Ramsey degrees for these metric spaces, and if possible, find the corresponding big Ramsey structures, in the hope of uncovering the core of the combinatorial nature of these structures.

\begin{problem}\label{prob:metric_diaries}
Let $S\subseteq \mathbb R^{{>}0}$ be finite and let $\mathbb U_S$ be a countable metric spaces with distances from $S$ universal for all countable metric spaces with distances from $S$. Characterize the big Ramsey degrees of $\mathbb U_S$. Does it admit a big Ramsey structure?
\end{problem}

We believe that in order to solve the above problems, one should consider a more restrictive category of embeddings of enumerated structures, which naturally arise from the current examples of big Ramsey
structures~\cite{typeamalg}.

\medskip

We have seen that the class of finite Boolean algebras with antilexicographic orderings is Ramsey and that it is an easy consequence of the dual Ramsey theorem (see Example~\ref{ex:ba}). In the infinite setting, these two statements diverge. While the Carlson--Simpson theorem~\cite{carlson1984} is a natural infinitary extension of the dual Ramsey theorem, one may also study the big Ramsey degrees of the countable atomless Boolean algebra
(\ie{}, the \Fraisse{} limit of the class of all finite Boolean algebras). Shortly before finishing this survey it was shown that the big Ramsey degrees are in fact infinite for the subalgebra with 3 atoms~\cite{Bartosova2025}, and it turns out that this is related to continuous colourings of 3-elements subsets of $\mathbb Q$ and topological copies~\cite[Theorem 6.33]{todorcevic2010introduction}. However we can still ask:

\begin{problem}[{\cite[Problem 1.4]{Bartosova2025}}]
Determine the big Ramsey degree of the 2-atom Boolean algebra in the countable atomless Boolean algebra.
\end{problem}
A similar situation has also been recently established for homogeneous countable pseudo-trees~\cite{ChEW_pseudotree,chodounsky2025big}. The following questions makes sense in both settings.
\begin{problem}[{\cite[Problem 1.5]{Bartosova2025}}]
\label{probopt}
Prove an infinite Ramsey-theoretic result for the countable atomless Boolean algebra (or the pseudotree) and
show its optimality even in the cases where the big Ramsey degrees are infinite.
\end{problem}
\begin{problem}[{\cite[Problem 1.6]{Bartosova2025}}]
Generalize the notion of a big Ramsey structure to this new setting where big Ramsey degrees are infinite,
and find its topological dynamics analogue.
\end{problem}
Another recent result~\cite{Omegalabelled2025} shows that $\omega$-edge-labeled hypergraphs do not have finite big Ramsey degrees.
However, one can consider the following reduct:
\begin{question}[{\cite[Problem 1.3]{Omegalabelled2025}}]
	Let $L$ be a relational language with a single quaternary relation $R$ and $\mathcal K$ the class of all finite $L$-structures
	where $\str{A}$ such that
	\begin{enumerate}
		\item for every $(a,b,c,d)\in R^\str{A}$ it holds that $a\neq b$, $c\neq d$ and $(c,d,a,b)\in R^\str{A}$,
		\item for every pair of distinct vertices $a,b$ of $\str{A}$ it holds that $(a,b,a,b),(a,b,b,a)\in R^\str{A}$,
		\item whenever $(a,b,c,d)$ and $(c,d,e,f)$ are in $R^\str{A}$ then also $(a,b,e,f)\in  R^\str{A}$.
	\end{enumerate}
	(In other words, $R^\str{A}$ is an equivalence relation on 2-element subsets of vertices of $A$.)
	Does the \Fraisse{} limit of $\mathcal K$ have finite big Ramsey degrees?
\end{question}

\medskip

The area of big Ramsey degrees is currently experiencing very rapid development, and so there are many other open problems in this area, see for example recent surveys~\cite{hubicka2024survey,dobrinen2021ramsey}, Section~7 of~\cite{braunfeld2023big}, Section~6 of~\cite{Balko2021exact}, or Chapter~2 of the second author's dissertation~\cite{Konecny2023phd}.

\section*{Acknowledgments}
We would like to thank Adam Bartoš, Manuel Bodirsky, Gregory Cherlin, Lionel Nguyen Van Th\'e, Vojta Rödl, Rob Sullivan, and Andy Zucker for enlightening discussions and helpful comments on early drafts of this survey. We are grateful to Jisun Baek, Hyoyoon Lee, and Jaehyeon Seo for helping us iron out some important details and for their thoroughness with which they read a preprint of this survey. We would also like to thank both anonymous referees for taking their time to carefully read this survey and for giving us many great suggestions which improved this survey significantly. Most importantly we would like to thank to Jarik Ne\v set\v ril for introducing us to this beautiful area of mathematics, being great supervisor, advisor, co-author and personal friend.

In the earlier stages of this project, J. H. and M.K. were supported by the project 21-10775S of  the  Czech  Science Foundation (GA\v CR). In the later stages, J. H. was supported by a project that has received funding from the European Research Council under the European Union's Horizon 2020 research and innovation programme (grant agreement No 810115), and M. K. was supported by a project that has received funding from the European Union (Project POCOCOP, ERC Synergy Grant 101071674).
Views and opinions expressed are however those of the authors only and do not necessarily reflect those of the
European Union or the European Research Council Executive Agency. Neither the European Union nor the granting
authority can be held responsible for them.
Since January 2025, during a (substantial) revision, J. H. was supported by the project 25-15571S of  the  Czech  Science Foundation (GA\v CR).

\appendix
\gdef\thesection{\Alph{section}}
\makeatletter
\renewcommand\@seccntformat[1]{\appendixname\ \csname the#1\endcsname.\hspace{0.5em}}
\makeatother

\section{Partite construction}
\label{appendix:partite}

The partite construction has been discovered and developed by Nešetřil and Rödl in a series of papers~\cite{Nevsetvril1976,Nevsetvril1977,Nevsetvril1979,Nevsetvril1981,Nevsetvril1982,Nevsetvril1983,Nevsetvril1984,Nevsetvril1984a,Nevsetvril1987,Nevsetvril1989,Nevsetvril1990,Nevsetvril2007}, and further extended by various authors~\cite{bhat2016ramsey,Hubicka2016,reiher2024graphs,reiher2023girth,reiher2024colouring}. It has been applied to many problems and published in
multiple variants.  The purpose of this section is to isolate and present its four variants which are used
today for obtaining structural Ramsey classes.

The main goal of this appendix is to serve as an exposition. For this reason, we choose to only state lemmas in their minimum strength necessary to prove the main result of the given section, even though sometimes we are going to need a slightly stronger form of the lemma (using more definitions) in a later section. Instead, we prove the stronger form separately. We believe that this choice makes it easier to appreciate the core ideas of partite construction for a reader who is not yet familiar with this beautiful concept.
Except for a few basic definitions, this appendix is independent from the rest of the survey.

The (primal) partite construction has two basic forms -- the \emph{non-induced} form and the \emph{induced} form. The non-induced partite construction is used
to prove the unrestricted Ne\v set\v ril--R\"odl theorem (Theorem~\ref{thm:unNR}) and is discussed in Section~\ref{anoninduced}. The induced form
builds on top of existing structural results and is then used for obtaining more restricted theorems from less restricted ones.
We formulate it in a relatively general form in Section~\ref{sec:induced}. This form allows easy proofs of the restricted form of the Ne\v set\v ril--R\"odl theorem (Theorem~\ref{thm:NR})
and a Ramsey theorem for partial orders with linear extensions (Theorem~\ref{thm:posets}).

From the induced partite construction, two other constructions can be derived.  In some cases, performing the induced partite construction once
does not solve the problem, but after performing it multiple times, magic happens and problems disappear. Intuitively, the $\langle \str{A},\str{B},\str{C} \rangle$-hypergraph (the hypergraph of copies in the Ramsey witness, see Section~\ref{sec:systematic}) becomes more and more
sparse with each iteration of the partite construction.  This \emph{iterated} form is discussed in Section~\ref{sec:iterated}
where we prove Theorem~\ref{thm:sparseningRamsey} which then implies, for example, that ordered metric spaces are Ramsey (Theorem~\ref{thm:nesetril-metric-spaces}).
The last discussed extension is done by replacing one of the two key steps of the partite construction by another partite construction. We call this the \emph{recursive} form
and discuss one application of it in Section~\ref{arecursive} to prove an extension of the unrestricted Ne\v set\v ril--R\"odl theorem for structures in languages with functions.
Combining this with the iterated partite construction then yields the main results of~\cite{Hubicka2016} (Theorems~\ref{thm:HN} and \ref{thm:hn_completions}) in their full strength.

We do not discuss variants of the
partite construction used for proving theorems about colourings of special structures only. For colourings of
vertices, see~\cite{Nevsetvril1979}, and for colourings of edges see the survey part of
a recent major result by Reiher and Rödl~\cite{reiher2023girth}.
We choose not to discuss the dual form of the partite construction either. However, what we do here
can be adapted quite naturally in the dual setting (see for example~\cite{junge2023categorical} for a presentation of the partite construction in the abstract categorical setting).

\subsection{Non-induced partite construction}
\label{anoninduced}
In this section, we will prove Theorem~\ref{thm:unNR}. We are inspired primarily by the 1989 presentation from~\cite{Nevsetvril1989}, see also a similar treatment in~\cite{Bodirsky2015}.

We will work with multiple languages. Typically, we will have a ``base'' language $L$ and then we will expand it to $L_P$ by finitely many unary relation symbols. In order to differentiate them from the unary relations in $L$, we will call the elements of $L_P\setminus L$
\emph{predicates} and denote them by letter $\pred{}{}$.
\begin{definition}[Partite System]Let $L_P$ be a language extending $L$ by finitely many predicates.
	An \emph{$L_P$-partite system} is any finite $L_P$-structure $\str{A}$ satisfying the following two conditions:
	\begin{enumerate}
		\item For every $v\in \str{A}$ there exists precisely one predicate $\pred{}{}\in L_P\setminus L$ containing $v$.
		\item For every predicate $\pred{}{}\in L_P\setminus L$ and every relation symbol $\rel{}{}\in L$, each $\vec{x}\in \rel{A}{}$ contains at most one vertex from $\pred{A}{}$. (Such $\vec{x}$ is called \emph{transversal}.)
	\end{enumerate}
\end{definition}
$L_P$-partite systems are a generalization of multipartite graphs. Note that the second condition allows a single vertex to appear multiple times in $\vec{x}$.
Given an $L_P$-partite system $\str{A}$ and a vertex $v\in A$ we denote by $\pi_\str{A}(v)$ the unique predicate $p$ containing $v$. This defines a function $\pi_\str{A}\colon A\to L_P\setminus L$ which we call \emph{projection}.
We denote by $A_{\pred{}{}}$ the set of all vertices of $\str{A}$ with projection $\pred{}{}$ and call
it the \emph{partition} $p$.

An $L_P$-partite system $\str{A}$ is \emph{transversal} if $A=L_P\setminus L$ and
$A_v=\{v\}$ for every $v\in A$ (thus the structure touches every partition by
precisely one vertex, and to simplify some notation below we also unify the names of
partitions and vertices).

Note that $L_P$-partite systems are typically not linearly ordered. In fact, even if $L$ contains a relation $<$, if $\str A$ in an $L_P$-partite system in which $<$ is interpreted as a linear order then $\str A$ is transversal, as vertices from the same partition are in no relations together. In this whole appendix, we will always construct partite systems as Ramsey objects, and in order to obtain an ordered structure (which we typically desire), we will in the very end extend the relation $<$ to a linear order.

The key, and most tricky, part of the proof is the following lemma.
\begin{lemma}[Partite Lemma]
	\label{lem:partite}
	Let $L_P$ be a relational language extending $L$ by finitely many predicates, and let $\str{A}$ and $\str{B}$ be $L_P$-partite systems such that $\str{A}$ is transversal.
	Then there exists an $L_P$-partite system $\str{C}$ such that $\str{C}\longrightarrow (\str{B})^\str{A}_2$.
\end{lemma}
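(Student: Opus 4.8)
The plan is to prove the Partite Lemma by a product (Hales--Jewett-style) argument, reducing the structural partition statement for transversal $\str A$ to an ordinary product Ramsey theorem. The key observation is that since $\str A$ is transversal, an embedding $f\colon \str A\to \str C$ is completely determined by choosing, for each predicate $\pred{}{}\in L_P\setminus L$, a vertex of $C_\pred{}{}$ (namely the image of the unique vertex of $\str A$ in partition $\pred{}{}$), subject to the constraint that these choices span a copy of $\str A$. So I would build $\str C$ partition-by-partition as a kind of ``power'' of $\str B$.

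Concretely, first I would enumerate $\Emb(\str A,\str B) = \{g_1,\dots,g_m\}$. For each predicate $\pred{}{}$, I would let $C_\pred{}{}$ be a large set, thought of as $B_\pred{}{}^N$ for a suitable large $N$ (or, more carefully, indexed by functions from some large index set into $B_\pred{}{}$, exactly as in the combinatorial-lines proof of Hales--Jewett). The relations of $\str C$ on transversal tuples would be defined coordinatewise: a transversal tuple $(c_1,\dots,c_k)$ with $c_i$ in partition $\pred{}{i}$ lies in $\rel{C}{}$ if and only if for every coordinate $j$ the projected tuple $(c_1(j),\dots,c_k(j))$ lies in $\rel{B}{}$. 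One checks this makes $\str C$ an $L_P$-partite system (transversality of relations is inherited coordinatewise, and each vertex of $\str C$ gets exactly one predicate from its partition index), and that the ``diagonal'' maps embed $\str B$ into $\str C$. Then, given a $2$-colouring $\chi$ of $\Emb(\str A,\str C)$: an embedding of the transversal structure $\str A$ into $\str C$ amounts to picking one vertex in each of the partitions of $\str A$, i.e.\ an element of $\prod_\pred{}{} C_\pred{}{} = \prod_\pred{}{} B_\pred{}{}^N$, which we may view as an $N$-tuple of elements of $\prod_\pred{}{} B_\pred{}{}$. The product Ramsey theorem (equivalently, iterating the finite Ramsey theorem, or using the Hales--Jewett theorem directly) then yields, for $N$ large enough in terms of $m$, $|\str B|$ and the number of colours, a ``combinatorial-line''-like monochromatic substructure; restricting $\chi$ to those embeddings of $\str A$ that factor through a fixed copy of $\str B$ sitting on such a line gives a monochromatic copy of $\str B$, which is exactly $\str C\longrightarrow(\str B)^\str A_2$.

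The step I expect to be the main obstacle is getting the bookkeeping of the product/Hales--Jewett reduction exactly right so that it genuinely produces a \emph{copy of $\str B$} on which $\chi$ is constant, rather than merely a monochromatic set of copies of $\str A$ with no coherent $\str B$ over them. The right formulation is: index $C_\pred{}{}$ by $B_\pred{}{}^{[N]}$, note that a copy of $\str B$ in $\str C$ corresponds to choosing, coordinatewise and uniformly, an embedding $\str B\to\str B$ (so in particular the $m$ embeddings of $\str A$ into that copy of $\str B$ are the $m$ compositions $g_i$ read off in each coordinate), and then apply Hales--Jewett with alphabet $\Emb(\str A,\str B)$ (or with alphabet $\prod_\pred{}{}B_\pred{}{}$ and a colouring refined to only see the ``$\str A$-copies inside a single $\str B$-copy'') to find a variable word whose induced copy of $\str B$ is monochromatic for $\chi$. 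Verifying that the coordinatewise-defined relations on $\str C$ restrict correctly to such induced copies---so that these really are embedded copies of $\str B$ and $\str A$ and not just abstract index sets---is the delicate bit, but it is exactly the standard ``partite lemma via Hales--Jewett'' argument and goes through. Everything else (that $\str C$ is a partite system, that it is finite, that the number of predicates is unchanged) is routine.
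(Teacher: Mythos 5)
Your overall architecture (vertex set $C_{\pred{}{}}=B_{\pred{}{}}^N$, alphabet $\Sigma=\Emb(\str A,\str B)$, words giving copies of $\str A$, parameter words giving copies of $\str B$, Hales--Jewett at the end) matches the paper's proof, but there is a genuine gap in how you define the relations of $\str C$, and it sits exactly at the point you dismiss as routine. You put a transversal tuple into $\rel{C}{}$ if and only if \emph{every} coordinate projects to a tuple of $\rel{B}{}$; that is the power structure $\str B^N$ of Definition~\ref{def:power}, and it only works in the \emph{induced} setting, where $\str B$ is assumed to be an $\str A$-partite system (i.e.\ $\pi_{\str B}$ is a homomorphism-embedding onto $\str A$). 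In the present lemma there is no such assumption: $\str B$ is an arbitrary $L_P$-partite system, so a tuple $\vec x\in\rel{B}{}$ need not project to a tuple of $\rel{A}{}$. Consequently, for a parameter word $W$ with a non-$\lambda$ coordinate $i$, the $i$-th coordinate of $f_W(\vec x)$ is $W_i(\pi_{\str B}(\vec x))$, which need not lie in $\rel{B}{}$, so $f_W(\vec x)\notin\rel{C}{}$ under your coordinatewise definition and $f_W$ fails to be a homomorphism. A concrete failure: let $L=\{E\}$, let $\str A$ be transversal on two predicates with no edge, and let $\str B$ have $b_1,b_1'$ in one partition and $b_2,b_2'$ in the other with $b_1b_2$ the only edge; then every $\varphi\in\Emb(\str A,\str B)$ hits a non-edge, so the coordinate $W_i=\varphi$ destroys the edge $b_1b_2$ in $f_W(\str B)$. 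Your monochromatic combinatorial line therefore need not carry a copy of $\str B$ at all.

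The paper resolves this by \emph{defining} the relations of $\str C$ to be precisely $\bigcup_W f_W[\rel{B}{}]$ over all parameter words $W$, so that each $f_W$ is a homomorphism by fiat. The real content of the lemma then becomes showing that $f_W$ also \emph{reflects} relations (is an embedding): if $f_W(\vec x)=f_{W'}(\vec y)$ with $\vec y\in\rel{B}{}$, one must conclude $\vec x\in\rel{B}{}$. The easy case is when $W$ and $W'$ share a $\lambda$-coordinate; the hard case is when they do not, and there the transversality of $\str A$ is used essentially, to see that $\vec x=W'_i\circ W_j^{-1}(\vec y)$ for suitable coordinates $i,j$ and hence $\vec x\in\rel{B}{}$. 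This case analysis is the one nontrivial step of the whole proof and is exactly what your proposal does not supply; with your definition of $\str C$ it cannot be supplied, and with the paper's definition it still has to be done. (A smaller point: the product Ramsey theorem is not interchangeable with Hales--Jewett here---one genuinely needs combinatorial lines so that the monochromatic object is a single copy of $\str B$ containing all its copies of $\str A$, which is why the general partite lemma resisted proof until the Hales--Jewett argument was found.)
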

It is interesting to notice that the Partite lemma is a special case of the Nešetřil--Rödl theorem (Theorem~\ref{thm:NR}), and in fact it follows easily from the unrestricted Nešetřil--Rödl theorem (Theorem~\ref{thm:unNR}) by removing all non-transversal tuples from relations. The main point is, however, that the Partite lemma has an easier proof than Theorem~\ref{thm:unNR} itself since the copies of $\str{A}$ overlap in a controlled way. In the early days, the Partite lemma was proved only for the special cases of $\vert A\vert =1$, where it follows directly from the Ramsey theorem,
and $\vert A\vert =2$, where one can adapt the proof of the edge-Ramsey property of bipartite graphs.
For colouring hyperedges, Ne\v set\v ril and R\"odl used a smart application of the Hales--Jewett theorem~\cite{Nevsetvril1982}
and this proof was later generalized to colouring substructures~\cite{Nevsetvril1989}.

We briefly recall the Hales--Jewett theorem~\cite{Hales1963}.
Let $\Sigma$ be a finite set which we will call \emph{alphabet}.
A \emph{word} $w$ of length $N$ in the alphabet $\Sigma$ is a sequence of length $N$ of elements of $\Sigma$. Given $i<n$, we denote by $w_i$ the letter
on index $i$. (Note that the first letter of $w$ has index $0$.)
The \emph{$N$-dimensional combinatorial cube} (in alphabet $\Sigma$) is the set $\Sigma^N$ consisting of all words in the alphabet $\Sigma$ of length $N$.
If $\Sigma$ does not contain symbol $\lambda$, a \emph{parameter word}  of length $N$ in the alphabet $\Sigma$ is a word $W$ in the alphabet $\Sigma\cup \{\lambda\}$ for which there exists $i< N$ satisfying $W_i=\lambda$.
Given letter $a\in \Sigma$, we denote by $W(a)$ the word created from $W$ by replacing all occurrences of $\lambda$ by $a$ (the \emph{substitution} of $a$ to $W$).
If $W$ is a parameter word in alphabet $\Sigma$ of length $N$, we call the set $\{W(a):a\in \Sigma\}$ a \emph{combinatorial line} in $\Sigma^N$.
The Hales--Jewett theorem states that for every finite $\Sigma$ there exists an integer $N=\mathrm{HJ}(\vert\Sigma\vert)$ such that in every $2$-colouring of $\Sigma^N$ there exists a
monochromatic combinatorial line.~\cite{Hales1963}

A proof using the Hales--Jewett theorem was first used for edges only~\cite{Nevsetvril1982}, later in the dual form~\cite{frankl1987},
and eventually in the form presented here~\cite{Nevsetvril1989}.
\begin{proof}[Proof of Lemma~\ref{lem:partite}]
	\begin{figure}
		\centering
		\includegraphics{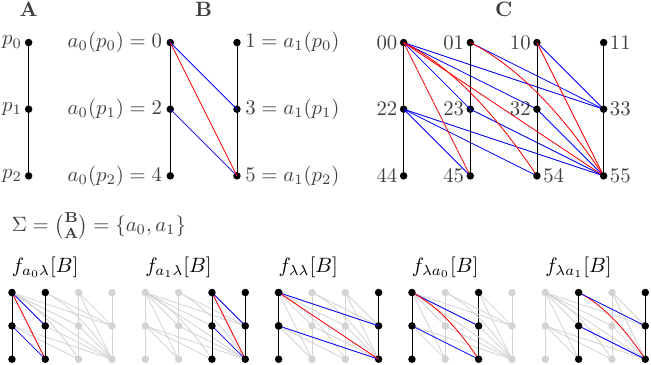}
		\caption{Example of the construction of $\str{C}$ in the proof of Lemma~\ref{lem:partite} for $N=2$ for graphs with three types of edges (black, red and blue).}
		\label{fig:partite1}
	\end{figure}
	Fix $L_P$, $\str{A}$ and $\str{B}$ as in the statement.
	Put $\Sigma=\Emb(\str{A},\str{B})$ and let $N=\mathrm{HJ}(\vert\Sigma\vert)$. For the rest of the proof, every word and parameter word will automatically be of length $N$ in the alphabet $\Sigma$.
	Our ultimate goal is to build an $L_P$-partite system $\str{C}$ such that every word $w$ will correspond to an embedding $e_w\colon \str{A}\to \str{C}$ and every parameter word $W$ will correspond to an embedding $f_W\colon \str{B}\to\str{C}$.
	A specific case of this construction is depicted in Figure~\ref{fig:partite1}. We first give all the necessary constructions, verifications will follow.

	\begin{description}
		\item[Vertex set of $\str{C}$:]~\\ Given $\pred{}{}\in A = L_P\setminus L$, let $C_{\pred{}{}}$ be the set of all functions $f\colon N\to B_{\pred{}{}}$. Put $C=\bigcup_{\pred{}{}\in A} C_{\pred{}{}}$.

		\item[Functions $e_w\colon A\to C$:]~\\ Given a word $w$, denote by $e_w$ the function $A\to C$ where for every $\pred{}{}\in A$ we have $e_w(\pred{}{})\in C_{\pred{}{}}$ defined by $e_w(\pred{}{})(i)=w_i(\pred{}{})$ for every $i<N$.

		\item[Functions $f_W \colon B\to C$:]~\\ Given a parameter word $W$, denote by $f_W$ the function $f_W\colon B\to C$
		      such that for every $p\in A$ and every $v\in B_p$  we have $f_W(v)\in C_p$ defined by putting, for every $i<N$,
		      $$f_W(v)(i)=\begin{cases}W_i(p)& \hbox{if }W_i\in \Sigma, \\ v & \hbox{if }W_i=\lambda.\end{cases}$$
		      Notice that $f_W$ is injective as $W$ contains $\lambda$.

		\item[Relations of $\str{C}$:]~\\
		      Relations of $\str{C}$ will contain precisely those tuples needed to make functions $f_W$ homomorphisms $\str{B}\to\str{C}$. Explicitly:
		      \begin{enumerate}
			      \item For every relation symbol $\rel{}{}\in L$, every tuple $\vec{x}=(x_0,x_1,\ldots, x_{n-1})\in \rel{B}{}$, and every parameter word $W$ we
			            put $f_W(\vec{x})=(f_W(x_0),f_W(x_1),\ldots, f_W(x_{n-1}))\in \rel{C}{}$.
			      \item For every $\pred{}{}\in A$ and every $v\in C_{\pred{}{}}$ we put $(v)\in \pred{C}{}$.
		      \end{enumerate}
	\end{description}
	\begin{claim}
		\label{clm1}
		For every parameter word $W$ it holds that $f_W\in \Emb(\str{B},\str{C})$.
	\end{claim}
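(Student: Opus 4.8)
The claim is that for every parameter word $W$, the map $f_W\colon B\to C$ defined above is an embedding $\str B\to\str C$. The strategy is to verify, in turn, the three things that ``embedding'' requires here: (1) $f_W$ preserves and reflects the predicates, (2) $f_W$ preserves and reflects the relations of $L$, and (3) $f_W$ is injective. Injectivity was already noted when $f_W$ was defined (since $W$ contains at least one coordinate equal to $\lambda$, the value $f_W(v)(i)=v$ on that coordinate lets us recover $v$ from $f_W(v)$), so the real content is (1) and (2).

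First I would check the predicates. By construction, if $v\in B_p$ then $f_W(v)\in C_p$, so $\pi_\str C(f_W(v))=p=\pi_\str B(v)$; thus $f_W$ maps the partition $B_p$ into the partition $C_p$ and, since every vertex of $C_p$ satisfies the predicate $p$ (by clause~2 of the definition of the relations of $\str C$) and no other predicate, $f_W$ both preserves and reflects the unique-predicate structure. This also immediately shows $f_W$ preserves each $\pred{}{}\in L_P\setminus L$ and reflects them.

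Next I would check the relations $\rel{}{}\in L$. Preservation is essentially by fiat: clause~1 of the definition of the relations of $\str C$ puts exactly the tuples $f_W(\vec x)$ for $\vec x\in\rel B{}$ into $\rel C{}$, so $\vec x\in\rel B{}$ implies $f_W(\vec x)\in\rel C{}$. The work is in reflection: I must show that if $\vec y\in\rel C{}$ and every coordinate of $\vec y$ lies in $f_W[B]$, then the $f_W$-preimage tuple lies in $\rel B{}$. Here I would argue by cases on which ``generator'' of $\rel C{}$ produced $\vec y$. Since $\str B$ is a partite system, every $\vec x\in\rel B{}$ is transversal, so the tuple $f_W(\vec x)$ has all coordinates in distinct partitions of $\str C$; the same holds for any tuple of the form $f_{W'}(\vec x')$. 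Thus if $\vec y=f_{W'}(\vec x')$ for some parameter word $W'$ and $\vec x'\in\rel B{}$, I need to deduce that the preimage under $f_W$ agrees with $\vec x'$. The key point is that, given $\vec y\in f_W[B]^n$ whose coordinates lie in pairwise-distinct partitions, the preimage $f_W^{-1}(\vec y)$ is a well-defined transversal tuple in $B$, and I claim it must equal $\vec x'$: comparing the $\lambda$-coordinate $i_0$ of $W$ (say $W_{i_0}=\lambda$), we have $y_j(i_0)=f_W(f_W^{-1}(y_j))$ read off at $i_0$ equals the preimage vertex itself, while on the other hand $y_j=f_{W'}(x'_j)$ so $y_j(i_0)$ is either $W'_{i_0}(p)$ or $x'_j$ depending on whether $W'_{i_0}\in\Sigma$ or $=\lambda$; working through these subcases pins down $f_W^{-1}(y_j)$ in terms of $x'_j$, and since $\vec x'\in\rel B{}$, the preimage tuple is in $\rel B{}$ as well. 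The unary-predicate tuples $(v)\in\pred C{}$ cannot interfere because those are $1$-ary and live in the language $L_P\setminus L$, not in $L$.

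\textbf{Main obstacle.} The delicate part is the reflection argument for the $L$-relations: one must be careful that a tuple $\vec y\in\rel C{}$ all of whose coordinates happen to lie in $f_W[B]$ really does come from applying $f_W$ to an $\rel B{}$-tuple, rather than from applying some \emph{other} $f_{W'}$ to an unrelated $\str B$-tuple whose image coincidentally lands inside $f_W[B]$. The transversality of tuples in the partite systems $\str A$ and $\str B$ is exactly what rules this out, together with the fact that distinct coordinates of $\vec y$ lie in distinct partitions and the $\lambda$-coordinate of $W$ ``tags'' each vertex of $f_W[B]$ by its $\str B$-preimage. I expect this bookkeeping — matching the coordinate $i_0$ where $W_{i_0}=\lambda$ against the structure of $W'$ — to be the one genuinely fiddly computation, and everything else (predicates, injectivity, preservation) to be routine.
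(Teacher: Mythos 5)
Your overall architecture matches the paper's: injectivity from the $\lambda$-position, predicates and preservation of $L$-relations by construction, and reflection of $L$-relations as the only substantive step, handled by writing a tuple of $\rel{C}{}$ as $f_{W'}(\vec x')$ for some parameter word $W'$ and $\vec x'\in\rel{B}{}$. However, there is a genuine gap in the second subcase of your reflection argument, and it sits exactly at what the paper calls ``probably the most difficult part of the proof.'' When $W'_{i_0}\in\Sigma$ at your chosen $\lambda$-position $i_0$ of $W$, the computation pins down the preimage tuple as $\vec x=W'_{i_0}\bigl(\pi_{\str B}(\vec x')\bigr)$, i.e.\ the image under the \emph{embedding $W'_{i_0}\colon\str A\to\str B$} of the tuple of predicates $\pi_{\str B}(\vec x')$. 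Your conclusion ``since $\vec x'\in\rel{B}{}$, the preimage tuple is in $\rel{B}{}$ as well'' is a non sequitur here: to push $\pi_{\str B}(\vec x')$ through $W'_{i_0}$ and land in $\rel{B}{}$ you need $\pi_{\str B}(\vec x')\in\rel{A}{}$, and in Lemma~\ref{lem:partite} there is no hypothesis that $\pi_{\str B}$ is a homomorphism into $\str A$ — $\str B$ may well contain tuples in $\rel{B}{}$ whose projection is not a relation of the transversal system $\str A$ (for instance, tuples not covered by any copy of $\str A$).

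The missing idea is the symmetric use of a $\lambda$-position $j_0$ of $W'$. Reading off coordinate $j_0$ gives $x'_j=f_W(x_j)(j_0)$; if $W_{j_0}=\lambda$ you fall back into the easy case $\vec x=\vec x'$, and otherwise $W_{j_0}\in\Sigma$ and $\vec x'=W_{j_0}\bigl(\pi_{\str B}(\vec x')\bigr)$, so $\vec x'$ lies in the image of the embedding $W_{j_0}\colon\str A\to\str B$. Only now, because embeddings reflect relations and $\str A$ is transversal, can you conclude $\pi_{\str B}(\vec x')=W_{j_0}^{-1}(\vec x')\in\rel{A}{}$, whence $\vec x=W'_{i_0}\circ W_{j_0}^{-1}(\vec x')\in\rel{B}{}$. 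This is precisely the paper's case distinction (a common $\lambda$-position versus a pair of positions $i,j$ with $W_i=\lambda\neq W'_i$ and $W'_j=\lambda\neq W_j$); without the second position your argument does not close.
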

	Fix a parameter word $W$. Since $W$ contains $\lambda$, $f_W$ is injective. By the definition of relations of $\str C$,
	we already know that $f_W$ is a homomorphism. Fix a relation symbol $R\in L$ and a tuple $\vec{x}$ of vertices of $\str{B}$ such that $f_W(\vec{x})\in \rel{C}{}$. It
	remains to check that $\vec{x}\in \rel{B}{}$. By the construction there is a parameter word $W'$ and a tuple $\vec{y}\in \rel{B}{}$ satisfying $f_W(\vec{x})=f_{W'}(\vec{y})$.
	Consider two cases (the second case is probably most difficult part of the proof):
	\begin{enumerate}
		\item There exists $i<N$ such that $W_i=W'_i=\lambda$.  In this case for every $j<\vert\vec{x}\vert$ it holds that $x_j=f_W(x_j)(i)=f_{W'}(y_j)(i)=y_j$. Consequently $\vec{x}=\vec{y}$ and we are done.
		\item There exist $i,j<N$ such that $W_i=\lambda$, $W'_i\neq \lambda$, $W_j\neq \lambda$, $W'_j=\lambda$.
		      Since $W'_i\in \Sigma$, it is an embedding $\str{A}\to \str{B}$, and $\vec{x}=W'_i(\pi_\str{B}(\vec{x}))$.
		      Similarly, $W_j\colon \str A\to \str B$ is an embedding and $\vec{y}=W_j(\pi_\str{B}(\vec{y}))$. Since $f_W(\vec{x})=f_{W'}(\vec{y})$, it follows that $\pi_\str{B}(\vec{x})=\pi_\str{B}(\vec{y})$, and as $\str A$ is transversal, we in fact obtain $\vec{x} = W'_i \circ W_j^{-1}(\vec{y})$. Consequently, $\vec{x}\in \rel{B}{}$.
	\end{enumerate}
	This finishes the proof of Claim~\ref{clm1}.
	By the construction we also get:
	\begin{claim}
		\label{clm3}
		For every parameter word $W$ and every embedding $\varphi\in \Sigma=\Emb(\str{A},\str{B})$ we have $f_W\circ \varphi=e_w$ where $w=W(\varphi)$.
		Consequently, for every word $w$ we have that $e_w\in \Emb(\str{A},\str{C})$.
	\end{claim}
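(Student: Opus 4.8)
The plan is to prove the identity $f_W\circ\varphi=e_w$ by unwinding the definitions pointwise, and then to obtain the ``consequently'' part by writing an arbitrary word as a substitution into a parameter word.

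First I would fix a parameter word $W$, an embedding $\varphi\in\Sigma$, and set $w=W(\varphi)$; it suffices to check that $f_W(\varphi(p))=e_w(p)$ for every $p\in A$, and since both sides lie in $C_p$, i.e. are functions $N\to B_p$, to check equality at each index $i<N$. Because $\str{A}$ is transversal we may identify each vertex $p\in A$ with its partition, so $\pi_\str{A}(p)=p$, and since $\varphi$ is an $L_P$-embedding (hence predicate-preserving) we get $\varphi(p)\in B_p$. Plugging $v=\varphi(p)$ into the definition of $f_W$ gives
$$f_W(\varphi(p))(i)=\begin{cases}W_i(p)&\hbox{if }W_i\in\Sigma,\\ \varphi(p)&\hbox{if }W_i=\lambda,\end{cases}$$
while by the definition of the substitution $W(\varphi)$ we have $w_i=W_i$ when $W_i\in\Sigma$ and $w_i=\varphi$ when $W_i=\lambda$, so $e_w(p)(i)=w_i(p)$ equals $W_i(p)$ in the first case and $\varphi(p)$ in the second. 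The two expressions coincide, which proves $f_W\circ\varphi=e_w$.

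For the final assertion, given an arbitrary word $w$ I would pick any index $i_0<N$ (note $N=\mathrm{HJ}(\vert\Sigma\vert)\ge 1$), put $\varphi=w_{i_0}\in\Sigma$, and let $W$ be the parameter word of length $N$ obtained from $w$ by replacing the letter in position $i_0$ with $\lambda$. Then $W$ contains $\lambda$, so it is a legitimate parameter word, and $W(\varphi)=w$ by construction. By Claim~\ref{clm1}, $f_W\in\Emb(\str{B},\str{C})$, and $\varphi\in\Emb(\str{A},\str{B})$, so the composite $f_W\circ\varphi$ lies in $\Emb(\str{A},\str{C})$; by the first part this composite equals $e_w$, as desired.

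I do not anticipate a genuine obstacle: the whole claim is a bookkeeping verification once $e_w$, $f_W$, and $W(\varphi)$ are written out. The one place deserving care is the use of transversality of $\str{A}$ — it is exactly what lets us treat $\varphi(p)$ as an element of $B_p$, so that the case split $W_i\in\Sigma$ versus $W_i=\lambda$ in the definition of $f_W$ matches the corresponding case split in the substitution $W(\varphi)$ — together with the trivial observation that $N\ge 1$, which guarantees a coordinate $i_0$ available to be ``opened up'' in the second part.
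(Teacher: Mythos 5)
Your proof is correct and follows essentially the same route as the paper: the identity $f_W\circ\varphi=e_w$ is exactly the pointwise unwinding of the definitions that the paper dismisses with ``by the construction'', and the ``consequently'' part is obtained, as in the paper, by writing an arbitrary word $w$ as $W'(\varphi')$ for a parameter word $W'$ and a letter $\varphi'$ and then invoking Claim~\ref{clm1}. Your explicit choice of $W'$ (opening up one coordinate $i_0$) and your remark that transversality of $\str{A}$ is what places $\varphi(p)$ in $B_p$ are exactly the details the paper leaves implicit.
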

	The ``consequently'' part follows from the fact that for every word $w$ there is a parameter word $W'$ and a letter $\varphi'\in \Sigma$ such that $w=W'(\varphi')$.
	By Claim~\ref{clm1} we know that $f_{W'}$ is an embedding $\str{B}\to\str{C}$ and, by choice of the alphabet, we know that $\varphi'$ is an embedding $\str{A}\to\str{B}$. Therefore
	$e_w=f_{W'}\circ \varphi'$ is an embedding $\str{A}\to\str{C}$.

	Using this we can conclude the proof of Lemma~\ref{lem:partite}. Fix a $2$-colouring $\chi$ of $\Emb(\str{A},\str{C})$. Define a colouring $\chi'$ of $\Sigma^N$ by putting $\chi'(w)=\chi(e_w)$.
	By the Hales--Jewett theorem we obtain a parameter word $W$ such that $\{W(\varphi):\varphi \in \Sigma\}$ is monochromatic. By Claim~\ref{clm3}, $f_W\in \Emb(\str{B},\str{C})$ is the desired embedding of $\str B$ to $\str C$ monochromatic with respect to $\chi$.
\end{proof}
Given an $L$-structure $\str{A}$, an $L_P$-partite system $\str{B}$, and an injection $\alpha\colon A\to L_P\setminus L$, we denote by $\Emb(\str{A},\str{B})_\alpha$ the set of all embeddings $e\colon \str{A}\to \str{B}\vert _L$ such that $\pi_\str{B}\circ e=\alpha$. We say that embeddings in $\Emb(\str{A},\str{B})_\alpha$ have \emph{projection} to $\alpha$. Note that one can define an $L_P$-partite system $\str A^\alpha$ from $\str A$ by putting each $v\in A$ into the predicate $\alpha(v)$ and then, clearly, there is a bijection between $\Emb(\str{A},\str{B})_\alpha$ and $\Emb(\str{A}^\alpha,\str{B})$. We use the $\Emb(\str{A},\str{B})_\alpha$ notation since we believe that it is slightly more convenient for our arguments.

\begin{lemma}[Picture lemma]
	\label{lem:picture}
	Let $L$ be a relational language, let $L_P$ extend $L$ by finitely many predicates, let $\str{A}$ be an $L$-structure, let $\str B$ be an $L_P$-partite system,
	and let $\alpha\colon A\to L_P\setminus L$ be an injection.  Then there exists an $L_P$-partite system $\str{C}$ such
	that for every $2$-colouring $\chi$ of $\Emb(\str{A},\str{C})_\alpha$ there exists an embedding $f\colon \str{B}\to\str{C}$ such that $\chi$ is constant on $\{f\circ e : e\in \Emb(\str{A},\str{B})_\alpha\}$.
\end{lemma}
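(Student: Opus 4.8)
The plan is to reduce the Picture Lemma to the Partite Lemma (Lemma~\ref{lem:partite}) by an appropriate ``amalgamation of partitioned copies'' construction. First I would observe that we may assume $\alpha$ is the inclusion $A\hookrightarrow L_P\setminus L$ (after renaming predicates), so that working with $\Emb(\str A,\str C)_\alpha$ is the same as working with $\Emb(\str A^\alpha,\str C)$ for the transversal $L_P$-partite system $\str A^\alpha$ obtained by putting each $v\in A$ into its own predicate $\alpha(v)$. This is exactly the setup of the Partite Lemma: $\str A^\alpha$ is transversal, and $\str B$ is an arbitrary $L_P$-partite system. The naive hope is that the $\str C$ produced by Lemma~\ref{lem:partite} applied to $\str A^\alpha$ and $\str B$ already works; the catch is the usual one in the partite method --- Lemma~\ref{lem:partite} gives $\str C \longrightarrow (\str B)^{\str A^\alpha}_2$, which is a statement about \emph{all} embeddings of $\str A^\alpha$ into $\str C$ that respect the partition structure, but an $L_P$-partite system $\str C$ may contain extra, non-transversal tuples that were not intended, and more importantly we need the monochromatic $f\colon\str B\to\str C$ together with the fact that every $e\in\Emb(\str A,\str B)_\alpha$ composes with $f$ to give something in $\Emb(\str A,\str C)_\alpha$.

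Concretely, the key steps would be: (1) Form the transversal system $\str A^\alpha$ from $\str A$ and $\alpha$; note $\Emb(\str A,\str C)_\alpha$ is canonically identified with $\Emb(\str A^\alpha,\str C)$ for any $L_P$-partite $\str C$, and similarly for $\str B$ in place of $\str C$, \emph{provided} one checks that an embedding $\str A^\alpha\to\str B$ of partite systems is the same data as an element of $\Emb(\str A,\str B)_\alpha$ --- this is routine since partite-system embeddings must preserve projections and each vertex of $\str A^\alpha$ sits in a distinct predicate. (2) Apply Lemma~\ref{lem:partite} to $\str A^\alpha$ and $\str B$ to obtain $\str C$ with $\str C\longrightarrow(\str B)^{\str A^\alpha}_2$. (3) Unwind the identifications: given a $2$-colouring $\chi$ of $\Emb(\str A,\str C)_\alpha = \Emb(\str A^\alpha,\str C)$, the Partite Lemma yields $f\colon\str B\to\str C$ (an embedding of $L_P$-structures, in particular an embedding $\str B\to\str C$) such that $\chi$ is constant on $\{f\circ e : e\in\Emb(\str A^\alpha,\str B)\} = \{f\circ e : e\in\Emb(\str A,\str B)_\alpha\}$, using that for $e\in\Emb(\str A,\str B)_\alpha$ we have $\pi_\str C\circ(f\circ e) = \pi_\str B\circ e = \alpha$, so $f\circ e\in\Emb(\str A,\str C)_\alpha$ as required.

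The step I expect to be the main (though still modest) obstacle is the bookkeeping in step (1): verifying that the correspondence between ``$L_P$-partite embeddings out of the transversal system $\str A^\alpha$'' and ``$L$-embeddings of $\str A$ with projection $\alpha$'' is a genuine bijection, compatibly on both $\str B$ and $\str C$, and compatibly with composition by $f$. One has to be slightly careful that $\str B$ (and $\str C$) as $L_P$-partite systems may have vertices in predicates outside the image of $\alpha$ and that relations of $\str B$ are transversal, so that an $L$-embedding $\str A\to\str B\vert_L$ with the prescribed projection automatically respects the partite structure and conversely. Once this identification is in place, everything else is a direct citation of Lemma~\ref{lem:partite}, so the real content of the Picture Lemma is essentially a repackaging of the Partite Lemma into a form convenient for the induced partite construction in Section~\ref{sec:induced}.

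Alternatively --- and this is how I would actually write it if the bookkeeping above feels heavy --- one skips the transversal reformulation and argues directly: apply the Partite Lemma verbatim to the transversal system $\str A^\alpha$ and to $\str B$, get $\str C$, and then simply note that every embedding of $\str A^\alpha$ into an $L_P$-partite system is the same as an embedding of $\str A$ with projection $\alpha$ \emph{by definition of} $\str A^\alpha$ (each vertex has its own predicate, and transversality of the target forces the map to be injective on relations in the right way). This makes the proof a one-paragraph deduction. The hard part, such as it is, is purely notational: making sure the reader (and the later sections) can see that $\Emb(\str A,\str C)_\alpha$ really is the object being coloured, and that the monochromatic picture $f\colon\str B\to\str C$ interacts with $\Emb(\str A,\str B)_\alpha$ exactly as the statement demands.
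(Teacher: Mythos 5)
There is a genuine gap, and it sits exactly where you wave it off as ``bookkeeping''. In this paper a partite system is \emph{transversal} only if it meets \emph{every} predicate of $L_P\setminus L$ in exactly one vertex (the definition literally requires the vertex set to equal $L_P\setminus L$). Your $\str A^\alpha$ places the vertices of $\str A$ into the predicates $\alpha[A]$ only, and in every intended application of the Picture Lemma (e.g.\ in the proof of Theorem~\ref{thm:unNR}) the injection $\alpha$ is far from surjective: $\str B$ has many partitions outside $\alpha[A]$. So your step (2) --- ``apply Lemma~\ref{lem:partite} to $\str A^\alpha$ and $\str B$'' --- invokes the Partite Lemma on a pair that does not satisfy its hypothesis. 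This is not a notational defect: the construction in the proof of Lemma~\ref{lem:partite} sets $C=\bigcup_{p\in A}C_p$ with $C_p$ the set of functions $N\to B_p$, and defines $f_W(v)(i)=W_i(p)$ for $v\in B_p$, where the letter $W_i\in\Emb(\str A,\str B)$ is evaluated at the predicate $p$; both of these are meaningless for vertices of $\str B$ lying in predicates not hit by $\str A$. There is no off-the-shelf version of the Partite Lemma for non-surjective $\alpha$ to cite, and you noticed the symptom (``$\str B$ may have vertices in predicates outside the image of $\alpha$'') but misdiagnosed it as harmless.

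The missing idea is the two-step decomposition the paper uses. First restrict $\str B$ to the substructure $\str B'$ induced on the vertices whose projection lies in $\alpha[A]$, and shrink the language to $L_{P'}$ containing only the predicates $p_{\alpha(j)}$, $j\in A$; over $L_{P'}$ the system $\str A'$ really is transversal, so Lemma~\ref{lem:partite} applies to $\str A'$ and $\str B'$ and yields $\str C'$ with $\str C'\longrightarrow(\str B')^{\str A'}_2$. Second --- and this is the step your proposal omits entirely --- extend $\str C'$ to an $L_P$-partite system $\str C$ by a series of free amalgamations so that every embedding $\str B'\to\str C'$ extends to an embedding $\str B\to\str C$. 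Only after this gluing can the monochromatic copy of $\str B'$ produced by the Partite Lemma be upgraded to the monochromatic copy of $\str B$ that the Picture Lemma demands. Your identification of $\Emb(\str A,\str C)_\alpha$ with embeddings of a transversal system, and the final colour-transfer computation, are fine as far as they go; but without the restriction to $\str B'$ and the free-amalgamation extension, the proof does not go through.
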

\begin{proof}
	First consider the substructure $\str{B}'$ induced by $\str{B}$ on
	$$B'=\{v\in \str{B}:\pi_\str{B}(v)\in \alpha[A]\}.$$
	Let language $L_{P'}\subseteq L_P$ extend language $L$ by predicates $p_{\alpha(j)}$, $j\in A$.
	Create a transversal $L_{P'}$-partite system $\str{A}'$ such that $\alpha$ is an isomorphism $\str A\to \str A'\vert _L$, and every vertex $p\in A'$ is in the predicate $p_{\str A'}$.
	Consider $\str{B}'$ to be an $L_{P'}$-partite system, and let $\str{C}'$ be given by Lemma~\ref{lem:partite} applied on $\str{A}'$ and $\str{B}'$.

	Now, consider $\str{C}'$ as $L_P$-partite system and create $\str{C}$ by extending  $\str{C}'$ such that every embedding $f\colon \str{B}'\to \str{C}'$ extends to an embedding $f^+\colon \str{B} \to \str{C}$. This can be accomplished by series of free amalgamations (see Figure~\ref{fig:picturei}).
	\begin{figure}
		\centering
		\includegraphics{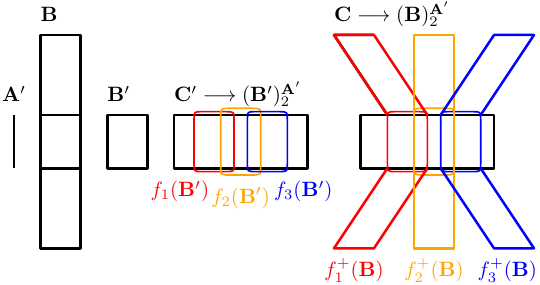}
		\caption{Construction of $\str{C}$ in the proof of Lemma~\ref{lem:picture}.}
		\label{fig:picturei}
	\end{figure}

	Given $\chi\colon \Emb(\str{A},\str{C})_\alpha\to 2$, let $\chi'$ be the colouring of $\Emb(\str{A}',\str{C}')$ defined by $\chi'(e)=\chi(e\circ \alpha)$ for every embedding $e\colon\str{A}'\to \str{B}'$. By Lemma~\ref{lem:partite}, there is an  embedding $f'\colon \str B'\to \str C'$ monochromatic with respect to $\chi'$. By the previous paragraph, there is an embedding $f\colon \str B\to \str C$ extending $f'$ which is the desired embedding.
\end{proof}

Now we are ready to prove Theorem~\ref{thm:unNR}.
\begin{proof}[Proof of Theorem~\ref{thm:unNR} using the Partite Construction]
	Fix a language $L$, finite ordered $L$-struc\-tures $\str{A}$ and $\str{B}$, and put $k=\vert A\vert$ and $n=\vert B\vert $.
	Without loss of generality assume that $A=k=\{0,1,\ldots, k-1\}$ and $B=n=\{0,1,\ldots,n-1\}$ and the orders $<_\str{A}$ and $<_\str{B}$
	correspond to the order of the integers.  Let $N$ be an integer satisfying $N\to (n)^k_2$ (given by the Ramsey theorem).
	Let $L_P$ extend language $L$ by predicates $\pred{}{0}$, $\pred{}{1}$, \ldots, $\pred{}{N-1}$
	and put $P=L\setminus L_P$ to be set of all these predicates. Order $P$ as $\pred{}{0}<\pred{}{1}<\cdots<\pred{}{N-1}$.

	Enumerate all order-preserving injections $\alpha\colon k\to P$ as $\alpha_0$, $\alpha_1$, \ldots, $\alpha_{m-1}$.	We shall define $L_P$-partite systems $\str{P}_0, \str{P}_1, \ldots, \str{P}_m$ (called \emph{pictures}) such that
	for every $0\leq i<m$ and every 2-colouring of $\Emb(\str A,\str{P}_{i+1})_{\alpha_i}$ there is an embedding $f_i\colon \str{P}_{i}\to \str{P}_{i+1}$ such that $\{f_i\circ e:e\in\Emb(\str A,\str{P}_{i})_{\alpha_i}\}$ is constant.
	\begin{figure}
		\centering
		\includegraphics{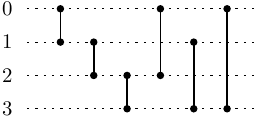}
		\caption{Picture $\str{P}_0$ for $N=4$ and $\str{B}$ being a graph edge.}
		\label{fig:P0}
	\end{figure}

	We start by constructing $\str{P}_0$ with the following property: for every order-preserving injection $\beta\colon n\to P$
	there exists an embedding $f_\beta\colon\str{B}\to \str{P}_0\vert _L$ such that $\pi_{\str{P}_0}\circ f_\beta=\beta$.  This can be easily done by starting with an empty $L_P$-partite system and, for every order-preserving injection $\beta\colon n\to P$, adding a disjoint copy of $\str{B}$ with vertices in the corresponding partitions. See Figure~\ref{fig:P0}.

	Finally, by repeatedly applying Lemma~\ref{lem:picture}, we obtain pictures $\str{P}_1$, $\str{P}_2$, \ldots, $\str{P}_m$.

	\begin{claim}
		$\str{P}_m\vert _L \longrightarrow (\str{B})^\str{A}_2$.
	\end{claim}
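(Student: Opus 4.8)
The plan is to unwind the tower of pictures $\str{P}_m,\str{P}_{m-1},\ldots,\str{P}_0$ backwards against a given colouring, and then to feed the resulting colouring of the $k$-subsets of $P$ into the classical Ramsey theorem via the choice $N\to(n)^k_2$.

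First I would fix an arbitrary $2$-colouring $\chi\colon\Emb(\str{A},\str{P}_m\vert_L)\to 2$ and set $\chi^{(m)}=\chi$. Then, for $i$ decreasing from $m-1$ to $0$, I would restrict $\chi^{(i+1)}$ to $\Emb(\str{A},\str{P}_{i+1})_{\alpha_i}$, apply the defining property of the pictures to obtain an embedding $f_i\colon\str{P}_i\to\str{P}_{i+1}$ of $L_P$-partite systems on which $\chi^{(i+1)}$ is constant over $\{f_i\circ e:e\in\Emb(\str{A},\str{P}_i)_{\alpha_i}\}$, and define $\chi^{(i)}(e)=\chi^{(i+1)}(f_i\circ e)$ for $e\in\Emb(\str{A},\str{P}_i)$. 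With $F=f_{m-1}\circ\cdots\circ f_0\colon\str{P}_0\to\str{P}_m$, a straightforward induction gives $\chi(F\circ e)=\chi^{(0)}(e)$ for every $e\in\Emb(\str{A},\str{P}_0)$. Since each $f_i$ is a morphism of $L_P$-partite systems, it preserves the predicates, so $\pi_{\str{P}_{i+1}}\circ f_i=\pi_{\str{P}_i}$, whence the $f_i$ (and hence $F$) preserve projections; combined with the monochromaticity at each level, this shows that for every $j<m$ the value $\chi(F\circ e)$ depends only on $j$ for $e\in\Emb(\str{A},\str{P}_0)_{\alpha_j}$ --- call it $c_j$.

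Next I would check that the only copies of $\str{A}$ we ever need are covered by some $\alpha_j$. Because $<$ is a relation of $L$ and every $L$-tuple of an $L_P$-partite system is transversal, any embedding $e\colon\str{A}\to\str{P}_0\vert_L$ sends $<_\str{A}$-comparable distinct vertices to distinct partitions, so $\pi_{\str{P}_0}\circ e$ is injective; and for embeddings arising from copies of $\str{A}$ inside order-preservingly embedded copies of $\str{B}$ it is moreover order-preserving. Concretely, recall that $\str{P}_0$ contains for each order-preserving injection $\beta\colon n\to P$ an embedding $f_\beta\colon\str{B}\to\str{P}_0\vert_L$ with $\pi_{\str{P}_0}\circ f_\beta=\beta$, and that every $g\in\Emb(\str{A},\str{B})$ is order-preserving; hence $\pi_{\str{P}_0}\circ f_\beta\circ g=\beta\circ g$ is an order-preserving injection $k\to P$, so it equals some $\alpha_j$. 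Using the fixed order $\pred{}{0}<\cdots<\pred{}{N-1}$ of $P$ to identify order-preserving injections $k\to P$ with $k$-subsets of $P$ (equivalently, with $\binom{N}{k}$), I would define $\psi\colon\binom{N}{k}\to 2$ by $\psi(\alpha_j)=c_j$; this is well defined and total since the $\alpha_j$ exhaust all such injections.

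Finally I would apply $N\to(n)^k_2$ to $\psi$, obtaining $S\in\binom{N}{n}$ with $\psi$ constant on $\binom{S}{k}$, take $\beta\colon n\to P$ to be the order-preserving injection with image $\{\pred{}{s}:s\in S\}$, and conclude: for every $g\in\Emb(\str{A},\str{B})$ the projection $\beta\circ g$ of $f_\beta\circ g$ corresponds to a member of $\binom{S}{k}$, so $\chi(F\circ f_\beta\circ g)=\psi(\beta\circ g)$ equals the constant value of $\psi$ on $\binom{S}{k}$, independently of $g$. As $F\vert_L$ is an embedding of $L$-structures, $F\circ f_\beta\colon\str{B}\to\str{P}_m\vert_L$ is an embedding, and it is monochromatic for $\chi$; this proves the claim, and hence Theorem~\ref{thm:unNR}. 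I expect the main delicate point to be the projection bookkeeping --- that backward composition of the $f_i$ keeps the projection of each copy of $\str{A}$ fixed, and that the copies of $\str{A}$ inside the chosen copy of $\str{B}$ have order-preserving projections --- since that is exactly what bridges the purely partite induction and the classical Ramsey input on $\binom{N}{k}$.
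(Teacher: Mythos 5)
Your proof is correct and follows essentially the same route as the paper's: a backward induction through the pictures producing an embedding $\str{P}_0\to\str{P}_m$ under which colours of copies of $\str{A}$ depend only on their projections, followed by transferring the induced colouring of order-preserving injections to $\binom{N}{k}$ and invoking $N\to(n)^k_2$. You simply spell out the projection bookkeeping that the paper leaves implicit.
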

	Fix a $2$-colouring $\chi\colon \Emb(\str{A},\str{P}_m\vert _L)\to 2$.
	Using backward induction, obtain an embedding $\varphi\colon\str{P}_0\to \str{P}_m$
	such that colours of embeddings $\varphi\circ e$, $e\in \Emb(\str{A},\str{P}_0)$, depend only on $\pi_{\str{P}_m}\circ\varphi\circ e$.
	Since for every $e\in \Emb(\str{A},\str{P}_0)$ there exists $i$ such that $\pi_{\str{P}_0}\circ e=\alpha_i$, we obtain a $2$-colouring of the set $\{\alpha_0,\alpha_1,\ldots, \alpha_{m-1}\}$. Since for every $\alpha_i$ there is a unique $A_i\in \binom{N}{k}$ such that for every $j\in A$ we have that $\alpha_i(j)=p_{j'}$ where $j'$ is the $j$-th element of $A_i$, we obtain a $2$-colouring of $\binom{N}{k}$. Let $S\in \binom{N}{n}$ be a monochromatic subset.

	Let $\beta\colon\str{B}\to \str{P}_0$ be the embedding satisfying for every $j\in B$ that $\pi_{\str{P}_0}(\beta(j))=p_{j'}$ where $j'$ is the $j$-th element of $S$.  It follows that $\varphi\circ \beta$ is the desired monochromatic embedding $\str{B}\to \str{P}_m\vert _L$.

	Note that for every $i\leq m$ it holds that $\pi_{\str P_i} \colon \str P_i \to P$ is monotone, that is, if $x<_{\str P_i} y$ then $\pi_{\str P_i}(x) < \pi_{\str P_i}(y)$ in the order on $P$. This is true for $\str P_0$ as we consider only order-preserving injections $n\to P$, for $i>0$ this follows by induction using the fact that all $\alpha_i$'s are also order-preserving, as Lemma~\ref{lem:partite} preserves this, and Lemma~\ref{lem:picture} then just freely amalgamates structures with this property.

	Consequently, $<_{\str{P}_m}$ is acyclic, and so we can construct an ordered $L$-structure $\str{C}$ from $\str{P}_m\vert _L$ by extending $<_{\str{P}_m}$ arbitrarily.
\end{proof}

Note that the construction in the proof of Lemma~\ref{lem:partite} may create irreducible substructures which do not belong to any copy of $\str B$. For example, in Figure~\ref{fig:partite1} there is such a clique on
vertices $00$, $32$ and $55$. In order to prove Theorem~\ref{thm:NR}, one needs to address this problem. In the next section, we will see how to do it.

\subsection{Induced partite construction}\label{sec:induced}
Induced partite construction follows the same key steps as the non-induced variant with an important extra invariant
that partite systems project to well-defined structures. We will make this projection an essential part of the partite system as follows.
\begin{definition}[$\str{A}$-partite System]
	Let $L_P$ be a language extending $L$ by finitely many predicates
	and let $\str{A}$ be an $L$-structure with $A\subseteq L_P\setminus L$. We call
	an $L_P$-partite system $\str{B}$ an \emph{$\str{A}$-partite system} if $\pi_\str{B}$ is
	a homomorphism-embedding $\str{B}\vert _L\to\str{A}$.
\end{definition}
\begin{definition}[Powers of $\str{A}$-partite Systems]
	\label{def:power}
	Let $\str{B}$ be an $\str{A}$-partite system and let $N$ be an integer.  Then by $\str{B}^N$ we denote the \emph{$N$-th power of $\str{B}$}. This is the $\str{A}$-partite system $\str{C}$ defined as follows:
	\begin{enumerate}
		\item $C=\bigcup_{\pred{}{}\in A} C_{\pred{}{}}$, where $C_{\pred{}{}}$ is the set of all functions $f\colon N\to B_{\pred{}{}}$ for every $\pred{}{}\in A$.
		\item Given a relation $R\in L_P$ of arity $n$ we put $(x_0,\allowbreak x_1,\allowbreak \ldots,\allowbreak x_{n-1})\in \rel{C}{}$ if and only if for every $i<N$ it holds that $(x_0(i),\allowbreak x_1(i),\allowbreak \ldots,\allowbreak x_{n-1}(i))\in \rel{B}{}$. (Note that $R$ may also be a predicate.)
		\item Given a function $\func{}{}\in L_P$ of arity  $n$ we put $y\in \func{C}{}(x_0,\allowbreak x_1,\allowbreak \ldots,\allowbreak x_{n-1})$ if and only if for every $i<N$ it holds that $y(i)\in\func{B}{}(x_0(i),\allowbreak x_1(i),\allowbreak \ldots,\allowbreak x_{n-1}(i))$.
	\end{enumerate}
\end{definition}
$\str{B}^N$ will later serve as a Ramsey object. However, in most applications we will not need to consider the structure of $\str{B}^N$ and use only the fact that it is
an $\str{A}$-partite system.
The applications of the induced partite construction are often based on maintaining very subtle invariants
about individual pictures constructed, such as the following property:
\begin{definition}
	\label{def:based}
	Let $\str{D}$ be an $L$-structure, let $\str{A}\subseteq \str D$ be its substructure, and let $\str{P}$ be a $\str{D}$-partite system.
	Denote by $\str{P}\vert _{\str{A}}$ the subsystem induced by $\str{P}$ on its vertices with projection to $\str{A}$.
	We say that a $\str{D}$-partite system $\str{Q}$ is \emph{$(\str{P},\str{A})$-based}
	if there exists $N>0$ such that $\str{Q}$ is created from $(\str{P}\vert _\str{A})^N$ by extending,
	using free amalgamation, every embedding $\str{P}\vert _\str{A}\to (\str{P}\vert _\str{A})^N$ to an embedding $\str{P}\to \str{Q}$.
\end{definition}

The following can serve as a black-box for applications of the partite construction.
\begin{theorem}[Induced Partite Construction]
	\label{thm:inducedpartite}
	Let $L$ be a language, and let $\str{A}$, $\str{B}$, and $\str{D}$ be $L$-structures satisfying $\str{D}\longrightarrow (\str{B})^\str{A}_2$. Then there exists a $\str{D}$-partite system $\str{P}$ satisfying $$\str{P}\vert _L\longrightarrow (\str{B})^\str{A}_2.$$
	Moreover, there is a sequence of $\str{D}$-partite systems $\str{P}_0$, $\str{P}_1$, $\str{P}_2$, \ldots, $\str{P}_m=\str{P}$ satisfying:
	\begin{enumerate}
		\item\label{thm:inducedpartite:1} $\str{P}_0\vert _L$ is a disjoint union of multiple copies of $\str{B}$,
		\item\label{thm:inducedpartite:2} for every $i<m$ there exists an embedding $\alpha_i\colon \str{A}\to\str{D}$ such that $\str{P}_{i+1}$ is $(\str{P}_i,\alpha_i(\str{A}))$-based, and
		\item\label{thm:inducedpartite:3} for every $i\leq m$, if $\str E$ is an irreducible substructure of $\str P_i$ then there is $\str B'\subseteq \str D$ isomorphic to $\str B$ such that $\pi_{\str P_i}(\str E)$ is a substructure of $\str B'$.
	\end{enumerate}
\end{theorem}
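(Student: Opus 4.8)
The plan is to run the induced partite construction by iterating the Picture Lemma (Lemma~\ref{lem:picture}) over a carefully organized list of copies of $\str A$ inside $\str D$, just as in the proof of Theorem~\ref{thm:unNR}, but with one crucial difference: instead of working with $L_P$-partite systems with no control over the projection, we work throughout with $\str D$-partite systems, so that the projection homomorphism-embedding into $\str D$ is maintained as an invariant. First I would introduce the base picture $\str P_0$: take $L_P$ to extend $L$ by one predicate $p_v$ for every $v\in D$, and let $\str P_0\vert _L$ be a disjoint union of copies of $\str B$, one for each embedding $g\colon \str B\to \str D$, with the $p_v$-partition of that copy consisting of the single vertex $g^{-1}(v)$ (when $v\in g[B]$). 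By construction $\pi_{\str P_0}$ is a homomorphism-embedding (in fact a local isomorphism onto each copy) $\str P_0\vert _L\to \str D$, so $\str P_0$ is a $\str D$-partite system, establishing~(\ref{thm:inducedpartite:1}), and every irreducible substructure of $\str P_0$ lives inside one of these copies of $\str B$, establishing~(\ref{thm:inducedpartite:3}) for $i=0$.

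Next I would use the hypothesis $\str D\longrightarrow(\str B)^\str A_2$ to reduce the problem to finitely many ``one-copy-of-$\str A$'' colourings. Enumerate all embeddings $\alpha_0,\dots,\alpha_{m-1}\colon \str A\to \str D$. For each $i$, apply the Picture Lemma to the structure $\str A$, the current picture $\str P_i$ (viewed as an $L_P$-partite system), and the injection $\alpha_i\colon A\to L_P\setminus L$ given by $j\mapsto p_{\alpha_i(j)}$, to obtain $\str P_{i+1}$. Inspecting the proof of Lemma~\ref{lem:picture}, the structure it produces is exactly a free-amalgamation extension of a power of $\str P_i\vert _{\alpha_i(\str A)}$: this is precisely the assertion that $\str P_{i+1}$ is $(\str P_i,\alpha_i(\str A))$-based, giving~(\ref{thm:inducedpartite:2}). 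I would then check that $\str P_{i+1}$ is again a $\str D$-partite system: taking a power of a $\str D$-partite system preserves the projection to $\str D$ by Definition~\ref{def:power} (the pointwise interpretation of relations and functions is compatible with the diagonal map into $\str D$), and free amalgamation along the embeddings $\str P_i\vert _{\alpha_i(\str A)}\to (\str P_i\vert _{\alpha_i(\str A)})^N$ extends $\pi$ to a homomorphism-embedding on the whole of $\str P_{i+1}$ because the two sides of each amalgamation project compatibly and no new irreducible substructures straddling the amalgam are created. The invariant~(\ref{thm:inducedpartite:3}) propagates for the same reason: an irreducible substructure $\str E$ of $\str P_{i+1}$ must (by freeness of the amalgam) lie inside a single copy of $\str P_i$, hence $\pi(\str E)$ lies inside $\pi(\text{that copy})$, which is inside some copy $\str B'$ of $\str B$ in $\str D$ by the inductive hypothesis applied to $\str P_i$.

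Finally I would argue that $\str P:=\str P_m$ satisfies $\str P\vert _L\longrightarrow(\str B)^\str A_2$ by exactly the backward-induction argument of Theorem~\ref{thm:unNR}: given a $2$-colouring $\chi$ of $\Emb(\str A,\str P_m\vert _L)$, pull it back through the chain of Picture-Lemma embeddings $\str P_0\to\str P_1\to\cdots\to\str P_m$ to produce an embedding $\varphi\colon\str P_0\to\str P_m$ for which the colour of $\varphi\circ e$ depends only on $\pi_{\str P_0}\circ e$ (equivalently, on which $\alpha_i$ the copy $e$ sits over); since $\str P_0\vert _L$ is a disjoint union of copies of $\str B$, each such copy of $\str B$ in $\str P_0$ already witnesses a monochromatic $\str B$, and $\varphi$ transports it into $\str P_m$. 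I expect the main obstacle to be the bookkeeping in the inductive verification that the ``$\str D$-partite'' property (i.e.\ $\pi$ being a homomorphism-embedding into $\str D$) and invariant~(\ref{thm:inducedpartite:3}) are genuinely preserved by the Picture Lemma — in particular, confirming that the free amalgamations performed inside Lemma~\ref{lem:picture} do not create transversal irreducible configurations whose projection escapes every copy of $\str B$. This is where one has to look carefully at what Lemma~\ref{lem:picture} actually builds rather than treat it as a black box, and it is the point at which the induced construction genuinely departs from (and improves upon) the non-induced one.
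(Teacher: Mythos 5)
Your overall skeleton matches the paper's proof (base picture $\str P_0$ made of disjoint copies of $\str B$, one per embedding $\str B\to\str D$; iterate a picture lemma over embeddings of $\str A$ into $\str D$; conclude by backward induction using $\str D\longrightarrow(\str B)^{\str A}_2$), but there are two genuine gaps. First, you invoke the non-induced Picture Lemma (Lemma~\ref{lem:picture}) and assert that ``the structure it produces is exactly a free-amalgamation extension of a power of $\str P_i\vert_{\alpha_i(\str A)}$.'' It is not: the non-induced Partite Lemma puts into the relations of its output only those tuples of the form $f_W(\vec x)$, which is strictly sparser than the power of Definition~\ref{def:power} (a tuple of the power is in a relation iff \emph{every} coordinate is). With only the sparse structure, property~(\ref{thm:inducedpartite:2}) (based-ness) fails by definition, and the projection need not remain a homomorphism-embedding (in a language with two binary relations, two vertices of the output can be $R$-related but not $S$-related while their projections are both, so $\pi$ fails to be an embedding on that irreducible pair). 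This is precisely why the paper introduces the power construction and the separate induced Partite and Picture Lemmas (Lemmas~\ref{lem:indpartite} and~\ref{lem:indpicutre}); the modification you flag as ``bookkeeping'' is actually the substantive content of the induced construction and cannot be read off from Lemma~\ref{lem:picture}.

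Second, your verification of property~(\ref{thm:inducedpartite:3}) rests on the claim that every irreducible substructure of $\str P_{i+1}$ lies inside a single glued copy of $\str P_i$ ``by freeness of the amalgam.'' This is false: the power $(\str P_i\vert_{\alpha_i(\str A)})^N$ contains ``diagonal'' irreducible substructures lying in no embedded copy of $\str P_i\vert_{\alpha_i(\str A)}$ (this is exactly the phenomenon of the clique on the vertices $00$, $32$, $55$ in Figure~\ref{fig:partite1}, and it is the whole reason the induced construction is needed). The correct dichotomy is: either $\str E$ lies in a glued copy of $\str P_i$ (induction hypothesis applies), or $\str E$ lies in the power, in which case $\pi_{\str P_{i+1}}(\str E)\subseteq\alpha_i(\str A)$ and one must know that $\alpha_i(\str A)$ is contained in a copy of $\str B$ inside $\str D$. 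The paper secures this by enumerating only those embeddings $\alpha_i\colon\str A\to\str D$ that extend to an embedding $\beta_i\colon\str B\to\str D$ (which still suffices for the final Ramsey argument, since every copy of $\str A$ inside a copy of $\str B$ in $\str P_0$ projects to such an $\alpha_i$); you enumerate all embeddings $\str A\to\str D$ and thereby lose exactly the fact needed to close this case.
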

We devote the rest of this section to a proof of Theorem~\ref{thm:inducedpartite}, which will have the same overall structure as the non-induced version from the previous section.  The reader is welcome to first enjoy the applications in Appendix~\ref{aaplications}.

\begin{lemma}[Induced Partite Lemma]
	\label{lem:indpartite}
	For every $L$-structure $\str A$ and every $\str{A}$-partite system $\str{B}$ there is $N\in \mathbb N$ such
	that $\str{B}^N\longrightarrow (\str{B})^{\str{A}'}_2$ where $\str{A'}$ is the $\str{A}$-partite system created from $A$ by putting every vertex $\pred{}{}\in \str{A}$ into the predicate $\pred{}{}$.
\end{lemma}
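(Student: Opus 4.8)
The plan is to reprise verbatim the Hales--Jewett argument from the proof of Lemma~\ref{lem:partite}, with $\str{A}'$ playing the role of the transversal system and the power $\str{B}^N$ playing the role of the Ramsey object $\str C$. First I would set $\Sigma=\Emb(\str{A}',\str{B})$ and $N=\mathrm{HJ}(\vert\Sigma\vert)$ (if $\Sigma=\emptyset$ then $N=1$ works, since $\str{B}^1\cong\str{B}$ and the arrow is then vacuous). By Definition~\ref{def:power} the vertex set of $\str{B}^N$ consists, for each $p\in A$, of all functions $f\colon N\to B_p$, exactly as the vertex set of $\str C$ in the proof of Lemma~\ref{lem:partite}. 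To each word $w\in\Sigma^N$ I associate the map $e_w\colon A'\to B^N$ with $e_w(p)(i)=w_i(p)$, and to each parameter word $W$ the map $f_W\colon B\to B^N$ with $f_W(v)(i)=W_i(p)$ when $W_i\in\Sigma$ (here $p=\pi_\str{B}(v)$) and $f_W(v)(i)=v$ when $W_i=\lambda$. Both are predicate-preserving by construction.

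\textbf{Key step: these maps are embeddings into the power.} The first block of work is to check that $f_W\in\Emb(\str{B},\str{B}^N)$ and, as a consequence, $e_w\in\Emb(\str{A}',\str{B}^N)$ for every word $w$. Injectivity of $f_W$ is immediate, since $W$ has a coordinate $i$ with $W_i=\lambda$ and there $f_W(v)(i)=v$. That $f_W$ is a homomorphism follows coordinatewise from Definition~\ref{def:power}: on a $\lambda$-coordinate the induced map is the identity, and on a coordinate $W_i=\varphi\in\Sigma$ it is $\varphi$ precomposed with $\pi_\str{B}$, which preserves all relations and functions because $\pi_\str{B}$ is a homomorphism-embedding $\str{B}\vert_L\to\str{A}$, each tuple in a relation of $\str{B}$ is transversal ($\str{B}$ being a partite system), and $\varphi$ is an embedding. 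The reflection of relations and functions --- the delicate case~2 in the proof of Lemma~\ref{lem:partite} --- becomes \emph{easier} here: if $f_W(\vec x)$ lies in a relation of $\str{B}^N$, then by Definition~\ref{def:power} it lies there coordinatewise, and reading off any coordinate $i$ with $W_i=\lambda$, where $f_W(x_j)(i)=x_j$, shows $\vec x$ already lies in that relation of $\str{B}$; the same $\lambda$-coordinate trick handles functions (in the paper's sense of ``embedding'', i.e. injective homomorphism with inverse a monomorphism). Writing an arbitrary word $w$ as $W(\varphi)$ with $W_0=\lambda$, $W_i=w_i$ for $i\ge 1$ and $\varphi=w_0$, one gets $e_w=f_W\circ\varphi$, a composite of embeddings, so $e_w\in\Emb(\str{A}',\str{B}^N)$.

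\textbf{The Ramsey conclusion.} The only bookkeeping identity needed is $f_W\circ\varphi=e_{W(\varphi)}$ for every parameter word $W$ and every $\varphi\in\Sigma$, proved by the same direct coordinatewise comparison as Claim~\ref{clm3}. Then, given a $2$-colouring $\chi$ of $\Emb(\str{A}',\str{B}^N)$, define $\chi'\colon\Sigma^N\to 2$ by $\chi'(w)=\chi(e_w)$; the Hales--Jewett theorem yields a parameter word $W$ on which $\{W(\varphi):\varphi\in\Sigma\}$ is $\chi'$-monochromatic; then $f_W\in\Emb(\str{B},\str{B}^N)$ and $\{f_W\circ e:e\in\Emb(\str{A}',\str{B})\}=\{e_{W(\varphi)}:\varphi\in\Sigma\}$ is $\chi$-monochromatic, which is exactly $\str{B}^N\longrightarrow(\str{B})^{\str{A}'}_2$.

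\textbf{Main obstacle.} I do not expect any step to be a genuine obstacle; the one point deserving care is confirming that $\str{B}^N$, which carries \emph{all} coordinatewise tuples rather than only those forced by the maps $f_W$, does not thereby acquire relations or functions that would spoil the ``embedding'' (as opposed to merely ``homomorphism'') property of $f_W$ and $e_w$ --- and, as noted above, the presence of a $\lambda$-coordinate in every parameter word reduces even this to a one-line check. Minor loose ends to tidy up are the degenerate case $\Emb(\str{A}',\str{B})=\emptyset$ and, if the lemma is to be used in languages with function symbols, writing out the reflection argument for functions in full accordance with the paper's definition of an embedding.
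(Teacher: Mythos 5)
Your proposal is correct and follows essentially the same route as the paper's proof: take $\Sigma=\Emb(\str A',\str B)$, $N=\mathrm{HJ}(\vert\Sigma\vert)$, use the coordinatewise maps $e_w$ and $f_W$ into $\str B^N$, verify the embedding property via the homomorphism-embedding $\pi_{\str B}$ on $\Sigma$-coordinates and a $\lambda$-coordinate for reflection, and conclude with Hales--Jewett. The only cosmetic differences are that you explicitly treat the degenerate case $\Sigma=\emptyset$ and omit the paper's (inessential) normalization that every vertex of $\str B$ lies in a copy of $\str A'$.
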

\begin{proof}
	Fix $\str{A}$ and $\str{B}$ as in the statement.
	Without loss of generality assume that for every vertex $v\in B$ there exists $f\in \Emb(\str{A},\str{B})$ such that $v\in f[A]$.
	Put $\Sigma=\Emb(\str{A},\str{B})$ and let $N=\mathrm{HJ}(\vert\Sigma\vert,2)$. From now on, all words and parameter words will be in the alphabet $\Sigma$ and have length $N$.  Put $\str{C}=\str{B}^N$. We claim that $$\str{C}\longrightarrow (\str{B})^\str{A}_2.$$
	To show this, we again associate every word $w$ with an embedding $e_w\colon \str{A}\to \str{C}$ and every parameter word $W$ with an embeddings $f_W\colon \str{B}\to\str{C}$ in the exactly same way as in the proof of Lemma~\ref{lem:partite}:
	\begin{description}
		\item[Functions $e_w\colon A\to C$:]~\\ Given a word $w$, denote by $e_w$ the function $A\to C$ where for every $\pred{}{}\in A$ we have $e_w(\pred{}{})\in C_{\pred{}{}}$ defined by $e_w(\pred{}{})(i)=w_i(\pred{}{})$ for every $i<N$.

		\item[Functions $f_W \colon B\to C$:]~\\ Given a parameter word $W$, denote by $f_W$ the function $f_W\colon B\to C$
		      such that for every $p\in A$ and every $v\in B_p$  we have $f_W(v)\in C_p$ defined by putting, for every $i<N$,
		      $$f_W(v)(i)=\begin{cases}W_i(p)& \hbox{if }W_i\in \Sigma, \\ v & \hbox{if }W_i=\lambda.\end{cases}$$
	\end{description}
	\begin{claim}
		For every parameter word $W$ it holds that $f_W\in \Emb(\str{B},\str{C})$.
	\end{claim}
	Fist, notice that that $f_W$ is injective as $W$ contains $\lambda$. It is easy to verify that $f_W$ preserves all predicates. It remains to prove that $f_W$ preserves all relations and functions from $L$.

	Fix a relation symbol $\rel{}{}\in L$, and a tuple $\vec{x}=(x_0,\allowbreak x_1,\allowbreak \ldots,\allowbreak  x_{n-1})$ of vertices of $\str{B}$.
	First assume that $\vec{x}\in \rel{B}{}$. To see that $f_W(\vec{x})\in \rel{C}{}$, by Definition~\ref{def:power} we need to verify that for every $i<N$ it holds that $(f_W(x_0)(i),\allowbreak f_W(x_1)(i),\allowbreak \ldots,\allowbreak f_W(x_{n-1})(i))\in \rel{B}{}$.

	If $W_i=\lambda$ then $f_W(x_j)(i)=x_j$ for every $j<n$ and we know that $\vec{x}\in \rel{B}{}$. If $W_i\in \Sigma$ then we know that $W_i$ is an embedding $\str{A}\to \str{B}$, and we can write $f_W(x_j)(i) = W_i\circ \pi_\str B(x_j)$. Note that since $\str B$ is an $\str A$-partite system, $\pi_\str B$ is a homomorphism-embedding $\str B\vert _L \to \str A$,
	and thus $W_i\circ \pi_\str B$ is a homomorphism-embedding as well, which implies that, indeed
	\begin{align*}
		(f_W & (x_0)(i),\allowbreak f_W(x_1)(i),\allowbreak \ldots,\allowbreak f_W(x_{n-1})(i))                                              \\
		     & = (W_i\circ \pi_\str B(x_0),\allowbreak W_i\circ \pi_\str B(x_1),\allowbreak \ldots,\allowbreak W_i\circ \pi_\str B(x_{n-1})) \\
		     & \in \rel{B}{}.
	\end{align*}

	If $\vec{x}\notin \rel{B}{}$, we can consider some index $i$ satisfying $W_i=\lambda$ and we obtain that $f_W(\vec{x})\notin \rel{C}{}$.
	For function symbols one can proceed analogously.
	Now that we established that functions $f_W$ are embeddings, the rest of the proof proceeds in a complete analogy to the non-induced version.
\end{proof}

\begin{lemma}[Induced Picture Lemma]\label{lem:indpicutre}
	Let $L$ be a language, let $\str{A}$ and $\str{D}$ be $L$-structures, let $\str{B}$ be a $\str{D}$-partite system, and let $\alpha\colon \str{A}\to \str{D}$ be an embedding.
	Then there exists a $(\str{B},\alpha(\str{A}))$-based $\str{D}$-partite system $\str{C}$ such
	that for every $2$-colouring $\chi$ of $\Emb(\str{A},\str{C})_\alpha$ there exists an embedding $f\colon\str{B}\to\str{C}$ such that $\chi$ restricted to $\{f\circ e:e\in \Emb(\str{A},\str B)_\alpha\}$ is constant.
\end{lemma}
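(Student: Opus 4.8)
The plan is to transcribe the proof of the non-induced Picture Lemma (Lemma~\ref{lem:picture}), with Lemma~\ref{lem:indpartite} playing the role of Lemma~\ref{lem:partite}, and to arrange the concluding free-amalgamation step so that the output is, \emph{by construction}, of the shape demanded by Definition~\ref{def:based}. Thus the only real work is to check that the ``induced'' bookkeeping goes through.

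First I would pass to the relevant subsystem. Write $\str{A}_\alpha = \alpha(\str{A})$ for the image of $\str{A}$ in $\str{D}$, regarded as an $L$-structure with vertex set $\alpha[A]\subseteq D\subseteq L_P\setminus L$, and let $\str{B}' = \str{B}\vert_{\alpha(\str{A})}$ be the subsystem of $\str{B}$ induced on the vertices whose projection lies in $\alpha[A]$. Restricting the homomorphism-embedding $\pi_\str{B}\colon\str{B}\vert_L\to\str{D}$ shows that $\str{B}'$ is an $\str{A}_\alpha$-partite system. Let $\str{A}'$ be the $\str{A}_\alpha$-partite system with vertex set $\alpha[A]$ in which every vertex lies in its own predicate, exactly as in the statement of Lemma~\ref{lem:indpartite}. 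Applying Lemma~\ref{lem:indpartite} to $\str{A}_\alpha$ and $\str{B}'$ yields $N\in\mathbb N$ with $(\str{B}')^N\longrightarrow(\str{B}')^{\str{A}'}_2$.

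Next I would build $\str{C}$ directly as in Definition~\ref{def:based}: take the power $(\str{B}')^N$ and, for every embedding $g\colon\str{B}'\to(\str{B}')^N$, glue on a copy of $\str{B}$ extending $g$ by free amalgamation over $\str{B}'$; let $\str{C}$ be the result. Since the vertices of $\str{B}$ outside $\str{B}'$ project to $D\setminus\alpha[A]$, the freshly added vertices all project outside $\alpha[A]$, so these copies of $\str{B}$ overlap only inside $(\str{B}')^N$; a routine check (as in Lemma~\ref{lem:picture}) shows $\str{C}$ is a $\str{D}$-partite system, it is $(\str{B},\alpha(\str{A}))$-based by construction, and, crucially, $\str{C}\vert_{\alpha(\str{A})} = (\str{B}')^N$. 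Consequently $\Emb(\str{A},\str{C})_\alpha = \Emb(\str{A},(\str{B}')^N)_\alpha$, and $e'\mapsto e'\circ\alpha$ is a bijection from $\Emb(\str{A}',(\str{B}')^N)$ onto $\Emb(\str{A},(\str{B}')^N)_\alpha$. Now, given $\chi\colon\Emb(\str{A},\str{C})_\alpha\to 2$, I would set $\chi'(e') = \chi(e'\circ\alpha)$ for $e'\in\Emb(\str{A}',(\str{B}')^N)$, apply $(\str{B}')^N\longrightarrow(\str{B}')^{\str{A}'}_2$ to obtain an embedding $f'\colon\str{B}'\to(\str{B}')^N$ on which $\chi'$ is constant over $\{f'\circ e':e'\in\Emb(\str{A}',\str{B}')\}$, and extend $f'$ to an embedding $f\colon\str{B}\to\str{C}$ — such an extension exists precisely because $\str{C}$ is $(\str{B},\alpha(\str{A}))$-based, with $f'$ being the restriction of $f$ to $\str{B}'$. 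For $e\in\Emb(\str{A},\str{B})_\alpha$ we have $e[A]\subseteq\str{B}'$, so writing $e = e''\circ\alpha$ with $e''\in\Emb(\str{A}',\str{B}')$ gives $f\circ e = f'\circ e''\circ\alpha$, whence $\chi(f\circ e) = \chi'(f'\circ e'')$ is independent of $e$; this is exactly the required monochromaticity.

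The only place that needs genuine care is the bookkeeping: keeping straight the two $\alpha$-mediated bijections $\Emb(\str{A},-)_\alpha\leftrightarrow\Emb(\str{A}',-)$, and verifying that the iterated free amalgamation in Definition~\ref{def:based} really produces a $\str{D}$-partite system whose $\alpha(\str{A})$-part is left untouched, i.e. equal to $(\str{B}')^N$ (so that the colouring $\chi$ lives on the same set of embeddings that $(\str{B}')^N\longrightarrow(\str{B}')^{\str{A}'}_2$ speaks about). Once these are pinned down, the argument is a direct transcription of the proof of Lemma~\ref{lem:picture} and no new combinatorics is required.
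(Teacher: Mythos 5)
Your proposal is correct and follows exactly the paper's route: the paper's own proof of this lemma is literally ``construct $\str{C}$ as in the proof of Lemma~\ref{lem:picture}, but use Lemma~\ref{lem:indpartite} instead of Lemma~\ref{lem:partite}'', which is precisely the restriction to $\str{B}\vert_{\alpha(\str{A})}$, power construction, and free-amalgamation extension you carry out. Your write-up simply fills in the bookkeeping (the $\alpha$-mediated bijections and the fact that the glued copies leave the $\alpha(\str{A})$-part untouched) that the paper leaves implicit.
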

\begin{proof}
	Construct $\str{C}$ exactly as in the proof of Lemma~\ref{lem:picture}, but use	Lemma~\ref{lem:indpartite} instead of Lemma~\ref{lem:partite}.
\end{proof}

\begin{proof}[Proof of Theorem~\ref{thm:inducedpartite}]
	We will proceed analogously as in the proof of Theorem~\ref{thm:NR} from the previous section. Fix $L$, $\str A$, $\str B$, and $\str D$ as in the statement. Let $\alpha_0$, $\alpha_1$, \ldots, $\alpha_{m-1}$ be an enumeration of all embeddings $\str A\to \str D$ with the property that there exists and embedding $\beta_i\colon \str B\to \str D$ such that $\alpha_i(\str A)\subseteq \beta_i(\str B)$. We will define $\str D$-partite systems $\str{P}_0, \str{P}_1, \ldots, \str{P}_m$ similarly as in the previous section.

	We start by constructing $\str{P}_0$ with the following property: for every embedding $\beta\colon \str B\to \str D$
	there exists an embedding $f_\beta\colon\str{B}\to \str{P}_0\vert _L$ such that $\pi_{\str{P}_0}\circ f_\beta=\beta$. This can be easily done by starting with an empty $\str{D}$-partite system and, for every $\beta\in \Emb(\str{B},\str{D})$, adding a disjoint copy of $\str{B}$ with vertices in the corresponding partitions.

	Finally, by repeated applications of Lemma~\ref{lem:indpicutre}, we obtain $\str D$-partite systems $\str{P}_1$, $\str{P}_2$, \ldots, $\str{P}_m$. It follows by the same argument as in the non-induced variant that $\str{P}_m\vert _L \longrightarrow (\str{B})^\str{A}_2$. Properties~(\ref{thm:inducedpartite:1}) and~(\ref{thm:inducedpartite:2}) follow directly from the construction.

	We will prove~(\ref{thm:inducedpartite:3}) by induction on $i$. For $i=0$ this trivially follows from the construction. Suppose now that it is true for some $i<m$ and let $\str E$ be an irreducible substructure of $\str P_{i+1}$. There are two possibilities: Either $\str E$ is a substructure of some copy of $\str P_i$ in $\str P_{i+1}$ (and then we can use the induction hypothesis), or $\str E \subseteq (\str{P}_i\vert _{\alpha_i(\str{A})})^N$, but then $\pi_{\str P_{i+1}}(\str E) \subseteq \alpha_i(\str A)$, and so it indeed extends to $\beta_i(\str B)$.
\end{proof}

\subsubsection{Examples of applications}
\label{aaplications}
As a warm-up before proceeding with the next general result we review two proofs which make essential use of the induced partite construction. We start with a proof of the Ne\v set\v ril--R\"odl theorem:
\begin{proof}[Proof of Theorem~\ref{thm:NR}]
	Fix $L$ and $\mathcal F$ as in the statement. Let $\str A$ and $\str B$ be finite ordered $\mathcal F$-free structures.
	Use Theorem~\ref{thm:unNR} to obtain an ordered $L$-structure $\str{D}$ satisfying $\str{D}\longrightarrow (\str{B})^\str{A}_2$,
	and then apply Theorem~\ref{thm:inducedpartite} to get a $\str D$-partite system $\str P$ such that $\str P\vert _L \longrightarrow (\str{B})^\str{A}_2$. Point~(\ref{thm:inducedpartite:3}) of Theorem~\ref{thm:inducedpartite} implies that every irreducible structure of $\str P\vert _L$ embeds to $\str B$, and consequently $\str P\vert _L$ is $\mathcal F$-free, as every $\str F\in \mathcal F$ is irreducible, and hence if there was an embedding $\str F\to \str \str P\vert _L$ then there would also be an embedding $\str F\to \str B$, a contradiction.

	Note that the projection is a homomorphism-embedding $\str{P}\to \str{D}$, and since $\str{D}$ is an ordered structure, we know
	that there is a linear order $<_\str C$ extending $<_\str{P}$. Put $\str C = (\str P\vert _{L\setminus <}, <_\str C)$.
	Note that if $x,y\in C$ are such that they are not ordered by $<_\str{P}$ then they are in no relations of $\str P$ together (as every pair in $\str B$ is ordered). Consequently, every embedding $f\colon \str B\to \str P\vert _L$ is also an embedding $\str B\to \str C$, and thus $\str C\longrightarrow (\str B)^\str A_2$. Moreover, since the $(L\setminus\{<\})$-reducts of members of $\mathcal F$ are irreducible, $\str C$ is also $\mathcal F$-free.
\end{proof}

The main strength of the partite construction is that one can maintain more interesting invariants.
What follows is Ne\v set\v ril and R\"odl's proof that the class of all finite partial orders with a linear extension is Ramsey.
\begin{proof}[Proof of Theorem~\ref{thm:posets}]
	Let $L$ contain two binary relations $<$ and $\ll$, and let $\str{A}$
	and $\str{B}$ be finite $L$-structures such that $(A,\ll_\str{A})$
	and $(B,\ll_\str{B})$ are partial orders and $(A,<_\str{A})$
	and $(B,<_\str{B})$ their respective linear extensions.
	Use Theorem~\ref{thm:unNR} to obtain an ordered $L$-structure $\str{D}$ satisfying
	$\str{D}\longrightarrow (\str{B})^\str{A}_2$.  Notice that while we know that $(\str{D},<_\str{D})$ is a linear
	order, we do not know much about $(\str{D},\ll_\str{D})$.

	We apply Theorem~\ref{thm:inducedpartite}.
	Let $\str{P}_0$, $\str{P}_1$, \ldots, $\str{P}_m=\str{P}$ be the sequence of
	pictures.  We verify by induction the following invariant:
	\begin{invariant}
		\label{inv2}
		Let $\str{P}_i$ be a picture and denote by $\ll'_{\str{P}_i}$ the transitive
		closure of $\ll_{\str{P}_i}$.  Then
		\begin{enumerate}
			\item If $(u,v)\in {\ll_{\str{P}_i}}$ then $(u,v)\in {<_{\str{P}_i}}$. Formally $ {\ll_{\str{P}_i}}\subseteq {<_{\str{P}_i}}$. Notice that neither $\ll_{\str{P}_i}$ nor $<_{\str{P}_i}$ are orders.
			\item If $(u,v)\in {\ll'_{\str{P}_i}} \setminus {\ll_{\str{P}_i}}$ then $(u,v)\notin {<_{\str{P}_i}}$. (Or in other words, ${\ll'_{\str{P}_i}}\cap {<_{\str{P}_i}}\subseteq {\ll_{\str{P}_i}}$.)
		\end{enumerate}
	\end{invariant}

	Invariant~\ref{inv2} is easy to check for $\str{P}_0$, since $\ll_{\str{P}_0}$ is transitive and consequently  $\ll'_{\str{P}_0} = \ll_{\str{P}_0}$.
	Assume that $\str{P}_i$, for $i<m$ satisfies Invariant~\ref{inv2}. Clearly,
	$\str{P}_{i+1}\vert _{\alpha_i(\str{A})}$ also satisfies Invariant~\ref{inv2} as it has homomorphism-embedding to $\str{A}$ where $\ll_\str{A}$ is a partial order and $<_\str{A}$ its linear extension. It remains to notice that Invariant~\ref{inv2} is preserved by free amalgamations.

	Note that $<_\str P$ has a homomorphism-embedding to $<_\str D$ which is a linear order. Consequently, $<_\str P$ is acyclic. The first point of Invariant~\ref{inv2} implies that $\ll_\str P$ is also acyclic. It remains to construct an $L$-structure $\str{C}$ by putting $C=P$, letting $<_\str{C}$ extend $<_\str{P}$ to a linear order, and putting $\ll_\str{C}$ to be the transitive closure of $\ll_\str{P}$.
	By the second point of Invariant~\ref{inv2} we know that every embedding $f\colon\str{B}\to\str{P}\vert _L$ is also an embedding $f\colon\str{B}\to\str{C}$.
\end{proof}
\subsection{Iterating the partite construction}\label{sec:iterated}
In this section we further develop the ideas from the previous section, iterate the induced partite construction and, in the end, prove Theorem~\ref{thm:sparseningRamsey}. We need to start with a definition.
\begin{definition}\label{defn:abnlocallytreelike}
	Let $L$ be a language, $\str A$, $\str B$, $\str C$ finite $L$-structures, and $n\geq 1$ an integer. We say that $\str C$ is \emph{$(\str A, \str B, n)$-locally tree-like} if for every substructure $\str{C}'$ of $\str{C}$ on at most $n$ vertices there exists a structure $\str{T}$ which is a tree amalgam of copies of $\str{B}$, and a homomorphism-embedding $f\colon\str{C}'\to\str{T}$ such that for every embedding $\alpha\colon \str A\to \str C$ there is an embedding $\alpha'\colon \str A\to \str T$ with $f[\alpha[A]\cap C']\subseteq \alpha'[A]$.
\end{definition}
In a typical situation, $\str A$ will be linearly ordered, and so every substructure of $\str A$ will be irreducible. In this case, if $\str{T}_0$ is any tree amalgam of copies of $\str{B}$ with a homomorphism-embedding $f\colon\str{C}'\to\str{T}_0$ then one can extend $\str T_0$ to $\str T$ satisfying Definition~\ref{defn:abnlocallytreelike} by amalgamating a copy of $\str B$ over every embedding of a substructure of $\str A$.

We now prove an extra invariant about the induced partite construction (Theorem~\ref{thm:inducedpartite}):
\begin{theorem}\label{thm:tree_invariant}
	Let $L$ be a language, and let $\str{A}$, $\str{B}$, and $\str{D}$ be $L$-structures such that $\str A$ is irreducible and $\str{D}\longrightarrow (\str{B})^\str{A}_2$. Assume that there is $n\geq 1$ such that $\str D$ is $(\str A, \str B, n-1)$-locally tree-like. Let $\str{P}_0$, $\str{P}_1$, $\str{P}_2$, \ldots, $\str{P}_m$ be the sequence of $\str{D}$-partite systems produced by Theorem~\ref{thm:inducedpartite}. Then for every $i\leq m$ it holds that $\str{P}_i\vert _L$ is $(\str A, \str B, n)$-locally tree-like.
\end{theorem}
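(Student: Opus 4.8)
The plan is to prove Theorem~\ref{thm:tree_invariant} by induction on $i$, following exactly the skeleton of the proof of property~(\ref{thm:inducedpartite:3}) in Theorem~\ref{thm:inducedpartite}, but carrying the stronger ``$(\str A,\str B,n)$-locally tree-like'' invariant instead of just ``irreducible substructures project into copies of $\str B$''. The base case $i=0$ is immediate: $\str P_0\vert_L$ is a disjoint union of copies of $\str B$, and any substructure $\str C'$ on at most $n$ vertices is a disjoint union of substructures of copies of $\str B$; such a structure is itself a tree amalgam of copies of $\str B$ (take free amalgams over the empty structure), and since $\str A$ is irreducible, every embedding $\str A\to\str P_0\vert_L$ lands inside a single copy of $\str B$, so the required embedding $\alpha'$ exists (and the condition $f[\alpha[A]\cap C']\subseteq\alpha'[A]$ holds with $f$ the identity).

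For the inductive step, suppose $\str P_i\vert_L$ is $(\str A,\str B,n)$-locally tree-like, and consider $\str P_{i+1}$, which by Theorem~\ref{thm:inducedpartite}(\ref{thm:inducedpartite:2}) is $(\str P_i,\alpha_i(\str A))$-based. So $\str P_{i+1}$ is built from a power $(\str P_i\vert_{\alpha_i(\str A)})^N$ by freely amalgamating a copy of $\str P_i$ onto every embedding $\str P_i\vert_{\alpha_i(\str A)}\to(\str P_i\vert_{\alpha_i(\str A)})^N$. Fix a substructure $\str C'\subseteq\str P_{i+1}\vert_L$ with $\lvert C'\rvert\le n$. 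The core observation is the structure of such a $\str C'$: since the copies of $\str P_i$ are glued along copies of $\alpha_i(\str A)$, which project (under $\pi$) into $\str D$ inside a single copy of $\str B$, one can decompose $\str C'$ according to which copy of $\str P_i$ its vertices lie in, and the ``overlap'' between two such pieces is contained in (the image of) a copy of $\alpha_i(\str A)$, hence in a copy of $\str A$ — and since $\str A$ is irreducible, this overlap lives inside an irreducible substructure. This is precisely the situation in Definition~\ref{defn:tree-amalgamation}: if each piece $\str C'\cap(\text{copy of }\str P_i)$ has a homomorphism-embedding into a tree amalgam $\str T_j$ of copies of $\str B$ (by the induction hypothesis, since each such piece has $\le n$ vertices), and the overlaps embed into irreducible substructures of these tree amalgams (using the ``$\alpha'$'' clause of $(\str A,\str B,n)$-locally tree-like applied to the overlap copy of $\str A$), then the free amalgam of the $\str T_j$'s over the (images of the) overlaps is again a tree amalgam of copies of $\str B$, and the pieces of $\str C'$ glue up to a homomorphism-embedding $\str C'\to\str T$. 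The same reasoning handles the part of $\str C'$ lying purely in $(\str P_i\vert_{\alpha_i(\str A)})^N$: there one uses that $\str P_i\vert_{\alpha_i(\str A)}$ has a homomorphism-embedding to $\alpha_i(\str A)\subseteq\str D$, invokes that $\str D$ is $(\str A,\str B,n-1)$-locally tree-like (note the $-1$: the power step can only guarantee this for slightly smaller substructures, which is why the hypothesis on $\str D$ is stated with $n-1$), and then amalgamates copies of $\str B$ onto the relevant embeddings of $\str A$ using the remark after Definition~\ref{defn:abnlocallytreelike}. Finally, to verify the $\alpha'$-clause for $\str C'$ itself: an embedding $\alpha\colon\str A\to\str P_{i+1}\vert_L$, $\str A$ being irreducible, must land entirely inside a single copy of $\str P_i$ (or inside $(\str P_i\vert_{\alpha_i(\str A)})^N$), so it is handled by the corresponding piece's tree amalgam, and the embedding $\alpha'$ into $\str T$ is obtained by composing with the inclusion of that piece's $\str T_j$ into $\str T$.

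The main obstacle — and the step deserving the most care — is the bookkeeping of how $\str C'$ decomposes across the ``based'' construction and, in particular, confirming that the pairwise overlaps of the pieces are genuinely contained in irreducible substructures of the target tree amalgams, so that Definition~\ref{defn:tree-amalgamation}(2) (in its slightly more restrictive form adopted here, requiring $f_i[D]$ to sit inside an irreducible substructure) applies and the result is again a legitimate tree amalgam of copies of $\str B$. This is where the irreducibility of $\str A$ is used twice: once to guarantee that overlaps between pieces are irreducible (they are contained in a copy of $\alpha_i(\str A)$, which maps homomorphically-embedding-wise into a copy of $\str A$, hence is irreducible), and once more, via the $\alpha'$-clause of the induction hypothesis, to place these overlaps inside irreducible substructures of the tree amalgams $\str T_j$ so that they can serve as amalgamation bases. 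One should also double-check the off-by-one: a substructure of $\str P_{i+1}\vert_L$ on $n$ vertices, when its $(\str P_i\vert_{\alpha_i(\str A)})^N$-part is pulled back along $\pi$ to $\str D$, may map onto up to $n$ vertices of $\str D$, but in the relevant amalgamation bases one actually needs the hypothesis only for substructures of $\str D$ of size at most $n-1$ because the overlap with a $\str P_i$-piece always contributes at least one more vertex; this is the precise reason the theorem hypothesizes $(\str A,\str B,n-1)$-locally tree-like for $\str D$ and concludes $(\str A,\str B,n)$ for the pictures. Everything else — that free amalgamation of tree amalgams over irreducible bases yields a tree amalgam, that homomorphism-embeddings compose and glue over irreducible overlaps — is routine and follows the pattern already established in Observation~\ref{obs:tree_amalg_completion}.
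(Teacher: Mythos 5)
Your overall skeleton (induction on $i$, trivial base case, decomposing $\str C'$ according to the based construction, gluing tree amalgams over the overlaps) is the right one, but the gluing step as you describe it does not go through, and the paper's proof is organized precisely to avoid the problem. You decompose $\str C'$ into pieces lying in the various copies of $\str P_i$, apply the induction hypothesis \emph{on $\str P_i$} to each piece, and then claim that the pairwise overlaps are ``contained in a copy of $\alpha_i(\str A)$, hence in a copy of $\str A$'' so that the $\alpha'$-clause and Definition~\ref{defn:tree-amalgamation}(2) let you amalgamate the resulting tree amalgams. This is where the argument breaks: at the picture level the copies of $\str P_i$ are glued along copies of $\str P_i\vert_{\alpha_i(\str A)}$, i.e.\ the entire column of vertices projecting into $\alpha_i[A]$, which has many vertices per partition, is far from irreducible, and is \emph{not} contained in any single copy of $\str A$ in $\str P_i$. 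Consequently the overlap $\str C'_j\cap\str C'_k$ is in general not of the form $\alpha[A]\cap\str C'_j$ for any embedding $\alpha\colon\str A\to\str P_i$, so the $\alpha'$-clause of the induction hypothesis cannot be invoked to place its image inside an irreducible substructure of $\str T_j$; nor is the restriction of a homomorphism-embedding to such a (reducible) overlap guaranteed to be an embedding, so the free amalgamation of the $\str T_j$'s is not even well defined as a tree amalgam.

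The paper's proof resolves this by pushing everything down to $\str D$ via $\pi_{\str P_{i+1}}$ before gluing, and by case-splitting on whether $\pi_{\str P_{i+1}}$ is injective on $C'$. If it is not injective, the projection $\str C''$ has at most $n-1$ vertices and the hypothesis that $\str D$ is $(\str A,\str B,n-1)$-locally tree-like finishes immediately (composing with $\pi_{\str P_{i+1}}$); this case is absent from your proposal and is one of the two places where the $n-1\to n$ gain actually occurs. If it is injective, $\str C'$ is cut into just two \emph{proper} substructures $\str E,\str F$ whose projections $\str E',\str F'$ into $\str D$ have at most $n-1$ vertices (the other source of the gain --- not the ``overlap contributes one more vertex'' heuristic you give), and whose overlap $\str G=\str E'\cap\str F'$ is a genuine substructure of the single designated copy $\alpha_i(\str A)\subseteq\str D$. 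Applying the $(\str A,\str B,n-1)$-hypothesis on $\str D$ to $\str E'$ and $\str F'$ and its $\alpha'$-clause to the embedding $\alpha_i\colon\str A\to\str D$ places the images of $\str G$ inside irreducible copies of $\str A$ in $\str T_{\str E}$ and $\str T_{\str F}$, which is exactly what legitimizes the free amalgamation. So the fix is not cosmetic: you must trade your ``induction hypothesis on $\str P_i$ applied to picture-level pieces'' for ``hypothesis on $\str D$ applied to the projected pieces,'' since only in $\str D$ does the amalgamation base collapse to a substructure of an actual copy of $\str A$.
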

The proof is essentially taken from~\cite{Hubicka2016}, even though~\cite{Hubicka2016} does not state the corresponding result in this way.
\begin{proof}
	Fix $\str{A}$, $\str{B}$, $\str{D}$, $n$, and $\str{P}_0$, $\str{P}_1$, \ldots, $\str{P}_m$ as in the statement. We will proceed by induction on $i$. The claim is easy to verify for $i=0$, as $\str{P}_0\vert _L$ is a disjoint union of multiple copies of $\str{B}$, and thus has a homomorphism-embedding to $\str B$, the rest follows from the fact $\str A$ is irreducible.

	Assume that
	$\str{P}_i\vert _L$, for some $i<m$, is $(\str A, \str B, n)$-locally tree-like. We need to verify that $\str{P}_{i+1}\vert _L$ is $(\str A, \str B, n)$-locally tree-like. Put $\str{B}_i=\str{P}_i\vert _{\alpha_i(\str{A})}$ and $\str{C}_{i+1}=\str{B}_i^N$.
	Choose a substructure $\str{C}'$ of $\str{P}_{i+1}\vert _L$ with at most $n$ vertices.
	Let $\str{C}''$ be structure induced by $\str{D}$ on $\pi_{\str{P}_{i+1}}[C']$.
	Consider two cases:
	\begin{enumerate}
		\item $\vert C''\vert<n$. Use the assumption on $\str{D}$ to get a homomorphism embedding $f\colon \str{C}''\to \str{T}$ where $\str{T}$ is a tree amalgam of copies of $\str{B}$. It follows that $f\circ \pi_{\str{P}_{i+1}}$ is a homomorphism-embedding $\str{C}'\to \str{T}$. Now, for an arbitrary embedding $\alpha\colon \str A\to \str{P}_{i+1}\vert _L$ we have that $\pi_{\str{P}_{i+1}}\circ \alpha$ is an embedding $\str A \to \str D$, and so there is an embedding $\alpha'\colon \str A\to \str T$ with
		      $$(f\circ \pi_{\str{P}_{i+1}})[\alpha[A] \cap C] \subseteq f[(\pi_{\str{P}_{i+1}}\circ \alpha)[A] \cap C''] \subseteq \alpha'[A]$$
		      and we are done.
		\item $\vert C''\vert=\vert C'\vert =n$.  Since $\str{P}_{i}$ is $(\str A, \str B, n)$-locally tree-like and $\str{C}_{i+1}$ has a homomorphism-embedding to $\str{A}$,
		      if there is an embedding of $\str{C}'$ to $\str{C}_{i+1}$ or $\str{P}_i$, we are done.
		      It remains to consider the case that $\str{C}'$ was created by amalgamating multiple copies of $\str{P}_i$ over $\str{C}_{i+1}$. In this case, there exist two proper substructures $\str{E}$ and $\str{F}$  of $\str{C}'$ such that $\pi_{\str{P}_{i+1}}[E\cap F]\subseteq \alpha_i[A]$ and $\str{C}'$ is the free amalgam of $\str{E}$ and $\str{F}$ over $\str{E}\cap \str{F}$.

		      Let $\str{E}'$ respectively $\str{F}'$ be the structures induced by $\str{D}$ on  $\pi_{\str{P}_{i+1}}[E]$ and $\pi_{\str{P}_{i+1}}[F]$ respectively.
		      Since $\vert\str{E}\vert=\vert\str{E}'\vert<n$ and $\vert\str{F}\vert=\vert\str{F}'\vert<n$, by our assumption on $\str{D}$ there exist tree amalgams $\str{T}_\str{E}$ and $\str{T}_\str{F}$ of copies of $\str{B}$ and homomorphism-embeddings $f\colon \str{E}\to \str{T}_\str{E}$ and $f'\colon \str{F}\to \str{T}_\str{F}$.
		      Now $\str{G} = \str{E}'\cap \str{F}'$ is a substructure of $\alpha_i(\str{A})$ and thus there are $\alpha'_\str E\colon \str A \to \str T_\str E$ and $\alpha'_\str F\colon \str A \to \str T_\str F$ such that $f[G] \subseteq \alpha'_\str E[A]$ and $f'[G] \subseteq \alpha'_\str F[A]$.  We can therefore construct structure $\str{T}$ as the free amalgam of $\str{T}_\str{E}$ and $\str{T}_\str{F}$
		      unifying $f(\str{G})$ and $f'(\str{G})$.  It follows $\str{T}$ is a tree amalgam of copies of $\str{B}$ and there is a homomorphism-embedding $\str{C}'\to \str{T}$.
		      \begin{figure}
			      \centering
			      \includegraphics{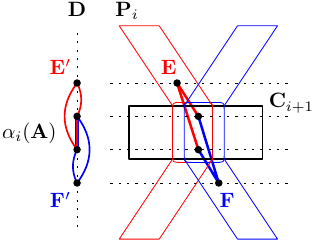}
			      \caption{Structures $\str{E}$, $\str{F}$, $\str{E}'$ and $\str{F'}$ in the proof of Theorem~\ref{thm:tree_invariant}.}
			      \label{fig:P0_iterated}
		      \end{figure}
	\end{enumerate}
\end{proof}

We can now prove Theorem~\ref{thm:sparseningRamsey}.
\begin{proof}[Proof of Theorem~\ref{thm:sparseningRamsey}]
	Without loss of generality we can assume that $n\geq 1$. Iterate Theorem~\ref{thm:inducedpartite} $n$ times to construct a sequence $\str C_0, \str C_1, \ldots, \str C_n$ (where in the $i$-th application we plug in $\str C_{i-1}$ as $\str D$ and we put $\str C_i$ to be the output structure $\str P\vert _L$). Clearly, we have homomorphisms embeddings $\pi_{\str C_i}\colon \str C_{i} \to \str C_{i-1}$ for $1\leq i\leq n$, and composing them we get a homomorphism-embedding $\pi\colon \str C_n \to \str C_0$. By Theorem~\ref{thm:tree_invariant} we know that $\str C_i$ is $(\str A, \str B, i)$-locally tree-like for every $0\leq i\leq n$. In particular, $\str C_n$ is $(\str A, \str B, n)$-locally tree-like which is stronger than property~(\ref{thm:sparseningRamsey:2}) in Theorem~\ref{thm:sparseningRamsey}.

	Finally, Theorem~\ref{thm:inducedpartite} promises us for $1\leq i \leq n$ that if $\str E$ is an irreducible substructure of $\str C_i$ then $\pi_{\str C_i}(\str E)$ extends to a copy of $\str B$ in $\str C_{i-1}$. By composing and using the fact that every $\pi_{\str C_i}$ is a homomorphism-embedding, we get that $\str E$ is an irreducible substructure of $\str C_n$ then $\pi(\str E)$ extends to a copy of $\str B$ in $\str C_{0}$. Consequently, we can obtain $\str C$ from $\str C_n$ satisfying property~(\ref{thm:sparseningRamsey:3}) of Theorem~\ref{thm:sparseningRamsey} by freely amalgamating a copy of $\str B$ over each irreducible substructure while preserving the existence of a homomorphism-embedding to $\str C_0$ and the existence of homomorphism-embeddings to tree amalgams of copies $\str B$.
\end{proof}

\subsection{Recursive partite construction}
\label{arecursive}
It only remains to obtain an analogue of the unrestricted Nešetřil--Rödl theorem for structures in languages with both relation and function symbols.
Our starting point will be the unrestricted Ne\v set\v ril--R\"odl theorem which works with relational languages only.
For this reason we will interpret structures in a language $L=L_R\cup L_F$, where $L_R$ are relation symbols and $L_F$ function symbols, using the relational language $L'=L_R\cup U$ where for every function symbol $\func{}{}\in L_F$ of arity $n$ we put the relation symbol $\rel{}{\func{}{}}$ of arity $n+1$ to $U$.
Given an $L'$-structure $\str{A}'$ we will then reconstruct an $L$-structure $\str{A}$ by defining functions $\func{A}{}$, $\func{}{}\in L_F$ using relations $\nbrel{\str A'}{\func{}{}}$ as follows:
$$\func{A}{}(x_0,x_1,\ldots, x_{n-1})=\{x_n\in A:(x_0,x_1,\ldots, x_{n-1},x_n)\in \nbrel{\str A'}{\func{}{}}\}.$$
For a structure $\str{A}$ with function symbols it is not necessarily true that $\str A$ induces a substructure on every subset $S\subseteq\str{A}$, as substructures need to be closed for function values.
This corresponds to the following notion of closure for $L'$-structures:
\begin{definition}
	Let $L$ be a relational language, let $U\subseteq L$ be a set whose elements have arity at least 2, and let $\str B$ be an $L$-structure. A substructure $\str{A}$ of $\str{B}$ is \emph{$U$-closed} if for every  $\rel{}{}\in U$ of arity $n+1$ and every tuple $(x_0,x_1,\ldots, x_{n-1},x_n)\in \rel{B}{}$ satisfying $x_0,x_1,\ldots,x_{n-1}\in A$ it holds $x_n\in A$.
\end{definition}
An embedding $f\colon\str{A}\to\str{B}$ is \emph{$U$-closed} if $f(\str{A})$ is a $U$-closed substructure of $\str{B}$.
(Embeddings of structures with functions correspond one-to-one to $U$-closed embeddings of their relational interpretations.)

We will denote by $\Emb(\str{A},\str{B})_U$ the set of all $U$-closed embeddings $\str{A}\to\str{B}$.
We will also write $\str{C}\longrightarrow_U (\str{B})^\str{A}_2$ for the following statement:
\begin{quote}
	For every $2$-colouring $\chi\colon\Emb(\str{A},\str{C})_U\to 2$, there exists a
	monochromatic embedding $f\in \Emb(\str{B},\str{C})_U$.
\end{quote}
We will prove:
\begin{theorem}
	\label{thm:models2}
	Let $L$ be a relational language containing a binary relation $<$, let $U\subseteq L\setminus\{<\}$ be a set of relation symbols of arity at least 2, and let $\K$ be the class of all finite ordered $L$-structures. Then
	$$(\forall \str{A},\str{B}\in \K)(\exists \str{C}\in \K)\str{C}\longrightarrow_U (\str{B})^\str{A}_2.$$
\end{theorem}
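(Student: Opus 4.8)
Recall first that, under the relational interpretation $L' = L_R\cup U$, the $U$-closed embeddings of $L'$-structures are exactly the embeddings of the corresponding structures with (set-valued) functions, so Theorem~\ref{thm:models2} is just the unrestricted Ramsey property of the class of all finite ordered structures in the functional language $L = L_R\cup L_F$. The reason one cannot simply quote Theorem~\ref{thm:unNR} is that the relational Ramsey object $\str C$ produced there carries no control over the symbols in $U$: the power $\str{B}^N$ in the partite lemma (Lemma~\ref{lem:partite}) and the free amalgamations inside the picture lemma (Lemma~\ref{lem:picture}) routinely enlarge the ``value sets'' of a would-be copy of $\str{B}$ beyond that copy, so that $\str C$ typically contains no $U$-closed copy of $\str B$ at all. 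We must therefore rerun the partite construction while keeping $U$-closure on a leash.

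The plan is to mirror Appendices~\ref{anoninduced} and~\ref{sec:induced}. First, forgetting $U$, apply Theorem~\ref{thm:unNR} to obtain an ordered $L$-structure $\str D$ with $\str D\longrightarrow(\str B)^\str A_2$ for ordinary embeddings; this provides the purely combinatorial core. Then run the induced partite construction of Theorem~\ref{thm:inducedpartite} over $\str D$, producing $\str D$-partite systems $\str{P}_0,\str{P}_1,\ldots,\str{P}_m=\str P$ with $\str P\vert_L\longrightarrow(\str B)^\str A_2$. Since members of $\K$ are ordered, every copy of $\str B$ (and of $\str A$) is irreducible, so the projection $\pi_{\str{P}_i}$ restricts to an embedding on every copy of $\str B$; using this one checks by induction over the pictures that each $\str{P}_i$ contains a $U$-closed copy of $\str B$ in every base position, that no $U$-closed copy of $\str B$ can be absorbed into one of the amalgamating fibres, and that the colouring bookkeeping descends through the pictures correctly. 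Finally, as $<_{\str P}$ has a homomorphism-embedding to the linearly ordered $\str D$ it is acyclic, so it extends to a linear order and we obtain $\str C\in\K$, exactly as in the proofs of Theorems~\ref{thm:NR} and~\ref{thm:posets}.

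The step that genuinely needs new work is the picture lemma: given a $\str D$-partite system $\str{P}_i$ and an embedding $\alpha_i\colon\str A\to\str D$, one needs $\str{P}_{i+1}$ such that every $2$-colouring of $\Emb(\str A,\str{P}_{i+1})_U$ with projection $\alpha_i$ admits a $U$-closed monochromatic copy of $\str{P}_i$. Its internal partite lemma, which currently forms the power $(\str{P}_i\vert_{\alpha_i(\str A)})^N$ with alphabet $\Sigma=\Emb(\str A',\str{P}_i\vert_{\alpha_i(\str A)})_U$, is precisely where closure can escape when the functions are set-valued: a coordinate-copy $e_w$ of $\str A$ need not be $U$-closed in the power because the ``escaping'' element of a value set may be chosen coordinate-by-coordinate (for single-valued functions there is a canonical element to track and no escape occurs, which is why the difficulty is new here). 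The fix is the recursive step: replace this single Hales--Jewett application by a nested partite construction. One first obtains a relational Ramsey witness for the partite lemma (via Lemma~\ref{lem:partite}), and then applies the induced partite construction once more over a base object in which the $U$-closures are forced to be ``on the nose'' --- so that in every picture of this inner construction a copy of $\str A$ either appears together with its entire value sets, contained in a single controlled copy of the fibre $\str{P}_i\vert_{\alpha_i(\str A)}$, or does not appear at all. The inner backward induction, together with the Ramsey theorem on subsets, then extracts a $U$-closed monochromatic copy, which is fed back into the outer picture lemma.

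The main obstacle is thus exactly this recursive upgrade of the partite lemma: arranging the inner partite construction and the choice of its base so that it eliminates the spurious function values created by the power construction, \emph{without} destroying the transversality and projection bookkeeping that the outer construction depends on, and while keeping the surviving $U$-closed copies of $\str A$ rich enough to run Hales--Jewett and the Ramsey theorem on. Set-valuedness of the functions is what makes this delicate, since there is no single element one can follow; keeping an entire value set inside the controlled fibre, uniformly across all coordinates and all pictures, is the technical heart of the argument. Once Theorem~\ref{thm:models2} is in hand, combining this recursive construction with the iterated partite construction of Appendix~\ref{sec:iterated} upgrades Theorems~\ref{thm:HN} and~\ref{thm:hn_completions} to languages with function symbols of all arities.
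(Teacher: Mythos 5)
Your overall architecture matches the paper's: encode functions relationally via the symbols in $U$, obtain $\str D$ from Theorem~\ref{thm:unNR}, rerun the partite construction over $\str D$ while tracking $U$-closure, and insert a nested (recursive) partite construction to repair the step where closure is lost. However, you misdiagnose \emph{where} closure is lost, and the step you correctly identify as ``the main obstacle'' is left entirely undone, so the heart of the proof is missing. Concretely: the partite lemma is \emph{not} where closure escapes. If $\str B$ is a $U$-transversal $\str A$-partite system and the alphabet is $\Sigma=\Emb(\str A,\str B)_U$, then in the power $\str B^N$ every partition class consists of functions into a single fibre $B_p$, and $U$-transversality guarantees that the set of ``function values'' over a fixed tuple meets each fibre at most once; together these pin down any would-be escaping element coordinate-by-coordinate to the unique vertex of the relevant copy in its partition, so $e_w$ and $f_W$ are automatically $U$-closed. (This works equally for set-valued and single-valued functions; set-valuedness is not the source of the difficulty, contrary to your claim.) This is why the paper's Lemma~\ref{lem:indpartiteU} is a routine extension of Lemma~\ref{lem:indpartite}.

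The genuine obstruction sits in the \emph{picture} lemma and the backward induction. There one freely amalgamates copies of $\str P_i$ over copies of the fibre $\str P_i\vert_{\alpha_i(\str A)}$ inside the power, and by Observations~\ref{obs:disaster1} and~\ref{obs:disaster2} the resulting copies of $\str P_i$ are $U$-closed only when $\alpha_i(\str A)$ is $U$-closed in $\str D$ --- which fails for most of the embeddings $\alpha_i$ that the backward induction must process, precisely because $\str D$ comes from Theorem~\ref{thm:unNR} with no closure control. The paper's resolution is a ``half $U$-closed'' picture lemma (Lemma~\ref{lem:indpicutreU}): the monochromatic copy of $\str P_i$ in the little picture $\str O_{i+1}$ need not be $U$-closed, but all the relevant copies of $\str A$ inside it are. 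One then runs a second, inner partite construction (Lemma~\ref{lem:rpartite}) over $\str O_{i+1}$ with $\str A_i$ and $\str P_i$ as the small and medium structures; since that inner construction only ever amalgamates over $U$-closed copies of $\str A_i$, the ``moreover'' clause of Lemma~\ref{lem:indpicutreU} applies throughout and the copies of $\str P_i$ in the output $\str P_{i+1}$ are genuinely $U$-closed. Your proposal instead places the nested construction inside the partite lemma and describes its base only as one ``in which the $U$-closures are forced to be on the nose,'' without specifying it or proving that the transversality and projection bookkeeping survive; as written, this is a plan for a proof rather than a proof, and the plan points at a step that does not actually fail.
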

Before we prove Theorem~\ref{thm:models2}, we show how it implies Theorem~\ref{thm:HN}:
\begin{proof}[Proof of Theorem~\ref{thm:HN}]
	Fix $L$ and $\mathcal F$ as in the statement. Fix also some finite ordered $\mathcal F$-free $L$-structures $\str A$ and $\str B$.

	Let $L' = L_R\cup U$ be the language constructed above for $L$ and use Theorem~\ref{thm:models2} to obtain a finite ordered $L'$-structure $\str C_0'$ such that $\str C_0'\longrightarrow_U (\str{B})^\str{A}_2$, where $\str A$ and $\str B$ are understood as $L'$-structures. Let $\str C_0$ be the $L$-structure obtained from $\str C_0'$ as above. Clearly, $\str C_0\longrightarrow (\str B)^\str A_2$. However, $\str C_0$ need not be $\mathcal F$-free.

	Now we are in the exactly same situation as when we used the induced partite construction to prove Theorem~\ref{thm:NR} from Theorem~\ref{thm:unNR}, and it turns out that none of the arguments in the proof in Section~\ref{aaplications} used the fact that $L$ was relational. We can thus use verbatim the same proof (with the exception of using the previous paragraphs instead of Theorem~\ref{thm:unNR}) as the proof of Theorem~\ref{thm:NR} from Section~\ref{aaplications} to conclude this proof of Theorem~\ref{thm:HN}.
\end{proof}

\medskip

In the rest of this section we will prove Theorem~\ref{thm:models2}. Fix $L$ and $U$ as in the statement of Theorem~\ref{thm:models2}. We, for the last time, will follow the key steps of the partite construction.

We now revisit definitions and constructions from Section~\ref{sec:induced} and introduce their $U$-closed variants.
An $\str{A}$-partite system $\str{P}$ is \emph{$U$-transversal} if for every
$\rel{}{\func{}{}}\in U$ of arity $n+1$ and every $x_0,x_1,\ldots, x_{n-1}\in P$ it holds that the set $S=\{x_n:(x_0,x_1,\ldots, x_{n-1}, x_n) \in \rel{P}{\func{}{}}\}$ is transversal, or in other words, $\vert\pi_{\str{P}}[S]\vert=\vert S\vert$.

\begin{lemma}[Induced Partite Lemma with Closures]
	\label{lem:indpartiteU}
	For every $L$-structure $\str A$ and every $U$-transversal $\str{A}$-partite system $\str{B}$ there is $N\in \mathbb N$ such
	that $\str{B}^N$ is a $U$-transversal $\str{A}$-partite system and $\str{B}^N\longrightarrow_U (\str{B})^{\str{A}'}_2$ where $\str{A'}$ is the $\str{A}$-partite system created from $A$ by putting every vertex $\pred{}{}\in \str{A}$ into the predicate $\pred{}{}$.
\end{lemma}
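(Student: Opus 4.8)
The plan is to run the proof of Lemma~\ref{lem:indpartite} essentially unchanged, inserting two extra verifications, both powered by the hypothesis that $\str B$ is $U$-transversal. I would put $\Sigma=\Emb(\str{A'},\str{B})_U$ (the $U$-closed embeddings of the transversal system $\str{A'}$ into $\str{B}$; equivalently, the $U$-closed embeddings $\str{A}\to\str{B}\vert_L$ whose projection is the identity), let $N=\mathrm{HJ}(\vert\Sigma\vert,2)$, and set $\str{C}=\str{B}^N$. For words $w$ and parameter words $W$ over $\Sigma$ of length $N$ I would define $e_w\colon A'\to C$ and $f_W\colon B\to C$ by the same formulas as in Lemma~\ref{lem:indpartite}. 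Exactly as there, each $f_W$ is an embedding $\str{B}\to\str{C}$ and satisfies $f_W\circ\varphi=e_{W(\varphi)}$ for $\varphi\in\Sigma$, hence each $e_w$ is an embedding $\str{A'}\to\str{C}$ (every word is $W(\varphi)$ for some parameter word $W$ and letter $\varphi$); these checks use only that $\pi_\str{B}$, and hence each $W_i\circ\pi_\str{B}$ with $W_i\in\Sigma$, is a homomorphism-embedding, so nothing changes there. As in Lemma~\ref{lem:indpartite} one may assume every vertex of $\str{B}$ lies in the range of some member of $\Sigma$, although this is not needed for the new verifications.

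First I would check that $\str{C}=\str{B}^N$ is again a $U$-transversal $\str{A}$-partite system (for any $N$). Fix $\rel{}{\func{}{}}\in U$ of arity $n+1$ and vertices $x_0,\ldots,x_{n-1}\in C$. By Definition~\ref{def:power}, a vertex $y$ lies in $S=\{y : (x_0,\ldots,x_{n-1},y)\in\rel{C}{\func{}{}}\}$ if and only if $y(i)$ lies in $S_i=\{z : (x_0(i),\ldots,x_{n-1}(i),z)\in\rel{B}{\func{}{}}\}$ for every $i<N$. If distinct $y,y'\in S$ had the same projection $p$ in $\str{C}$, then all coordinates of $y$ and of $y'$ lie in $B_p$; choosing $i$ with $y(i)\neq y'(i)$ produces two distinct vertices of $B_p$ inside the transversal set $S_i$, contradicting $U$-transversality of $\str{B}$. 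Hence $S$ is transversal and $\str{C}$ is $U$-transversal.

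The crux is to show that every $f_W$ is $U$-closed; granting this, every $e_w=f_{W'}\circ\varphi'$ (where $w=W'(\varphi')$) is $U$-closed as well, because a composition $h\circ g$ of $U$-closed embeddings is $U$-closed ($U$-closedness of $h$ places any missing last coordinate in the range of $h$, and then $U$-closedness of $g$ places its $h$-preimage in the range of $g$). To see that $f_W$ is $U$-closed I would take $\rel{}{\func{}{}}\in U$ of arity $n+1$ and a tuple $(x_0,\ldots,x_{n-1},x_n)\in\rel{C}{\func{}{}}$ with $x_j=f_W(b_j)$ for some $b_j\in B$, $j<n$, and exhibit $b_n\in B$ with $x_n=f_W(b_n)$. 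Let $i_0<N$ satisfy $W_{i_0}=\lambda$ and set $b_n:=x_n(i_0)$, so $(b_0,\ldots,b_{n-1},b_n)\in\rel{B}{\func{}{}}$ by Definition~\ref{def:power} at coordinate $i_0$; also let $p=\pi_\str{C}(x_n)$, so all coordinates of $x_n$ lie in $B_p$ and in particular $\pi_\str{B}(b_n)=p$. Now fix an arbitrary $i<N$ and compare $x_n(i)$ with $f_W(b_n)(i)$. If $W_i=\lambda$, then $(b_0,\ldots,b_{n-1},x_n(i))\in\rel{B}{\func{}{}}$, so $x_n(i)$ and $b_n$ both lie in the transversal set $\{z:(b_0,\ldots,b_{n-1},z)\in\rel{B}{\func{}{}}\}$ and both lie in $B_p$, forcing $x_n(i)=b_n=f_W(b_n)(i)$. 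If $W_i\in\Sigma$, then applying the homomorphism-embedding $\pi_\str{B}$ to $(b_0,\ldots,b_{n-1},b_n)$ and then the $U$-closed embedding $W_i$ gives $(x_0(i),\ldots,x_{n-1}(i),W_i(\pi_\str{B}(b_n)))\in\rel{B}{\func{}{}}$; since $W_i$ preserves predicates and $\pi_\str{B}(b_n)=p$, the vertex $W_i(\pi_\str{B}(b_n))$ lies in $B_p$, so it and $x_n(i)$ lie in one transversal set and in the single partition $B_p$, forcing $x_n(i)=W_i(\pi_\str{B}(b_n))=f_W(b_n)(i)$. Hence $x_n=f_W(b_n)$, and $f_W$ is $U$-closed.

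The Ramsey step would then be copied verbatim from Lemma~\ref{lem:indpartite}: given $\chi\colon\Emb(\str{A'},\str{C})_U\to 2$, the colouring $\chi'(w)=\chi(e_w)$ of $\Sigma^N$ is well-defined because every $e_w$ is $U$-closed; the Hales--Jewett theorem yields a parameter word $W$ with $\{W(\varphi):\varphi\in\Sigma\}$ monochromatic; and then $f_W\in\Emb(\str{B},\str{C})_U$ is the desired monochromatic embedding, since $\{f_W\circ e:e\in\Emb(\str{A'},\str{B})_U\}=\{e_{W(\varphi)}:\varphi\in\Sigma\}$. I expect the only genuinely new difficulty to be the $U$-closedness of $f_W$ — specifically, the need to argue coordinate by coordinate, using $U$-transversality, that the single $\lambda$-coordinate already pins down the missing function value in every coordinate; everything else is a routine unwinding of Definition~\ref{def:power} or a direct reuse of the proof of Lemma~\ref{lem:indpartite}.
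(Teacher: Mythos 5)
Your proposal is correct and follows exactly the route the paper intends: it takes $\Sigma$ to be the $U$-closed embeddings and then verifies the two points the paper's one-line proof defers to the reader, namely that $\str{B}^N$ is $U$-transversal and that the maps $f_W$ (and hence the $e_w$) are $U$-closed. Your coordinate-by-coordinate argument for the $U$-closedness of $f_W$ — pinning down the missing function value at a $\lambda$-coordinate and then using $U$-transversality within a single partition to force agreement at every other coordinate — is a correct filling-in of the detail the paper omits.
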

The proof is an easy extension of the proof of Lemma~\ref{lem:indpartite}: After putting $\Sigma=\Emb(\str{A},\str{B})_U$ one only needs to check that $\str{B}^N$ is $U$-transversal and that the embeddings $e_w$ and $f_W$ are $U$-closed.

Notice the following:
\begin{observation}
	\label{obs:disaster1}
	Let $\str{P}$ be a $U$-transversal $\str{D}$-partite system and let $\str A$ be a substructure of $\str{D}$. If $\str{A}$ is $U$-closed in $\str{D}$ then $\str{P}\vert _\str{A}$ is a $U$-closed substructure of $\str{P}$.
\end{observation}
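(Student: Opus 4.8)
The plan is to reduce the statement to the homomorphism half of the defining property of a $\str{D}$-partite system, after simply unwinding the two definitions involved.

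First I would recall that, by definition, $\str{P}\vert_\str{A}$ is the subsystem of $\str{P}$ induced on the vertex set $\{v \in P : \pi_\str{P}(v) \in A\}$; since it is an \emph{induced} substructure, it is automatically a substructure of $\str{P}$, so the whole content lies in verifying $U$-closedness in the sense of the definition of a $U$-closed substructure given above. Accordingly, I would next fix a relation symbol $\rel{}{} \in U$, say of arity $n+1$, together with a tuple $(x_0,\ldots,x_{n-1},x_n) \in \rel{P}{}$ such that $x_0,\ldots,x_{n-1}$ all lie in $P\vert_\str{A}$, i.e. $\pi_\str{P}(x_i)\in A$ for every $i<n$; the goal is then to show $\pi_\str{P}(x_n)\in A$ as well, which is exactly what $U$-closedness of $\str{P}\vert_\str{A}$ in $\str{P}$ demands.

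The key step is this: because $\str{P}$ is a $\str{D}$-partite system, $\pi_\str{P}$ is a homomorphism-embedding $\str{P}\vert_L \to \str{D}$, hence in particular a homomorphism, so $(\pi_\str{P}(x_0),\ldots,\pi_\str{P}(x_{n-1}),\pi_\str{P}(x_n)) \in \rel{D}{}$. Since the first $n$ entries of this tuple lie in $A$ and $\str{A}$ is $U$-closed in $\str{D}$ by hypothesis, the last entry $\pi_\str{P}(x_n)$ also lies in $A$. Therefore $x_n \in P\vert_\str{A}$, as needed.

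Honestly there is no serious obstacle: the argument is a one-line diagram chase once the definitions are written out. The only points worth flagging in the write-up are that merely the homomorphism part of ``homomorphism-embedding'' is used, and that the $U$-transversality hypothesis is not actually needed for the closure property itself — it is recorded in the statement because it is what makes the translation between $\str{P}$ and its functional reading well-behaved, so that ``$U$-closed substructure of $\str{P}$'' genuinely corresponds to an honest substructure at the level of structures with (set-valued) functions, which is the form in which the observation will be applied in the recursive partite construction.
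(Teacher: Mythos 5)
Your proof is correct and is exactly the one-line definition chase the paper has in mind (the observation is stated without proof there): the projection $\pi_\str{P}$ is a homomorphism on the relations in $U\subseteq L$, so a tuple of $\rel{P}{}$ whose first $n$ entries project into $A$ projects to a tuple of $\rel{D}{}$ whose last entry must lie in $A$ by $U$-closedness of $\str{A}$ in $\str{D}$. Your side remark is also accurate: $U$-transversality plays no role in this argument and is carried in the hypothesis only because it is the standing assumption on the partite systems in that section.
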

Note that if $\str{A}$ is not $U$-closed then $\str{P}\vert _\str{A}$ need not be $U$-closed.
\begin{observation}
	\label{obs:disaster2}
	If $\str{C}$ is the free amalgam of $\str{B}$ and $\str{B}'$ over $\str{A}$ then $\str{B}$ and $\str{B}'$ are $U$-closed in $\str{C}$ if and only if $\str{A}$ is $U$-closed in both $\str{B}$ and $\str{B}'$.
\end{observation}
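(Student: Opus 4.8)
The plan is to begin by identifying $\str{B}$, $\str{B}'$, and $\str{A}$ with their images inside $\str{C}$ under the amalgamation embeddings, exactly as the paper does elsewhere, so that $B\cup B' = C$ and — by strongness of the free amalgam — $B\cap B' = A$. I would then isolate the two structural facts to be used repeatedly: that substructures reflect relations, so that for every $\rel{}{}\in U$ of arity $n+1$ we have $\rel{B}{}=\rel{C}{}\cap B^{n+1}$ (and likewise for $\str{B}'$); and the defining splitting property of the free amalgam, namely that every tuple lying in a relation of $\str{C}$ lies entirely in $B^{n+1}$ or entirely in $(B')^{n+1}$. Everything reduces to a short case analysis built from these.

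For the backward implication, I would assume $\str{A}$ is $U$-closed in both $\str{B}$ and $\str{B}'$ and show that $\str{B}$ is $U$-closed in $\str{C}$ (the argument for $\str{B}'$ being symmetric). Take $\rel{}{}\in U$ of arity $n+1$ and a tuple $(x_0,\ldots,x_{n-1},x_n)\in\rel{C}{}$ with $x_0,\ldots,x_{n-1}\in B$. By the splitting property the whole tuple lies in $B^{n+1}$ or in $(B')^{n+1}$. In the first case $x_n\in B$ immediately. In the second case $x_0,\ldots,x_{n-1}\in B\cap B' = A$, and since the tuple lies in $(B')^{n+1}$ it witnesses the relation $\rel{}{}$ inside $\str{B}'$; $U$-closedness of $\str{A}$ in $\str{B}'$ then forces $x_n\in A\subseteq B$, as required.

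For the forward implication, I would assume $\str{B}$ and $\str{B}'$ are $U$-closed in $\str{C}$ and show $\str{A}$ is $U$-closed in $\str{B}$ (again $\str{B}'$ is symmetric). Take $(x_0,\ldots,x_{n-1},x_n)\in\rel{B}{}$ with $x_0,\ldots,x_{n-1}\in A$. This tuple also lies in $\rel{C}{}$, and since $x_0,\ldots,x_{n-1}\in A\subseteq B'$, the $U$-closedness of $\str{B}'$ in $\str{C}$ yields $x_n\in B'$; meanwhile $x_n\in B$ because the tuple lies in $\rel{B}{}$. Hence $x_n\in B\cap B' = A$. I note that this direction uses only strongness (the identity $B\cap B'=A$) and never the full splitting property.

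I do not expect a genuine technical obstacle here, as the whole argument is elementary. The one point requiring care — the step I would flag — is the second case of the backward direction: one must apply the $U$-closedness hypothesis to the \emph{correct} side, $\str{B}'$ rather than $\str{B}$, and it is precisely strongness of the amalgam (landing the domain vertices $x_0,\ldots,x_{n-1}$ in $A$) that makes that hypothesis applicable. I would also remark that this asymmetry explains why the splitting property of the free amalgam is essential for the backward direction but superfluous for the forward one.
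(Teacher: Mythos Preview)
Your proof is correct; the paper states this as an observation without proof, and your argument is the natural verification one would supply. The only remark worth adding is that the paper treats this as self-evident, so there is nothing to compare against beyond noting that your case split on the free-amalgam splitting property is exactly what the definition invites.
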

By Observations~\ref{obs:disaster1} and~\ref{obs:disaster2}, the usual construction used to prove the Picture lemma will yield $U$-closed embeddings
only if $\str{P}\vert _\str{A}$ is $U$-closed, and this is typically true only if $\str{A}$ is $U$-closed in $\str{D}$. This is the main difficulty we will need to deal with, and in the following we need to carefully track which embeddings are guaranteed to be $U$-closed.

Given a $\str{D}$-partite system $\str{B}$ and an embedding $\alpha\colon\str{A}\to\str{D}$ we will denote by $\Emb(\str{A},\allowbreak \str{B})_{U,\alpha}$ the set of all $U$-closed
embeddings $e\colon \str{A}\to \str{B}\vert _L$ such that $\pi_\str{B}\circ e=\alpha$.
The following observation will eventually let us ``upgrade'' embeddings to $U$-closed embeddings.
\begin{observation}
	\label{obs:solution}
	If $\str{C}$ is the free amalgam of $\str{B}$ and $\str{B}'$ over $\str{A}$, and $\str{A}'$ is a substructure of $\str{A}$
	then $\str{A}'$ is $U$-closed in $\str{C}$ if and only if $\str{A}'$ is $U$-closed in both $\str{B}$ and $\str{B}'$.
\end{observation}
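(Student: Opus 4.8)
The plan is to unwind the free amalgam into its concrete representation and then transport the $U$-closedness condition along the inclusion maps. After relabelling, I would assume that $\beta_1,\beta_2$ are inclusions, so that $C=B\cup B'$, $B\cap B'=A$, both $\str B$ and $\str B'$ are substructures of $\str C$ (in particular $\rel{B}{}=\rel{C}{}\cap B^{k}$ for every relation symbol $\rel{}{}$ of arity $k$), and---this is the only place where \emph{freeness} rather than mere strongness is used---every tuple lying in a relation of $\str C$ is contained entirely in $B^\bullet$ or entirely in $(B')^\bullet$, by Definition~\ref{def:amalgamation}. Throughout we have $\str A'\subseteq\str A\subseteq\str B,\str B'$ and $\str A'\subseteq\str C$.

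For the forward direction I would assume $\str A'$ is $U$-closed in $\str C$. Given $\rel{}{}\in U$ of arity $n+1$ and a tuple $(x_0,\ldots,x_{n-1},x_n)\in \rel{B}{}$ with $x_0,\ldots,x_{n-1}\in A'$, this same tuple lies in $\rel{C}{}$ since $\str B$ is a substructure of $\str C$; $U$-closedness of $\str A'$ in $\str C$ then forces $x_n\in A'$. Hence $\str A'$ is $U$-closed in $\str B$, and symmetrically in $\str B'$. This direction needs nothing beyond the fact that $\str B,\str B'$ embed into $\str C$ as substructures, and is in effect a restriction of the easy half of Observation~\ref{obs:disaster2}.

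For the backward direction I would assume $\str A'$ is $U$-closed in both $\str B$ and $\str B'$. Given $\rel{}{}\in U$ of arity $n+1$ and a tuple $\vec{x}=(x_0,\ldots,x_{n-1},x_n)\in \rel{C}{}$ with $x_0,\ldots,x_{n-1}\in A'$, freeness says $\vec{x}$ lies entirely in $B^{n+1}$ or entirely in $(B')^{n+1}$; in the first case $\vec{x}\in \rel{B}{}$, and since $x_0,\ldots,x_{n-1}\in A'\subseteq B$ and $\str A'$ is $U$-closed in $\str B$ we obtain $x_n\in A'$, while the second case is handled identically with $\str B'$. Thus $\str A'$ is $U$-closed in $\str C$.

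The step that deserves care---and the only conceptual point---is the use of freeness in the backward direction: the last coordinate $x_n$ of a witnessing tuple cannot escape to the side of the amalgam opposite to $x_0,\ldots,x_{n-1}$, precisely because a free amalgam introduces no tuples straddling the two sides. For a merely strong amalgam this can fail, which is why the observation is phrased for free amalgams; everything else is routine bookkeeping about restrictions of relations along inclusions, which I would spell out only as far as needed to make the two cases of the backward direction symmetric.
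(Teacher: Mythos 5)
Your proof is correct and is exactly the argument the paper intends: the forward direction only uses that $\str B$ and $\str B'$ are substructures of $\str C$, and the backward direction uses freeness to confine any witnessing tuple of a relation in $U$ to one side of the amalgam, where $U$-closedness of $\str A'$ in that side applies. The paper states Observation~\ref{obs:solution} without proof, treating it as immediate, so there is nothing to contrast; your identification of freeness (as opposed to mere strongness) as the one essential hypothesis in the backward direction is the right point to emphasize.
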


\begin{lemma}[Induced Picture lemma with Closures]\label{lem:indpicutreU}
	Let $L$ be a language, let $\str{A}$ and $\str{D}$ be $L$-structures, let $\str{B}$ be a $U$-transversal $\str{D}$-partite system, and let $\alpha\colon \str{A}\to \str{D}$ be an embedding.
	Then there exists a $U$-transversal
	$\str{D}$-partite system $\str{C}$ such
	that for every $2$-colouring $\chi$ of $\Emb(\str{A},\str{C})_{U,\alpha}$ there exists an embedding $f\colon\str{B}\to\str{C}$ such that all embeddings $f\circ e$, $e\in \Emb(\str{A},\str{B})_U$ are $U$-closed and have the
	same colour.

	Moreover, if $\alpha$ is $U$-closed then $f$ can also be chosen to be $U$-closed.
\end{lemma}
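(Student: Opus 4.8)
The plan is to replay the proof of the (induced) Picture lemma (Lemmas~\ref{lem:picture} and~\ref{lem:indpicutre}), feeding in Lemma~\ref{lem:indpartiteU} in place of Lemma~\ref{lem:indpartite}, and then verifying the two new requirements: that the system $\str C$ we build stays $U$-transversal, and that $f\circ e$ is $U$-closed for every $e\in\Emb(\str A,\str B)_U$, so that the monochromatic family is honestly made of $U$-closed embeddings that receive a single colour. Concretely, first I would set $\str B' = \str B\vert_{\alpha(\str A)}$, the subsystem of $\str B$ on the vertices whose projection lies in $\alpha[A]$, viewed as an $\alpha(\str A)$-partite system via the restriction of $\pi_\str B$; since $\str B'$ is a substructure of $\str B$ and relations and functions only shrink under passing to substructures, $\str B'$ inherits $U$-transversality from $\str B$. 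Then I would apply Lemma~\ref{lem:indpartiteU} to $\str B'$ to obtain $N$ and the $U$-transversal system $\str C' = (\str B')^N$ with $\str C'\longrightarrow_U(\str B')^{\str A'}_2$, where $\str A'$ is the transversal copy of $\alpha(\str A)$; I regard $\str C'$ as a $\str D$-partite system through $\alpha[A]\subseteq D$. Finally, I would build $\str C$ from $\str C'$ as in Lemma~\ref{lem:picture}, i.e.\ by a series of free amalgamations extending copies of $\str B'$ sitting inside the current structure to copies of $\str B$; to keep closure under control I would perform this gluing only over copies of $\str B'$ that are $U$-closed in the current structure, so that Observations~\ref{obs:disaster2} and~\ref{obs:solution} apply at every step.

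This restriction is harmless for the Ramsey argument because the closure-aware Partite lemma delivers its monochromatic embedding as a $U$-closed embedding. Given $\chi\colon\Emb(\str A,\str C)_{U,\alpha}\to 2$, I would transport it (through $\alpha$, as in the proof of Lemma~\ref{lem:picture}) to a colouring of the relevant $U$-closed embeddings of $\str A'$ into $\str C'$, apply Lemma~\ref{lem:indpartiteU} to get a $U$-closed monochromatic $f_W\colon\str B'\to\str C'$, and take $f\colon\str B\to\str C$ to be the extension of $f_W$ produced in the construction of $\str C$ (it exists since $f_W[B']$ is $U$-closed in $\str C'$, being the image of a $U$-closed embedding). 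For $e\in\Emb(\str A,\str B)_U$ the image $e[A]$ lies in $B'$, because its projections fall in $\alpha[A]$, and it is $U$-closed in $\str B$, hence also in $\str B'$; so $f\circ e = f_W\circ e$ (as $f$ restricted to $B'$ equals $f_W$), with $e$ viewed as a $U$-closed embedding $\str A\to\str B'$. Thus $f[e[A]]$ is $U$-closed in $\str C'$, and Observation~\ref{obs:solution}, applied to the free amalgam that glues the copy of $\str B$ over $f_W[B']$ with the ``$\str A'$'' of the observation taken to be $f[e[A]]$ (a substructure of the overlap $f_W[B']$), upgrades it to being $U$-closed in $\str C$. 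Hence all composites $f\circ e$ lie in $\Emb(\str A,\str C)_U$ and receive the single colour assigned by $f_W$.

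The main obstacle will be showing that $\str C$ remains $U$-transversal. The delicate point is that when two copies of $\str B$ are glued over two copies of $\str B'$ inside $\str C'$ that share some vertices, a symbol $\rel{}{\func{}{}}\in U$ could in principle relate one common tuple to two distinct new vertices lying in the same partition, which would break $U$-transversality. Two facts mitigate this: every vertex freshly introduced by a gluing (a vertex of a copy of $\str B$ outside its copy of $\str B'$) projects into $D\setminus\alpha[A]$, whereas all of $\str C'$ projects into $\alpha[A]$, so a new vertex can never clash with an old one; and a clash between new vertices of two different copies would force a tuple of $\rel{\str B}{\func{}{}}$ whose first coordinates lie in $B'$ but whose last coordinate lies outside $B'$ — that is, it would witness that $\str B'$ fails to be $U$-closed in $\str B$. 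When $\alpha$ is $U$-closed, Observation~\ref{obs:disaster1} (applied with $\str P=\str B$ and the $U$-closed substructure $\alpha(\str A)$ of $\str D$) gives that $\str B' = \str B\vert_{\alpha(\str A)}$ is $U$-closed in $\str B$, so such a clash is impossible, $\str C$ is $U$-transversal, and moreover Observation~\ref{obs:disaster2} shows that the whole copy $f[B]$ is $U$-closed in $\str C$; this yields the ``moreover'' clause that $f$ itself can be taken $U$-closed. In the general case $\str B'$ need not be $U$-closed in $\str B$, and one has to argue, using the explicit product shape of $\str C'=(\str B')^N$ together with the restriction that the gluings are performed only over $U$-closed copies of $\str B'$, that no two glued copies actually produce such a coincidence. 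This bookkeeping, while not conceptually deep, is the fiddly heart of the argument, and it is also the place where one must double-check that the analogues of the non-induced Picture lemma proof (injectivity of the $f_W$, preservation of predicates, acyclicity of $<$ if present) go through verbatim.
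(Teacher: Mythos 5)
Your proposal is correct and follows essentially the same route as the paper: restrict to $\str B\vert_{\alpha(\str A)}$, apply the closure-aware partite lemma (Lemma~\ref{lem:indpartiteU}), glue copies of $\str B$ by free amalgamation only over the $U$-closed copies of $\str B\vert_{\alpha(\str A)}$, and use Observations~\ref{obs:disaster1}, \ref{obs:disaster2}, and~\ref{obs:solution} exactly as you do for the closure upgrades and the ``moreover'' clause. The one point you flag as unfinished --- $U$-transversality of $\str C$ when $\alpha$ is not $U$-closed --- is indeed delicate (two glued copies sharing an initial segment of a $U$-tuple in the overlap can each contribute a fresh last coordinate in the same partition), but the paper's own proof is silent on it as well; note that in the only downstream application where the conclusion of $U$-transversality is actually consumed (Lemma~\ref{lem:rpartite}) the embeddings $\alpha_i$ are $U$-closed, so your argument via Observation~\ref{obs:disaster1} already closes that case, while in Theorem~\ref{thm:models2} the offending tuples are pruned by the ``without loss of generality'' step restricting to tuples contained in $U$-closed copies.
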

\begin{proof}
	Do the same construction as always, but use Lemma~\ref{lem:indpartiteU} and then freely amalgamate copies of $\str B$ only over $U$-closed embeddings
	$\str{B}\vert _\str{A}\to (\str{B}\vert _\str{A})^N$.  Notice that Lemma~\ref{lem:indpartiteU} promises us that $f$ can be chosen such that all embedding $f\circ e$ are $U$-closed in $(\str{B}\vert _\str{A})^N$. By Observation~\ref{obs:solution} the process of extending $U$-closed embedding of $\str{B}\vert _\str{A}\to (\str{B}\vert _\str{A})^N$ to $\str{B}$ by free amalgamation preserves this property, and so all embeddings $f\circ e$, $e\in \Emb(\str{A},\str{B})_U$ are $U$-closed in $\str{C}$.

	The ``moreover'' part follows from Observations~\ref{obs:disaster1} and~\ref{obs:disaster2}.
\end{proof}
\begin{lemma}[Induced ``Half $U$-closed'' Partite Construction]
	\label{lem:rpartite}
	Let $L$ be a language, and let $\str{A}$, $\str{B}$, and $\str{D}$ be $L$-structures such that for every 2-colouring $\chi\colon \Emb(\str{A},\str{D})_U\to 2$ there
	exists a (not necessarily $U$-closed) embedding $f\colon\str{B}\to \str{D}$  such that all embeddings $f\circ e$, $e\in \Emb(\str{A},\str{B})_U$ are $U$-closed and have the
	same colour.

	Then there exists a $\str{D}$-partite $U$-transversal system $\str{P}$ satisfying $$\str{P}\vert _L\longrightarrow_U (\str{B})^\str{A}_2.$$
\end{lemma}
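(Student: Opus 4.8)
The plan is to follow the proof of the Induced Partite Construction (Theorem~\ref{thm:inducedpartite}) step for step, using the closure-aware Induced Picture Lemma (Lemma~\ref{lem:indpicutreU}) in place of Lemma~\ref{lem:indpicutre}, and to carry $U$-closedness through the iteration. First I would build the initial picture $\str{P}_0$: for every embedding $\beta\colon\str{B}\to\str{D}$ take a disjoint copy $\str{B}^\beta$ of $\str{B}$ whose vertices lie in the predicates prescribed by $\beta$, and set $\str{P}_0=\bigsqcup_\beta\str{B}^\beta$. Since $\str{P}_0$ is a disjoint union of copies of $\str{B}$ along which $\pi_{\str{P}_0}$ is injective, it is a $U$-transversal $\str{D}$-partite system; crucially, each component $\str{B}^\beta$ is a $U$-closed substructure of $\str{P}_0$, so the inclusion $h_\beta\colon\str{B}\to\str{B}^\beta\subseteq\str{P}_0\vert_L$ is a $U$-closed embedding with $\pi_{\str{P}_0}\circ h_\beta=\beta$ --- even when $\beta(\str{B})$ itself fails to be $U$-closed in $\str{D}$. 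This is the feature that ultimately makes the final extraction possible.

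Next, enumerate $\alpha_0,\dots,\alpha_{m-1}$ as all embeddings $\str{A}\to\str{D}$ that extend to a copy of $\str{B}$ in $\str{D}$, and apply Lemma~\ref{lem:indpicutreU} repeatedly to obtain $U$-transversal $\str{D}$-partite systems $\str{P}_1,\dots,\str{P}_m=\str{P}$, where $\str{P}_{i+1}$ is built from $\str{P}_i$ and $\alpha_i$. By the construction underlying Lemma~\ref{lem:indpicutreU}, $\str{P}_{i+1}$ arises from $(\str{P}_i\vert_{\alpha_i(\str{A})})^N$ by freely amalgamating copies of $\str{P}_i$ only over $U$-closed embeddings of $\str{P}_i\vert_{\alpha_i(\str{A})}$, and the embedding $f_i\colon\str{P}_i\to\str{P}_{i+1}$ it produces is \emph{half $U$-closed}, in the sense that $f_i\circ e$ is $U$-closed in $\str{P}_{i+1}$ for every $e\in\Emb(\str{A},\str{P}_i)_U$. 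Half $U$-closedness is preserved under composition, so $\varphi=f_{m-1}\circ\cdots\circ f_0$ sends $U$-closed embeddings of $\str{A}$ into $\str{P}_0$ to $U$-closed embeddings into $\str{P}_m$; running the backward induction exactly as in the proof of Theorem~\ref{thm:inducedpartite}, but restricted to $U$-closed embeddings of $\str{A}$, one arranges the $f_i$ so that the colour of $\varphi\circ e$ depends only on $\pi_{\str{P}_0}\circ e$ (which is one of the $\alpha_j$, since every $U$-closed copy of $\str{A}$ in $\str{P}_0$ lies in a single component and hence projects into a copy of $\str{B}$).

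Now, given a colouring $\chi\colon\Emb(\str{A},\str{P}_m\vert_L)_U\to 2$, the backward induction yields a well-defined colouring $\psi$ of those embeddings $\str{A}\to\str{D}$ extending to a copy of $\str{B}$; I would extend $\psi$ arbitrarily to all of $\Emb(\str{A},\str{D})_U$ and feed it into the half $U$-closed hypothesis on $\str{D}$, obtaining an embedding $g\colon\str{B}\to\str{D}$ such that every $g\circ e$ with $e\in\Emb(\str{A},\str{B})_U$ is $U$-closed in $\str{D}$ and all of them receive the same $\psi$-colour. The candidate monochromatic witness is $\varphi\circ h_g\colon\str{B}\to\str{P}_m\vert_L$: for each $e\in\Emb(\str{A},\str{B})_U$ the composite $h_g\circ e$ is a $U$-closed embedding of $\str{A}$ into $\str{P}_0$, so $\varphi\circ h_g\circ e$ is $U$-closed in $\str{P}_m$ and carries the colour $\psi(g\circ e)$, which is constant in $e$. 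Finally, $<_{\str{P}}$ is acyclic because $\pi_{\str{P}}$ is a homomorphism-embedding into the ordered structure $\str{D}$, so $\str{P}\vert_L$ may be reordered to an ordered structure if that is desired.

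The delicate point --- and the main obstacle --- is that $\varphi\circ h_g$ must be a genuine $U$-closed embedding of the \emph{whole} structure $\str{B}$, not merely a half $U$-closed monochromatic map: the definition of $\longrightarrow_U$ requires a monochromatic $f\in\Emb(\str{B},\str{P}\vert_L)_U$. When $\alpha_i$ is not $U$-closed in $\str{D}$, the slice $\str{P}_i\vert_{\alpha_i(\str{A})}$ is not $U$-closed in $\str{P}_i$ (Observation~\ref{obs:disaster1}), so the copies of $\str{P}_i$ amalgamated into $\str{P}_{i+1}$ are not $U$-closed there, and one cannot simply push the $U$-closed component $\str{B}^g\subseteq\str{P}_0$ forward. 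The fix I envisage is to maintain, throughout the iteration, the additional invariant that over every $\beta\colon\str{B}\to\str{D}$ there sits a $U$-closed copy of $\str{B}$ inside the current picture; the inductive step exploits that $\str{P}_{i+1}$ is amalgamated only over $U$-closed maps --- so that Observations~\ref{obs:disaster1}--\ref{obs:solution} pin down precisely which substructures remain $U$-closed --- together with the elementary remark that $\str{Z}\cap\str{P}_i\vert_{\alpha_i(\str{A})}$ is automatically $U$-closed inside $\str{P}_i\vert_{\alpha_i(\str{A})}$ whenever $\str{Z}$ is $U$-closed in $\str{P}_i$, which lets one choose, at step $i$, a copy of $\str{P}_i$ in $\str{P}_{i+1}$ compatible with the invariant. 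Verifying this invariant is the technical heart of the argument; once it is in place, everything else runs on the template of Theorem~\ref{thm:inducedpartite}.
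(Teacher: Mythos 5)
You have correctly identified the crux --- the final witness must be a genuine $U$-closed embedding of $\str{B}$, not merely a half $U$-closed monochromatic map --- but your choice of enumeration is what creates this problem, and the proposed fix does not close it. By enumerating \emph{all} embeddings $\alpha_i\colon\str{A}\to\str{D}$ that extend to a copy of $\str{B}$, you are forced at the non-$U$-closed $\alpha_i$ to forgo the ``moreover'' clause of Lemma~\ref{lem:indpicutreU}, so $f_i$ is only half $U$-closed and the composite $\varphi$ is not $U$-closed. Your rescuing invariant (a $U$-closed copy of $\str{B}$ over each $\beta$ survives every step) does not propagate. Write $\str{W}=\str{P}_i\vert_{\alpha_i(\str{A})}$ and let $\str{Z}$ be a $U$-closed copy of $\str{B}$ inside the copy of $\str{P}_i$ glued over a $U$-closed embedding $h\colon\str{W}\to\str{W}^N$. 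Take another gluing embedding $h'$ whose image meets $h[Z\cap W]$ at a vertex $x$ (distinct parameter-word embeddings do overlap in the power). Inside the copy of $\str{P}_i$ glued over $h'$, the vertex $x$ plays the role of $h'^{-1}(x)\in W$, and when $\alpha_i$ is not $U$-closed there is a tuple of a relation from $U$ in that copy starting at $x$ whose closure point lies outside $h'[W]$, hence outside the base of the amalgam and outside $\str{Z}$. So $\str{Z}$ fails to be $U$-closed in $\str{P}_{i+1}$; Observations~\ref{obs:disaster1}--\ref{obs:solution} cannot help here because they only control substructures of the base, whereas $\str{Z}$ sticks out into one of the glued copies. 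The ``technical heart'' you defer is therefore not verifiable as stated.

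The repair is to remove the source of the difficulty rather than to patch it: enumerate only the \emph{$U$-closed} embeddings $\alpha_i\colon\str{A}\to\str{D}$ that extend to a copy of $\str{B}$, which is what the paper's proof does. Then the ``moreover'' clause of Lemma~\ref{lem:indpicutreU} applies at every single step, each $f_i$ is $U$-closed, hence so is $\varphi$, and $\varphi\circ h_g$ is automatically a $U$-closed copy of $\str{B}$ because $h_g$ is $U$-closed in $\str{P}_0$. The restriction costs nothing: the hypothesis on $\str{D}$ guarantees that $g\circ e$ is $U$-closed in $\str{D}$ for every $e\in\Emb(\str{A},\str{B})_U$ and lands inside $g[\str{B}]$, so every $U$-closed embedding of $\str{A}$ into the final copy projects onto one of the $\alpha_i$ and is controlled by the backward induction. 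The remaining ingredients of your write-up --- the construction of $\str{P}_0$ with its $U$-closed components over arbitrary $\beta$, the backward induction, the derived colouring of $\Emb(\str{A},\str{D})_U$, and the extraction of the witness --- coincide with the paper's argument once the enumeration is corrected.
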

\begin{proof}
	We will follow the proof of Theorem~\ref{thm:inducedpartite}, this time using Lemma~\ref{lem:indpicutreU}.
	Fix $L$, $\str A$, $\str B$, and $\str D$ as in the statement. Let $\alpha_0$, $\alpha_1$, \ldots, $\alpha_{m-1}$ be an enumeration of all $U$-closed embeddings $\str A\to \str D$ with the property that there exist embeddings $\beta_i\colon \str B\to \str D$ such that $\alpha_i(\str A)\subseteq \beta_i(\str B)$. We will define $\str D$-partite systems $\str{P}_0, \str{P}_1, \ldots, \str{P}_m$ similarly as in the proof of Theorem~\ref{thm:inducedpartite}.

	The picture $\str{P}_0$ is constructed from a disjoint union of copies of $\str{B}$ as usual.
	Now, to obtain the subsequent pictures, one can apply Lemma~\ref{lem:indpicutreU}, and since we are concerned only with $U$-closed embeddings $\str{A}\to\str{D}$, we can apply the ``moreover'' part of Lemma~\ref{lem:indpicutreU} and maintain the invariant that all embeddings that we consider are $U$-closed.
\end{proof}

\medskip

Now that our revisionism of Section~\ref{sec:induced} is complete, we are ready to finish the proof.
\begin{proof}[Proof of Theorem~\ref{thm:models2}]
	We will proceed analogously to the proofs of Theorems~\ref{thm:NR} and~\ref{thm:inducedpartite}. Fix $L$, $\str A$, $\str B$ as in the statement.
	Apply Theorem~\ref{thm:unNR} to obtain $\str{D}$ satisfying $\str{D}\longrightarrow (\str{B})^\str{A}_2$.
	Let $\alpha_0$, $\alpha_1$, \ldots, $\alpha_{m-1}$ be an enumeration of all (not necessarily $U$-closed) embeddings $\str A\to \str D$.
	We shall define $U$-transversal  $\str{D}$-partite systems $\str{P}_0, \str{P}_1, \ldots, \str{P}_m$ such that
	for every $0\leq i<m$ and every 2-colouring $\chi$ of $\Emb(\str A,\str{P}_{i+1})_{U,\alpha_i}$ there is a $U$-closed embedding $f_i\colon \str{P}_{i}\to \str{P}_{i+1}$ such that $\chi$ is constant on $\{f_i\circ e:\Emb(\str A,\str{P}_{i})_{U,\alpha_i}\}$.

	We start (as usual) by constructing $\str{P}_0$ with the following property: for every embedding $\beta\colon \str B\to \str D$
	there exists a $U$-closed embedding $f_\beta\colon\str{B}\to \str{P}_0\vert _L$ such that $\pi_{\str{P}_0}\circ f_\beta=\beta$. This can be easily done by starting with an empty $\str D$-partite system and, for every $\beta\in \Emb(\str{B},\str{D})$, adding a disjoint copy of $\str{B}$ with vertices in the corresponding partitions.

	Next we build pictures $\str{P}_1$, $\str{P}_2$, \ldots, $\str{P}_n$.
	Assume that $\str{P}_i$ is constructed.
	Apply Lemma~\ref{lem:indpicutreU} for $\str A$, $\str D$, $\str{P}_i$ and $\alpha_i$ to obtain $\str{O}_{i+1}$.
	(The letter $O$ comes from the Czech word ``obrázek'' which means ``little picture''.)  This time, we did not assume that $\alpha_i$ is $U$-closed and thus we only know by Lemma~\ref{lem:indpicutreU} that for every 2-colouring of $\Emb(\str{A},\str{O}_{i+1})_{U,\alpha_i}$ there is a (not necessarily $U$-closed) embedding $f\colon\str{P}_{i}\to \str{O}_{i+1}$ such that all embeddings from $\{f\circ e:e\in \Emb(\str{A},\str{P}_i)_{U,\alpha_i}\}$ are $U$-closed and have the same colour.

	At this point, we remember that $\str{O}_i$ is an $L_P$-structure, and without loss of generality we can assume that
	every vertex in $O_{i+1}$ and every tuple of every relation of $\str{O}_{i+1}$ is contained in some $U$-closed image
	of $\str{P}_i$ in $\str{O}_{i+1}$ (as otherwise we can remove them without losing the Ramsey property of $\str O_{i+1}$).
	Create a $\str{D}$-partite system $\str{A}_i$ from the $L$-structure $\alpha_i(\str{A})$ by putting vertices into the corresponding predicates.
	Notice that $\Emb(\str{A}_i,\str{O}_{i+1})_U=\Emb(\str{A},\str{O}_{i+1})_{U,\alpha_i}$ and we can thus apply Lemma~\ref{lem:rpartite} on $\str{A}_i$, $\str{P}_i$ and $\str{O}_{i+1}$ (seen as $L_P$-structures and not as $\str{D}$-partite systems) to obtain an $\str O_{i+1}$-partite system $\str{P}_{i+1}^+$. Putting $\str P_{i+1} = \str{P}_{i+1}^+\vert _{L_P}$, Lemma~\ref{lem:rpartite} tells us that $\str{P}_{i+1}\longrightarrow_U (\str{P}_i)^{\str{A}_i}_2$.

	Clearly, $\str P_{i+1}$ is a $\str D$-partite system (there are homomorphism-embeddings $\str P_{i+1}\to \str O_{i+1}$ and $\str O_{i+1}\to \str D$). Since every vertex in $O_{i+1}$ and every tuple of every relation of $\str{O}_{i+1}$ is contained in some $U$-closed image
	of $\str{P}_i$ in $\str{O}_{i+1}$, it follows that $\str{P}_{i+1}$ is a $U$-transversal, since $\str P_i$ is $U$-transversal.
	Re-interpreting the properties of $\str{P}_{i+1}$ back to the partite-system setting, we get that for every $2$-colouring $\chi$ of
	$\Emb(\str A,\str{P}_{i+1})_{U,\alpha_i}$ there is a $U$-closed embedding $f_i\colon \str{P}_{i}\to \str{P}_{i+1}$ such that $\{f_i\circ e:e\in \Emb(\str A,\str{P}_{i})_{U,\alpha_i}\}$ is constant.

	The rest of the proof follows in full analogy to the proof of Theorem~\ref{thm:inducedpartite}.
\end{proof}

\bibliographystyle{alpha}
\bibliography{ramsey.bib}

\end{document}